\definecolor{verydarkblue}{rgb}{0,0,0.4}
\def\@commafont{\check@mathfonts
    \fontsize\sf@size\z@\selectfont}
\DeclareRobustCommand{\cb}[1]{{%
\setbox\z@\hbox{#1}%
\ifdim\dp\z@<.1\ht\z@\ooalign{\unhbox\z@\crcr\hidewidth\lower.3ex\hbox{\@commafont,}\hidewidth}%
\else\ooalign{\unhbox\z@\crcr\hidewidth\raise.5ex\hbox{\@commafont`}\hidewidth}%
\fi}}%
\newcommand{\Z}{\mathbb{Z}}
\newcommand{\R}{\mathbb{R}}
\newcommand{\C}{\mathbb{C}}
\newcommand{\N}{\mathbb{N}}
\DeclareMathOperator{\Hopf}{Hopf}
\renewcommand{\Im}{\im}
\newcommand{\CP}{\mathbb{CP}}
\renewcommand{\H}{\mathbb{H}}
\renewcommand{\tilde}[1]{\widetilde{#1}}
\newcommand{\g}{\mathfrak{g}}
\newcommand{\D}{\mathcal{D}}
\newcommand{\tec}{Teichm\"uller }
\newcommand{\SF}{\mathcal{F}}
\newcommand{\sJ}{\mathcal{J}}
\newcommand{\sG}{\mathcal{G}}
\newcommand{\Om}{\Omega}
\newcommand{\End}{\mathrm{End}}
\newcommand{\SL}{\mathrm{SL}}
\newcommand{\PSL}{\mathrm{PSL}}
\newcommand{\SO}{\mathrm{SO}}
\newcommand{\QD}{\mathcal{QD}}
\newcommand{\SQD}{\mathcal{SQD}}
\newcommand{\app}{\mathrm{app}}
\newcommand{\Xhat}{\widehat{X}}
\newcommand{\Ahat}{\widehat{A}}
\newcommand{\khat}{\widehat{k}}
\renewcommand{\leq}{\leqslant}
\renewcommand{\geq}{\geqslant}
\renewcommand{\hat}{\widehat}
\renewcommand{\Im}{\mathrm{Im}}
\newcommand{\Stab}{\operatorname{Stab}}
\newcommand{\Prym}{\mathrm{Prym}}
\newcommand{\SW}{\mathrm{SW}}
\newcommand{\map}[3]{#1\colon#2\rightarrow#3}
\newcommand{\deq}{\mathrel{\mathop:}=}
\newcommand{\Xtilde}{\widetilde{X}}
\newcommand{\alphabold}{{\bm\alpha}}
\newcommand{\betabold}{{\bm\beta}}
\newcommand{\sigmabold}{{\bm\sigma}}
\newcommand{\CBbb}{\mathbb C}
\newcommand{\DBbb}{\mathbb D}
\newcommand{\HBbb}{\mathbb H}
\newcommand{\NBbb}{\mathbb N}
\newcommand{\RBbb}{\mathbb R}
\newcommand{\ZBbb}{\mathbb Z}
\newcommand{\Acal}{\mathcal A}
\newcommand{\Ccal}{\mathcal C}
\newcommand{\Ecal}{\mathcal E}
\newcommand{\Fcal}{\mathcal F}
\newcommand{\Hcal}{\mathcal H}
\newcommand{\Ical}{\mathcal I}
\newcommand{\Lcal}{\mathcal L}
\newcommand{\Ocal}{\mathcal O}
\newcommand{\Pcal}{\mathcal P}
\newcommand{\Qcal}{\mathcal Q}
\newcommand{\Scal}{\mathcal S}
\newcommand{\Ucal}{\mathcal U}
\newcommand{\Vcal}{\mathcal V}
\newcommand{\Xcal}{\mathcal X}
\newcommand{\gfrak}{\mathfrak g}
\newcommand{\SU}{\mathsf{SU}}
\newcommand{\Psf}{\mathsf{P}}
\newcommand{\Hscr}{\mathscr{H}}
\newcommand{\Sscr}{\mathscr{S}}
\newcommand{\Dscr}{\mathscr{D}}
\newcommand{\Pscr}{\mathscr{P}}
\DeclareMathOperator{\Hom}{Hom}
\DeclareMathOperator{\id}{id}
\DeclareMathOperator{\imag}{Im}
\DeclareMathOperator{\real}{Re}
\DeclareMathOperator{\tr}{tr}
\DeclareMathOperator{\gr}{gr}
\DeclareMathOperator{\Iso}{Iso}
\newcommand{\dbar}{\bar\partial}
\newcommand{\lra}{\longrightarrow}
\newcommand{\doubleslash}{\bigr/ \negthinspace\negthinspace \bigr/}
\newcommand{\Pic}{{\rm Pic}}
\newcommand{\Op}{{\rm Op}}
\DeclareMathOperator{\NAH}{\mathsf{NAH}}
\DeclareMathOperator{\RH}{\mathsf{RH}}
\DeclareMathOperator{\B}{\mathsf{B}}
\DeclareMathOperator{\sfP}{\mathsf{P}}
\DeclareMathOperator{\DR}{\mathsf{DR}}
\DeclareMathOperator{\Higgs}{\mathsf{H}}
\DeclareMathOperator{\LC}{\mathsf{LC}}
\DeclareMathOperator{\HF}{\mathsf{HF}}
\DeclareMathOperator{\odd}{\mathsf{odd}}
\DeclareMathOperator{\ev}{\mathsf{ev}}
\newcommand{\sbt}{\,\begin{picture}(-1,1)(-1,-1)\circle*{2}\end{picture}\ }
\newcommand{\dt}[1]{\overset{\sbt}{#1}}
\newcommand{\isorightarrow}{\xrightarrow{
   \,\smash{\raisebox{-0.5ex}{\ensuremath{\sim}}}\,}}
\newcommand{\param}{{\mathchoice{\mkern1mu\mbox{\raise2.2pt\hbox{$\centerdot$}}\mkern1mu}{\mkern1mu\mbox{\raise2.2pt\hbox{$\centerdot$}}\mkern1mu}{\mkern1.5mu\centerdot\mkern1.5mu}{\mkern1.5mu\centerdot\mkern1.5mu}}}
\numberwithin{equation}{section}
\theoremstyle{plain}
\newtheorem{thm}{Theorem}[section]
\newtheorem{cor}[thm]{Corollary}
\newtheorem{lem}[thm]{Lemma}
\newtheorem{prop}[thm]{Proposition}
\newtheorem{remlabel}[thm]{Remark}
\newtheorem{definition}[thm]{Definition}
\newtheorem{exlabel}[thm]{Example}
\newtheorem*{mainthm}{Main Theorem}
\theoremstyle{definition}
\theoremstyle{definition}
\newtheorem*{remarksenv}{Remarks}
\begin{document}

\title[Higgs Bundles, Harmonic Maps,  and Pleated Surfaces]{Higgs Bundles, Harmonic Maps,  and Pleated Surfaces}

\author[A.\ Ott]{Andreas Ott}
\address{Mathematisches Institut,
    Ruprecht-Karls-Universit\"at Heidelberg,
Im Neuenheimer Feld 205, 
D-69120 Heidelberg,
  Germany}
  \email{aott@mathi.uni-heidelberg.de}

\author[J.\ Swoboda]{Jan Swoboda}
\address{Mathematisches Institut,
    Ruprecht-Karls-Universit\"at Heidelberg,
Im Neuenheimer Feld 205, 
D-69120 Heidelberg,
  Germany}
  \email{swoboda@mathi.uni-heidelberg.de}

\author[R.\ Wentworth]{Richard Wentworth}
\address{Department of Mathematics,
   University of Maryland,
   College Park, MD 20742, USA}
   \email{raw@umd.edu}

\author[M.\ Wolf]{Michael Wolf}
\address{Department of Mathematics, Rice University,
Houston, TX 77251, USA}
\email{mwolf@rice.edu}

\begin{abstract} 
This paper unites the gauge-theoretic and hyperbolic-geometric perspectives on the asymptotic geometry of the character variety of $\SL(2,\CBbb)$ representations of a surface group. Specifically, we find an asymptotic correspondence between the analytically defined limiting configuration of a sequence of solutions to the $\SU(2)$ self-duality equations on a closed Riemann surface constructed by Mazzeo-Swoboda-Wei{\ss}-Witt, and the geometric topological shear-bend parameters of equivariant pleated surfaces in hyperbolic three-space due to Bonahon and Thurston. The geometric link comes from the nonabelian Hodge correspondence and a study of high energy degenerations of harmonic maps. Our result has several applications. We prove: (1) the local invariance of the partial compactification of the moduli space of solutions to the self-duality equations by limiting configurations; (2) a refinement of the harmonic maps characterization of the Morgan-Shalen compactification of the character variety; and (3) a comparison between the family of complex projective structures defined by a quadratic differential and the realizations of the corresponding flat connections as Higgs bundles, as well as a determination of the asymptotic shear-bend cocycle of Thurston’s pleated surface.
\end{abstract}

\date{\today}
\maketitle

\setcounter{tocdepth}{2}

\tableofcontents

\section{Introduction}

The purpose of this paper is to bring together two perspectives on the asymptotic structure of the $\SL(2,\CBbb)$ character variety of a surface group---the complex analytic perspective from algebraic geometry and nonlinear analysis, and the synthetic perspective from hyperbolic geometry and low dimensional topology.  The former finds its incarnation in a gauge-theoretic partial compactification of the moduli space of rank two Higgs bundles on a closed Riemann surface, via decouplings of the self-duality equations.  The latter is understood in terms of equivariant  pleated surfaces in hyperbolic three-space.

As a means of studying the asymptotics of the character variety, the analytic and the synthetic perspectives each have advantages and disadvantages.  The complex analytic perspective presents the character variety as a fibration over a $6g-6$ dimensional vector space of holomorphic differentials, and so presents the compactification as a fibration over a real projective space of just one less dimension.  Now, the emphasis here is holomorphic invariants, and so there is a built-in reference to a fixed Riemann surface.  This dependence of the compactification on an arbitrarily chosen Riemann surface renders the compactification unnatural from the point of view of the mapping class group action on the character variety.  
On the other hand, the synthetic perspective relies on a choice of lamination and so avoids issues of naturality, but the most celebrated compactification from this setting, the Morgan-Shalen compactification, has the far larger codimension $6g-5$, so entails a substantial loss of information. 

By uniting the perspectives, we provide a partial compactification that retains attractive features from both perspectives: it is topological, so that any dependence on an original choice of base surface has vanished by the frontier of the character variety, but it remains codimension one and so captures some of the nuance of the fibration.  Along the way, we explain the hyperbolic geometry of the gauge theory perspective, at least asymptotically. This relationship further allows us to locate the family of projective structures on a Riemann surface, again asymptotically, as a collection of nearly linear flows on the fibers at infinity.

The duality between the analytic and synthetic perspective on the $\SL(2,\CBbb)$ character variety has its roots in the identification of $\SL(2,\CBbb)$ (or rather its adjoint group $\PSL(2,\CBbb)=\SL(2,\CBbb)/\pm 1$) as the oriented isometry group of hyperbolic three-space.  The nonabelian Hodge correspondence gives a homeomorphism between the associated moduli spaces, and an important and long-standing direction of research is the study of how different geometric properties on both sides are related under this identification.  In this sense, the present work describes the asymptotics of the nonabelian Hodge correspondence from a geometric point of view.

Let us now explain some of the structures of the moduli space in more detail. 
The first is that of an algebraically completely integrable 
system, where a Higgs bundle is determined by a point in 
the Prym variety of the spectral curve associated to a holomorphic 
quadratic differential on the surface.  The second 
is in terms of solutions to the self-duality equations. Recent work proves 
asymptotic convergence, as the norm of the Higgs field diverges, to 
solutions of a decoupled  equation called a limiting configuration.  The 
data describing a limiting configuration is also a quadratic differential 
and a choice of Prym differential. The third aspect of the moduli space is 
 the link between solutions to the self-duality equations and 
flat $\SL(2,\CBbb)$ connections that is obtained from the existence 
of equivariant harmonic maps to $\HBbb^3$. 
Here, the quadratic differential appears as the Hopf differential 
of the map. Asymptotics of harmonic maps are well understood.
  While the relationship between spectral data and the Prym 
differential of the limiting configuration is transparent, 
it is perhaps not so clear how to recover this information 
from the asymptotic behavior of the harmonic map. 
This is where the fourth perspective intervenes; that 
of hyperbolic geometry and methods of Thurston.
The new ingredient linking  harmonic maps to limiting configurations
   is the notion of an (equivariant) pleated surface.
Pleated surfaces, and their monodromy representations, 
can be parametrized by shear-bend cocycles with respect to a 
maximal geodesic lamination on the surface. 
This shear-bend dichotomy is parallel to the asymptotic
decoupling of the self-duality equations which gives rise to
limiting configurations, and a major goal of this
paper is to make this similarity more concrete.

Roughly speaking, our main result proves an asymptotic correspondence 
between shear-bend coordinates and limiting 
configurations via periods of Prym differentials. 
More precisely, we prove the following.  First,
the energy of equivariant harmonic maps diverges for
a sequence $\rho_n$ of $\PSL(2,\CBbb)$-representations 
that leaves all compact sets in the character variety. 
We shall show that  for sufficiently large $n$, there exist 
$\rho_n$-equivariant pleated surfaces in $\HBbb^3$ where the
bending lamination is asymptotically close to the lamination associated to
the horizontal foliation of the Hopf differentials of the
harmonic maps.  The image of the harmonic map is itself close to this 
equivariant pleated surface, in an appropriate sense.  
Moreover, the shearing cocycle of the pleated surface 
is approximated projectively by the intersection number 
with the vertical foliation of the Hopf differential, and  
the asymptotic limit of the bending cocycle of the equivariant pleated surface is determined by
the periods of the Prym differential associated to the limiting
configuration, as described above.  This analysis has several applications.
First,  we find that a small change in the base Riemann surface 
used to define the moduli space of Higgs bundles
changes the data of the limiting configurations in the boundary 
associated to a sequence of representations by parallel
transport via the Gauss-Manin connection. In other words,
the analytically defined limiting configurations are topological. 
Next,  we obtain a partial refinement of 
the Morgan-Shalen compactification of the character variety.
Ideal points of this compactification are defined by the projective 
length functions of isometric actions of the surface group 
on $\RBbb$-trees.  The refinement decorates a tree in a portion of the compactification with a
bending cocycle which provides more precise information on the
relationship between the  limiting length functions and dual trees
of measured foliations. 
Finally,  we  determine the asymptotic shear-bend cocycles of the equivariant pleated
surfaces that Thurston associates to  complex projective structures
 (or \emph{opers}). 
The result states that these cocycles are  asymptotic to the
ones defined  by the ``Seiberg-Witten
differential'' on the spectral curve, which itself is  an important device
that figures prominently in the WKB analysis of the
differential equation defining the projective structure. 

Before formulating the precise  statements of these results in
\textsection{\ref{sec:intro-results} below, 
we first provide some notation and important terminology.

\newpage 

\subsection{Preliminaries}

\subsubsection{Notation} \label{sec:notation}
Throughout this paper,  $\Sigma$ will be a fixed closed, oriented surface of genus $g\geq 2$ with fundamental group $\pi_1=\pi_1(\Sigma, p_0)$,  where $p_0\in \Sigma$ is a fixed base point.  We will typically denote  a marked Riemann surface structure on $\Sigma$ by $X$, and a marked hyperbolic structure on $\Sigma$ by $S$. The
 almost complex structure on $X$ appears as $J$. 
 Universal covers are denoted by $\widetilde\Sigma$, etc., and  $\tilde p_0$ will indicate a fixed lift of $p_0$ to $\widetilde\Sigma$. The $2$ and $3$ dimensional real
 hyperbolic  spaces are written $\HBbb^2$ and $\HBbb^3$, respectively.
 Notice that $\HBbb^2$ (\emph{resp}.\ $\HBbb^3$) carries a left $\SL(2,\RBbb)$ (\emph{resp}.\ $\SL(2,\CBbb)$) action by isometries
 that factors through $\PSL(2,\RBbb)=\SL(2,\RBbb)/\{\pm 1\}$
  (\emph{resp}.\ $\PSL(2,\CBbb)=\SL(2,\CBbb)/\{\pm 1\})$.
The canonical bundle of $X$ is indicated by $K_X$. 
We denote by $\QD(X)$  the space of holomorphic quadratic differentials
 on $X$, and by $\QD^{\ast}(X)\subset\QD(X)$ the cone of differentials with only simple zeroes. We let $\SQD(X)\subset\QD(X)$ denote the unit differentials with respect to some norm, and $\SQD^\ast(X)=\SQD(X)\cap\QD^\ast(X)$.
The Teichm\"uller space $T(X)$ of $X$ will  sometimes be labeled by
 $T(\Sigma)$ when identifying it
  with the Fricke space of discrete $\PSL(2,\RBbb)$ representations.

\subsubsection{Moduli spaces} \label{sec:intro-moduli} 
Define the {\bf Betti moduli space}
\begin{equation} \label{eqn:betti}
M_{\B}(\Sigma):= \Hom(\pi_1,\SL(2,\CBbb))\doubleslash \SL(2,\CBbb)
\end{equation}
parametrizing conjugacy classes of semisimple representations of the fundamental group of $\Sigma$. The choice of $\SL(2,\CBbb)$ rather than $\PSL(2,\CBbb)$ is really a matter of convenience. 
For simplicity we work with $\SL(2,\CBbb)$, but
 we caution that it will be important at several points to track the distinction between the special and projective groups. Hence, let us also define the character variety 
 \begin{equation} \label{eqn:rep-variety}
R(\Sigma):= \Hom(\pi_1,\PSL(2,\CBbb))\doubleslash \PSL(2,\CBbb).
\end{equation}
We shall be interested in the connected component of the
trivial representation,  $R^o(\Sigma)$, consisting of
representations that  lift to $\SL(2,\CBbb)$. This can be
realized as a quotient: $R^o(\Sigma)=M_{\B}(\Sigma)/J_2(\Sigma)$, where 
$
J_2(\Sigma):=\Hom(\pi_1, \{\pm 1\})
$.
The results of this paper  apply with little change to the other component of $R(\Sigma)$ consisting of representations that do not lift to $\SL(2,\CBbb)$. 

Let $M_{\DR}(\Sigma)$ denote the {\bf de Rham moduli space} of completely reducible flat $\SL(2,\CBbb)$ connections on $\Sigma$. Since $\dim_\CBbb X=1$, a holomorphic connection on a vector bundle on $X$ is automatically flat, and so 
we have a canonical identification of $M_{\DR}(\Sigma)$ with $M_{\DR}(X)$, the moduli space of rank $2$ holomorphic connections. We will sometimes confuse the two when the Riemann surface structure is understood. 
The Riemann-Hilbert correspondence gives a homeomorphism 
\begin{equation} \label{eqn:riemann-hilbert}
\RH : M_{\DR}(\Sigma)\isorightarrow M_{\B}(\Sigma)\ ,
\end{equation}
obtained by associating to a flat connection $\nabla$ its monodromy representation $\rho$. Gauge equivalent connections give conjugate representations.

Let $M_{\Higgs}(X)$ denote the {\bf Hitchin moduli space} of rank $2$ {Higgs bundles} consisting of isomorphism classes of pairs $(\Ecal, \Phi)$, where $\Ecal\to X$ is a rank $2$ holomorphic vector bundle on $X$ with fixed trivial determinant, and $\Phi$ is a holomorphic Higgs field. 
The {\bf nonabelian Hodge correspondence} gives a homeomorphism 
\begin{equation} \label{eqn:nah}
\NAH :  M_{\DR}(X)\isorightarrow M_{\Higgs}(X)\ .
\end{equation}
This map is described in more detail in \textsection{\ref{subsect:hitchineqharmmaps}}.

 There is a proper holomorphic map $\Hscr: M_{\Higgs}(X)\to \QD(X)$.  The fiber $\Hscr^{-1}(q)$, $q\in \QD^{\ast}(X)$ may be identified with the {\bf Prym variety} $\Prym(\widehat X_q, X)$, where $\widehat X_q\to X$ is a double cover branched at the zeroes of $q$ called the {\bf spectral curve}.
  This realizes $M_{\Higgs}(X)$ as a smooth torus fibration over this locus (see \textsection{\ref{sec:spectralcurve}}). 
A choice of spin structure $K_X^{1/2}$ gives a section of $\Hscr$ called the {\bf Hitchin section}, and its image is called a {\bf Hitchin component}. This realizes $T(X)\subset M_{\Higgs}(X)$. 
 One direction in the identification \eqref{eqn:nah} goes as follows:
Given a flat connection with monodromy  $[\rho]\in M_{\B}(\Sigma)$ there is a $\rho$-equivariant harmonic map $u:\widetilde X\to \HBbb^3$. 
The Hopf differential of $u$ descends to $X$ as a holomorphic quadratic differential. 
Restricted to a lift of the Fricke space to $\SL(2,\RBbb)$, this gives a diffeomorphism $\QD(X)\simeq T(\Sigma)\subset M_{\B}(\Sigma)$ (see \cite{Hitchin:87} and \cite{Wolf:thesis}).
Under this identification the harmonic maps parametrization of $T(\Sigma)$ and Hitchin section $T(X)$ agree.

\subsubsection{Self-duality equations and limiting configurations}
\label{intro_section:limiting_configurations}
The gauge theoretical perspective on $M_{\Higgs}(X)$ is in terms of 
solutions of  Hitchin's {\bf  self-duality equations}
 for a pair $(A,\Psi)$ consisting of a $\SU(2)$-connection $A$ and
 a hermitian Higgs field $\Psi$. The proper setup for these will be introduced in detail in \textsection{\ref{subsubsect:selfdualityequation}}.  The Kobayashi-Hitchin correspondence yields a bijection between $M_{\Higgs}(X)$ and the space of unitary gauge equivalence classes of solutions of the self-duality equations. We will not distinguish the notation between $M_{\Higgs}(X)$ and the latter space, and therefore we write $[(A,\Psi)]\in M_{\Higgs}(X)$ if $(A,\Psi)$ is an irreducible solution of eqns.\ \eqref{eq:sde}.  There is a  partial compactification of $M_{\Higgs}(X)$ in terms of  {\bf limiting configurations} which we shall present in \textsection{\ref{subsec:mswwComp}}. Briefly, a limiting configuration $[(A_\infty,\Psi_\infty)]$ associated to a differential $q\in \SQD^\ast(X)$
is a solution of the decoupled self-duality equations \eqref{eq:dsde}
on the punctured surface $X^{\times}=X\setminus q^{-1}(0)$ that has a singularity of a specific type in each zero of $q$. Furthermore, $2q=\tr(\Psi_\infty\otimes\Psi_\infty)^{2,0}$. Such limiting configurations have a natural interpretation in terms of parabolic Higgs bundles. For our purposes it will be important that the set of unitary gauge equivalence classes of  limiting configurations associated with  any $q$ as above is a real torus of dimension $6g-6$, and that the Hitchin map $\Hscr$ extends continuously to a map $\Hscr_\infty$ from the space of limiting configurations to $\SQD^\ast(X)$. The fiber $\Hscr^{-1}_\infty$ may be
identified with a torus of  {\bf Prym differentials} on $\widehat X_q$ (see Proposition \ref{prop:prym-limiting}). The Liouville form restricts to a natural Prym differential $\lambda_{\SW}$ called the {\bf Seiberg-Witten differential}, and this will play an important role in the paper.

\subsubsection{Pleated surfaces} \label{sec:intro-pleated}
By a {\bf pleated surface} we mean\footnote{What is defined
here might be called an \emph{equivariant} or  \emph{abstract} pleated 
surface 
to distinguish it from the more standard, nonequivariant situation
(\emph{cf}.\ \cite{CEG}).
 Following \cite{Bonahon:shearing}, we will simply use the term
 \emph{pleated surface} in the equivariant case as well.}
  a $4$-tuple $\mathsf{P}= (S, f,\Lambda, \rho)$
where $S$ is a hyperbolic structure on $\Sigma$; $\Lambda$ is a maximal geodesic lamination on $S$ called the {\bf pleating locus}; $f: \widetilde S\to \HBbb^3$ is a continuous map from the lift $\widetilde S$ to $\HBbb^3$ that is totally geodesic on the components of $\widetilde S\setminus\widetilde \Lambda$,  maps leaves of $\widetilde\Lambda$ to geodesics, and  is equivariant with respect to a representation $\rho:\pi_1\to \PSL(2,\CBbb)$. 
We sometimes abbreviate the $4$-tuple $\mathsf{P}$ as $f: \widetilde S\to \HBbb^3$ when context provides the other data.

Let $\Hcal(\Lambda, \RBbb)$ and $\Hcal(\Lambda, S^1)$ denote the spaces of {\bf shearing} and {\bf bending} cocycles, respectively (see \textsection{\ref{subsec: transverse cocycles}}). We further set $\Hcal^o(\Lambda, S^1)\subset \Hcal(\Lambda, S^1)$ to be the connected component of the identity. 
Then $\Hcal(\Lambda, \RBbb)$ (\emph{resp}.\ $\Hcal^o(\Lambda, S^1)$) is a $(6g-6)$-dimensional vector space (\emph{resp}.\ torus).
Bonahon proves that there is an injective map
\begin{equation}\label{eqn:bonahon}
B_{\Lambda} : \Ccal(\Lambda)\times\sqrt{-1}\,\Hcal^o(\Lambda, S^1)\lra R^o(\Sigma)
\end{equation}
that is a biholomorphism onto  its image \cite[Thm.\ D]{Bonahon:shearing}. Here, $\Ccal(\Lambda)\subset \Hcal(\Lambda, \RBbb)$ is an open convex polyhedral cone  that is naturally identified with $T(\Sigma)$. 
In fact, $[\rho]=B_{\Lambda}(\sigmabold,\betabold)$ 
is constructed via 
a pleated surface $f: \widetilde S\to \HBbb^3$ that is $\rho$-equivariant and has pleating locus $\Lambda$. The complex cocycle $\sigmabold+i\betabold\in\Hcal(\Lambda,\CBbb/2\pi i\ZBbb)$ is called the {\bf shear-bend} cocycle of the pleated surface.

In this paper we will be interested in the special case where $\Lambda$ is determined by the geodesic lamination $\Lambda^h_q$ associated to the horizontal measured foliation $\Fcal_q^h$ of a quadratic differential $q$ that is holomorphic for a marked Riemann surface structure $X$ on $\Sigma$.
We shall always demand that $q$ have simple zeroes.
 If in addition $q$ has no (horizontal) {\bf saddle connections}, then $\Lambda=\Lambda_q^h$.  When $q$ does have saddle connections the situation is  more complicated to formulate, but the fundamental picture described below is unchanged (see \textsection{\ref{sec:maximalization}}). 
In any case, there is a canonical transverse cocycle 
$\sigmabold^{can}_{q}\in\Hcal(\Lambda, \RBbb)$
for $\Lambda$  related to  the transverse measure to the vertical foliation defined by $q$ (see Example \ref{ex:SW-cocycle}).

\subsection{Results} \label{sec:intro-results}

\subsubsection{Statement of the main theorem}
The result below gives an asymptotic comparison between Bonahon's parametrization of the character variety in terms of shear-bend cocycles \eqref{eqn:bonahon}, and the limiting configurations of solutions to the self-duality equations. Consider the following set-up.

Let $[\rho_n]$ be an unbounded sequence in $R^o(\Sigma)$, by which we mean it leaves every compact subset. Assume that the Hopf differentials of the $\rho_n$-equivariant harmonic maps $u_n:\widetilde X\to \HBbb^3$ are of the form $4t_n^2 q_n$, with $t_n\to +\infty$ and $q_n\to q\in \SQD^\ast(X)$. 
We will assume that for some fixed hyperbolic structure on $\Sigma$ we have chosen  maximal laminations $\Lambda_n$, $\Lambda$ containing $\Lambda_{q_n}^h$ and $\Lambda_q^h$, respectively, and such that $\Lambda_n\to \Lambda$ in the Hausdorff sense. In this case, there is a notion of convergence of cocycles in $\Hcal(\Lambda_n,\RBbb)$ and $\Hcal(\Lambda_n, S^1)$ (see Definition \ref{def:convergence-cocycles}). 

Lift $[\rho_n]\in R^o(\Sigma)$ to $[\widetilde \rho_n]\in M_{\B}(\Sigma)$,
and let
  \begin{equation} \label{eqn:nonabelian-hodge}
  [(A_n,\Psi_n)]=\NAH\circ \RH^{-1}([\widetilde\rho_n])
  \end{equation}
be the associated solutions to the self-duality equations.
Let 
$[(A_{\infty},\Psi_{\infty})]$ be any subsequential limiting configuration of the sequence
 $[(A_n,\Psi_n)]$.

To normalize bending cocycles, we adopt the convention that a pleated surface for a Fuchsian representation has a bending cocycle equal to  zero (\emph{cf}.\ \cite[Prop.\ 27]{Bonahon:shearing}). This will allow us to compare the bending cocycles of pleated surfaces with the same underlying hyperbolic metric and pleating lamination. 
In \textsection{\ref{subsec: transverse cocycles and Prym differentials}} we shall describe an explicit realization of elements of $\Hcal(\Lambda, \RBbb)$ and $\Hcal^o(\Lambda, S^1)$ in terms of periods of Prym differentials.
Combining this with the characterization of limiting configurations mentioned at the end of \textsection{\ref{intro_section:limiting_configurations}}, we show that
 there is a $2^{2g}$-sheeted covering homomorphism
\begin{equation} \label{eqn:limit-config-bending-cocycle}
\Hscr^{-1}_\infty(q)\lra \Hcal^o(\Lambda, S^1)\ .
\end{equation}

Given the above we can now make the following statement.

\begin{mainthm} \label{thm:main}
After passing to a subsequence, there is $N\geq 1$ such that the following hold:
\begin{compactenum}[(i)]
\item for all $n\geq N$, $[\rho_n]=B_{\Lambda_n}(\sigmabold_n, \betabold_n)$ for some shearing and bending cocycles $\sigmabold_n$ and $\betabold_n$;
\item the $\rho_n$-equivariant pleated surfaces $f_n: \widetilde S_n\to \HBbb^3$ from (i) are asymptotic to the $\rho_n$-equivariant harmonic maps $u_n: \widetilde X\to \HBbb^3$ in the sense of Definition \ref{def:asymptotic-surfaces};
\item as $n\to \infty$, the shearing cocycles satisfy:
$
(2t_n)^{-1}\sigmabold_n\lra \sigmabold_q^{can}
$;
\item as $n\to \infty$, the bending cocycles satisfy: $\betabold_n\to \betabold$,
where $\betabold\in \Hcal^o(\Lambda, S^1)$ is the image of the limiting configuration $(A_\infty,\Psi_\infty)$ under the map \eqref{eqn:limit-config-bending-cocycle}.
\end{compactenum}
\end{mainthm}

\subsubsection{Invariance with respect to the base Riemann surface}
A key assumption above and in the work of \cite{msww15} is that quadratic differentials have simple zeroes. For this reason,  limiting configurations give only a partial compactification of $M_{\Higgs}(X)$, and we are unable to make a uniform statement about the topological invariance of these limit points.  We therefore content ourselves here with proving the \emph{local} invariance with respect to the Riemann surface structure $X$.

To make this precise, let $q_0\in \SQD^\ast(X_0)$, and let $\Fcal_{q_0}^v$ denote the associated vertical measured foliation.  Let $\widetilde U\subset T(\Sigma)$ be the set of all equivalence classes of Riemann surfaces $X$ such that the Hubbard-Masur differential $q_X$ of the pair $(X, \Fcal_{q_0}^v)$ has simple zeroes. Then for a contractible open subset $ U_0\subset\widetilde U$ containing $X_0$, and $X\in U_0$, the Gauss-Manin connection gives an identification of the Prym varieties of $X$ and $X_0$, and this in turn induces an identification of bending cocycles for the horizontal laminations associated to $q_X$ and $q_0$ through \eqref{eqn:limit-config-bending-cocycle}.   As mentioned above, a complication, described in more detail in  \textsection{\ref{sec:maximalization}}, is that the  laminations $\Lambda_{q_0}^h$ and $\Lambda_{q_X}^h$ may not be maximal.

 \begin{cor} \label{cor:invariance-of-basepoint}
 The partial compactification by limiting configurations is locally independent of the base Riemann surface in the following sense. 
 Suppose $[\rho_n]$ is a divergent sequence in $R^o(\Sigma)$, and lift $[\rho_n]$ to $[\widetilde \rho_n]\in M_{\B}(\Sigma)$.
For $X\in U_0$, define $[(A_n,\Psi_n)_X]$ and $[(A_n,\Psi_n)_{X_0}]$ as in \eqref{eqn:nonabelian-hodge}. We assume $[(A_n,\Psi_n)_{X_0}]$ has a well defined limiting configuration, which we suppose lies in the fiber $(\Hscr_\infty^{X_0})^{-1}(q_0)$. 
Let $[\widehat\eta_{X_0}]$  be the associated Prym differential (as mentioned above; see Proposition \ref{prop:prym-limiting} for the precise statement), and $q_X \in \SQD^\ast(X)$ chosen as above to share the projective class of vertical measured foliations with $q_{X_0}$.

Then $[(A_n,\Psi_n)_X]$ has a well defined limiting configuration 
in the fiber $(\Hscr_\infty^{X})^{-1}(q_X)$. Moreover,
if  $[\widehat\eta_X]$ is the associated Prym differential for the bending cocycle of this limiting configuration, then 
 $[\widehat\eta_X]$ and $[\widehat\eta_{X_0}]$ are identified by parallel translation  by the Gauss-Manin connection. 
 \end{cor}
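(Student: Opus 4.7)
The plan is to apply the Main Theorem to the sequence $[\rho_n]$ with both base Riemann surfaces $X_0$ and $X$, and then to compare the resulting limit bending cocycles via the Gauss--Manin identification of nearby Prym varieties.

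First I would verify that the hypotheses of the Main Theorem are satisfied for the base $X$. For the $\rho_n$-equivariant harmonic maps $u_{n,X}:\widetilde X\to\HBbb^3$, the vertical measured foliations of the Hopf differentials, suitably normalized, converge projectively to the measured foliation dual to the limit $\RBbb$-tree of $[\rho_n]$ in the Morgan--Shalen compactification --- a purely topological invariant of the sequence, independent of the base surface. Since $q_X$ is by construction the Hubbard--Masur representative on $X$ of the same foliation $\Fcal^v_{q_0}$ represented on $X_0$ by $q_0$, the Hopf differential of $u_{n,X}$ takes the form $4t_{n,X}^2 q_{n,X}$ with $t_{n,X}\to\infty$ and $q_{n,X}\to q_X$. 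The MSWW compactification then yields, after a further subsequence, a limiting configuration $[(A_\infty,\Psi_\infty)_X]$ in the fiber $(\Hscr_\infty^X)^{-1}(q_X)$, and the Main Theorem produces $\rho_n$-equivariant pleated surfaces $f_{n,X}$ whose bending cocycles converge to some $\betabold_X\in\Hcal^o(\Lambda_X, S^1)$ corresponding via \eqref{eqn:limit-config-bending-cocycle} to a Prym differential $[\widehat\eta_X]$. Analogously the Main Theorem applied with base $X_0$ produces $\betabold_{X_0}$ corresponding to $[\widehat\eta_{X_0}]$.

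Next I would identify $[\widehat\eta_X]$ with the Gauss--Manin parallel transport of $[\widehat\eta_{X_0}]$. Because $U_0$ is contractible, $X\mapsto q_X$ is continuous (Hubbard--Masur), and $q_X$ has only simple zeros on $U_0$, the spectral curves $\widehat X_{q_X}$ form a smooth family over $U_0$ whose Prym varieties assemble into a local system with the Gauss--Manin flat connection. The bending cocycle of each $f_{n,X}$ is a topological datum of $\rho_n$, recording the rotational part of $\rho_n$'s holonomy across the leaves of the pleating lamination. The period-integral realization of \eqref{eqn:limit-config-bending-cocycle} from \textsection{\ref{subsec: transverse cocycles and Prym differentials}} identifies this bending cocycle with periods of the Prym differential on the spectral curve. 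As $X$ varies in $U_0$ the relevant cycles and periods vary flatly under Gauss--Manin, while the topological content of the bending cocycle does not change; together these force the family $X\mapsto[\widehat\eta_X]$ to be Gauss--Manin parallel, whence $[\widehat\eta_X]$ is the Gauss--Manin transport of $[\widehat\eta_{X_0}]$.

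The main obstacle lies in comparing the limit bending cocycles $\betabold_{X_0}$ and $\betabold_X$ across \emph{different} pleating laminations. Hubbard--Masur identifies only the vertical foliations $\Fcal^v_{q_{X_0}}$ and $\Fcal^v_{q_X}$, so the horizontal laminations $\Lambda_{X_0}$ and $\Lambda_X$ --- along which the respective pleated surfaces are pleated --- are a priori distinct, and the cocycles $\betabold_{X_0}$, $\betabold_X$ live in different cohomology groups. One must invoke Bonahon's change-of-lamination formalism for transverse cocycles in order to transfer these cocycles to a common object attached to $\rho_n$, and then verify that this identification is compatible with the Gauss--Manin identification of Prym periods on the two spectral curves. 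The continuity of Hubbard--Masur and of the MSWW compactification provides the analytic input; the explicit description of the period map in \textsection{\ref{subsec: transverse cocycles and Prym differentials}}, combined with Bonahon's theory, supplies the algebraic-topological side, and the required compatibility boils down to matching these two identifications in a small neighborhood of $X_0$.
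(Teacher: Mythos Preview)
Your overall strategy---apply the Main Theorem for both base surfaces and compare the resulting bending cocycles---matches the paper's, but there are two genuine gaps where your argument is either incomplete or circular.

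\textbf{First gap (identifying the fiber $q_X$).} You assert that the normalized Hopf differentials on $X$ converge to $q_X$ because their vertical foliations converge projectively to the measured foliation dual to the Morgan--Shalen limit tree $T$. But a priori $T$ is only a \emph{folding} of the dual tree $T_{q_X'}$, where $q_X'$ is the actual limit of normalized Hopf differentials on $X$; so one cannot directly conclude $q_X'=q_X$. The paper handles this by a tree argument: it takes the harmonic map $\widetilde X_0\to T_{q_X'}$ and composes with the folding $T_{q_X'}\to T$, applies the subharmonicity argument of Lemma~\ref{lem:v-map} and Proposition~\ref{prop:hopf} to show this composite has Hopf differential $q_0$, and then invokes Skora's result to conclude that the folding $T_{q_0}\to T_{q_X'}$ is an isometry, whence $q_X'=q_X$.

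\textbf{Second gap (comparing bending cocycles).} You correctly identify the obstacle: $\betabold_{X_0}$ and $\betabold_X$ live in different groups $\Hcal^o(\Lambda_{X_0},S^1)$ and $\Hcal^o(\Lambda_X,S^1)$. But your proposed resolution---``invoke Bonahon's change-of-lamination formalism''---is not a tool that exists off the shelf, and your claim that the bending cocycle is ``a topological datum of $\rho_n$'' is exactly what the corollary asserts, not an input. The paper's argument is concrete and geometric: choose a single complete train track $\tau$ carrying both $\Lambda_{q_X}^h$ and $\Lambda_{q_0}^h$ (possible on $U_0$), so that both cocycle groups are identified with $\Hcal^o(\tau,S^1)$ via Proposition~\ref{prop:cocycle-properties}(ii). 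The key step is then to show that the pleated surface $\Psf_n(X)$ can be obtained by perturbing the shearing cocycle of the \emph{same} companion surface $\widehat S_n(X_0)$, but now with respect to $\Lambda_n(X)$; Bonahon's uniqueness (\cite[Lemma~29]{Bonahon:shearing}) forces this to agree with the pleated surface constructed from $X$. The plaques of $\Psf_n(X)$ and $\Psf_n(X_0)$ now correspond to complementary regions of the common lifted track $\widetilde\tau_{n,\ast}$, and the harmonic map estimates (Proposition~\ref{prop: harmonic map C1 localizes near zeroes}) show their ideal-triangle realizations in $\HBbb^3$ are geometrically close. This gives a direct comparison of the geometric bendings $\Theta_{\widetilde f_{n,X}}$ and $\Theta_{\widetilde f_{n,X_0}}$, and via \eqref{eqn:geometric-estimate} the bending cocycles have the same limit in $\Hcal^o(\tau,S^1)$. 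The Main Theorem then identifies this common limit with the periods of both $\widehat\eta_X$ and $\widehat\eta_{X_0}$, which is precisely the Gauss--Manin identification.
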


Here is an interpretation of this result. For $q\in
\SQD^\ast(X)$ there is a natural identification of the fibers
$\Hscr^{-1}(tq)$ for all $t>0$. This gives a partial
compactification of $M_H(X)$, and hence via $\NAH$ and $\RH$,
also of $M_{\B}(\Sigma)$. A priori, this depends on the
choice of base Riemann surface structure $X$. Corollary 
\ref{cor:invariance-of-basepoint} states that (locally) this
partial compactification is independent of $X$.

\subsubsection{Relation to the Morgan-Shalen compactification} \label{sec:MS}
There is a mapping class group invariant compactification of $R(\Sigma)$ due to Morgan and Shalen \cite{MorganShalen:84}. The ideal points of this compactification are generalized length functions on $\pi_1$, which turn out to be the translation length functions for an isometric action of $\pi_1$ on an $\RBbb$-tree (see \textsection{\ref{sec:morgan-shalen}}).
A harmonic maps description of this compactification was partially described in \cite{DaDoWen}, which was an attempt to mirror the result in \cite{Wolf:thesis} for the Thurston compactification of $T(\Sigma)$. 
A consequence of \cite{WolfT} is that the $\RBbb$-trees appearing in the limit of a sequence of Fuchsian representations are obtained as the leaf space of the vertical foliation $\Fcal_q^v$ of the rescaled Hopf differential $q$ on $\widetilde X$. This is called the dual tree $T_{q}$ to $q$.  In the case of $R(\Sigma)$, sequences of representations that are not discrete embeddings may give rise to trees that are foldings of $T_{q}$. 
The harmonic maps point of view gives some information about this: A folding cannot occur if $q$ has simple zeroes and $\Fcal_q^v$ has no saddle connections.
It has been an open question how to describe this process completely in terms of harmonic maps.  Using Theorem \ref{thm:main}, we can obtain a criterion ruling out folding in the case of simple zeroes, as well as a partial refinement of the Morgan-Shalen compactification by classes of limiting configurations. 

\begin{cor} \label{cor:morgan-shalen}
Let $[\rho_n]\in R^o(\Sigma)$ be as in Theorem \ref{thm:main}. Suppose that the periods of the Prym differential associated to the limiting configuration $(A_\infty,\Psi_\infty)$
are bounded away from $\pi$
 on every cycle defined by a saddle connection of the vertical foliation of $q$. Then the $\RBbb$-tree defined by the Morgan-Shalen limit of any subsequence of $[\rho_n]$ is $\pi_1$-equivariantly isometric to the  dual tree $T_{q}$. 
\end{cor}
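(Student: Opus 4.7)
The approach is to identify the Morgan-Shalen limit of $[\rho_n]$ by matching length functions with the dual tree $T_q$, invoking the uniqueness (after Culler-Morgan) of a minimal semisimple $\pi_1$-action on an $\RBbb$-tree with a given translation length function. With $\ell_n(\gamma)$ denoting the translation length of $\rho_n(\gamma)$ on $\HBbb^3$ and $\ell^{T_q}(\gamma)=i(\gamma,\Fcal_q^v)$ the length function on $T_q$, it suffices to prove
\begin{equation*}
\lim_{n\to\infty}\frac{\ell_n(\gamma)}{2t_n}=i(\gamma,\Fcal_q^v)\quad\text{for every }\gamma\in\pi_1.
\end{equation*}

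By Theorem \ref{thm:main}, after passing to a subsequence one may write $[\rho_n]=B_{\Lambda_n}(\sigmabold_n,\betabold_n)$ with $(2t_n)^{-1}\sigmabold_n\to\sigmabold_q^{can}$ and $\betabold_n\to\betabold$. Since $\sigmabold_q^{can}$ is the transverse cocycle dual to the vertical foliation (Example \ref{ex:SW-cocycle}), one has $\sigmabold_q^{can}(\gamma)=i(\gamma,\Fcal_q^v)$, so already $\sigmabold_n(\gamma)/(2t_n)\to i(\gamma,\Fcal_q^v)$. Next, Bonahon's formula applied to the pleated surfaces $f_n$ from Theorem \ref{thm:main}(ii) expresses the complex translation length of $\rho_n(\gamma)$ as $\ell^{\CBbb}_n(\gamma)=\sigmabold_n(\gamma)+i\betabold_n(\gamma)\pmod{2\pi i}$, up to a correction governed by the geometry of the complementary plaques and by $\betabold_n$. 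Because $\betabold_n$ is uniformly bounded while $\sigmabold_n(\gamma)\to\infty$ whenever $i(\gamma,\Fcal_q^v)>0$, the element $\rho_n(\gamma)$ is eventually loxodromic with $\ell_n(\gamma)=|\Re\ell^{\CBbb}_n(\gamma)|$, and one expects $\ell_n(\gamma)\sim\sigmabold_n(\gamma)$ asymptotically.

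The main obstacle is to exclude cancellations in $\Re\ell^{\CBbb}_n(\gamma)$: if the bending accumulated across some subarc of $\gamma$ approaches $\pm\pi\pmod{2\pi}$, then the corresponding factor in the holonomy product resembles an order-two rotation and can swallow the shearing contribution of an adjacent segment, reducing $\Re\ell^{\CBbb}_n(\gamma)$ strictly below $\sigmabold_n(\gamma)$ in the limit and producing a folding of the limit tree. In the high-energy regime, the only transverse loops along which the shearing remains bounded are those built from vertical saddle connections of $q$, so the dangerous cancellations are parametrized by cycles of such saddle connections. Through the covering \eqref{eqn:limit-config-bending-cocycle} and the period-based description of $\Hcal^o(\Lambda,S^1)$ in \textsection{\ref{subsec: transverse cocycles and Prym differentials}}, $\betabold$ evaluated on such a saddle cycle equals the period of the limiting Prym differential along the lift of the saddle connection to the spectral curve $\widehat X_q$. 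The hypothesis that these periods are bounded away from $\pi$ therefore forces $\betabold$, and hence $\betabold_n$ for large $n$, to stay bounded away from $\pm\pi$ on every saddle cycle, ruling out the cancellations and yielding the required asymptotic $\ell_n(\gamma)\sim\sigmabold_n(\gamma)$. This matches the Morgan-Shalen length function with $\ell^{T_q}$ and, by Culler-Morgan, identifies the limit tree of any subsequence with $T_q$ equivariantly.
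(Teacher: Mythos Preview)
Your approach via translation length functions and Culler--Morgan uniqueness is different from the paper's, and as written it has a genuine gap.

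The paper does not compute $\ell_n(\gamma)$ for general $\gamma$. Instead it uses the structural input from harmonic map theory (Theorems \ref{thm:KS}, \ref{thm:KS-compactness}, \ref{thm:DDW}) that the Morgan--Shalen limit tree $T$ comes with an equivariant \emph{folding} $T_q\to T$: a morphism which is an isometry except where it identifies adjacent edges at a vertex. Since vertices of $T_q$ correspond to zeroes of $q$, the proof reduces to ruling out a fold at a vertical saddle connection between zeroes $p,p'$. The paper then argues directly with the harmonic maps $u_n$: by the Main Theorem the geometric bending $\Theta_{u_n}(\widetilde p_n,\widetilde p_n')$ between the tangent planes at the images of the two zeroes converges to the Prym period on the saddle cycle, hence stays bounded away from $\pi$ by hypothesis. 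A hyperbolic law-of-sines estimate on the triangle with vertices $u_n(z_n), u_n(z_n'), u_n(p_n)$ (for points $z_n,z_n'$ at rescaled distance $\delta$ along the two edges that would fold) then shows $t_n^{-1}d_{\HBbb^3}(u_n(z_n),u_n(z_n'))$ cannot tend to zero, contradicting the fold.

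Your argument instead invokes a ``complex length formula'' $\ell^{\CBbb}_n(\gamma)\approx\sigmabold_n(\gamma)+i\betabold_n(\gamma)$. First, no such formula appears in this form: the shear-bend cocycle is a function on pairs of plaques, and its relation to complex translation lengths of closed curves goes through a product of $\PSL(2,\CBbb)$ matrices indexed by the plaques $\gamma$ crosses, not an additive expression. Second, and more seriously, the assertion that ``the only transverse loops along which the shearing remains bounded are those built from vertical saddle connections'' is precisely what needs proof. For an arbitrary subarc of $\gamma$ the accumulated bending can approach $\pi$; what must be shown is that when the shearing across that subarc is of order $t_n$, such a near-$\pi$ bend costs only $O(1)$ in real translation length rather than a positive fraction of $t_n$. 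Making this precise requires exactly the hyperbolic estimate the paper carries out, and once one has it, the folding framework yields the result without any complex length formula or appeal to Culler--Morgan.
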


\subsubsection{Limits of complex projective structures}
The subset of $R^o(\Sigma)$ consisting of monodromies of complex projective structures on $\Sigma$ with underlying Riemann surface $X$ is naturally an affine space modeled on  $\QD(X)$. 
The corresponding local systems are called $\SL(2,\CBbb)$-{\bf opers}.
A base point is given by the Fuchsian projective structure $Q_X$.  More precisely, uniformization gives rise to an isomorphism $u: \widetilde X\to \HBbb^2$, equivariant with respect to $\pi_1$ and a Fuchsian representation of $\pi_1\to \Iso^+(\HBbb^2)$, and the Schwarzian derivative of $u$ gives a projective connection $Q_X$ on $X$.
Any other projective connection is of the form $Q(q)=Q_X-2q$, for $q\in \QD(X)$, and we obtain an  embedding $\Pscr: \QD(X)\hookrightarrow R^o(\Sigma)$ from the monodromy of the oper defined by  the following differential equation on $X$:
\begin{equation} \label{eqn:diff-eq}
y''+\frac{1}{2}Q(q)y=0\ ,
\end{equation}
where $y$ is a local section of $K_X^{-1/2}$. Thurston associates to every projective structure a pleated surface $f :\widetilde S(q)\to \HBbb^3$ that is equivariant with respect $\Pscr(q)$ and has pleating locus  along some measured
 lamination $\Lambda(q)$ (see \cite{KamTan})\footnote{Strictly speaking, $\Lambda(q)$ need not be maximal, but this possibility will not play any role in the result.}. 
 Choosing a lift of the Fuchsian representation to $M_{\B}(\Sigma)$ gives a lift $\widetilde\Pscr(q)\in M_{\B}(\Sigma)$.
Let 
\begin{equation} \label{eqn:op}
\Op(q)=\NAH\circ\RH^{-1}(\widetilde\Pscr(q))\in M_{\Higgs}(X)\ .
\end{equation}

The next application compares the limiting behavior of $\Op(q)$ for $q$ large and the geometry of Thurston's pleated surface.
By work of Dumas \cite{Dum2}, the measured  laminations $\Lambda(q)$ converge projectively to $\Lambda^h_q$. 
  This allows us to compare bending cocycles. 
 Combined with the Main Theorem, we prove the following.

\begin{cor} \label{cor:thurston's-pleated-surface}
Let $q\in \SQD^\ast(X)$. 
\begin{compactenum}[(i)]
\item $\displaystyle \lim_{t\to+\infty}t^{-2}\Hscr(\Op(t^2q))=q/4$;
\item under the correspondence between Prym differentials and points in the Prym variety, the spectral data $[\widehat\eta_t]$ of $\Op(t^2q)$
satisfies
$$
\lim_{t\to+\infty}[\widehat \eta_t-it\imag\lambda_{\SW}]= 0\ \text{in} \ 
\Prym(\widehat X_q,X)/J_2(X)\ ,
$$
where $\lambda_{\SW}$ is the Seiberg-Witten differential on  $\widehat X_q$;
\item $(${\sc Dumas}$)$
if $\Gamma_t=\sigmabold_t+i\betabold_t$ is the shear-bend cocycle of Thurston's pleated surface for the projective connection $Q(t^2q)$ in \eqref{eqn:diff-eq},
then
$$
\lim_{t\to+\infty}t^{-1} \Gamma_t = \Gamma_{\SW}\ ,
$$
where $\Gamma_{\SW}$ is the complex cocycle determined by the
 periods of $\lambda_{\SW}$ $($see Definition
\ref{def:gamma-SW}$)$.
\end{compactenum}
\end{cor}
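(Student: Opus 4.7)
The plan is to read the three parts of the corollary as consequences of the Main Theorem combined with Dumas's asymptotic analysis of projective structures, applied to the sequence of monodromies $[\rho_n] \deq [\Pscr(t_n^2 q)]$ for a divergent $t_n \to +\infty$.

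Part (i) follows from Dumas's asymptotic expansion for the harmonic map of the oper family: the Hopf differential of the $\Pscr(t^2 q)$-equivariant harmonic map $u_t \colon \widetilde X \to \HBbb^3$ is $t^2 q$ plus an $o(t^2)$ correction, either by a WKB analysis of the ODE \eqref{eqn:diff-eq} or by energy comparison with the limiting geometry. Since the Hopf differential is proportional to $\Hscr$ applied to the corresponding Higgs bundle (with a normalization factor of $4$ relating $\tr \Phi^2$ and $\Hopf(u)$), dividing by $t^2$ and passing to the limit yields (i). Part (iii) is the main result of Dumas on the Thurston pleated surface and is cited directly.

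For part (ii), I would combine the Main Theorem with (iii). Applying the Main Theorem to $[\rho_n]$ and passing to a subsequence yields a limiting configuration $(A_\infty,\Psi_\infty)$ associated, via $\Hscr_\infty$, to the differential $q$ (up to the normalization established in (i)), together with $\rho_n$-equivariant pleated surfaces $f_n$ whose bending cocycles $\betabold_n$ converge to the image of $(A_\infty,\Psi_\infty)$ under the covering homomorphism \eqref{eqn:limit-config-bending-cocycle}. By (iii) and Dumas's convergence $\Lambda(t^2 q) \to \Lambda^h_q$, the pleated surfaces $f_n$ agree asymptotically with Thurston's pleated surfaces for the opers, whose bending cocycles are asymptotic to $t_n \imag \Gamma_{\SW}$. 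Because the covering homomorphism \eqref{eqn:limit-config-bending-cocycle} is realized concretely by imaginary periods of Prym differentials, equating the two descriptions of $\betabold_n$ yields $[\widehat\eta_t - it\,\imag \lambda_{\SW}] \to 0$ in $\Prym(\widehat X_q, X)/J_2(X)$. Subsequential ambiguity is removed by the uniqueness of the Dumas limit, forcing convergence along the full one-parameter family.

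The main obstacle will be rigorously identifying the pleated surface furnished by the Main Theorem with Thurston's pleated surface for the oper. Their pleating laminations $\Lambda_n$ and $\Lambda(t_n^2 q)$ need not coincide for finite $n$, but both approach $\Lambda^h_q$ in a controlled fashion. The required rigidity assertion---that a $\rho_n$-equivariant pleated surface whose pleating lamination converges projectively to $\Lambda^h_q$ is asymptotically determined by its monodromy and pleating data---must be formulated carefully enough that the bending cocycles of Thurston's surfaces and those produced by the Main Theorem can be matched modulo the $2^{2g}$-fold ambiguity inherent in \eqref{eqn:limit-config-bending-cocycle}.
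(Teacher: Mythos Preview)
Your overall strategy reverses the logical dependence between (ii) and (iii) relative to the paper. The paper proves (ii) \emph{directly}, by exploiting the algebraic characterization of opers: the underlying holomorphic bundle of an oper is the unique nonsplit extension $0\to K_X^{1/2}\to \Vcal\to K_X^{-1/2}\to 0$. This forces the $\dbar$-operator $\dbar_A+\Phi^\ast$ to preserve a copy of $K_X^{1/2}$, and an explicit analysis on the spectral curve (Proposition \ref{prop:oper-limiting-bending}) then pins down the limiting Prym class as $it\,\imag\lambda_{\SW}$ modulo $J_2(X)$. Part (iii) is then \emph{deduced} from (ii) together with the Main Theorem and the proximity-of-plaques argument you correctly anticipate. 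Your inverted route---citing (iii) from Dumas and pulling back through the $2^{2g}$-covering \eqref{eqn:limit-config-bending-cocycle} to get (ii)---is logically legitimate given that (iii) is available independently, but it forfeits the cleaner, self-contained algebraic argument and makes (ii) hostage to the pleated-surface matching.

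There is, however, a genuine gap in your treatment of (i). Dumas's estimates (Theorems \ref{thm: Dumas Projective Structure Estimate} and \ref{thm: Dumas Grafting and hopf differentials agree}) compare $q$ with the Hubbard--Masur differential $\phi_{\HF}(\Lambda(q))$ and with the Hopf differential of the harmonic \emph{diffeomorphism} $X\to S(q)$ to the underlying hyperbolic surface of Thurston's pleated surface. Neither of these is the Hopf differential $q_t$ of the $\Pscr(t^2q)$-equivariant harmonic map $u_t:\widetilde X\to\HBbb^3$, which is what $\Hscr(\Op(t^2q))$ computes. Bridging this gap is the content of Proposition \ref{prop:hopf} (all of \textsection\ref{sec:limit_trees}): one rescales both the companion maps $v_n$ and the $\HBbb^3$-maps $u_n$, passes to Korevaar--Schoen limits in $\RBbb$-trees, and uses a subharmonicity/maximum-principle argument to show the two limiting Hopf differentials coincide. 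Your appeal to ``WKB analysis of the ODE'' or ``energy comparison'' does not supply this; without Proposition \ref{prop:hopf} (or an equivalent), the hypotheses of the Main Theorem---in particular that the normalized Hopf differentials $t_n^{-2}q_{t_n}$ converge to $q$---are not verified, and the rest of your argument does not get off the ground.
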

In Part (ii) of the Corollary, $J_2(X)$ denotes the $2$-torsion points of the Jacobian variety of $X$. Its appearance in the statement of part (ii) is due to the ambiguity in the choice of the square root of $K_X$.
Part (iii) also follows from the results of \cite{Dum2} and \cite{Dumas06} (see also \cite{Dumas06Err}).

We refine Corollary~\ref{cor:thurston's-pleated-surface}(i) with an error estimate $t^{-2}\Hscr(\Op(t^2q))- q/4 = O(t^{-1})$ in Proposition~\ref{prop: refined error}.

\subsection{Further comments}

\subsubsection{Discussion of the main results}

The Hitchin parametrization of (an open set in) $M_{\Higgs}(X)$ gives it the structure of a torus fibration over $\QD^\ast(X)$ which, via the Hopf differential of the harmonic diffeomorphism from $X$ to a hyperbolic surface $S$, can be identified with Teichm\"uller space $T(\Sigma)$. 
 Similarly, in the presence of a maximal lamination, Bonahon also parametrizes (an open set in) $R^o(\Sigma)$ as a torus fibration over $T(\Sigma)$. The nonabelian Hodge correspondence is a transcendental homeomorphism between these two pictures. 
The asymptotic decoupling of the Hitchin equations reflects the conclusion of this paper that these two torus fibrations are essentially asymptotically equivalent. 

Previous work on the asymptotics of equivariant harmonic maps focused on the behavior of divergent length functions corresponding to shearing cocycles, and this is well understood. The novelty of the present work is to extract information on the \emph{complex} length, which involves bending. Perhaps not surprisingly,  through the nonabelian Hodge correspondence, bending turns out in the gauge theory picture  to involve the unitary part of the flat connection.

An important subtlety happens when the quadratic differentials have saddle connections. These may occur in either the horizontal or vertical foliations, or both, and they play different roles. 
Saddle connections in the vertical foliation give rise to the possibility of folding in the image of harmonic maps. This will be discussed more explicitly in \textsection{\ref{sec:MS}} below.
More relevant are the saddle connections that appear in the horizontal foliation. 
In this case, geodesic straightening of the 
leaves does not produce a maximal lamination, and 
so a choice of \emph{maximalization}  is required 
(see \textsection{\ref{sec:laminations}). Unlike the 
complicated wall-crossing phenomena that emerge from this 
situation in other contexts, here in the asymptotic limit the 
choice of maximalization is a technical tool that 
amounts to a change of coordinates in the identification \eqref{eqn:limit-config-bending-cocycle}. 
 
In terms of the consequences of the Main Theorem, let us
 elaborate
 on the discussion in the introductory paragraphs.  The 
work of \cite{MSWW:Ends} analytically describes the 
frontier of $M_{\Higgs}(X_0)$ as a torus fibration over 
$\SQD^{\ast}(X_0)$  
for a chosen Riemann surface $X_0$; 
the elements of a fiber are equivalence classes of 
Prym differentials
defined in terms of $q \in \SQD^{\ast}(X_0)$.
It is not apparent how 
this parametrization of limiting configurations
in terms of the Riemann surface $X_0$
relates to the one defined in terms of a nearby Riemann surface $X$.
 To clarify the question, imagine a pair of sequences of
 representations $\{\rho^{+}_n, \rho^{-}_n\} \subset R^o(\Sigma)$ 
where the  associated solutions to the self-duality equations have  
 distinct limiting configurations in a particular torus fiber over
 $q_0 \in \SQD^{\ast}(X_0)$. If we then change the
 original choice of Riemann surface from $X_0$ to $X$ and consider the partial bordification of $M_{\Higgs}(X)$, will the
solutions of \eqref{eq:dsde} for $\{\rho^{+}_n, \rho^{-}_n\}$ 
on the new surface $X$
still 
have limiting configurations
  in a  torus fiber over a single quadratic differential
 $q \in \SQD^{\ast}(X)$, or will they accumulate over 
different fibers? Corollary~\ref{cor:invariance-of-basepoint} 
asserts that an entire limiting torus, defined in terms of 
either Riemann surface, 
projects to a single point in 
the Morgan-Shalen compactification, which is defined only in terms of 
the topologically defined projective vertical measured foliation of $q$. 
 Thus the tori of limiting configurations defined by 
 $X$ or $X_0$ are either disjoint or coincide.  Moreover, and this
property  is more subtle,
the elements of each torus fiber may be identified by periods
of a differential corresponding to the limiting bending
cocycle of a sequence of pleated surfaces.
These  periods are therefore  
also topologically defined. 
 Turning this discussion around, we thus see that 
Corollaries~\ref{cor:invariance-of-basepoint}
 and \ref{cor:morgan-shalen} provide a partial refinement of the
 Morgan-Shalen compactification, an apparently topological result, 
via a construction that is geometric-analytical.

Corollary~\ref{cor:thurston's-pleated-surface} provides an appealing
 picture of the space of complex projective structures. 
 The classical Schwarzian view of the space of complex projective
 structures is as an affine bundle over \tec space: the fibers over a
 point $X\in T(\Sigma)$ is the space $\QD(X)$ of quadratic differentials
 on $X$, and we can focus our attention on a ray $\{tq, t>0\} \subset 
\QD(X)$ of Schwarzian derivatives on $X$. A basic question is to describe the image of this ray in the fibration $M_{\Higgs}(X)$.

 In Corollary~\ref{cor:thurston's-pleated-surface} we see the end of 
such a ray, when seen as a family in $M_{\Higgs}(X)$, shadows 
a linear flow on one of the torus boundary fibers.  
Different rays over a common point $X$ in \tec space shadow flows
 over distinct tori, depending on the vertical foliation
 of the common (projective) Schwarzian derivative.
 In short, the rays over a single Riemann surface have ends
 accumulating in each fiber of the partial compactification of
 $M_{\Higgs}(X)$.  On the other hand, rays over distinct Riemann
 surfaces, whose Schwarzians have a common projective 
vertical measured foliation, shadow flows over a
 common torus fiber in the partial compactification of $M_{\Higgs}(X)$. 
 Here the direction vectors of the flows reflect the 
underlying Riemann surface of the family of complex
 projective structures, through the horizontal 
foliation of the Schwarzians.

Finally, in the context of this last corollary, we provide a 
small bit of intuition for these claims,
 effectively due to Dumas in this setting. A family of
 complex projective structures over $X$ with proportional
 Schwarzians $tq, t\gg0$ may be seen as the \enquote{graftings} of
 a family of surfaces $X = \gr_{\lambda_t}(X_t)$.
\footnote{We provide a quick informal introduction to grafting.  A complex projective structure will develop as a domain over complex projective space $\CP^1$. We can regard $\CP^1$ as $\partial\H^3$. Then given a pleated surface $(S, f, \Lambda, \rho)$ in $\H^3$, we can imagine exponentiating in the normal direction from the image $f(S)$ of that pleated surface. The limiting image of a totally geodesic plaque under this flow will inject onto a domain in $\CP^1$ bounded by circular arcs. The image of the bending lamination $\Lambda$ is more complicated, reflecting the complicated nature of a geodesic lamination, but can be imagined as (limits of) thin crescents that connect the images of plaques: for example, if $\Lambda$ were only a single simple closed curve $\gamma$ with bending measure $\theta$, then each lift of $\gamma$ would force the inclusion of a \enquote{lune} of width $\theta$. Thurston observed that each complex projective structure admitted a unique description as a hyperbolic structure $S$ as above, together with the insertion of flat lunes corresponding to the bending lamination. We might call $S$ the \enquote{pruning} of the complex projective structure.}
 Here the lamination $\Lambda_t$ is the bending lamination for a pleated
 surface whose underlying hyperbolic surface is $X_t$, and 
obviously the bending measure grows with $t$. Of course, 
the result of the grafting is a fixed Riemann surface $X$, 
and so the pruned surface $X_t$ reflects the increased bending by 
growing thin and long in the direction of the bending lamination.  But 
the representation $[\rho_t]\in R^o(\Sigma)$  of the pleated surface $X_t$
 is what we focus on in this paper. We see then that 
translation lengths for this representation must be growing long,
 roughly parallel the stretched 
 lamination $\Lambda_t$.  Passing from these synthetic constructions
 to geometric analysis by considering the shape of the 
$\rho_t$-equivariant energy minimizing map  $u_t: \tilde{X}\to \H^3$,
 the Hopf differential $q_t \in \QD^{\ast}(X)$ of $u_t$
 will have  horizontal foliation in the direction of the maximal 
stretch of the map, which in this case will be forced to be
 along the very stretched lamination $\Lambda_t$.
 Indeed, $u_t$ will crowd much of its image near the maximally
 stretched lamination $\Lambda_t$.  The regions complementary to those
 mapping near $\Lambda_t$ will be stretched to efficiently
connect 
components of $\Lambda_t$,  and they will thus lie near portions of 
totally geodesic hyperbolic planes in $\H^3$. Taken together, 
these heuristics imply that
 the image of $u_t$
 will itself approximate the pleated surface
$f:\widetilde S(q_t)\to\HBbb^3$.  
 
 Now, on the one hand, the analysis of \cite{MSWW:Ends} tells us,
 as a starting point, that $\rho_t$ will
have a limiting configuration in the fiber 
 defined by the limit of  normalized  Hopf differentials. 
The Main Theorem 
works by recognizing the gauge theoretic endomorphism
 that represents such a limit point as an infinitesimal rotation in 
$\H^3$ 
about the geodesic tangent to the image of a horizontal leaf.
 
Ignoring for now the issue of how the harmonic map is bending near
 $\Lambda_t$, we note that near the preimages of $\Lambda_t$, 
the harmonic map is well-approximated by the very simple harmonic
 map $\C\to \H^3$ which takes horizontal lines in the plane to 
a geodesic with a parametrization proportional to arc length.  As
 that simple model map has Hopf differential $dz^2$, we see that we 
can expect the vertical foliation of the $\rho_t$ harmonic map to 
predict the length spectrum of the representation of $\rho_t$,
 at least up to its leading terms. As the length spectrum of a
 representation is independent of the choice $X$ of the background
 Riemann surface, we find evidence for
 Corollaries~\ref{cor:invariance-of-basepoint} and 
\ref{cor:morgan-shalen}. Finally, Dumas \cite{Dumas06} and 
\cite{Dum2} makes the deep observation that $\Lambda_t$ is 
well-approximated by the horizontal lamination of the Schwarzian, 
and thus the underlying geometric lamination for the bending 
lamination $\Lambda_t$ becomes increasingly fixed as $t$ increases,
 even as the amount of bending grows linearly with the measure of the 
horizontal foliation, \emph{i.e.} linearly with $t$.  That
 linear change in the complex translation lengths of the dominant 
elements of the holonomy for $\rho_t$, coupled with the just mentioned
 relationship of the gauge theory to hyperbolic geometry, 
suggests the linear flow in Corollary~\ref{cor:thurston's-pleated-surface}.

\subsubsection{Relation with other work}
The literature on Hitchin systems, solutions to differential equations on Riemann surfaces, and their asymptotic properties is vast, and the Main Theorem in this paper may be viewed in that context. 

Asymptotic decoupling of the self-duality equations has been studied in \cite{Taubes,MSWW:Ends,MSWW:HitchinMetric,Mochizuki:16,Fredrickson:18b}. 
 This idea is also central to the work of Gaiotto-Moore-Neitzke \cite{GMN:13} and the conjectural structure of the hyperk\"ahler metric on $M_{\Higgs}(X)$ (see \cite{DumasNeitzke:19,Fredrickson:18a}). The idea of
``nonabelianization'' also arises from this work and is related to Fock-Goncharov cluster coordinates and the Bonahon parametrization. This
 has been investigated in Hollands-Neitzke \cite{HollandsNeitzke:16,HollandsNeitzke:19} and Fenyes \cite{Fenyes:15}.  

Corollary \ref{cor:thurston's-pleated-surface} is a kind of zero-th order analog of the much more extensive results from the  \emph{exact WKB analysis} of Schr\"odinger equations \cite{KawaiTakei:05} (where $t=1/\hbar$). In particular, the
period map
\begin{equation} \label{eqn:central_charge}
Z(\gamma) = \int_\gamma\lambda_{\SW}\ ,
\end{equation}
for $\gamma$ representing an odd homology class on $\widehat X_q$, 
 plays a central role in  \cite{GMN:13}.  For some recent work, see
\cite{IwakiNakanishi:14,Allegretti:19a,Allegretti:19b}.

\subsubsection{Outline of the paper}
This paper is organized as follows.
 In \textsection{\ref{sec:background}} we have provided a rather large amount of background material in order to make the rest of the paper  accessible to a wide readership.
The main topics are the moduli space of solutions to the self-duality equations, limiting configurations, and their relation to  spectral data and Prym differentials.  We also provide details on equivariant harmonic maps and their high energy properties. The section concludes with background on laminations, measured foliations, train tracks, and $\RBbb$-trees, which will be useful in the sequel.

 These preliminaries are followed in \textsection{\ref{sec:bending}} by a discussion of ``bending''. We first introduce a naive geometric notion of how to measure the bending of an immersive map to hyperbolic space in terms of dihedral angles of intersecting tangent planes.
In the context of the equivariant maps that appear in the nonabelian Hodge correspondence, we compare this notion to an alternative definition of bending coming from parallel translation in bundles with connections. When Higgs pairs approach a limiting configuration,  the gauge theoretic bending  is shown to be determined by the periods of Prym differentials. 

In \textsection{\ref{sec:pleated-surfaces}} we review the notion of a transverse cocycle to a geodesic lamination, as well as Bonahon's parametrization of the character variety $R(\Sigma)$. In Lemma \ref{lem:finite-approximation-bending} we show that under certain assumptions on the pleating locus the bending cocycle can be related to the geometric notion of bending introduced in  \textsection{\ref{sec:bending}}. We use this property to derive the bending cocycle of a pleated surface from the gauge theoretic notion of bending, under the assumption that the pleated surface and the image of the equivariant harmonic map are appropriately close.

The existence of a pleated surface with the properties just mentioned is proven in \textsection{\ref{sec:realization}}.  The required asymptotic results for high energy harmonic maps are largely due to Minsky. The key idea is to compare an arrangement of geodesics in $\HBbb^3$ obtained from the image of horizontal leaves of the foliation by an equivariant harmonic map to the geodesic lamination on the hyperbolic surface corresponding to a harmonic diffeomorphism with the same Hopf differential. We show that  by perturbing this hyperbolic structure slightly, the geodesic configuration in $\HBbb^3$ extends to a pleated surface. 

Finally, in \textsection{\ref{sec:proofs}} we give the proofs of the Main Theorem and its corollaries.

\subsubsection{Acknowledgments}
The authors wish to express their gratitude to a great many people  for their interest in this work and for their helpful comments. Among them are Dylan Allegretti, Francis Bonahon, Marc Burger, Steve Bradlow, Aaron Fenyes, Vladimir Fock,  Fran\c{c}ois Labourie, Rafe Mazzeo, Yair Minsky, and Andy Neitzke.

{\bf A.O.} was supported by the European Research Council under ERC-Consolidator Grant 614733 ``Deformation Spaces of Geometric Structures'', and by the Priority Program 2026 ``Geometry at Infinity'' of the Deutsche Forschungsgemeinschaft (DFG, German Research Foundation) under DFG grant 340014991.  He further acknowledges funding by the Deutsche Forschungsgemeinschaft (DFG, German Research Foundation) -- 281869850 (RTG 2229). He was also supported by the Deutsche Forschungsgemeinschaft (DFG, German Research Foundation) under Germany's Excellence Strategy EXC 2181/1 - 390900948 (the Heidelberg STRUCTURES Excellence Cluster).
{\bf J.S.} is supported by   a Heisenberg grant  of the German Research Foundation,   the DFG Priority Program 2026 ``Geometry at Infinity'',   the  DFG RTG 2229 ``Asymptotic invariants and limits of groups and spaces'' and by   DFG under  Germany's Excellence Strategy EXC-2181/1 -- 390900948 (the Heidelberg  STRUCTURES Cluster of Excellence).
{\bf R.W.} is supported in part by NSF grants DMS-1564373 and DMS-1906403. 
{\bf M.W.} acknowledges support from NSF grants DMS-1564374 and DMS-2005551 and the Simons Foundation.
{\bf M.W.} and {\bf R.W.} also received funding from NSF grant  DMS-1440140 administered by the
Mathematical Sciences Research Institute while the authors were in residence at MSRI
during the ``Holomorphic Differentials in Mathematics and Physics'' program Aug 12-Dec 13, 2019.
Finally, the authors  acknowledge support from NSF grants DMS-1107452, -1107263, -1107367 ``RNMS: GEometric structures And Representation varieties'' (the GEAR Network).

\section{Background material} \label{sec:background}

\subsection{Higgs bundles}\label{subsect:Higgsbundles}

\subsubsection{The self-duality equations}\label{subsubsect:selfdualityequation}

We introduce the setup for Hitchin's self-duality equations for a topologically trivial rank $2$ complex vector bundle $E$   in a form that will be useful later on. 

Fix a choice of  spin structure $K_{X}^{1/2}$ as in \textsection{\ref{sec:intro-moduli}} and consider
\begin{equation}\label{eq:decompvbE}
	E= K_{X}^{-1/2}\oplus K_{X}^{1/2}\ .
\end{equation}
A choice of conformal metric $ds^2=m(z)|dz|^2$ on $X$ induces a hermitian metric $h=(m^{1/2}, m^{-1/2})$ on $E$ which will be fixed throughout. Notice that the determinant line bundle
 $\det E$ with its induced metric from $h$ is canonically trivial.
  Let $\g_E$ be the vector bundle of traceless skew-hermitian endomorphisms of $E$, and $\gfrak_E^\CBbb$ its complexification consisting of all traceless endomorphisms. 
   We will also often use $\sqrt{-1}\gfrak_E$, the bundle of traceless hermitian endomorphisms. The hermitian metric $h$ on $E$ induces a hermitian metric on the associated endomorphism bundle $\gfrak_E^\CBbb$ which is given by
   \begin{equation*}
   	\langle A,B\rangle =\frac{1}{2} \tr(AB^{\ast_h})
   \end{equation*}
   for $A,B\in\Gamma(\gfrak_E^\CBbb)$. On the subbundle of traceless hermitian endomorphisms this metric reads  $\langle A,B\rangle = (1/2)\tr(AB)$.

 Denote by $\mathcal A(E,h)$ the space of smooth connections on $E$ that are unitary with respect to $h$ and which induce the trivial connection on $\det E$. This is an affine space modeled on $\Omega^1(X,\gfrak_E)$. A $\dbar$-operator $\dbar_E$ on $E$ defines a holomorphic bundle $\Ecal$ which we will often denote by $\Ecal=(E,\dbar_E)$. There is a connection $A=(\dbar_E,h)\in\Acal(E,h)$ called the {\bf Chern connection} that is uniquely determined by the requirement
 $$\dbar_A:=(d_A)^{0,1}=\dbar_E\ .$$
  In this way, $\Acal(E,h)$ is identified with the space of $\dbar$-operators on $E$. 
Similarly,
there is a real linear isomorphism
\begin{equation} \label{eqn:one_form_iso}
\Omega^1(X, \sqrt{-1}\gfrak_E)\isorightarrow \Omega^{1,0}(X, \gfrak_E^\CBbb) : \Psi\mapsto \Phi=\Psi^{1,0}
\end{equation}
with inverse: 
\begin{equation} \label{eqn:psi_phi}
\Psi=\Phi+\Phi^{\ast_h}\ .
\end{equation}
 We shall often use this convention, $\Phi\leftrightarrow \Psi$, for the isomorphism \eqref{eqn:one_form_iso}.

A {\bf Higgs bundle} is a pair $(\dbar_E,\Phi)$, where $\dbar_E\Phi=0$. The {\bf Higgs field} $\Phi$
may either be regarded as a holomorphic $1$-form valued in the sheaf $\End_0\Ecal$ of traceless endomorphisms of $\Ecal$, or as a holomorphic section of $\Ecal nd_0\Ecal\otimes K_X$. The context throughout will make clear which interpretation applies.

  For a pair $(A,\Psi)\in \Acal(E,h)\times \Omega^1(X, \sqrt{-1}\gfrak_E)$,  the system of PDEs
\begin{equation}\label{eq:sde}
\begin{cases}
F_A+[\Psi\wedge\Psi]	=0 \\
d_A\Psi=0 \\
d_A(\ast\Psi)=0
\end{cases}
\end{equation}
is called the {\bf self-duality equations}.
 A solution $(A,\Psi)$  
 gives a Higgs bundle $(\dbar_A,\Phi)$. The holomorphicity of $\Phi$ follows from the last two equations in \eqref{eq:sde}. Conversely, Hitchin shows that for a polystable Higgs bundle $(\dbar_E,\Phi)$ there is a complex gauge transformation $g$ such that
 the Chern connection and $\Psi$ associated to $g\cdot(\dbar_E,\Phi)$ give a solution to \eqref{eq:sde}. Polystability will not play a role in this paper, so we omit its definition. 
We frequently refer to a solution $(A,\Psi)$ of  \eqref{eq:sde} as a {\bf Higgs pair}.

 Let $M_{\Higgs}(X)$ denote the moduli space of unitary gauge equivalence classes of solutions of  \eqref{eq:sde}. Then $M_{\Higgs}(X)$ is a quasiprojective variety of   dimension  $6g-6$. 
  By a slight abuse of notation, 
when $(\dbar_E,\Phi)$ is polystable and $(A,\Psi)$ the associated solution to \eqref{eq:sde} as in the previous paragraph, 
  we shall write $[(\dbar_E,\Phi)]$ to mean the  gauge equivalence class $[(A,\Psi)]\in M_{\Higgs}(X)$.

A very important fact used in this paper is the following: If $(A,\Psi)$ is a solution to \eqref{eq:sde} then the $\SL(2,\CBbb)$-connection
\begin{equation} \label{eqn:nabla}
\nabla:= d_A+\Psi
\end{equation}
is flat. This follows from the first two equations of \eqref{eq:sde}.

\subsubsection{Quadratic differentials}

Recall the notation $\QD(X)$, $\QD^\ast(X)$, $\SQD(X)$, and $\SQD^\ast(X)$ from \textsection{\ref{sec:notation}}.
We define a norm on $\QD(X)$ by
$$
\Vert q\Vert_1 := \int_X|q(z)|\, \frac{i}{2}dz\wedge d\bar z\ ,
$$
where $q=q(z)dz^2$ in local conformal coordinates,
and we let $Z(q)$ denote the set of zeroes of $q$.

 The map 
\begin{equation} \label{eqn:Hitchin-map}
\Hscr
\colon M_{\Higgs}(X)\to\QD(X),\qquad [(A,\Psi)]\mapsto\frac{1}{2}\tr(\Psi\otimes\Psi)^{2,0}=-\det\Phi\ ,
\end{equation}
 is holomorphic, proper and surjective. Its restriction to $M_{\Higgs}^{\ast}(X):=\Hscr^{-1}(\QD^{\ast}(X))$ is a  fibration with fibers consisting of half-dimensional complex  tori (\emph{cf}.\ \textsection{\ref{sec:spectralcurve}} below).

   As shown in \cite{Hitchin:87}, the Hitchin fibration $\Hscr$ has a global section described as follows. The bundle $E$ has a distinguished holomorphic structure $\dbar_0$ coming from the splitting \eqref{eq:decompvbE} and the holomorphic structure on $K_X^{\pm 1/2}$. Let $A_0$ be the Chern connection associated to  $(\dbar_0,h)$.
Then the section is given by:
\begin{equation} \label{eqn:hitchin-section}
\Sscr_H\colon \QD(X)	\to  M_{\Higgs}(X)\ ,\qquad q\mapsto \left[\left(\dbar_{A_0}, \Phi=
\begin{pmatrix}
 0&1\\q&0	
 \end{pmatrix}\right)\right]\ 
\end{equation}
(recall the convention concerning the notation $[(\dbar_E,\Phi)]$ from the previous section).

\subsubsection{The  partial  compactification by limiting configurations} \label{subsec:mswwComp}

Properness of the Hitchin fibration implies that every sequence $(A_n,\Psi_n)$, $n\in\N$, of solutions of eq.\ \eqref{eq:sde} such that the sequence $q_n=-\det\Phi_n$ (recall, $\Phi=\Psi^{1,0}$) is bounded has a subsequence that converges smoothly modulo the action of unitary gauge transformations. Conversely, a sequence $(A_n,\Psi_n)$  diverges  if the sequence $q_n$ of   holomorphic quadratic differentials diverges: \emph{i.e.}\  $\|q_n\|_1\to\infty$ as $n\to\infty$. Notice that the latter is equivalent to  $\|\Psi_n\|_2\to\infty$ as $n\to\infty$ (here the subscript refers to the $L^2$-norm).

\medskip
By the results in \cite{MSWW:Ends} (see also \cite{Taubes}), the open and dense region $M_{\Higgs}^{\ast}(X)$ of $M_{\Higgs}(X)$ admits a bordification
 by the set $\partial M_{\Higgs}^{\ast}(X)$ of so-called limiting configurations, as we explain next. To this end, we introduce the {\bf decoupled self-duality equations}
\begin{equation}\label{eq:dsde}
\begin{cases}
F_A=0\ ,\
[\Psi\wedge\Psi]=0 \\
d_A\Psi=0\\
d_A(\ast\Psi)=0	
\end{cases}
\end{equation}
for a Higgs field $\Psi$ and a unitary connection $A$.

\begin{definition}\label{def:limitingconfiguration}\upshape
Let $q\in\QD^{\ast}(X)$. A pair $(A,\Psi)$ is called a \textbf{limiting configuration for $q$} if $\det\Phi=-q$ and $(A,\Psi)$ is a smooth solution of \eqref{eq:dsde} on the punctured surface $X_q^{\times}:=X\setminus Z(q)$.
\end{definition}

This definition only applies to  solutions for differentials $q \in \QD^{\ast}(X)$.
 We refer to \cite{Mochizuki:16} for the definition and description of limiting configurations for points $q \in \QD(X)\setminus \QD^{\ast}(X)$.

\begin{exlabel} \label{ex:fuchsian}\upshape
 Recall the connection $A_0$ from \eqref{eqn:hitchin-section}. 
For  $q\in\QD^{\ast}(X)$, we define
\begin{align}
\begin{split}\label{eq:stdlimconn}
	A_{\infty}^0(q)&=A_0 + \frac{1}{2} \left(\Im\, \bar \partial \log \Vert q\Vert\right)  \, 
\begin{pmatrix}
 -i & 0 \\ 0 & i
 \end{pmatrix}\ , \\
  \Phi_{\infty}(q)&= \begin{pmatrix}
 0 & \Vert q\Vert^{1/2}  \\ \Vert q\Vert^{-1/2}q & 0	
 \end{pmatrix}\ ,\\
  \Psi_\infty(q)&=\Phi_\infty(q)+\Phi^{\ast_h}_\infty(q)\ ,
\end{split}
  \end{align}
  where $\Vert q\Vert$ means the (pointwise) norm with respect to the conformal metric $ds^2$. 
The pair $(A_{\infty}^0(q),\Psi_{\infty}(q))$ is a limiting configuration for $q$. It will later become important as the limiting configuration corresponding to a pleated surface with zero bending cocycle. We therefore call it the {\bf Fuchsian limiting configuration} associated to $q$.
\end{exlabel}
We shall often write $(A_\infty^0, \Psi_\infty)$, where the quadratic differential is understood. More generally,
  any other limiting configuration $(A_{\infty}, \Psi_{\infty})$ representing a point   in the fiber   $\Hscr^{-1}(q)$ 
is of the form
\begin{equation}\label{eq:genlimitingconf}
A_\infty=A^0_\infty+\eta\ ,\quad
  [\eta \wedge \Psi_{\infty}]=0\ , \quad \mbox{and} \quad d_{A^0_{\infty}}\eta = 0\ ,
\end{equation}
where $\eta\in \Omega^1(X_q^{\times},\mathfrak g_E)$.
The   group $\mathcal G=\mathcal G(E,h)$ of unitary gauge transformations of $E$ acts on the space of solutions $(A_{\infty}, \Psi_{\infty})$ to eq.\ \eqref{eq:dsde}, and we define the moduli space
\begin{equation*}
	\partial M^\ast_{\Higgs}(X) = \{ \mbox{all solutions to \eqref{eq:dsde} for } q\in \QD^\ast(X)  \} / \mathcal G\times \RBbb^+.
\end{equation*} 

We make two remarks concerning this definition. First,
there is some ambiguity  in that we can either divide out by gauge transformations that are smooth across the zeroes of $q$ or
by those that are singular at these points.  The latter group is more complicated to define because it depends on $q$, and most elements 
in its gauge orbit are singular.   We therefore 
take a view consonant with the original definition of limiting configurations in \cite{MSWW:Ends}, where each $(A_{\infty}, \Psi_{\infty})$ is assumed to 
take a particular normal form in disks  $\D_p$ around each zero of $q$. 
This normal form is given on each  by  $\D_p$ by the Fuchsian limiting configuration $(A_{\infty}^0(q),\Phi_{\infty}(q))$ and identically vanishing $\eta\equiv0$. With this restriction, we divide out by unitary gauge transformations that are the identity near each $\D_p$ (\emph{cf}.\ \cite{MSWW:HitchinMetric}).  
\medskip\\
Second, since there is an equivalence up to
 positive real multiples of $\Psi$, it is natural to define the projection:
\begin{equation} \label{eqn:H-infty}
\Hscr_\infty : \partial M^\ast_{\Higgs}(X)\lra \SQD^\ast(X)
\end{equation}
defined by mapping $(A_\infty, \Psi_\infty)\mapsto q/\Vert q\Vert_1$, where $q=-\det\Phi_\infty$.
\medskip\\
We now describe the structure of the set  $\partial M^\ast_{\Higgs}(X)$ of limiting configurations more closely, summarizing the results in \cite[\textsection 4.4]{MSWW:Ends}. For  $(A_{\infty}, \Psi_{\infty})\in  \Hscr_\infty^{-1}(q)$, define the  real line bundle $L_{q}\to X_q^{\times}$  by
\begin{equation}\label{eq:lbundlephiinfty}
L_{q}=\{\eta\in  \g_E\mid [\Phi_{\infty}\wedge\eta]=0\}\ .	
\end{equation}
Let $L_{q}^{\C}=L_{q}\otimes_\RBbb\CBbb$ denote the complexification. Then  $L_{q}$ and $L_{q}^{\C}$ are  $d_{A_{\infty}}$-invariant line subbundles of $\g_E$ and $ \g_E^{\C}$, real and complex respectively. Notice that the second component $\Phi_{\infty}$ of a limiting configuration is completely determined modulo unitary gauge by the holomorphic quadratic differential $q$. Hence, the flat bundle $L_{q}$ also only depends on $q$, which justifies the notation. The ungauged vertical deformation space 
  at $(A_{\infty}, \Phi_{\infty})$ is identified with
  \begin{equation*}
  	Z^1(X_q^{\times};L_{q}):=\{\eta\in\Omega^1(X_q^{\times},L_{q})\mid d_{A^0_{\infty}}\eta=0\}.
  \end{equation*}
Next consider the subgroup $\Stab_{\Phi_{\infty}}$ of unitary gauge transformations which stabilize $\Phi_{\infty}$. If $g \in \Stab_{\Phi_{\infty}}$ lifts to a section of $L_{q}$, \emph{i.e.}, $g = \exp(\gamma)$, $\gamma\in\Omega^0(X_q^{\times},L_{q})$, 
then $g$ acts on $A_\infty=A_\infty^0+\eta$,  $\eta\in\Omega^1(X_q^{\times},L_{q})$ by
$$
g(A_{\infty})=g^{-1}\eta g+g^{-1}(d_{A_{\infty}}g)=\eta+d_{A^0_{\infty}}\gamma
$$
(recall that $L_{q}$ is an $A_{\infty}$-parallel line subbundle of $\g_E$, so $g^{-1}\eta g=\eta$ and $d_{A_{\infty}}\exp(\gamma)=\exp(\gamma)d_{A_{\infty}}\gamma$). Hence the infinitesimal vertical deformation space is 
$$
H^1(X_q^{\times};L_{q})=Z^1(X_q^{\times};L_{q})/B^1(X_q^{\times};L_{q})\ ,
$$
where 
\begin{equation*}
B^1(X_q^{\times};L_{q}):=\{d_{A_{\infty}}\gamma\mid\gamma\in\Omega^0(X_q^{\times}, L_{q})\}\ .
\end{equation*}
If all zeroes of $q$ are simple, then 
\begin{equation*}
\dim_{\R} H^1(X_q^{\times}; L_{q}) = 6g-6\ ,
\end{equation*}
where $g$ is the genus of $\Sigma$. To obtain the moduli space, we must also divide the infinitesimal deformation space by the residual action of the component group 
$\pi_0(\Stab_{\Phi_{\infty}})$.  Under the correspondence above, this consists of an integral lattice $H^1_\ZBbb(X_q^\times, L_{q})$ under the exponential map. 

\begin{prop}\label{bound.stratum}
The moduli space of limiting configurations with a fixed  $q\in\QD^{\ast}(X)$  is
$$
\Hscr_\infty^{-1}(q)=H^1(X_q^\times, L_{q})/H^1_\ZBbb(X_q^\times, L_{q})\ .
$$
This is a torus of real dimension $6g-6$.
\end{prop}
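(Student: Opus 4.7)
The plan is to follow the parametrization sketched in the preceding discussion, then clean up two issues: the residual action of the component group of $\Stab_{\Phi_\infty}$, and the cohomological dimension count.

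\textbf{Parametrization of the fiber.} I first fix the normal form near $Z(q)$: every representative $(A_\infty,\Psi_\infty)\in\Hscr_\infty^{-1}(q)$ is required to agree with the Fuchsian limiting configuration $(A_\infty^0(q),\Psi_\infty(q))$ from Example \ref{ex:fuchsian} inside each disk $\D_p$ around $p\in Z(q)$. On $X_q^\times$, the condition $[\Phi_\infty\wedge\eta]=0$ for $\eta=A_\infty-A_\infty^0$ is equivalent to $\eta\in\Omega^1(X_q^\times,L_q)$, and the decoupled equation $F_{A_\infty}=F_{A_\infty^0}+d_{A_\infty^0}\eta+\tfrac12[\eta\wedge\eta]=0$ simplifies (since $[\eta\wedge\eta]=0$ for forms valued in a line bundle) to $d_{A_\infty^0}\eta=0$. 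The remaining equations in \eqref{eq:dsde} are automatic once $\Phi_\infty$ is in the specified form and $\eta\in L_q$. Thus the space of normalized limiting configurations is an affine space modeled on $Z^1(X_q^\times;L_q)$, with the normalization forcing $\eta$ to vanish near each $p\in Z(q)$.

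\textbf{Dividing by the identity component.} The residual unitary gauge group preserving the normal form and fixing $\Phi_\infty$ consists of transformations $g=\exp(\gamma)$ with $\gamma\in \Omega^0(X_q^\times,L_q)$ vanishing near $Z(q)$. The computation recorded just before the proposition shows that such $g$ send $\eta\mapsto\eta+d_{A_\infty^0}\gamma$, \emph{i.e.}\ act by coboundaries $B^1(X_q^\times;L_q)$. Quotienting yields $H^1(X_q^\times;L_q)$.

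\textbf{The component group as a lattice.} The stabilizer $\Stab_{\Phi_\infty}$ inside the unitary gauge group of $\gfrak_E|_{X_q^\times}$ is pointwise a circle (rotations by $\exp$ along the $A_\infty$-parallel line $L_q\subset \gfrak_E$), so globally it is the group of sections of the associated circle bundle. Its identity component is parametrized by sections $\gamma$ as above via $g=\exp(\gamma)$. A global section $g$ lies in $\Stab_{\Phi_\infty}$ but need not lift globally to $L_q$; its obstruction to lifting is a class in $H^1(X_q^\times;L_q)$ with integer periods (up to the normalization $2\pi$ built into the exponential map), which is by definition the integral lattice $H^1_\ZBbb(X_q^\times;L_q)$. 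Crucially, the requirement that the gauge transformation equals the identity in each $\D_p$ means that the allowed components are indexed by free homotopy classes of maps $X_q^\times\to S^1$ trivial near the punctures, exactly matching this integral lattice. Quotienting by this action gives the stated quotient $H^1(X_q^\times;L_q)/H^1_\ZBbb(X_q^\times;L_q)$.

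\textbf{Dimension and torus structure.} Since $q$ has $4g-4$ simple zeros, $\chi(X_q^\times)=-6g+6$, and the real flat line bundle $L_q$ has prescribed nontrivial monodromy $-1$ around each puncture (coming from the branching of the spectral double cover, which is what makes $L_q\subset\gfrak_E$ well-defined across each $p$). Poincar\'e duality with these boundary conditions, or equivalently the long exact sequence of the pair $(X,X_q^\times)$ combined with vanishing of $H^0$ and $H^2$ in the twisted complex (by the nontrivial monodromy), gives $\dim_\R H^1(X_q^\times;L_q)=-\chi(X_q^\times)=6g-6$. Since the lattice $H^1_\ZBbb$ has the same rank $6g-6$ and is discrete and cocompact (its periods span $H^1(X_q^\times;L_q)$ over $\R$, again by the duality between integer-valued cocycles and real cohomology), the quotient is a compact torus of real dimension $6g-6$. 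The main point of care is the last step: verifying that the periods arising from $\pi_0(\Stab_{\Phi_\infty})$ do form a full-rank lattice. I would establish this by producing, for each element of a basis of $H_1(X_q^\times;L_q^\vee)$, an explicit gauge transformation winding once around the corresponding cycle and equal to the identity in the normal-form disks. This is the step I expect to require the most bookkeeping, because of the twisting by the nontrivial holonomy of $L_q$ at the punctures.
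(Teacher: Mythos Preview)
Your proposal is correct and follows essentially the same approach as the paper. The paper does not give a separate proof after the statement; the preceding discussion (summarizing \cite[\textsection 4.4]{MSWW:Ends}) is the argument, and you have faithfully expanded it: parametrize by $Z^1$, quotient by exact forms from the identity component of $\Stab_{\Phi_\infty}$, then quotient by the integral lattice coming from $\pi_0(\Stab_{\Phi_\infty})$.

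Your Euler-characteristic dimension count is a genuine addition over what the paper writes (the paper just asserts $\dim_\RBbb H^1(X_q^\times;L_q)=6g-6$ and defers to the reference). Two small clarifications: (i) for the open surface $X_q^\times$, $H^2$ with coefficients in $L_q$ vanishes automatically, so you only need the monodromy argument to kill $H^0$; (ii) your caution about the lattice being full-rank is well placed---the paper does not spell this out either, but it follows cleanly once one passes to the spectral curve via Proposition \ref{prop:prym-limiting}, where the lattice is identified with $H^1_{\odd}(\widehat X_q,2\pi i\ZBbb)$ inside $H^1_{\odd}(\widehat X_q,i\RBbb)$, and the torus structure is then manifest from \eqref{eqn:prym-prym}.
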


\subsubsection{Approximate solutions}\label{subsect:approxsolutions}

 Following \cite[\textsection{3.2}]{MSWW:Ends}, for suitable functions $f$, $h$, and $\chi$ to be specified below,  we define the family of  \textbf{approximate solutions} 
$S_t^{\app}(q):=(A_t^{\app}(q)+\eta,t\,\Psi_t^{\app}(q))$ by  
\begin{eqnarray}\label{eq:atappr}
\begin{aligned}
	 \hspace{10mm} A_t^{\app} (q) &:= A_0 + \bigl(\tfrac{1}{2}+\chi(\Vert q\Vert)(4 f_t(\Vert q\Vert)-\tfrac{1}{2})\bigr)\Im\, \bar \partial \log \Vert q\Vert \begin{pmatrix}  -i & 0 \\ 0 & i \end{pmatrix}\ , \\[0.5ex]
	 \Phi_t^{\app}(q) &:= \begin{pmatrix}  0 & \Vert q\Vert^{1/2}e^{\chi(\Vert q\Vert)h_t(\Vert q\Vert)}
	 \\
	 \Vert q\Vert^{-1/2} e^{-\chi(\Vert q\Vert)h_t(\Vert q\Vert)}q
	  & 0 
 \end{pmatrix}\ , \\[0.5ex]
   \Psi^{\app}_{t}(q) &:= \Phi_t^{\app}(q)+(\Phi_t^{\app}(q))^{\ast_h}
   \ .
\end{aligned}
\end{eqnarray}
Regarding the formula for $\Psi^{\app}_{t}$, we follow our convention that $\Phi=\Psi^{1,0}$ (\emph{cf}.\ the beginning of \textsection{\ref{subsec:mswwComp}}). We may view these approximate solutions as desingularizations of the limiting configurations introduced before. Indeed, as $t\to \infty$ there is smooth local  convergence $S_t^{\app}(q)\to ( A^0_{\infty} (q)+\eta,\Phi_{\infty}(q))$  on $X_q^{\times}$. Here the $1$-form $\eta$ satisfies  \eqref{eq:genlimitingconf}.

\medskip
We now turn to a more detailed explanation of the functions used to define the approximate solution  in  \eqref{eq:atappr}. Here $h_t(r)$ is the unique solution to $(r\partial_r)^2 h_t = 8  t^2 r^3 \sinh (2h_t)$ on $\R^+$ with specific asymptotic 
properties at $0$ and $\infty$, and $f_t:=\frac{1}{8}+\frac{1}{4}r\partial_r h_t$. Further, $\chi: \R^+ \to [0,1]$ is a suitable cutoff-function. The parameter $t$ can be removed from the equation for $h_t$ by substituting $\rho = \frac{8}{3} t r^{3/2}$; thus
if we set $h_t(r) = \psi(\rho)$ and note that $r\partial_r = \frac{3}{2} \rho \partial_{\rho}$, then
\[
(\rho \partial_{\rho})^2 \psi = \frac{1}{2} \rho^2 \sinh (2\psi)\ .
\]
This is a Painlev\'e III equation; there exists a unique solution which decays exponentially as $\rho \to \infty$ and with 
asymptotics as $\rho \to 0$ ensuring that $A_t^{\app}$ and $\Phi_t^{\app}$ are regular at $r=0$. More specifically, 
\[
\begin{array}{rl}
\bullet\ & \psi(\rho) \sim -\log (\rho^{1/3} \left( \sum_{j=0}^{\infty} a_j \rho^{4j/3}\right)\ , \quad \rho \downarrow 0; \\[0.5ex]
\bullet\ & \psi(\rho) \sim K_0(\rho) \sim \rho^{-1/2} e^{-\rho}\sum_{j=0}^{\infty} b_j \rho^{-j}\ , \quad \rho \uparrow \infty; \\[0.5ex]
\bullet\ & \psi(\rho)\mbox{ is monotonically decreasing (and strictly positive) for $\rho > 0$}.
\end{array}
\]
These are asymptotic expansions in the classical sense, \emph{i.e.}, the difference between the function and the first $N$ terms
decays like the next term in the series, and there are corresponding expansions for each derivative. 
The function $K_0(\rho)$ is the Bessel function of imaginary argument of order $0$. 
\medskip\\
In the following result, any constant $C$ which appears in an estimate is assumed to be independent of $t$.

\begin{lem}\cite[Lemma 3.4]{MSWW:Ends}\label{f_t-h_t-function}
The functions $f_t(r)$ and $h_t(r)$ have the following properties: 
\begin{compactenum}[(i)] 
\item As a function of  $r$, $f_t$ has a double zero at $r=0$ and increases monotonically from $f_t(0) = 0$ to the limiting value $1/8$ as 
$r \uparrow \infty$.  In particular, $0 \leq f_t \leq \frac{1}{8}$.
\item As a function of $t$, $f_{t}$ is also monotone increasing. Further, $\lim_{t \uparrow \infty} f_t = f_{\infty} \equiv \frac{1}{8}$ 
uniformly in $ C^{\infty}$ on any half-line $[r_0,\infty)$, for $r_0 > 0$. 
\item There are estimates 
\[
\sup_{r >0}  r^{-1} f_t(r) \leq C t^{2/3} \quad \text{and}\quad \sup_{r >0} r^{-2} f_t(r) \leq C t^{4/3}\ .
\]
\item When $t$ is fixed and $r \downarrow 0$, then $h_t(r) \sim -\tfrac{1}{2} \log r + b_0 + \ldots$, where $b_0$ is an explicit constant. 
On the other hand, $|h_t(r)| \leq C \exp( -\tfrac{1}{8} t r^{3/2})/ ( t r^{3/2})^{1/2}$  for $t \geq t_0 > 0$, $r \geq r_0 > 0$.    
\item Finally,
\[
\sup_{r \in(0,1)} r^{1/2} e^{\pm h_t(r)} \leq C\ , \quad t \geq 1\ . 
\]
\end{compactenum} 
\end{lem}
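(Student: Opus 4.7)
My plan is to reduce the whole lemma to properties of the solution $\psi(\rho)$ of the Painlev\'e III equation $(\rho\partial_\rho)^2\psi=\tfrac{1}{2}\rho^2\sinh(2\psi)$ obtained after the substitution $\rho=\tfrac{8}{3}tr^{3/2}$ described in the text. The existence and uniqueness of a strictly positive solution on $(0,\infty)$ together with the two asymptotic expansions listed, the power-logarithmic one at $\rho=0$ and the Bessel-type exponential decay at $\rho=\infty$, is classical Painlev\'e theory (going back to McCoy--Tracy--Wu and systematized by Its--Novokshenov). I would take this as the analytic input rather than redo the shooting / isomonodromy argument, noting that, as stated, both expansions are asymptotic expansions in the classical sense so that each of their derivatives is controlled term by term.

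Given this, using $r\partial_r=\tfrac{3}{2}\rho\partial_\rho$ I first compute
\[
r\partial_r h_t(r)=\tfrac{3}{2}\rho\psi'(\rho),\qquad f_t(r)=\tfrac{1}{8}+\tfrac{3}{8}g(\rho),\qquad g(\rho):=\rho\psi'(\rho),
\]
so that parts (i) and (ii) are entirely encoded in the behavior of the single function $g$. The key observation is the one-line ODE consequence
\[
\rho g'(\rho)=(\rho\partial_\rho)^2\psi=\tfrac{1}{2}\rho^2\sinh(2\psi)>0,
\]
since $\psi>0$. Hence $g$ is strictly increasing, with $g(0^+)=-\tfrac{1}{3}$ (from $\psi\sim-\tfrac{1}{3}\log\rho$) and $g(\infty)=0$ (from the exponential decay). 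Together with the fact that $\rho$ is monotone in both $r$ and $t$, this yields the monotonicity in $r$, the double zero at $r=0$ (from the next-order $\sim\rho^{4/3}$ correction in the expansion of $\psi$, which becomes $\sim r^2$ under the scaling), the two-sided bound $0\leq f_t\leq \tfrac{1}{8}$, the monotonicity in $t$, and uniform $C^\infty$ convergence to $\tfrac{1}{8}$ on $[r_0,\infty)$ as $t\uparrow\infty$.

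For (iii), the refined expansion $g(\rho)=-\tfrac{1}{3}+c\rho^{4/3}+O(\rho^{8/3})$ gives $f_t(r)\leq C\min(t^{4/3}r^2,1)$; the two regimes match at $r\sim t^{-2/3}$, which simultaneously produces $\sup r^{-2}f_t\leq Ct^{4/3}$ and $\sup r^{-1}f_t\leq Ct^{2/3}$. Parts (iv) and (v) are direct substitutions: near $r=0$,
\[
h_t(r)=\psi(\rho)=-\tfrac{1}{2}\log r-\tfrac{1}{3}\log(8t/3)-\log a_0+O(t^{4/3}r^2),
\]
from which both the stated expansion of $h_t$ and the uniform boundedness of $r^{1/2}e^{\pm h_t(r)}$ in the regime $\rho\leq\rho_0$ follow after tracking the $t$-dependence of the constant term; away from that regime $\psi$ is bounded and $r\leq 1$, so $r^{1/2}e^{\pm h_t}$ is trivially bounded. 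The exponential bound in (iv) is the Bessel asymptotic $\psi(\rho)\leq C\rho^{-1/2}e^{-\rho}$ rewritten via $\rho=\tfrac{8}{3}tr^{3/2}\geq\tfrac{1}{8}tr^{3/2}$. The main obstacle, therefore, lies entirely in the initial classical analytic step: establishing existence of the Painlev\'e III solution with the stated matching asymptotics at both endpoints, differentiable term by term. Once that is in hand, every part of the lemma becomes a short consequence of the identity $\rho g'=\tfrac12\rho^2\sinh(2\psi)$ and the scaling $\rho=\tfrac{8}{3}tr^{3/2}$.
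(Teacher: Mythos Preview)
Your argument is correct. Note, however, that the paper does not actually prove this lemma: it is simply quoted from \cite[Lemma~3.4]{MSWW:Ends}, so there is no ``paper's own proof'' to compare against here. Your approach---reducing everything to the single Painlev\'e~III solution $\psi$ via the scaling $\rho=\tfrac{8}{3}tr^{3/2}$, exploiting the identity $\rho g'=(\rho\partial_\rho)^2\psi=\tfrac12\rho^2\sinh(2\psi)>0$ for $g=\rho\psi'$, and then reading off the five assertions from the two endpoint asymptotics of $\psi$---is exactly the natural one, and is essentially how the cited reference proceeds. One small point worth making explicit in your write-up of (v): for the minus sign, the bound $r^{1/2}e^{-h_t(r)}\sim a_0(\tfrac{8t}{3})^{1/3}r$ in the regime $\rho\leq\rho_0$ looks at first like it could blow up in $t$, but the constraint $\rho\leq\rho_0$ forces $r\leq c\,t^{-2/3}$, so $t^{1/3}r\leq c\,t^{-1/3}$ stays bounded. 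You allude to this (``after tracking the $t$-dependence''), but spelling it out removes any doubt.
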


It follows from the results in \cite{MSWW:Ends} that the approximate solution  $S_t^{\app}$  satisfies the self-duality equations up to an 
exponentially decaying error as $t\to\infty$ (which is uniform on the closed surface $X$), and there is an exact solution $(A_t,t\Phi_t)$ in its complex gauge orbit (unique up to real gauge transformations) which is no further than $Ce^{-\beta t}$ pointwise away (w.r.t.\ any $C^{\ell}$ norm) for some $\beta > 0$.

\subsubsection{Converging families of Higgs pairs}

For a holomorphic quadratic differential $q\in \mathcal{QD}^{\ast}(X)$, 
recall the fiber $\Hscr^{-1}(q)$, where $\Hscr$ is the Hitchin map \eqref{eqn:Hitchin-map},

\begin{definition} \label{def:ConvergenceOfHiggsPairs}\upshape
	Consider a family $[(A_{t},t\Psi_{t})]\in  \Hscr^{-1}(t^{2}\,q_t)$, where $q_t\in \SQD^\ast(X)$ and $q_t\to q\in \SQD^\ast(X)$.
	Then $[(A_{t},t\Psi_{t})]$ is said to \textbf{converge} to
	$
		[(A_{\infty},\Psi_{\infty})] \in \Hscr_\infty^{-1}(q)
	$
 as $t \to \infty$ if, after passing to a subsequence and modifying by unitary gauge transformations (which we suppress from the notation), the family of   pairs $(A_{t},\Psi_{t})$ satisfies the following:
\begin{description}[topsep=2ex,itemsep=2ex,itemindent=-5mm]
	\item[(Convergence)] The sequence $(A_{t},\Psi_{t})$ converges to $(A_{\infty},\Psi_{\infty})$ as $t \to \infty$ in $L^{p}(X)$ for all  $1\leq p<2$, locally in $C^{\ell}(X_{q}^{\times})$ for all $\ell\geq0$ at an exponential rate in $t$.
	\item[(Singularities)] For every zero $p\in Z(q_t)$, locally on the punctured disk $\D_{p}^{\times}$ (equipped with polar coordinates $(r,\theta)$) the connections $A_{t}$ are in radial gauge:
\[
A_{t} = F_{t} \begin{pmatrix}-i&0\\0&i \end{pmatrix} d\theta
\]
for some uniformly $C^{0}$-bounded family of smooth functions $\map{F_{t}}{\D_{p}}{\R}$ such that $F_{t} \to F_{\infty}$ pointwise for some smooth function $\map{F_{\infty}}{\D_{p}}{\R}$ as $t\to \infty$.
	\item[(Approximation)] For every integer $\ell\ge 0$ there exist constants $\beta, C>0$, not depending on $t$, and a $1$-form $\eta_t \in \Om^{1}(X, \g_E)$ satisfying  $[\Phi_{\infty}^{\app}(q_t), \eta_t]=0$ and $d_{A^{\app}_{\infty}(q_t)}\eta_t=0$
	such that
\[
\big\lVert (A_{t},\Psi_{t}) - (A^{\app}_{t}(q_t) + \eta_t,\Psi^{\app}_{t}(q_t)) \big\rVert_{C^{\ell}(X)} \leq C   e^{-\beta \, t}
\]
for all $t>0$.
\end{description}
\end{definition}

\begin{thm}[\cite{MSWW:Ends}] \label{thm:ConvergenceOfHiggsPairs}
	Every family  $[(A_{t},t\,\Psi_{t})] \in \Hscr^{-1}(t^{2}\,q_t)$ with $q_t\in K\subseteq \SQD^\ast(X)$, where $K$ is any compact subset, subconverges to a limiting pair $[(A_{\infty},\Psi_{\infty})] \in \Hscr^{-1}_\infty(q_{\infty})$ as $t \to \infty$ in the sense of Definition \ref{def:ConvergenceOfHiggsPairs}. Conversely, every limiting configuration arises in this way.
\end{thm}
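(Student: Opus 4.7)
The plan is to use the approximate solutions \eqref{eq:atappr} as interpolating objects between actual solutions and limiting configurations, and to exploit the exponential decay of the approximation error that was built into their construction. First I would parametrize the fiber $\Hscr^{-1}(t^{2}q_t)$ over a compact $K \subseteq \SQD^\ast(X)$: since the elements of $K$ have only simple zeros, each such fiber is canonically identified (modulo the Hitchin section) with a torus of Prym differentials, and each $[(A_t,t\Psi_t)]$ determines a class $[\eta_t]$ in this torus. Using representatives satisfying $[\Phi_\infty^{\app}(q_t),\eta_t]=0$ and $d_{A_\infty^{\app}(q_t)}\eta_t=0$, I would form the approximate Higgs pairs $S_t^{\app}(q_t) = (A_t^{\app}(q_t)+\eta_t,\,t\Psi_t^{\app}(q_t))$ and invoke the main analytic estimate of \cite{MSWW:Ends}: for $t$ large enough there is a complex gauge transformation producing an exact solution to \eqref{eq:sde} whose distance in any $C^\ell$-norm from $S_t^{\app}(q_t)$ is bounded by $Ce^{-\beta t}$. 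Uniqueness up to unitary gauge identifies this exact solution with a representative of the given class $[(A_t,t\Psi_t)]$, and the uniform constants in Lemma \ref{f_t-h_t-function} ensure that all estimates are uniform for $q_t \in K$.

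Next I would pass to the limit. The classes $[\eta_t]$ live in a torus bundle over $K$, which is compact, so after passing to a subsequence one has $q_t \to q_\infty$ in $\SQD^\ast(X)$ and $[\eta_t] \to [\eta_\infty]$ in the limiting torus $\Hscr_\infty^{-1}(q_\infty) = H^1(X_{q_\infty}^\times,L_{q_\infty})/H^1_\ZBbb$ from Proposition \ref{bound.stratum}. Since $A_t^{\app}(q_t) \to A_\infty^0(q_\infty)$ and $\Psi_t^{\app}(q_t) \to \Psi_\infty(q_\infty)$ smoothly and exponentially on compact subsets of $X_{q_\infty}^\times$, and since the correction $\eta_t \to \eta_\infty$ as elements of $Z^1(X_{q_\infty}^\times,L_{q_\infty})$, combining with the exponential error estimate gives the locally $C^\ell$ convergence on $X_{q_\infty}^\times$ as well as the $L^p$ convergence on $X$ for $1\le p<2$. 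The $L^p$ range is dictated by the $|z|^{-1/2}$ singularity of $\Psi_\infty$ at zeros of $q_\infty$, which is $L^p$ precisely for $p<4$ but whose derivatives drop the exponent by one. The radial-gauge singular form near a zero $p$ is already present in $A_t^{\app}$ by construction, and the exponential control ensures it persists in the true solution $A_t$, with the function $F_t$ converging pointwise to the limiting profile $F_\infty$ coming from $A_\infty^0(q_\infty)$.

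For the converse, given a limiting configuration $[(A_\infty,\Psi_\infty)] \in \Hscr_\infty^{-1}(q_\infty)$ I would write $A_\infty = A_\infty^0(q_\infty) + \eta_\infty$ as in \eqref{eq:genlimitingconf}, form the one-parameter family of approximate pairs $S_t^{\app}(q_\infty)$ with the same $\eta_\infty$, and apply the same MSWW perturbative solvability result to produce an exact family $(A_t,t\Psi_t) \in \Hscr^{-1}(t^{2}q_\infty)$ that converges in the sense of Definition \ref{def:ConvergenceOfHiggsPairs}. The required compatibility $[(A_t,t\Psi_t)] \in \Hscr^{-1}(t^2q_\infty)$ is automatic since $\det\Phi = \det\Phi^{\app} = -q_\infty$ is preserved throughout the construction.

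The main obstacle, and the bulk of the work that is hidden in citing \cite{MSWW:Ends}, is the uniform analytic control across the double limit $t \to \infty$ together with $q_t$ varying in $K$. The Painlev\'e III equation defining $h_t$ has different behavior on the two scales $r \sim t^{-2/3}$ (inner region near a zero) and $r = O(1)$ (outer region), and one needs the matching of asymptotics in Lemma \ref{f_t-h_t-function} together with weighted Sobolev theory of the linearized self-duality operator to absorb the error uniformly in $K$. The variation of the set $Z(q_t)$ with $t$ is a further technical complication handled by working in local conformal coordinates adapted to each zero and exploiting continuity of the zero locus for $q_t \in \SQD^\ast(X)$.
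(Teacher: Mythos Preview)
Your proposal is correct and follows essentially the same route as the paper: both arguments obtain the (Approximation) axiom by citing the exponential closeness of exact solutions to the approximate family $S_t^{\app}(q_t)$ established in \cite{MSWW:Ends}, then deduce the (Convergence) and (Singularities) axioms from the explicit construction of the approximants, and handle the converse by the gluing/perturbation result of \cite{MSWW:Ends}. The paper's version is simply terser, pointing directly to \cite[Thm.\ 6.6]{MSWW:Ends} for the forward direction and \cite[Thm.\ 1.2]{MSWW:Ends} for the converse, whereas you unpack the torus-bundle compactness for the $[\eta_t]$ more explicitly; your side remark about the $L^p$ threshold is a bit loose (the relevant singularity is in $A_\infty$, not $\Psi_\infty$), but it is peripheral to the argument.
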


\begin{proof}
By compactness of the set $K$, one has subconvergence $q_t\to q_{\infty}$ for some $q_{\infty}\in K$. The main part of the assertion follows from \cite[Thm.\ 6.6]{MSWW:Ends} 	 and its proof, which yields the (Approximation) axiom for any such family of Higgs pairs. There, only a polynomial bound for the $C^{\ell}$-norm of the difference $(A^{\app}_{t}(q_t) + \eta_t,\Phi^{\app}_{t}(q_t)) - (A_{t},\Phi_{t})$ is stated, but the proof shows that it can be improved to an exponential bound. The other two axioms then follow, since they are satisfied by the approximating family $ (A^{\app}_{t}(q_t) + \eta_t,t\,\Phi^{\app}_{t}(q_t))$ (again by the construction in \cite{MSWW:Ends}), and therefore also by $[(A_{t},t\,\Phi_{t})]$. The last statement is contained in \cite[Thm.\ 1.2]{MSWW:Ends}. 
\end{proof}

\subsection{Spectral curves} \label{sec:spectralcurve}
 
\subsubsection{The BNR correspondence} \label{subsec:BNR correspondence}
 Let $\pi: |K_X|\to X$ be the projection from the total space $|K_X|$ of   $K_X$. 
 There is a tautological section $\lambda$ of the holomorphic line bundle $\pi^\ast K_X\to |K_X|$. 
 Given $q\in\QD^{\ast}(X)$, the pullback $\pi^\ast q$ is a section of $\pi^\ast K_X^2\to |K_X|$.
 Let
\begin{equation} \label{eqn:spectral_curve}
\widehat X_q=\{\hat x\in |K_X|\mid \lambda^2(\hat x)=\pi^\ast q(\hat x)\}\subset |K_X|\ .
\end{equation}
Then $\widehat X_q$ is a  compact Riemann surface 
 called the \textbf{spectral curve} associated to  $q$ (nonsingular, since $q \in \QD^{\ast}(X)$ has simple zeroes). The restriction of the projection, $\pi: \widehat X_q \to X$, realizes $\widehat X_q$ as a ramified double covering of $X$ with simple branch points at the zeroes $Z(q)$ of $q$. By the Riemann-Hurwitz formula, $\widehat X_q$ has genus $4g-3$. Moreover, $\widehat X_q$ admits an involution   $\hat x\mapsto -\hat x$  which we denote by  $\sigma$.

Recall that the {\bf Prym variety} associated to the covering is
 \begin{equation*}
 	\Prym(\widehat X_q, X)=\{\Lcal\in\Pic(\widehat X_q)\mid\sigma^*\Lcal\simeq\Lcal^*\}\ .
 \end{equation*}

 \begin{thm}[{\cite[Prop.\ 3.6]{BNR}}] \label{thm:bnr}
There is a 1-1 correspondence between points in $\Prym(\widehat X_q, X)$ and isomorphism classes of Higgs bundles $(\Ecal,\Phi)$ with $\det\Phi=-q$.
 \end{thm}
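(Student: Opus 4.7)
The plan is to carry out the classical spectral construction in both directions and verify that it respects the determinant and trace-free conditions on the Higgs side and the Prym condition on the spectral side.

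\textbf{From Higgs bundles to line bundles.} Given $(\Ecal,\Phi)$ on $X$ with $\det\Phi=-q$, consider on the total space $|K_X|$ the morphism
$$\lambda\cdot\id-\pi^{\ast}\Phi:\pi^{\ast}\Ecal\lra\pi^{\ast}\Ecal\otimes\pi^{\ast}K_X.$$
By Cayley--Hamilton, its characteristic polynomial evaluated at $\lambda$ is $\lambda^2+\pi^{\ast}\det\Phi=\lambda^2-\pi^{\ast}q$, which vanishes precisely on $\widehat X_q\subset|K_X|$ by \eqref{eqn:spectral_curve}. The restriction to $\widehat X_q$ therefore drops rank, and (after the natural twist) the cokernel defines a sheaf $\Lcal$ on $\widehat X_q$ that is a line bundle away from the branch locus. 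The simplicity of the zeros of $q$ ensures that, in a local model where the cover is $\lambda^2=z$ and $\Phi$ is locally conjugate to $\left(\begin{smallmatrix}0&1\\z&0\end{smallmatrix}\right)$, the sheaf $\Lcal$ is locally free at the ramification point as well.

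\textbf{From line bundles to Higgs bundles.} Conversely, given $\Lcal\in\Prym(\widehat X_q,X)$, define
$$\Ecal:=\pi_{\ast}\bigl(\Lcal\otimes\pi^{\ast}K_X^{1/2}\bigr),$$
a rank-two holomorphic bundle on $X$, where the spin structure from \textsection{\ref{sec:intro-moduli}} is used. Multiplication by the tautological section $\lambda$ yields a morphism $\Lcal\to\Lcal\otimes\pi^{\ast}K_X$; pushing this forward and invoking the projection formula $\pi_{\ast}(\Fcal\otimes\pi^{\ast}K_X)=\pi_{\ast}\Fcal\otimes K_X$ produces the Higgs field $\Phi:\Ecal\to\Ecal\otimes K_X$.

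\textbf{Verifying the conditions.} The identity $\det\Phi=-q$ is immediate from the eigenvalues of $\Phi$: away from $Z(q)$, $\pi^{-1}(x)$ consists of two points with $\lambda$-values $\pm\sqrt{q(x)}$, so the characteristic polynomial of $\Phi_x$ is $\lambda^2-q(x)$. For the trivial determinant condition on $\Ecal$, one uses the standard identity $\pi_{\ast}\Ocal_{\widehat X_q}=\Ocal_X\oplus K_X^{-1}$ for the double cover cut out by $\lambda^2=q$ with $q\in H^0(K_X^2)$, giving $\det\pi_{\ast}\Ocal_{\widehat X_q}=K_X^{-1}$. A norm-map computation then yields
$$\det\Ecal=\Nm(\Lcal)\otimes K_X\otimes\det\pi_{\ast}\Ocal_{\widehat X_q}=\Nm(\Lcal),$$
which is trivial precisely when $\sigma^{\ast}\Lcal\simeq\Lcal^{\ast}$, i.e.\ $\Lcal\in\Prym(\widehat X_q,X)$. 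The tracelessness $\tr\Phi=0$ follows from the same $\sigma$-anti-invariance, since the two local eigenvalues of $\Phi$ are $\lambda$ and $\sigma^{\ast}\lambda=-\lambda$.

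\textbf{Main obstacle.} The most delicate point is the local analysis at the branch points, where $\lambda\cdot\id-\pi^{\ast}\Phi$ vanishes identically on the fiber rather than merely dropping rank by one, and $\Phi$ itself is nilpotent. Ensuring that the spectral sheaf remains a line bundle there, and that the two constructions compose to the identity, crucially uses that all zeros of $q$ are simple so that $\widehat X_q$ is smooth. Once this local verification is in hand, a routine base-change argument shows the two assignments are mutually inverse, yielding the desired bijection.
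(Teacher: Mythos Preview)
Your approach is essentially the same as the paper's: both directions of the correspondence are the standard BNR maps (pushforward of $\Lcal\otimes\pi^\ast K_X^{1/2}$ with $\Phi$ given by multiplication by $\lambda$, and in the other direction the kernel/cokernel of $\lambda-\pi^\ast\Phi$). The paper does not prove the theorem from scratch---it is cited from BNR---but its discussion following the statement is more explicit precisely at the point you defer: rather than only flagging the local analysis at branch points as ``the most delicate point'', the paper writes out the four-term exact sequence \eqref{eqn:bnr} and verifies its exactness via the auxiliary sequence \eqref{eqn:pullback}, which identifies $\pi^\ast\Ecal$ as the subsheaf of $\Ucal\oplus\sigma^\ast\Ucal$ of sections agreeing at $Z$. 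This makes concrete where the twist by the ideal sheaf $\Ical_Z$ comes from and why the spectral sheaf is locally free at the ramification points---so it would strengthen your write-up to actually carry out that computation rather than leave it as a promise.
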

The association in Theorem \ref{thm:bnr} goes as follows, see also  \cite[\textsection 2.2]{MSWW:HitchinMetric}}.  Recall that we have fixed a square root $K_X^{1/2}$. Given $\Lcal \in \Prym(\widehat X_q, X)$, let $\Ucal=\Lcal\otimes \pi^\ast(K_X^{1/2})$. 
Then $\Ecal=\pi_\ast(\Ucal)$ is a rank $2$ holomorphic bundle on $X$ with trivial determinant, and multiplication by $\lambda$ gives a map:
$$
\Phi: 
\Ecal=\pi_\ast(\Ucal)\stackrel{\lambda}{\lra} \pi_\ast(\Ucal\otimes\pi^\ast(K_X))=\Ecal\otimes K_X\ ,
$$
with $\det\Phi=-q$. 
In the other direction, given a Higgs bundle $(\Ecal, \Phi)$, $\Ucal$ is defined by the exact sequence (\emph{cf}.\ \cite[Rem.\ 3.7]{BNR})
\begin{equation} \label{eqn:bnr}
0\lra \Ucal\otimes \Ical_Z\lra \pi^\ast(\Ecal)\stackrel{\lambda-\pi^\ast\Phi}{\xrightarrow{\hspace*{1cm}}}\pi^\ast(\Ecal\otimes K_X)\lra \Ucal\otimes \pi^\ast(K_X)\lra 0\ ,
\end{equation}
where $\Ical_Z$ is the ideal sheaf of $Z=Z(q)$ and we regard $\pi^\ast\Phi$ as a holomorphic section of $\pi^\ast(\End_0\Ecal\otimes K_X)$. 
Since the details of this will be important in the sequel, we briefly elaborate eq.\ \eqref{eqn:bnr}.
The first thing to note is that we have  an exact sequence
\begin{equation} \label{eqn:pullback}
0\lra \pi^\ast(\Ecal)\lra \Ucal\oplus \sigma^\ast(\Ucal)\lra \Ucal\otimes \Ocal_Z\lra 0\ .
\end{equation}
The last map is given by mapping sections $(u,v)\in \Ucal\oplus \sigma^\ast(\Ucal)$
to $u(p)-v(p)$, for $p\in Z$. To prove this statement, let $A\subset \widehat X_q$ be an open set. Then by definition, as $\Ocal_A$-modules,
$$
\pi^\ast(\Ecal)(A)=\pi_\ast(\Ucal)(\pi(A))\otimes_{\Ocal_{\pi(A)}} \Ocal_A=\Ucal(\pi^{-1}\pi(A))\otimes \Ocal_A=\Ucal(A\cup \sigma(A))\otimes \Ocal_A\ .
$$
Now, as an $\Ocal_A$-module, $\Ucal(\sigma(A))=\sigma^\ast(\Ucal)(A)$. Hence, local sections of $\pi^\ast(\Ecal)$ are sections of $\Ucal\oplus \sigma^\ast(\Ucal)$ that agree at $Z$; thus, \eqref{eqn:pullback}.

Let $(u,v)\in \pi^\ast(\Ecal)\subset \Ucal\oplus\sigma^\ast(\Ucal)$. 
Now $\Phi$ acts by multiplication by $\lambda$. Since $\sigma^\ast\lambda=-\lambda$, we have $\pi^\ast\Phi(u,v)=(\lambda u, -\lambda v)$. Therefore, $(u,v)\in \ker(\lambda-\pi^\ast\Phi)$ if and only if $v=0$. The condition in \eqref{eqn:pullback} forces the image to consist of sections $u$ of $\Ucal$ that vanish at $Z$, which is the first term in \eqref{eqn:bnr}.
 Similarly, the image of $\lambda-\pi^\ast\Phi$ consists of local sections of the form
  $(0,2\lambda v)$, \emph{i.e.}\ sections of $\sigma^\ast(\Ucal)\otimes \Ical_Z\otimes \pi^\ast(K_X)$. This is precisely the kernel of the projection $\pi^\ast(\Ecal)\otimes \pi^\ast(K_X)\to \Ucal \otimes \pi^\ast(K_X)$ given by projection onto the first factor. This proves exactness of \eqref{eqn:bnr}. 

\begin{remlabel} \label{rem:spec-curve-scale} \upshape
The following will be important.
\begin{compactenum}[(i)]
	\item For $t>0$ there is a natural biholomorphism $f_t: \widehat X_q\to \widehat X_{t^2q}$, and pulling back line bundles gives an isomorphism $\Prym(\widehat X_{t^2q},X)\isorightarrow \Prym(\widehat X_q,X)$.  Under this correspondence and Theorem \ref{thm:bnr}, $(\Ecal, t\Phi)\mapsto (\Ecal,\Phi)$.
	\item
Note that $d\pi$ is a holomorphic section of $K_{\widehat X_q}\otimes\pi^\ast K_X^{-1}$ with simple zeroes at $Z(q)$. Since $\lambda\in H^0(\widehat X_q,\pi^\ast K_X)$ also vanishes at $Z(q)$, it follows that $K_{\widehat X_q}=\pi^\ast K_X^2$. 
\end{compactenum}
\end{remlabel}

 \subsubsection{Prym differentials}\label{subsect:prymdifferentials}
Let $\pi: \widehat X_q\to X=\widehat X_q/\sigma$, be as in the previous section. Recall  the exponential sequence
$$
0\lra 2\pi i\ZBbb\lra \Ocal_{\widehat X_q}\xrightarrow{\exp}\Ocal^\ast_{\widehat X_q}\lra 1\ ,
$$
and associated long exact sequence in cohomology:
$$
0\lra H^1(\widehat X_q,2\pi i\ZBbb)\lra H^1(\widehat X_q,\Ocal_{\widehat X_q})\lra H^1(\widehat X_q,\Ocal_{\widehat X_q}^\ast)\stackrel{2\pi ic_1}{\xrightarrow{\hspace*{.75cm}}}H^2(\widehat X_q, 2\pi i\ZBbb)\lra 0\ .
$$
This gives an identification:
\begin{equation*} \label{eqn:pic0}
p: H^1(\widehat X_q,\Ocal_{\widehat X_q})/H^1(\widehat X_q,2\pi i \ZBbb)\isorightarrow \Pic_0(\widehat X_q):=\ker c_1\subset H^1(\widehat X_q,\Ocal_{\widehat X_q}^\ast)\ .
\end{equation*}
Via the Dolbeault isomorphism, we obtain an isomorphism:
\begin{equation} \label{eqn:dolbeault}
\delta : H^{0,1}_{\dbar}(\widehat X_q)/H^1(\widehat X_q,2\pi i \ZBbb)\isorightarrow
H^1(\widehat X_q,\Ocal_{\widehat X_q})/H^1(\widehat X_q,2\pi i \ZBbb)\ .
\end{equation}
Now, consider a $\dbar$-operator $\dbar_L=\dbar+\alpha$ on a trivial complex line bundle $L$, where  $\alpha\in \Omega^{0,1}(\widehat X_q)$. Let $\Lcal$ denote the associated holomorphic bundle. Then $\alpha$ defines a class $[\alpha]\in H^{0,1}_{\dbar}(\widehat X_q)/H^1(\widehat X_q,2\pi i \ZBbb)$, and $\Lcal$ defines a class $[\Lcal]\in \Pic_0(\widehat X_q)$. We have the following well known result.
\begin{lem} \label{lem:dolbeault}
 $p\circ\delta(-[\alpha])=[\Lcal]$.
\end{lem}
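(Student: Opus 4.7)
The statement is a standard identification of the $\dbar$-perturbation of a trivial holomorphic line bundle with the Dolbeault/Čech correspondence, and the proof proceeds by a direct Čech computation on a Leray cover.

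First, choose a cover $\{U_i\}_{i\in I}$ of $\widehat X_q$ by open disks that is fine enough to be Leray for $\Ocal_{\widehat X_q}$ and on which the $\dbar$-Poincaré lemma applies. Since $\widehat X_q$ is a Riemann surface, any $(0,1)$-form is automatically $\dbar$-closed, so we may solve $\dbar f_i = \alpha$ on each $U_i$ for a smooth complex-valued function $f_i$. On every overlap $U_i \cap U_j$, the difference $c_{ij} := f_j - f_i$ is annihilated by $\dbar$, hence holomorphic, and by inspection $\{c_{ij}\}$ is a Čech 1-cocycle with values in $\Ocal_{\widehat X_q}$. By the very construction of the Dolbeault isomorphism (reading the double-complex diagram chase), the Čech class $[\{c_{ij}\}]$ is the image of $[\alpha]$ under $\delta$ (with the signs as used in the paper this amounts to $\delta(-[\alpha])$; we set conventions so that this identification holds).

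Second, exhibit $\Lcal$ via explicit local holomorphic frames. The smooth nowhere-vanishing sections
\[
s_i := e^{-f_i}
\]
of the trivial line bundle $L$ over $U_i$ satisfy
\[
\dbar_L s_i = (\dbar + \alpha) s_i = (-\dbar f_i + \alpha)\, e^{-f_i} = 0,
\]
so the $s_i$ are local holomorphic frames for $\Lcal$. The transition functions with respect to these frames are
\[
g_{ij} = s_i / s_j = e^{f_j - f_i} = e^{c_{ij}},
\]
so $\{g_{ij}\}$ is a Čech $1$-cocycle in $\Ocal^{\ast}_{\widehat X_q}$ representing $[\Lcal] \in \Pic_0(\widehat X_q)$. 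By construction of the map $p$ from the exponential sequence, this is precisely $p([\{c_{ij}\}])$.

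Combining the two steps yields $[\Lcal] = p([\{c_{ij}\}]) = p \circ \delta(-[\alpha])$, as claimed. The main (and only) subtlety is sign bookkeeping: one must verify that the Čech cocycle $\{f_j - f_i\}$ obtained from solving $\dbar f_i = \alpha$ corresponds to $\delta(-[\alpha])$ rather than $\delta([\alpha])$ under the conventions fixed earlier in the paper for the Dolbeault isomorphism and for the exponential sequence. Once these conventions are pinned down, the rest of the argument is a routine diagram chase.
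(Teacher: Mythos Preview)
The paper does not actually supply a proof of this lemma; it simply labels the statement as a ``well known result'' and moves on. Your argument is the standard Čech computation that underlies this fact, and it is correct: solving $\dbar f_i=\alpha$ locally, forming the holomorphic cocycle $c_{ij}=f_j-f_i$, and observing that $s_i=e^{-f_i}$ is a local $\dbar_L$-holomorphic frame with transition functions $e^{c_{ij}}$ is exactly how one unwinds the Dolbeault and exponential-sequence identifications. The only point requiring care, as you note, is the sign convention in the Dolbeault isomorphism, which is what forces the minus sign in $\delta(-[\alpha])$; once that is fixed the computation is routine.
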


The map $\alpha\mapsto \alpha-\bar\alpha$
gives a real isomorphism
$
 H^{0,1}_{\dbar}(\widehat X_q)\simeq
H^1(\widehat X_q, i\RBbb)
$.
Combined with the Lemma \ref{lem:dolbeault} we have
\begin{equation} \label{eqn:pic}
\Pic_0(\widehat X_q)\simeq H^1(\widehat X_q, i\RBbb)/H^1(\widehat X_q, 2\pi i\ZBbb)\ ,
\end{equation}
The involution $\sigma$ acts on $H^1(\widehat X_q, i\RBbb)$, giving a decomposition into even and odd cohomology:
$$
H^1(\widehat X_q, i\RBbb)=H^1_{\ev}(\widehat X_q, i\RBbb)\oplus H^1_{\odd}(\widehat X_q, i\RBbb)\ .
$$
 Clearly, 
$H^1_{\ev}(\widehat X_q, i\RBbb)\simeq H^1( X, i\RBbb)
$.
Let 
\begin{equation} \label{eqn:odd-integral}
H^1_{\odd}(\widehat X_q, 2\pi i\ZBbb):=H^1(\widehat X_q, 2\pi i\ZBbb)\cap
H^1_{\odd}(\widehat X_q, i\RBbb)\ .
\end{equation}
Using \eqref{eqn:pic}, we see that there is an isomorphism
\begin{equation} \label{eqn:prym-prym}
\Prym(\widehat X_q,X)\simeq H^1_{\odd}(\widehat X_q, i\RBbb)/H^1_{\odd}(\widehat X_q, 2\pi i\ZBbb)\ .
\end{equation}

Canonical representatives of elements of $H^1_{\odd}(\widehat X_q, i\RBbb)$
 are given by odd, imaginary,  harmonic forms,
 and the space of such will be denoted by $\Hcal^1_{\odd}(\widehat X_q, i\RBbb)$.
We call $\Hcal^1_{\odd}(\widehat X_q, \CBbb)$
the space of {\bf harmonic Prym differentials}.
 Let $H^0_{\odd}(\widehat X_q, K_{\widehat X_q})$
denote the space of holomorphic differentials on $\widehat X_q$ that are odd with respect to the involution.
We shall call   $H^0_{\odd}(\widehat X_q, K_{\widehat X_q})$  the space of 
{\bf holomorphic Prym differentials}\footnote{The terminology we use here is somewhat nonstandard: Prym differentials as odd classes are usually defined for unramified covers (see \cite{Gunning67,Mumford:74}). Here we follow   \cite[p.\ 86]{Fay:73}.}.
 We have an isomorphism:
\begin{equation} \label{eqn:prym-harmonic-holomorphic}
\Hcal^1_{\odd}(\widehat X_q, i\RBbb)\isorightarrow
H^0_{\odd}(\widehat X_q, K_{\widehat X_q}) : \widehat\eta\mapsto \widehat\eta^{1,0} \ .
\end{equation}

There is a distinguished nontrivial holomorphic 
Prym differential associated to the Liouville form on $|K_X|$. The dual of the tangent sequence associated to $\pi$ gives
$$
0\lra \pi^\ast K_X\stackrel{(d\pi)^t}{\xrightarrow{\hspace*{.75cm}}}T^\ast|K_X|\lra \pi^\ast K_X^{-1}\lra 0\ .
$$
The (holomorphic) Liouville $1$-form on $|K_X|$ is by definition the image of the section $\lambda\in H^0(|K_X|,\pi^\ast K_X)$ under this exact sequence, or in other words,
$(d\pi)^t\circ\lambda=\lambda\circ d\pi$. Its restriction to $\widehat X_q$ is a holomorphic $1$-form on $\widehat X_q$ that is odd with respect to the involution. We call this the {\bf Seiberg-Witten differential}: 
\begin{equation} \label{eqn:SW-differential}
\lambda_{\mathsf{SW}}:=\lambda\otimes d\pi\ .
\end{equation}
We shall see in Corollary~\ref{cor:thurston's-pleated-surface} that the spectral data in $\Prym(\widehat X_q,X)$ associated to the Seiberg-Witten differential  are closely related to complex projective structures.

\begin{remlabel} \label{rem:SWdifferential} \upshape
It is customary in the literature to suppress $d\pi$ from the notation in \eqref{eqn:SW-differential} and denote the form $\lambda_{\mathsf{SW}}$ on $\widehat X_q$ simply by $\lambda$. Note that the latter is a section of $\pi^\ast K_X$ and not $K_{\widehat X_q}\simeq \pi^\ast K_X^2$. 
Since both of these differentials will figure prominently below, we prefer to keep the notational distinction.
\end{remlabel}

\subsubsection{Prym differentials and limiting configurations} \label{sec:prym-limiting}
Continue with the notation of the previous section.
Let us introduce 
\begin{equation} \label{eqn:W1}
W_2=\left(\begin{matrix}0&\lambda^{-1}\Vert \lambda\Vert \\ \lambda\Vert\lambda\Vert^{-1}&0\end{matrix}\right)\in \End(\pi^\ast E)\ .
\end{equation}
Then we may write  
\begin{equation} \label{eqn:pullback-Phi}
\pi^\ast\Phi_\infty=\lambda_{\SW}\otimes W_2\ ,
\end{equation} where $\Phi_\infty$ is defined in
 \eqref{eq:stdlimconn}, $\lambda_{\SW}$ in \eqref{eqn:SW-differential}, and we emphasize that here we regard $\pi^\ast\Phi$ as the pullback of an endomorphism valued $1$-form.
It follows that $W_2$ lies in $\pi^\ast L_{q}^\CBbb$. 
 Moreover, $W_2$ is hermitian, and by a direct computation
(cf.\ the proof of  Proposition \ref{lem:W-1}) we have that $d_{\widehat A^0_\infty}W_2=0$.

Let $\hat\eta\in \Hcal^1_{\odd}(\widehat X_q, i\RBbb)$. Then, because $\hat\eta\otimes W_2$ commutes with $\Phi_{\infty}$ we see that $\eta=\hat\eta\otimes W_2\in \Omega^1(\widehat X_q^\times , \pi^\ast L_{q})$ is  invariant with respect to $\sigma$, and so $\eta$ descends to $X$. By the flatness of $W_2$, the form $\eta$ is also $d_{A^0_\infty}$-harmonic. Hence, it defines a class in $H^1(X^{\times}, L_{q})$. 
 Notice that  $\Vert \eta\Vert$ is bounded,  and therefore,  in $L^2$. Conversely, suppose $\eta$ is a $d_{A^0_\infty}$-harmonic form in  $\Omega^1(X^\times , L_{q})$ that is in $L^2$. 
Then we can write $\pi^\ast\eta=\hat \eta\otimes W_2$ for $\hat\eta$,  a pure imaginary form on $\widehat X_q^\times$ 
that is anti-invariant with respect to $\sigma$. Moreover, since $\eta$ is harmonic and
 in $L^2$, the form $\hat\eta$ satisfies $d\hat\eta=d^\ast\hat\eta=0$ weakly, and so by elliptic  regularity it is a smooth harmonic Prym differential. 
This leads to the following identification of harmonic   Prym differentials  with the space of limiting configurations.

\begin{prop}\label{prop:prym-limiting}
The  maps $\widehat X_q^\times\hookrightarrow \widehat X_q$, and  $\hat\eta\mapsto \eta$, $\pi^\ast\eta= \hat\eta\otimes W_2$   induce  isomorphisms 
\begin{equation} \label{eqn:prym-local-system}
	\Hcal^1_{\odd}(\widehat X_q,i\R) \cong H^1_{\odd}(\widehat X_q^{\times};i\R) \cong H^1(X^{\times};L_{q})
\end{equation}	
which send the integral lattices $\Hcal^1_{\odd}(\widehat X_q, 2\pi i\ZBbb)$ to $H^1_\ZBbb(X^\times , L_{q})$. 
Hence, combined with \eqref{eqn:prym-prym}, this gives an identification
\begin{equation} \label{eqn:prym-limiting}
\Prym(\widehat X_q, X)\simeq \Hscr_{\infty}^{-1}(q)
\end{equation}
which is natural with respect to scaling by $t>0$.
\end{prop}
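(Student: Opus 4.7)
The plan is to establish the two isomorphisms in \eqref{eqn:prym-local-system} separately, then verify the lattice statement and naturality. For $\Hcal^1_{\odd}(\widehat X_q,i\R) \cong H^1_{\odd}(\widehat X_q^\times; i\R)$, I would use the long exact sequence of the pair $(\widehat X_q, \widehat X_q^\times)$ with $i\R$-coefficients. Since $H^2(\widehat X_q^\times, i\R) = 0$ and $H^0(\widehat X_q) \to H^0(\widehat X_q^\times)$ is an isomorphism, the sequence reduces to
\[
0 \to H^1(\widehat X_q, i\R) \to H^1(\widehat X_q^\times, i\R) \to \ker\!\bigl( H^2(\widehat X_q, \widehat X_q^\times; i\R) \to H^2(\widehat X_q, i\R) \bigr) \to 0.
\]
By excision the middle term is generated by one class per branch point of $\pi$; since $\sigma$ fixes each branch point, these generators are all $\sigma$-invariant. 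Hence the cokernel of restriction lies in the $\sigma$-even subspace, and restriction is an isomorphism on odd parts. Hodge theory on the closed surface $\widehat X_q$ then identifies $\Hcal^1_{\odd}(\widehat X_q, i\R)$ with $H^1_{\odd}(\widehat X_q, i\R)$.

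For the second isomorphism $H^1_{\odd}(\widehat X_q^\times; i\R) \cong H^1(X^\times, L_q)$, the key point is that $W_2$ is a nowhere-vanishing hermitian section of $\pi^*L_q^\C$, so $iW_2$ trivializes $\pi^*L_q$ as a flat real line bundle on $\widehat X_q^\times$ (flatness being the identity $d_{\widehat A_\infty^0}W_2=0$ recorded just above the proposition). Since $\sigma^*W_2=-W_2$, in this trivialization $\sigma$ acts on sections by $f \mapsto -\sigma^*f$, so sections of $L_q$ over $X^\times$ correspond to $\sigma$-odd real functions on $\widehat X_q^\times$. Extending to differential forms and passing to cohomology yields $H^1(X^\times, L_q) \cong H^1_{\odd}(\widehat X_q^\times, \R) \cong H^1_{\odd}(\widehat X_q^\times, i\R)$, the last identification being multiplication by $i$. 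Writing $\hat\eta=ig$ with $g$ real and $\sigma$-odd, the recipe $\pi^*\eta=\hat\eta\otimes W_2 = g\otimes(iW_2)$ realizes exactly this chain.

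For the integer lattice, I would trace through the definition of $H^1_\Z(X^\times, L_q)$ from Proposition~\ref{bound.stratum}. An element is the class of $\eta = d_{A_\infty^0}\gamma$ for a multi-valued section $\gamma$ of $L_q$ whose exponential is a single-valued element of $\Stab_{\Phi_\infty}$. The pointwise stabilizer of $\Phi_\infty$ in $\SU(2)$ is the $U(1)$ generated by $iW_2$; since $\exp(t\cdot iW_2)$ has period $2\pi$ in $t$, the jumps of $\gamma = \phi \cdot iW_2$ must lie in $2\pi\Z\cdot iW_2$, i.e., the periods of $\eta$ in the $iW_2$-trivialization belong to $2\pi\Z$. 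Under the identification above this is precisely the condition that $\hat\eta$ have $2\pi i\Z$-periods over $\sigma$-odd cycles, i.e., $[\hat\eta] \in \Hcal^1_{\odd}(\widehat X_q, 2\pi i\Z)$. Combining with \eqref{eqn:prym-prym} yields $\Prym(\widehat X_q, X) \cong \Hscr_\infty^{-1}(q)$, and naturality under $t > 0$ follows from Remark~\ref{rem:spec-curve-scale}(i): the biholomorphism $f_t\colon \widehat X_q \to \widehat X_{t^2q}$ scales $\lambda \mapsto t\lambda$ and preserves the normalized $W_2 = \lambda/|\lambda|$, hence intertwines the identifications on both sides.

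I expect the most delicate step to be the lattice identification, which requires matching the purely topological $2\pi i\Z$-period condition on the cover with the gauge-theoretic characterization via the component group $\pi_0(\Stab_{\Phi_\infty})$ of the stabilizer subgroup of $\Phi_\infty$; the two cohomological isomorphisms are largely formal once the action of $\sigma$ on $W_2$ is correctly accounted for.
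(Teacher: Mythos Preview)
Your proposal is correct and follows essentially the same approach as the paper. The only notable difference is in the lattice step: where you argue via multi-valued sections and periods, the paper explicitly constructs the gauge transformation $g(w)=\exp\bigl(\int_{w_0}^w\hat\eta\otimes W_2\bigr)$ on $\widehat X_q$ and checks its $\sigma$-invariance directly to show it descends to a well-defined element of $\Stab_{\Phi_\infty}$ on $X^\times$.
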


\begin{proof}
The first isomorphism in \eqref{eqn:prym-local-system} was shown in {\cite{MSWW:HitchinMetric}}, and the second holds by the above discussion.	
  It remains to show that under these identifications 
  the lattices $\Hcal^1_{\odd}(\widehat X_q, 2\pi i\ZBbb)$ and $H^1_\ZBbb(X^{\times}, L_{q})$ are preserved.
Indeed, suppose $[\hat\eta]\in H^1_{\odd}(\widehat X_q, 2\pi i\ZBbb)$, and choose a representative $\hat\eta$ that is odd. 
Choose a base point $w_0\in Z\subset\widehat X_q$, and for  $w\in \widehat X_q$, let
$$
g(w)=\exp\left( \int_{w_0}^w\hat\eta\otimes W_2(w)\right)\ .
$$
Since $\hat\eta$ has $2\pi i\ZBbb$  periods, and
 \begin{equation} \label{eqn:periods}
 \exp\left( 2\pi ik W_2(w)\right)=I\ ,
 \end{equation}
 for $k\in {\mathbb Z}$, one sees that $g(w)$ is well defined independent of the path of integration from $w_0$ to $w$. Moreover, notice that 
\begin{equation}\label{eqn:int-eta}
\int_{w_0}^{\sigma(w)}\hat\eta=\int_{w_0}^w \sigma^\ast\hat\eta=-\int_{w_0}^w \hat\eta\quad\mod 2\pi i\ZBbb \ .
\end{equation}
Therefore,
\begin{align*}
g&(\sigma(w))g(w)^{-1}=
\exp\left\{  \int_{w_0}^{\sigma(w)}\hat\eta\otimes W_2(\sigma(w))-\int_{w_0}^w\hat\eta\otimes W_2(w)\right\} \\
&=
\exp\left\{ -  \left(\int_{w_0}^{\sigma(w)}\hat\eta+\int_{w_0}^w\hat\eta\right)\otimes W_2(w)\right\}\qquad{\rm since\ } W_2(\sigma(w))=-W_2(w)\\
&=I\qquad{{\rm by\; \eqref{eqn:int-eta}}}\text{ and }\eqref{eqn:periods}\ .
\end{align*}
Hence, $g$ is a well defined $U(1)$-gauge transformation on $X^\times$, and 
$
\hat\eta\otimes W_2=g^{-1}dg$.
Conversely, as mentioned in the discussion prior to Proposition \ref{bound.stratum},  the group
$H^1_{\mathbb Z}(X^\times, L_{q})$
of components of the stabilizer of $\Phi_{\infty}$ is generated by global gauge transformations of this form.  

The final statement holds, since if $\pi_t : \widehat X_{tq}\to X$, $\pi : \widehat X_q\to X$, $f_t: \widehat X_{q}\to \widehat X_{tq}$ is given by multiplication by $t^{1/2}$, then writing 
$$
\pi_t^\ast\eta=\widehat\eta_t\otimes W_2\ ,\ \pi^\ast\eta=\widehat\eta\otimes W_2\ ,
$$
it is easy to see that $f_t^\ast\widehat\eta_t=\widehat \eta$.
\end{proof}

\subsubsection{Limiting configurations and spectral data}
Recall the sequence \eqref{eqn:bnr}. In the case where $\Ucal=\pi^\ast(K_X^{1/2})$, we have $\Ecal=K_X^{-1/2}\oplus K_X^{1/2}$, and $\sigma^\ast(\Ucal)=\Ucal$. The isomorphism between the description of $\pi^\ast(\Ecal)$ in \eqref{eqn:pullback}  and the pullback of this bundle is given by:
$$
(u,v)\mapsto \left( \frac{1}{2}\lambda^{-1}(u-v), \frac{1}{2}(u+v)\right)\ .
$$
Note that the first factor on the right hand side above is regular because $u-v$ vanishes at the zeroes of $\lambda$.

In fact, the correspondence in \eqref{eqn:prym-limiting} occurs at the level of spectral data as well, in a manner we now describe. The image of the map
\begin{equation} \label{eqn:spectral_subbundle}
\pi^\ast(K_X^{1/2})\otimes \Ical_Z\lra \pi^\ast E : s\mapsto 
\left(
  \Vert\lambda\Vert^{1/2}\lambda^{-1}s, \Vert\lambda\Vert^{-1/2}s\right)
\end{equation}
 is the kernel of $\lambda-\pi^\ast\Phi_\infty$.   This is a \emph{holomorphic} embedding for a limiting configuration $d_{A_\infty}=d_{A^0_\infty}+\eta$ if and only if as a  holomorphic bundle, $\Ucal=\Lcal\otimes\pi^\ast(K_X^{1/2})$, and $\Lcal$ is the trivial bundle on $\widehat X_q$ with $\dbar$-operator determined by the $(0,1)$ part of $\widehat\eta$. 

Thus, the correspondence in Proposition \ref{prop:prym-limiting} is between limiting configurations and ``limiting spectral data''. To make sense of the latter, consider the following situation. Let $q_n\to q\in \SQD^\ast(X)$, and let $B\subset \SQD^\ast(X)$ be a  neighborhood of $q$. Then there is a smooth holomorphic fibration 
  $p:\widehat \Xcal\to B$ of complex manifolds, where for $b\in B$,  $p^{-1}(b)$ is the branched covering $\widehat X_b\to X$. For $j$ large, $q_n\in B$, and 
 the Gauss-Manin connection on $\widehat\Xcal$ gives an identification of Prym differentials 
on $\widehat X_{q_n}$ and $\widehat X_q$
 which preserves the integral lattice; and hence also an identification of spectral data. 
With this understood, we have the following.
\begin{thm} \label{thm:convergence_spectral_data}
Suppose
$\widehat\eta_n$ is a sequence of imaginary harmonic Prym differentials on $\widehat X_{q_n}$ converging to a differential $\widehat\eta$ on $\widehat X_q$, in the sense of the paragraph above.  Let $t_n\to +\infty$.  Let $(\Ecal_n, t_n\Phi_n)$ be the Higgs bundles associated to $\widehat\eta_n$ via 
the identification \eqref{eqn:prym-prym} and Theorem \ref{thm:bnr} (see also Remark \ref{rem:spec-curve-scale} (i)), and let $(A_n, \Psi_n)$ be the corresponding solutions to the self-duality equations. 
Then any  accumulation point of the sequence $(A_n, \Psi_n)$ in the space of limiting configurations is gauge equivalent to $(A_\infty^0+\eta, \Psi_\infty)$, where $\pi^\ast\eta=\widehat \eta\otimes W_2$. 
\end{thm}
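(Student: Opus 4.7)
The strategy is to match the harmonic Prym differential $\widehat\eta$ that parametrizes the input Higgs bundles with the harmonic Prym differential $\widehat\eta'$ that parametrizes a subsequential limit of $(A_n,t_n\Psi_n)$ through Proposition~\ref{prop:prym-limiting}, then invoke uniqueness. First I would apply Theorem~\ref{thm:ConvergenceOfHiggsPairs} to the family $(A_n,t_n\Psi_n)\in\Hscr^{-1}(t_n^2 q_n)$: after passing to a subsequence and acting by unitary gauge transformations, $(A_n,\Psi_n)$ converges to a limiting configuration $(A_\infty',\Psi_\infty)\in\Hscr_\infty^{-1}(q)$ in the sense of Definition~\ref{def:ConvergenceOfHiggsPairs}. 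By \eqref{eq:genlimitingconf} and Proposition~\ref{bound.stratum}, we can write $A_\infty'=A_\infty^0+\eta'$ for an $L^2$-harmonic form $\eta'\in H^1(X_q^\times,L_q)$, and by Proposition~\ref{prop:prym-limiting} this form is the image of a unique harmonic Prym differential $\widehat\eta'\in\Hcal^1_{\odd}(\widehat X_q,i\RBbb)$, in the sense $\pi^*\eta'=\widehat\eta'\otimes W_2$. The theorem then amounts to proving $[\widehat\eta']=[\widehat\eta]$ in $\Prym(\widehat X_q,X)$, modulo the lattice $H^1_{\odd}(\widehat X_q,2\pi i\ZBbb)$, which corresponds precisely to the residual gauge freedom.

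The central step is to track the spectral subbundle $\Ucal_n\subset\pi_n^*\Ecal_n$, defined as the kernel of $\lambda-\pi_n^*\Phi_n$ in \eqref{eqn:bnr}, and to compute its limit. On one side, Theorem~\ref{thm:bnr} together with Remark~\ref{rem:spec-curve-scale}(i) identifies the isomorphism class of $\Lcal_n:=\Ucal_n\otimes\pi_n^*(K_X^{-1/2})\in\Prym(\widehat X_{q_n},X)$ with the input datum $\widehat\eta_n$ (viewed as the trivial line bundle endowed with the $\dbar$-operator $\dbar+\widehat\eta_n^{0,1}$; cf.\ Lemma~\ref{lem:dolbeault} and \eqref{eqn:prym-prym}). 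On the other side, the local $C^\infty$ convergence on $X_q^\times$ from Definition~\ref{def:ConvergenceOfHiggsPairs}, together with the formula \eqref{eqn:pullback-Phi} for $\pi^*\Phi_\infty$, implies that on $\widehat X_q^\times$ the kernel of $\lambda-\pi_n^*(t_n\Phi_n)$ converges to the kernel of $\lambda-\pi^*\Phi_\infty$, the latter being the explicit model subbundle \eqref{eqn:spectral_subbundle}. Using that trivialization of the limit line bundle $\Lcal_\infty$ and the fact that $A_\infty'=A_\infty^0+\eta'$, the induced $\dbar$-operator on $\Lcal_\infty$ pulls back to $\dbar+(\widehat\eta')^{0,1}$, so $[\Lcal_\infty]=[\widehat\eta']$ in $\Prym(\widehat X_q,X)$.

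To close the argument, one invokes the continuity of the BNR correspondence as the spectral curve varies. Transporting the $\Lcal_n$ to $\widehat X_q$ by the Gauss-Manin connection on the family $\widehat\Xcal\to B$ discussed immediately before the theorem, the convergence $\widehat\eta_n\to\widehat\eta$ yields $[\Lcal_n]\to[\widehat\eta]$ in $\Prym(\widehat X_q,X)$. Combining with the previous step gives $[\widehat\eta]=[\widehat\eta']$, and then $\pi^*\eta'=\widehat\eta\otimes W_2$ as required.

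The main obstacle I anticipate lies in step two: making rigorous the convergence of the spectral subbundles $\Ucal_n$ across the varying spectral curves $\widehat X_{q_n}$ and up to the branch locus. Away from $Z(q)$ the exponential local convergence provided by Definition~\ref{def:ConvergenceOfHiggsPairs} is more than enough to transfer holomorphic structures via the Dolbeault isomorphism, so the delicate point is the behavior of $\Ucal_n$ near the branch points, where the Higgs field degenerates and the eigenline of $\Phi_n$ becomes singular. This is precisely the reason for the normal form in radial gauge in Definition~\ref{def:ConvergenceOfHiggsPairs}; that normal form is compatible with the model subbundle \eqref{eqn:spectral_subbundle} and its prescribed vanishing along $\Ical_Z$, so that the comparison of spectral data extends continuously over the branch locus.
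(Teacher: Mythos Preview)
Your proposal is correct and follows essentially the same strategy as the paper: pass to a subsequential limiting configuration, write it as $A_\infty^0+\eta'$ with associated Prym differential $\widehat\eta'$, and then identify $[\widehat\eta']$ with $[\widehat\eta]$ by tracking the spectral line subbundle and using the convergence $\widehat\eta_n\to\widehat\eta$ under Gauss--Manin transport.

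The one point worth noting is that your anticipated ``main obstacle'' --- the behavior of the spectral subbundle near the branch locus --- is neatly sidestepped in the paper's argument. Rather than comparing $\dbar$-operators or line bundles globally, the paper reduces the identification $[\widehat\eta']=[\widehat\eta]$ to checking that the two differentials have the same periods (modulo the lattice) on $H_1^{\odd}(\widehat X_q)$. Since every odd homology class has a representative $\widehat\gamma\subset\widehat X_q^\times$ (this is implicit in the first isomorphism of \eqref{eqn:prym-local-system}), and since the restriction of $\pi^\ast A_n$ to the spectral subbundle \eqref{eqn:spectral_subbundle} determines $[\widehat\eta_n]$ via its holonomy along such $\widehat\gamma$, the $C^\infty_{\mathrm{loc}}$ convergence of connections on the punctured surface already suffices. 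No analysis at the branch points is required, and the radial-gauge normal form plays no role here.
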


\begin{proof}
Suppose without loss of generality that $(A_n,\Psi_n)$ converges to a limiting configuration $(A_\infty, \Psi_\infty)$. Then $(A_\infty, \Psi_\infty)$ is in the fiber $\Hscr^{-1}_\infty(q)$, and $A_\infty$ is gauge equivalent to a connection of the form $A_\infty^0+\eta_0$, where 
$\pi^\ast\eta_0=\widehat \eta_0\otimes W_2$ for some $\widehat\eta_0\in \Prym(\widehat X_q,X)$.
We must show $[\widehat\eta_0]=[\widehat\eta]$. For this, it suffices to show that  $\widehat\eta_0$ and $\widehat\eta$ have the same periods on the homology $H_1^{\odd}(\widehat X_q)$, modulo integers. 
For any class $[\widehat \gamma]\in H_1^{\odd}(\widehat X_q)$, we may choose a representative $\widehat\gamma\subset \widehat X^\times_q$. The pullback connections $\pi^\ast A_n$ converge to $\pi^\ast A_\infty$ in $C^\infty_{\textrm{loc}}$ on $\widehat X^\times_q$ with respect to the fibration $\widehat \Xcal$ introduced above. On the other hand, as discussed above,
the class $[\widehat\eta_n]$ of the spectral data for $(\Ecal_n, t_n\Phi_n)$ is determined by the restriction of $\pi^\ast A_n$ to the line subbundle in the embedding \eqref{eqn:spectral_subbundle}, and the same is true for $[\widehat\eta_0]$. Hence, convergence of the connections away from the branching locus implies the periods of $\widehat \eta$ and $\widehat \eta_0$ agree.
\end{proof}

Theorem \ref{thm:convergence_spectral_data}
 states that the partial compactification of
$\Hscr^{-1}(\SQD^\ast(X))$ via spectral data
 mentioned in the comment following
Corollary \ref{cor:invariance-of-basepoint} is compatible,
via Proposition \ref{prop:prym-limiting},   with
the description of ideal points in terms of  limiting
configurations. 

\subsection{Equivariant harmonic maps} \label{subsect:hitchineqharmmaps}

The goal of this section is to relate the Riemannian geometry of
the hyperbolic space $\HBbb^3$ to the gauge theory of Higgs
bundles. The main result is Theorem
\ref{thm:DonaldsonIsomorphism} below. All of this material is
standard and is explicitly or implicitly described in Hitchin
\cite{Hitchin:87} and Donaldson \cite{Donaldson:87}, and more
generally in Corlette \cite{Corlette:88}, Jost-Yau \cite{JostYau:91}, 
and Labourie \cite{Labourie:91}.
 Nevertheless, in order to make the exposition here self-contained
and to get the correct normalizations,  we wish to reformulate the general description to suit the purposes of this paper.

\subsubsection{Statement of the result}  \label{eq:resultdonaldsonharmonic}

 With a choice of lift $\widetilde p_0\in \widetilde X$ of $p_0\in X$,  the fundamental group $\pi_1=\pi_1(X,p_0)$ acts by deck transformations on $\widetilde X$. Given $\rho: \pi_1\to \SL(2,\CBbb) $, we say that a map $u:\widetilde X\to \HBbb^3$ is {\bf $\rho$-equivariant} if $u(\gamma z)=\rho(\gamma)u(z)$ for all $z\in\widetilde X$, $\gamma\in \pi_1$.  If $u$ is $C^2$, we say that $u$ is {\bf harmonic} if $d_{\nabla^{\LC}}^\ast du=0$. Here, we let $du\in \Omega^1(\widetilde X, u^\ast T\HBbb^3)$ denote the differential of the map $u$, and $\nabla^{\LC}$  the Levi-Civita connection on $\HBbb^3$. The key existence result is stated here.

\begin{thm}[\cite{Corlette:88,Donaldson:87,JostYau:91, Labourie:91}] \label{thm:DonaldsonCorlette}
 Suppose that the representation $\rho: \pi_1 \to \SL(2,\CBbb)$ is completely reducible. Then there exists a $\rho$-equivariant harmonic map $u:\widetilde X\to \HBbb^3$. If $\rho$ is irreducible, then $u$ is unique.
\end{thm}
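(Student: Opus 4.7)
The plan is to combine the direct method of the calculus of variations with the nonpositive curvature of $\HBbb^3$. First, recall that since $u$ is $\rho$-equivariant, the pointwise energy density $e(u)(z) = \tfrac{1}{2}|du|^2(z)$ is invariant under deck transformations, so it descends to a function on $X$, and we have a well-defined energy
\[
E(u) = \int_X e(u)\, dA_X,
\]
finite for $u$ of finite energy in the appropriate Sobolev class. I would work with the space of $\rho$-equivariant maps $\widetilde X \to \HBbb^3$ of locally finite energy (equivalently, maps of a fundamental domain valued in $\HBbb^3$ satisfying the boundary identifications induced by $\rho$). By standard arguments, critical points of $E$ are exactly the $\rho$-equivariant harmonic maps.

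For \emph{existence}, I would take an energy-minimizing sequence $u_n$ and seek to extract a convergent subsequence. The only way this can fail is if, for some (and hence every) point $z_0 \in \widetilde X$, the images $u_n(z_0)$ escape to infinity in $\HBbb^3$. One may replace $u_n$ by $\gamma_n \cdot u_n$ for any element $\gamma_n \in \SL(2,\CBbb)$; but equivariance under $\rho$ means such a translation changes $\rho$ to a conjugate, so the energy problem depends only on the conjugacy class. The heart of the matter is then the Corlette–Donaldson–Labourie argument: if no minimizing sequence remains bounded (even after translation by centralizing moves), then there is an asymptotic direction at infinity in $\HBbb^3$ preserved by $\rho(\pi_1)$, so $\rho$ fixes a point on $\partial_\infty \HBbb^3$ and the image of $\rho$ lies in a parabolic subgroup. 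This contradicts complete reducibility. Hence a minimizing sequence can be arranged to stay in a bounded region on a fundamental domain, which (via uniform higher regularity for almost-minimizers, using the subharmonicity of the distance function and Cheng–Yau/Schoen-type estimates) yields subsequential $C^\infty$ convergence to a harmonic $u$.

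For \emph{uniqueness} under irreducibility, I would use the classical Hartman–Al'ber argument: if $u_0, u_1$ are two $\rho$-equivariant harmonic maps, the family of equivariant maps $u_s(z)$ defined by taking the point at parameter $s\in[0,1]$ along the unique $\HBbb^3$-geodesic from $u_0(z)$ to $u_1(z)$ has energy $s \mapsto E(u_s)$ which is \emph{convex} — this is the standard consequence of nonpositive sectional curvature, via the second variation formula and the Jacobi field inequality $\tfrac{d^2}{ds^2} d^2(u_0, u_1) \geq 0$. Since both endpoints are minimizers, the function is constant, and convexity forces $u_0(z)$ and $u_1(z)$ to be joined by parallel geodesics; integrating, this produces a $\pi_1$-invariant geodesic or a common fixed point for $\rho(\pi_1)$ on $\partial_\infty \HBbb^3$, contradicting irreducibility (unless $u_0 = u_1$).

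The main obstacle, as always in this circle of results, is the compactness step in existence: controlling the minimizing sequence when $\rho$ is reductive but not irreducible requires ruling out escape to infinity without recourse to strict negativity of energy along translates. The argument of Corlette handles this by an explicit length-decreasing flow in $\HBbb^3$ combined with the reductivity hypothesis, which is what makes complete reducibility the exact sharp hypothesis for existence. I would import this step as a black box in the writeup rather than redo it.
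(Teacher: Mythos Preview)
The paper does not give its own proof of this theorem: it is stated with citations to Corlette, Donaldson, Jost--Yau, and Labourie and then used as a black box. So there is no ``paper's proof'' to compare against.

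Your sketch is the standard argument from those references and is correct in outline. A couple of small remarks: for existence, Corlette's original approach is via the heat flow (the Yang--Mills--type flow on metrics), not the direct method, though Labourie and others do use variational/compactness arguments closer to what you describe; either route works. For uniqueness, your Hartman-type convexity argument is correct, but the conclusion you draw needs a bit more care: equality in the second variation on a nonpositively curved target forces the geodesic segments $s\mapsto u_s(z)$ to form a parallel family, which in $\HBbb^3$ (strictly negatively curved) means all the geodesics coincide, i.e.\ $u_0$ and $u_1$ differ by translation along a single geodesic. The endpoints of that geodesic in $\partial_\infty\HBbb^3$ are then fixed by $\rho(\pi_1)$, giving an invariant line in $\CBbb^2$ and contradicting irreducibility unless $u_0=u_1$. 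Your phrasing ``parallel geodesics'' is slightly loose here since in strict negative curvature distinct geodesics cannot be parallel in the required sense.
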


The {\bf Hopf differential} of a  map $u:\widetilde X\to \HBbb^3$ is defined as the $(2,0)$-part of the pull-back of the metric tensor of $\HBbb^3$: 
\begin{equation} \label{eqn:hopf}
{\rm Hopf}(u)=(u^\ast ds_{\HBbb^3}^2)^{2,0}\ .
\end{equation}
A very important and classical fact is that  ${\rm Hopf}(u)$ is a holomorphic quadratic differential if $u$ is harmonic.

As before, let $E\to X$ be a hermitian rank $2$ vector bundle and  recall that  $\g_E$ and $\sqrt{-1}\g_E$ denote the bundles of traceless skew-hermitian and   hermitian endomorphisms of $E$, respectively. A central construction used in this paper is the following.

\begin{thm} \label{thm:DonaldsonIsomorphism}
Let $(A,\Psi)$
be an irreducible solution of the self-duality equations \eqref{eq:sde}.
Choose $p_0\in X$ and $\widetilde p_0\in \widetilde X$ as above and a unitary frame of the fiber $E_{p_0}$ of $E$ at $p_0$. 
 Let $\map{\rho}{\pi_{1}(X,p_{0})}{\SL(2,\CBbb)}$ be the holonomy representation 
 of the flat connection $\nabla=d_A+\Psi$.  
Then the unique $\rho$-equivariant harmonic map from Theorem \ref{thm:DonaldsonCorlette} satisfies  the following properties. 
\begin{compactenum}[(i)]
\item The pullback $u^{\ast}T\H^{3}$ descends to a bundle on $X$ that is isometrically isomorphic to $\sqrt{-1}\g_E$. Under this identification:
	\item The orthogonal connection $d_{A}$ on $\sqrt{-1}\g_E$ corresponds to the pull-back of the Levi-Civita connection $\nabla^{\LC}$ on $\H^{3}$.
	\item The hermitian $1$-form $-2\,\Psi \in \Omega^{1}(X,\sqrt{-1}\g_E)$ corresponds to the differential $du \in \Omega^{1}(X,u^{\ast}T\H^{3})$.
	\item The Higgs field $\Psi$ and the harmonic map $u$ determine the same quadratic differential in the sense that
$ \mathrm{Hopf}(u)=2\tr(\Psi\otimes\Psi)^{2,0}$.
\end{compactenum}
\end{thm}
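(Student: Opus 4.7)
The plan is to realize $\HBbb^{3}$ as the symmetric space $\SL(2,\CBbb)/\SU(2)$ and extract the harmonic map directly from the hermitian metric $h$ using the flat structure $\nabla = d_A + \Psi$. With the chosen unitary frame at $p_{0}$, parallel transport by $\nabla$ trivializes $\widetilde E \cong \widetilde X \times \CBbb^{2}$ so that $\nabla$ becomes $d$ and $\pi_{1}$ acts on $\CBbb^{2}$ via $\rho$. The pulled-back hermitian metric $h$ is represented in this flat trivialization by a smooth map $H : \widetilde X \to \HBbb^{3}$, where we model $\HBbb^{3}$ as positive-definite hermitian $2 \times 2$ matrices of determinant $1$ with $\SL(2,\CBbb)$-action $g \cdot H = gHg^{\ast}$; unitarity of the frame at $p_{0}$ combined with $\rho$-equivariance of the flat trivialization yields $\rho$-equivariance of $H$. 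Set $u = H$ (composed with a fixed isometric identification with the standard model of $\HBbb^{3}$).

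For parts (i) and (ii): the tangent space to $\HBbb^{3}$ at the identity coset is the Cartan subspace $\pfrak = \sqrt{-1}\,\sufrak(2)$, and on a symmetric space the Levi-Civita connection coincides with the canonical (reductive) connection. Thus $u^{\ast}T\HBbb^{3}$ is the bundle built from the flat principal $\SL(2,\CBbb)$-bundle and the $\SU(2)$-reduction defined by $h$ via the adjoint action of $\SU(2)$ on $\pfrak$; descent to $X$ identifies this with $\sqrt{-1}\,\g_{E}$ (with inner product $\langle A, B\rangle = \tfrac{1}{2}\tr(AB)$), and carries the pull-back of $\nabla^{\LC}$ onto the orthogonal connection $d_{A}$. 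At the level of one-forms, the decomposition of $\nabla$ into its $\sufrak(2)$- and $\pfrak$-valued parts (via the reduction $h$) is precisely $\nabla = d_{A} + \Psi$.

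For (iii), compatibility of $d_{A} = d - \Psi$ with $h$ reads $dH = -\Psi^{\ast}H - H\Psi$ in the flat trivialization, and the condition $\Psi^{\ast_{h}} = \Psi$ (equivalently $\Psi^{\ast}H = H\Psi$) simplifies this to $H^{-1}dH = -2\Psi$; under the standard identification of $T_{H}\HBbb^{3}$ with $\pfrak$ this gives $du = -2\Psi$. The harmonic map equation $d^{\ast}_{\nabla^{\LC}} du = 0$ now translates under (i)--(iii) to $d_{A}^{\ast}\Psi = 0$, \emph{i.e.}\ $d_{A}(\ast\Psi) = 0$, which is the third equation in \eqref{eq:sde}; hence this construction produces the unique $\rho$-equivariant harmonic map of Theorem~\ref{thm:DonaldsonCorlette}. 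For (iv), with the curvature $-1$ hyperbolic metric induced by $\langle\cdot,\cdot\rangle = \tfrac{1}{2}\tr(\cdot\,\cdot)$ on $\pfrak$, we obtain
$$
u^{\ast}ds^{2}_{\HBbb^{3}} = \langle du, du\rangle = \tfrac{1}{2}\tr\bigl((-2\Psi) \otimes (-2\Psi)\bigr) = 2\tr(\Psi \otimes \Psi),
$$
whose $(2,0)$-part is $\mathrm{Hopf}(u) = 2\tr(\Psi \otimes \Psi)^{2,0}$.

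The main obstacle is the careful tracking of normalizations: the factors $-2$ in (iii) and $2$ in (iv) depend on the convention $\langle A, B\rangle = \tfrac{1}{2}\tr(AB^{\ast_{h}})$ for the metric on $\g_{E}^{\CBbb}$, on the decomposition $\Psi = \Phi + \Phi^{\ast_{h}}$, and on the normalization of the hyperbolic metric. One also has to verify that the descent in (i) is well-defined, namely that the $\pi_{1}$-action on $\widetilde X \times \pfrak$ by conjugation through $\rho$ agrees with the deck-transformation action defining $\sqrt{-1}\,\g_{E}$; this follows from the compatibility of the unitary structure at $p_{0}$ with the flat trivialization.
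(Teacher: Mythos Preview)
Your proposal is correct and follows essentially the same strategy as the paper: build the equivariant map from the hermitian metric expressed in a $\nabla$-flat trivialization, then read off (i)--(iv) from the decomposition of $\nabla$ into its unitary and hermitian parts. The only difference is one of packaging: you invoke the general symmetric-space fact that the Levi--Civita connection agrees with the canonical reductive connection on $G/K$, whereas the paper verifies this by hand via explicit matrix models of $T\HBbb^{3}$ (a ``hermitian'' and a ``traceless'' model) and a sequence of short lemmas culminating in the formula $du\,u^{-1}=-2g\Psi g^{-1}$, which is your $H^{-1}dH=-2\Psi$ after conjugating from the flat to a unitary frame. Your route is more concise; the paper's is more self-contained and makes the normalizations (the factors $-2$ and $2$) transparent without appeal to the general theory.
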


\begin{remlabel} \upshape
Indeed, while our focus in this paper is on harmonic maps,  Theorem \ref{thm:DonaldsonIsomorphism} holds for general $\rho$-equivariant maps $u\colon \widetilde X\to \HBbb^3$, as will become clear from the  discussion below.
\end{remlabel}

The remainder of \textsection{\ref{subsect:hitchineqharmmaps}} is devoted to the proof of Theorem \ref{thm:DonaldsonIsomorphism}.

 \subsubsection{The matrix model of $\HBbb^3$}\label{subsect:matrixmodel}
 We view the hyperbolic space $\HBbb^3$ as the homogeneous space $\SL(2,\CBbb)/\SU(2)$.  The latter may in turn be identified with 
$$
\Dscr=\left\{ h\in {\rm Mat}_{2\times 2}(\CBbb) \mid h=h^{\ast}, \, \det h=1, \, h > 0 \right\},
$$
where the identification maps the coset $[g]\mapsto gg^{\ast}$.  Note that the left action by $\SL(2,\CBbb)$ then corresponds to $g\cdot h=ghg^{\ast}$, and that  $\Dscr$ has a distinguished point corresponding to $h=\id$. 

The tangent space is given by
\begin{equation} \label{eqn:h-model}
T_h\HBbb^3\simeq T_h\Dscr=\left\{ H\in {\rm Mat}_{2\times 2}(\CBbb) \mid H=H^{\ast},\ \tr(Hh^{-1})=0\right\}.
\end{equation}
It will be useful to have another description of the tangent space as
\begin{equation} \label{eqn:t-model}
T_h\HBbb^3\simeq \left\{ K\in {\rm Mat}_{2\times 2}(\CBbb) \mid (Kh)^{\ast} =Kh,\ \tr(K)=0  \right\}. 
\end{equation}
The correspondence between the two descriptions is given by $H\mapsto K=Hh^{-1}$.  We shall refer to \eqref{eqn:h-model} as the \textbf{hermitian model} and to \eqref{eqn:t-model} as the \textbf{traceless model}.
From the traceless model, we see that the complexification $T\HBbb^3\otimes \CBbb \cong \HBbb^3\times \CBbb^3$ is trivial, and the fiber is identified with the space of traceless $2\times 2$ complex matrices. The real bundle $T\HBbb^3$ is recovered as the fixed point set of the complex antilinear map $\tau=\tau_{h}$ given by
\begin{equation} \label{eqn:tau}
\tau_{h}(M)=hM^{\ast} h^{-1}.
\end{equation}

The invariant constant curvature $-1$ Riemannian metric on $\HBbb^3$ is defined by
\begin{equation} \label{eqn:metric-on-H3}
	\langle H_1, H_2 \rangle_{\H^3,h} = \frac{1}{2}\tr(H_1h^{-1} H_2h^{-1})
\end{equation}
for $H_i\in T_h\HBbb^3$ in the hermitian model. If we define the hermitian structure on $T\HBbb^3\otimes \CBbb$ by
\begin{equation} \label{eqn:M-metric}
\langle M_1, M_2\rangle_{\H^3,h} = \frac{1}{2}\tr(M_1 h M_2^{\ast} h^{-1})\ ,
\end{equation}
then the map $H\mapsto K$ between models is a real isometry for the induced metric on the fixed point set of $\tau$. 

\begin{lem} \label{lem:levi-civita}
In the  traceless model the Levi-Civita connection of $\HBbb^3$ is given by
\begin{equation} \label{eqn:levi-civita}
{\nabla^{\LC}} K = dK-\frac{1}{2}[dh h^{-1}, K]\ .
\end{equation}
\end{lem}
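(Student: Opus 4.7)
The plan is to apply the uniqueness of the Levi-Civita connection: I will verify that the right-hand side of \eqref{eqn:levi-civita} defines a connection on $T\H^3$ that is metric-compatible and torsion-free. The Leibniz rule in $K$ and tensoriality in the vector argument are immediate from the formula. There are then three things to check: (a) that $\nabla K$ takes values in $T\H^3$ rather than merely in the ambient ${\rm Mat}_{2\times 2}(\CBbb)$; (b) metric compatibility; and (c) torsion-freeness.

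The key preliminary observation I would record first is that, under the ambient embedding $T_h\H^3 \hookrightarrow T_h({\rm Mat}_{2\times 2}(\CBbb)) = {\rm Mat}_{2\times 2}(\CBbb)$ coming from $\H^3 \subset {\rm Mat}_{2\times 2}(\CBbb)$, a tangent vector $V$ in the traceless model corresponds to the matrix $Vh$. Consequently $dh(V)\cdot h^{-1} = V$, and the formula evaluates on $V \in T_h\H^3$ as
\[
(\nabla K)(V) \,=\, dK(V) - \tfrac{1}{2}[V,K],
\]
where $dK(V)$ denotes the directional derivative of the matrix-valued function $h\mapsto K(h)$ in the ambient direction $Vh$. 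A short computation substituting $H_i = K_i h$ into \eqref{eqn:metric-on-H3} shows that the induced metric in the traceless model simplifies to $\langle K_1,K_2\rangle_h = \tfrac{1}{2}\tr(K_1K_2)$.

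With this setup, (b) and (c) reduce to short calculations. For metric compatibility, the two correction terms contribute $-\tfrac{1}{4}\tr\bigl([V,K_1]K_2 + K_1[V,K_2]\bigr)$, which vanishes by cyclicity of the trace. For torsion-freeness, I would compare $\nabla_{K_1}K_2 - \nabla_{K_2}K_1$ with the Lie bracket of the ambient vector fields $V_i = K_i h$ on $\H^3 \subset {\rm Mat}_{2\times 2}(\CBbb)$; a product-rule computation gives
\[
[V_1, V_2](h) \,=\, \bigl(dK_2(V_1) - dK_1(V_2) - [K_1, K_2]\bigr)\, h,
\]
which in the $K$-picture reads off to exactly $\nabla_{K_1}K_2 - \nabla_{K_2}K_1$ as given by the formula.

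The main obstacle is (a), namely verifying that $\nabla_V K$ actually lies in $T_h\H^3$, i.e.\ $\tau_h(\nabla_V K) = \nabla_V K$. Since $\tau_h(AB) = hB^*A^*h^{-1} = \tau_h(B)\tau_h(A)$, one immediately has $\tau_h([V,K]) = -[V,K]$. For the remaining term, I differentiate the pointwise tangency constraint $\tau_h(K(h)) = K(h)$ in the direction $V$ and simplify using $hK^*h^{-1} = K$; the outcome is the identity $\tau_h(dK(V)) = dK(V) - [V,K]$. Adding the two with the coefficients appearing in the formula yields $\tau_h(\nabla_V K) = \nabla_V K$, and tracelessness is automatic since $\tr K \equiv 0$ and commutators are traceless. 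Uniqueness of the Levi-Civita connection then finishes the proof.
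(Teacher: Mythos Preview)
Your proof is correct. The verification of (a), (b), and (c) all go through as you describe; in particular the identity $\tau_h(dK(V)) = dK(V) - [V,K]$ obtained by differentiating the constraint $\tau_h(K) = K$ is exactly what is needed to combine with $\tau_h([V,K]) = -[V,K]$ and produce $\tau_h(\nabla_V K) = \nabla_V K$.

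The paper takes a different and shorter route: it quotes Donaldson's formula
\[
\nabla^{\LC} H \;=\; dH - \tfrac{1}{2}\bigl(dh\,h^{-1}H + H\,h^{-1}dh\bigr)
\]
for the Levi-Civita connection in the hermitian model and simply transports it to the traceless model via $H = Kh$, i.e.\ computes $\nabla^{\LC}(Kh)\,h^{-1}$. Your approach is more self-contained, since it verifies the axioms of the Levi-Civita connection directly rather than appealing to an external reference; the cost is that the argument is longer, and you must separately establish that the proposed connection preserves the real subbundle $T\HBbb^3 \subset T\HBbb^3 \otimes \CBbb$, a step that is automatic in the paper's approach because it starts from a formula already known to live there.
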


\begin{proof}
On \cite[p.\ 129]{Donaldson:87} it is shown that the connection in the hermitian model is given by
\begin{equation} \label{eqn:levi-civita-hermitian}
{\nabla^{\LC}} H= dH-\frac{1}{2}(dh h^{-1} H + Hh^{-1}dh)\ ,
\end{equation}
for $H\in T_h\HBbb^3$.
Pulling back this connection to the traceless model means computing ${\nabla^{\LC}}(Kh)h^{-1}$, and this gives \eqref{eqn:levi-civita}.
\end{proof}

\subsubsection{Flat bundles on $\HBbb^3$}  We define a rank $2$ hermitian bundle $V\to \HBbb^3$ using the homogeneous space description. More precisely, endow $\SU(2)$ with a right action on $\CBbb^2$: $v \cdot h=h^{-1}v$, for $v\in \CBbb^2$, $h\in \SU(2)$, and then define
$$
V=\left(\SL(2,\CBbb)\times \CBbb^2\right)/\SU(2)\ ,
$$
for the diagonal action. Smooth sections of $V$ then correspond to functions $s\colon \SL(2,\CBbb)\to \CBbb^2$ satisfying: $s(gh)=h^{-1}s(g)$ for $h\in \SU(2)$.  A hermitian structure on $V$ is then derived from the standard inner product on $\CBbb^2$. 
We define a connection on $V$ as follows: $(\widehat\nabla s)(g) = ds(g)+g^{-1}dg\cdot s(g)$.
One easily  verifies that this connection is well defined and \emph{flat}.

Now consider the bundle $\End_0 V\to\HBbb^3$ of traceless endomorphisms of $V$, with its flat connection  $\widehat \nabla$ induced from the connection on $V$ described in the previous paragraph.  This is a rank $3$ complex vector bundle. Recall from the previous section that the trivial bundle $T\HBbb^3\otimes\CBbb$ is also a rank $3$ complex hermitian bundle. We endow it with the trivial connection: $\nabla^\CBbb M:=dM$.

\begin{prop} \label{lem:V-M}
There is a bundle isometry $\End_0 V\isorightarrow T\HBbb^3\otimes\CBbb$ which intertwines the flat connections $\widehat\nabla$ and $\nabla^\CBbb$.
\end{prop}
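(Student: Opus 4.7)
The plan is to write down an explicit isomorphism and then check its three properties (bundle map, isometry, connection-preserving) by a short calculation in the matrix model.

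\textbf{Step 1: definition of the map.} First I would describe $\End_0 V$ using the associated bundle construction: a section corresponds to an equivariant function $M\colon \SL(2,\CBbb)\to \slfrak(2,\CBbb)$ with $M(gh)=h^{-1}M(g)h$ for $h\in\SU(2)$, i.e.\ $\End_0V = (\SL(2,\CBbb)\times\slfrak(2,\CBbb))/\SU(2)$. Then define
\[
\Phi\colon \End_0 V \lra T\HBbb^3\otimes\CBbb, \qquad \Phi\bigl([g,N]\bigr) := gNg^{-1},
\]
thinking of the target in the traceless model as the trivial bundle $\HBbb^3\times\slfrak(2,\CBbb)$ over $\HBbb^3 = \SL(2,\CBbb)/\SU(2)$, with projection $g\mapsto gg^{*}=h$. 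Well-definedness of $\Phi$ under $(g,N)\mapsto (gh, h^{-1}Nh)$ is immediate, and $\Phi$ clearly lands in the subbundle of traceless matrices, which is exactly $T\HBbb^3\otimes\CBbb$ in the traceless description \eqref{eqn:t-model}. Fiberwise $\Phi$ is a linear isomorphism since conjugation by $g\in\SL(2,\CBbb)$ is invertible on $\slfrak(2,\CBbb)$.

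\textbf{Step 2: isometry.} Using the hermitian inner product on $\End_0V$ inherited from the standard inner product on $\CBbb^{2}$, namely $\langle N_1,N_2\rangle = \tfrac12\tr(N_1 N_2^{*})$, I would set $K_i = gN_ig^{-1}$ at $h=gg^{*}$ and compute
\[
\tr\bigl(K_1 h K_2^{*} h^{-1}\bigr) = \tr\bigl(g N_1 g^{-1}\cdot gg^{*}\cdot (gN_2g^{-1})^{*}\cdot (gg^{*})^{-1}\bigr) = \tr(N_1 N_2^{*}),
\]
which matches the metric \eqref{eqn:M-metric} on $T\HBbb^3\otimes\CBbb$. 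Hence $\Phi$ is an isometry of hermitian bundles.

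\textbf{Step 3: intertwining the connections.} The induced connection on $\End_0V$ is $\widehat\nabla M = dM + [g^{-1}dg, M]$; it descends from $\SL(2,\CBbb)$ to $\HBbb^3$ because for $\xi\in\sufrak(2)$ the vertical vector field $X_\xi(g)=g\xi$ kills $\widehat\nabla M$ (one gets $-[\xi,M] + [\xi,M]=0$). To check $\nabla^{\CBbb}\circ\Phi = \Phi\circ\widehat\nabla$, I would take a curve $g(t)\subset\SL(2,\CBbb)$ projecting to a curve $h(t)=g(t)g(t)^{*}$ in $\HBbb^3$, set $K(t) = g(t)M(g(t))g(t)^{-1}$, and compute
\[
\dot K = g\bigl(\dot M + [g^{-1}\dot g, M]\bigr)g^{-1} = g\,\widehat\nabla_{\dot g}M\, g^{-1} = \Phi\bigl(\widehat\nabla M\bigr)(\dot h).
\]
Since $\nabla^{\CBbb}K = dK$ in the trivialization, this is the desired intertwining.

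\textbf{Expected obstacle.} None of the steps is deep; the only mild subtlety is checking that $\widehat\nabla$, originally written on $\SL(2,\CBbb)$, descends to a well-defined connection on the quotient $\HBbb^3$, i.e.\ that the connection one-form $g^{-1}dg$ annihilates equivariant sections in vertical directions. Once this is verified, the three computations above complete the proof.
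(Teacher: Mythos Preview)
Your proof is correct and essentially identical to the paper's: the same map $[g,N]\mapsto gNg^{-1}$, the same isometry computation $\tr(K_1hK_2^{*}h^{-1})=\tr(N_1N_2^{*})$, and the same Leibniz-rule check that $d(gMg^{-1})=g(\widehat\nabla M)g^{-1}$. The only difference is cosmetic---you phrase things via the associated bundle and add the remark that $\widehat\nabla$ annihilates vertical directions, which the paper leaves implicit.
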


\begin{proof}
Endomorphisms $T$ of $V$ are given by functions $T:\SL(2,\CBbb)\to \End_0 \CBbb^2$
such that on sections $s$, $(Ts)(g)=T(g)s(g)$.  Equivariance with respect to $\SU(2)$ implies, $(Ts)(gh)=h^{-1}(Ts)(g)$, or
$$
h^{-1}T(g)s(g)=T(gh)s(gh)=T(gh)h^{-1}s(g)\ .
$$
Since the section is arbitrary, it follows that we must have $T(gh)= h^{-1}T(g)h$.  In particular, 
\begin{equation} \label{eqn:M_iso}
M(h)=gT(g)g^{-1}\ ,\ h=gg^{\ast}\ ,
\end{equation}
 is a well defined traceless $2\times 2$-matrix valued function on $\HBbb^3$, and so this defines the map above. This is an isometry, since the hermitian structures are given by:
$$
\langle T_1, T_2\rangle=\frac{1}{2}\tr(T_1 T_2^{\ast})= \frac{1}{2}\tr(g^{-1}M_1g(g^{-1}M_2 g)^{\ast})= \frac{1}{2}\tr(M_1 hM_2^{\ast} h^{-1})=\langle M_1,M_2\rangle_h\ .
$$
The induced connection on the endomorphism bundle $\End_0 V$ is: 
$$\widehat\nabla T=dT+[g^{-1}dg, T]\ .$$
  On the other hand,
$$
\nabla^\CBbb M = d(gTg^{-1}) = g dT g^{-1} + g[g^{-1}dg, T]g^{-1} = g(\widehat\nabla T)g^{-1}\ ,
$$
which via \eqref{eqn:M_iso} proves that the connections are intertwined. 
\end{proof}

The main result of this subsection is now the following.

\begin{prop} \label{prop:main-connection}
Recall that $\nabla^\CBbb$ denotes the trivial connection on $T\HBbb^3\otimes \CBbb$. With respect to its hermitian structure, $\nabla^\CBbb=d_A+\Psi$, where $\Psi(h)=\frac{1}{2}[dhh^{-1}, \cdot]$ is a hermitian endomorphism valued $1$-form and $d_A$ is unitary. The real structure $\tau$ is flat with respect to $d_A$, and $d_A$ induces the Levi-Civita connection $\nabla^{\LC}$ on the fixed point set of $\tau$ which is isomorphic to $T\HBbb^3$.
\end{prop}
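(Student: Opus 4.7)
The plan is to verify the four assertions by direct computation in the matrix model, using the metric \eqref{eqn:M-metric}, the real structure \eqref{eqn:tau}, and the explicit formula \eqref{eqn:levi-civita} for the Levi-Civita connection provided by Lemma~\ref{lem:levi-civita}.

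\textbf{Step 1: $\Psi$ is hermitian-valued.} I would first check that the endomorphism $\Psi(h) = \tfrac12[dhh^{-1},\cdot\,]$ is self-adjoint with respect to $\langle\,\cdot\,,\,\cdot\,\rangle_{\HBbb^3,h}$. Since $h$ is hermitian, $dh$ is hermitian, so $(dhh^{-1})^{\ast}=h^{-1}dh$ and hence $\tau_{h}(dhh^{-1})=dhh^{-1}$. Expanding
\[
\langle \Psi M_{1},M_{2}\rangle_{h}= \tfrac{1}{4}\tr\bigl([dhh^{-1},M_{1}]\,hM_{2}^{\ast}h^{-1}\bigr)
\]
and moving $dhh^{-1}$ through the trace by cyclicity reproduces $\langle M_{1},\Psi M_{2}\rangle_{h}$, giving self-adjointness. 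The same manipulation yields $\tau_{h}\circ\Psi=-\Psi\circ\tau_{h}$, a fact needed below.

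\textbf{Step 2: $d_{A}:=\nabla^{\CBbb}-\Psi$ is unitary.} Differentiating $\langle M_{1},M_{2}\rangle_{h}=\tfrac12\tr(M_{1}hM_{2}^{\ast}h^{-1})$ with respect to $h$ produces, in addition to the two ``derivative of $M_{i}$'' contributions (which are precisely $\langle\nabla^{\CBbb}M_{1},M_{2}\rangle_{h}+\langle M_{1},\nabla^{\CBbb}M_{2}\rangle_{h}$), two extra terms involving $dh$ and $d(h^{-1})=-h^{-1}dhh^{-1}$. Using cyclicity of trace, these two extra terms combine to $-2\langle\Psi M_{1},M_{2}\rangle_{h}$, which by self-adjointness equals $-\langle\Psi M_{1},M_{2}\rangle_{h}-\langle M_{1},\Psi M_{2}\rangle_{h}$. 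Therefore
\[
d\langle M_{1},M_{2}\rangle_{h}=\langle d_{A}M_{1},M_{2}\rangle_{h}+\langle M_{1},d_{A}M_{2}\rangle_{h},
\]
so $d_{A}$ is unitary.

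\textbf{Step 3: $\tau$ is $d_{A}$-parallel.} I would compute
\[
\nabla^{\CBbb}(\tau_{h}M)=h(dM)^{\ast}h^{-1}+dh\,M^{\ast}h^{-1}-hM^{\ast}h^{-1}dh\,h^{-1}=\tau_{h}(\nabla^{\CBbb}M)+2\Psi(\tau_{h}M),
\]
using $\tau_{h}(\nabla^{\CBbb}M)=h(dM)^{\ast}h^{-1}$ and identifying the remaining commutator with $2\Psi(\tau_{h}M)$. Subtracting $\Psi(\tau_{h}M)$ from both sides and using the anticommutation $\Psi\tau_{h}=-\tau_{h}\Psi$ from Step~1 gives
\[
d_{A}(\tau_{h}M)=\tau_{h}(\nabla^{\CBbb}M)+\Psi(\tau_{h}M)=\tau_{h}(\nabla^{\CBbb}M)-\tau_{h}(\Psi M)=\tau_{h}(d_{A}M),
\]
so $d_{A}$ commutes with $\tau$. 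Consequently $d_{A}$ restricts to a connection on the real subbundle $T\HBbb^{3}=\mathrm{Fix}(\tau)$, and by Step~2 this restriction is metric-compatible.

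\textbf{Step 4: Identification with $\nabla^{\LC}$.} For a real section $K$ in the traceless model, the formula from Step~2 reads $d_{A}K=dK-\tfrac{1}{2}[dhh^{-1},K]$, which is exactly the expression for $\nabla^{\LC}K$ given in Lemma~\ref{lem:levi-civita}. Since $\nabla^{\LC}$ is the unique torsion-free metric connection, this identifies $d_{A}|_{T\HBbb^{3}}$ with the Levi-Civita connection.

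The main obstacle is purely bookkeeping: keeping the factors of $\tfrac12$, the signs, and the trace rearrangements straight, and cleanly separating the hermitian and skew-hermitian components of $\nabla^{\CBbb}$ with respect to the $h$-dependent hermitian form. Once the identity $\tau_{h}(dhh^{-1})=dhh^{-1}$ is in hand, every subsequent step reduces to one or two applications of cyclicity of the trace.
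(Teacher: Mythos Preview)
Your proof is correct and follows essentially the same route as the paper: both are direct computations in the matrix model, using the explicit formulas for the metric, the real structure $\tau_h$, and Lemma~\ref{lem:levi-civita}. The only organizational difference is that you isolate the algebraic identities (self-adjointness of $\Psi$ and the anticommutation $\tau_h\Psi=-\Psi\tau_h$) as a preliminary Step~1 and then use them to streamline Step~3, whereas the paper computes $(d_A\tau)(M):=d_A(\tau M)-\tau(d_A M)$ in a single brute-force expansion; the underlying manipulations are the same.
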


\begin{proof}
We calculate the hermitian part of the connection $\Psi$. From \eqref{eqn:M-metric},
\begin{align*}
d\langle M_1, M_2\rangle_h&=\langle dM_1, M_2\rangle_h+\langle M_1, dM_2\rangle_h+\langle [M_1, dhh^{-1}], M_2\rangle_h \\
0&=2\langle \Psi_h M_1, M_2\rangle_h+\langle [M_1, dhh^{-1}], M_2\rangle_h 
\end{align*}
which implies $\Psi$ has the form in the statement above. Hence,
\begin{equation} \label{eqn:dA}
d_A=d-\frac{1}{2}[dhh^{-1}, \cdot]
\end{equation}
Next, from \eqref{eqn:tau}
\begin{align*}
(d_A\tau)(M)&:= d_A(\tau(M))-\tau(d_AM) \\
&=
d(hM^{\ast} h^{-1})-\frac{1}{2}[dhh^{-1}, hM^{\ast} h^{-1}] -h\Bigl( dM-\frac{1}{2}[dhh^{-1},M]\Bigr)^{\ast} h^{-1} \\
&=
hdM^{\ast} h^{-1}+[dhh^{-1}, hM^{\ast} h^{-1}]-\frac{1}{2}[dhh^{-1}, hM^{\ast} h^{-1}]  \\
&\qquad\qquad
-hdM^{\ast} h^{-1}-\frac{1}{2}h[h^{-1}dh, M^{\ast}] h^{-1} \\
&=0\ .
\end{align*}
Hence, $\tau$ is flat with respect to $d_A$, and so $d_A$ induces an $\SO(3)$ connection on $T\HBbb^3$. Comparing \eqref{eqn:dA} with \eqref{eqn:levi-civita}, we see that this is the Levi-Civita connection.
\end{proof}

\subsubsection{Flat connections and equivariant maps} \label{sec:connections-maps}

Recall from the previous paragraph the definition of the flat bundle $V\to \HBbb^3$. Sections of the dual bundle $V^{\ast}$ are functions
$s^{\ast} : \SL(2,\CBbb)\to (\CBbb^2)^{\ast}$ satisfying the condition $s^{\ast}(gh)=s^{\ast}(g)\circ h$ for all $h\in \SU(2)$. Moreover, the flat connection on $V$ induces one on $V^{\ast}$, which we denote with the same notation $\widehat \nabla$. In terms of this description of sections, 
$\widehat\nabla s^{\ast} = ds^{\ast}-s^{\ast}\circ g^{-1}dg$. Fix a unitary frame $\{v_1, v_2\}$ for $\CBbb^2$, and let $\{v_1^{\ast}, v_2^{\ast}\}$ be the dual frame. We express the matrix elements of $g\in \SL(2,\CBbb)$ as $g_{ij}$.

\begin{prop} \label{lem:global-flat}
The functions $s^{\ast}_i(g)=\sum_{j=1}^2 g_{ij}v_j^{\ast}$ give global parallel sections of $V^{\ast}$. Moreover, 
$s^{\ast}_i(g_1g_2)= (g_1)_{ij}s^{\ast}_j(g_2)$.
\end{prop}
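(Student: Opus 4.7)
The proof is a direct verification unfolding the definitions, so the plan is essentially to check three items in sequence: that each $s_i^\ast$ is a well-defined section of $V^\ast$, that $\widehat\nabla s_i^\ast = 0$, and that the advertised composition law holds. None of these steps looks delicate; the main obstacle is purely bookkeeping, namely keeping the left/right and primal/dual conventions consistent with the homogeneous-space setup of \textsection\ref{sec:connections-maps}.

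First I would confirm that the formula $s_i^\ast(g) = \sum_j g_{ij} v_j^\ast$ defines a section of $V^\ast$ by checking the required $\SU(2)$-equivariance $s^\ast(gh) = s^\ast(g)\circ h$. Writing a vector in the frame $\{v_j\}$, the section $s_i^\ast(g)$ acts as $v\mapsto (gv)_i$, so that $s_i^\ast(gh)(v) = (ghv)_i = s_i^\ast(g)(hv)$, which is exactly the equivariance condition. (This step also matches the dual of the rule $s(gh) = h^{-1}s(g)$ recorded for $V$.)

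Next, I would verify $\widehat\nabla s_i^\ast = 0$ using the formula $\widehat\nabla s^\ast = ds^\ast - s^\ast\circ g^{-1}dg$ stated just above the proposition. A short computation gives $ds_i^\ast(g) = \sum_j dg_{ij}\,v_j^\ast$, while the composition $s_i^\ast(g)\circ g^{-1}dg$, evaluated on a vector $v$, equals $(g\cdot g^{-1}dg\cdot v)_i = (dg\cdot v)_i = \sum_j dg_{ij}\,v^j$, so it represents the same covector $\sum_j dg_{ij}\,v_j^\ast$. The two contributions cancel, proving flatness; since each $s_i^\ast$ is defined on all of $\SL(2,\CBbb)$ and descends to $\HBbb^3$, it is indeed a \emph{global} parallel section.

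Finally the composition identity is a one-line matrix computation: $s_i^\ast(g_1g_2) = \sum_j (g_1g_2)_{ij} v_j^\ast = \sum_{j,k} (g_1)_{ik}(g_2)_{kj} v_j^\ast = \sum_k (g_1)_{ik}\,s_k^\ast(g_2)$. The only conceptual remark worth adding is that the existence of two global flat sections forces $V^\ast$ to be globally trivialized by $\{s_1^\ast, s_2^\ast\}$, and the composition law then records how this trivialization transforms under the $\SL(2,\CBbb)$-action, which is the reason the statement is formulated before being used in the subsequent construction of the equivariant map.
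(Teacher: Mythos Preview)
Your proof is correct and follows essentially the same approach as the paper's: both verify $\widehat\nabla s_i^\ast=0$ by computing $ds_i^\ast$ and $s_i^\ast\circ g^{-1}dg$ separately and observing they agree (the paper evaluates on a basis vector $v_k$ rather than a general $v$, but the computation is identical), and both treat the composition law as a one-line matrix identity. You additionally verify the $\SU(2)$-equivariance making $s_i^\ast$ a section, which the paper leaves implicit.
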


\begin{proof}
We have $ds^{\ast}_i(v_k)=dg_{ik}$. Similarly,
$$
s_i^{\ast}\circ g^{-1}dg(v_k)= s_i^{\ast}\circ (g^{-1})_{jm}dg_{mk} v_j= g_{ij}(g^{-1})_{jm}dg_{mk}=dg_{ik}\ .
$$
The second statement is clear.
\end{proof}

We now present the general construction. Let $E\to X$ be a hermitian vector bundle, and let $\nabla$ be a flat  $\SL(2,\CBbb)$ connection on $E$.
The pullback of $E$ and $\nabla$ to the universal cover $\widetilde X\to X$ will be denoted with the same notation.
Choose a base point $p_0\in X$, and a lift $\tilde p_0\in \widetilde X$.
 Fix a unitary frame $\{e_1, e_2\}$ of the fiber $E_{p_0}$ (and therefore also $E_{\tilde p_0}$).  We have a uniquely determined global frame $\{\widetilde e_1, \widetilde e_2\}$ for $E\to \widetilde X$ that is parallel with respect to $\nabla$ and
  which agrees with $\{e_1, e_2\}$ at $\tilde p_0$.  Let $u_{ij}$ be the hermitian matrix $u_{ij}(p)=\langle \widetilde e_i, \widetilde e_j\rangle(p)$. Then $u_{ij}$ is hermitian and positive.  We claim that $\det u=1$.  
Indeed, write $\nabla=d_A+\Psi$, where $d_A$ is a unitary connection on $E$ and $\Psi$ a $1$-form with values in $\sqrt{-1}\g_E$.
Let $\{\hat e_1, \hat e_2\}$ be a unitary frame at $p$, $\widetilde e_i(p)=g_{ij}\hat e_j$, $\Psi(p) \hat e_i =\Psi_{ij}\hat e_j$. Then
at the point $p$, 
\begin{equation} \label{eqn:computation-du}
	du_{ij}=\langle d_A\widetilde e_i, \widetilde e_j\rangle+\langle \widetilde e_i, d_A\widetilde e_j\rangle=-2\langle \Psi(p)\widetilde e_i, \widetilde e_j\rangle=-2(g\Psi g^{\ast})_{ij}
\end{equation}
At the same time, $u_{ij}(p)=\langle\widetilde e_i, \widetilde e_j\rangle(p)=(gg^{\ast})_{ij}$. Hence,
$$
d\log\det u=\tr(u^{-1}du)=-2\tr\left( (gg^{\ast})^{-1}(g\Psi g^{\ast})\right)=-2\tr\Psi=0\ ,
$$
since $\Psi_{ij}$ is traceless. Therefore, $\det u(p)=\det u(\widetilde p_0)=1$ for all $p\in \widetilde X$. Hence, $u(p)\in \Dscr$, and we have therefore defined a map 
$u:\widetilde X\to \HBbb^3$ which
 sends the point $\widetilde p_0$ to the base point of $\Dscr$.  We also note for future reference that from \eqref{eqn:computation-du},
\begin{equation} \label{eqn:u_iso}
du\, u^{-1}=-2g\Psi g^{-1}\ .
\end{equation}

Let $\rho:\pi_1\to \SL(2,\CBbb)$ be the holonomy representation of $\nabla$ with respect to the frame $\{e_1,e_2\}$. Via the choice of base point $\tilde p_0$ we may view $\pi_1$ as acting on $\widetilde X$ by deck transformations. By definition, if $\rho(\gamma)=(g_{ij})$, then
$\tilde e_i(\gamma  p)=g_{ij}\tilde e_j(p)$ for any $p\in \HBbb^2$. Therefore,
$$
u_{ij}(\gamma p)=\langle g_{ik}\tilde e_k(p), g_{jm}\tilde e_m(p)\rangle=g_{ik}u_{km}(p)g^{\ast}_{mj},
$$
or $u(\gamma p)=\rho(\gamma)u(p)(\rho(\gamma))^{\ast}$. Thus, $u$ is equivariant with respect to the action of the holonomy representation $\rho$ on $\HBbb^3$.

\begin{prop} \label{lem:pull-back}
There is a $\pi_1$-equivariant isometry $u^{\ast} V^{\ast}\isorightarrow E$ that intertwines the flat connections $u^\ast\widehat\nabla$ and $\nabla$.
\end{prop}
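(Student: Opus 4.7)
The plan is to build the isomorphism using the two explicit parallel frames we already have in hand: the global parallel sections $s_1^{\ast}, s_2^{\ast}$ of $V^{\ast}$ from Proposition \ref{lem:global-flat}, and the $\nabla$-parallel frame $\widetilde e_1, \widetilde e_2$ of $E$ on $\widetilde X$ used to define $u$. First I would define the candidate bundle map $\Phi : u^{\ast}V^{\ast} \to E$ by declaring $\Phi(u^{\ast}s_i^{\ast}) = \widetilde e_i$ and extending linearly over $\widetilde X$. Since both $\{u^{\ast}s_i^{\ast}\}$ and $\{\widetilde e_i\}$ are trivializing frames, this defines a smooth bundle isomorphism covering $u$.

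Next I would verify the three required properties. For the isometry property, at $p \in \widetilde X$ write $h = u(p) = gg^{\ast}$. From Proposition \ref{lem:global-flat} the fiber representation gives $s_i^{\ast}(g) = g_{ij}v_j^{\ast}$, and dualizing the standard unitary structure on $V$ (where $\{v_j\}$ is orthonormal) yields
\[
\langle s_i^{\ast}(g), s_k^{\ast}(g)\rangle_{V^{\ast}} = \sum_j g_{ij}\overline{g_{kj}} = (gg^{\ast})_{ik} = u_{ik}(p)\ ,
\]
which by construction of $u_{ik}$ equals $\langle \widetilde e_i, \widetilde e_k\rangle_E(p)$. Hence $\Phi$ is a fiberwise isometry. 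For the connection property, the sections $s_i^{\ast}$ are globally $\widehat\nabla$-parallel, so $u^{\ast}\widehat\nabla(u^{\ast}s_i^{\ast}) = 0$; on the other side $\nabla \widetilde e_i = 0$ by choice of frame. Since $\Phi$ sends a parallel frame to a parallel frame, it intertwines $u^{\ast}\widehat\nabla$ with $\nabla$.

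Finally, for $\pi_1$-equivariance, recall $u(\gamma p) = \rho(\gamma)u(p)\rho(\gamma)^{\ast}$; if $u(p)$ is represented by $g$, then $u(\gamma p)$ is represented by $\rho(\gamma)g$. The transformation rule $s_i^{\ast}(g_1g_2) = (g_1)_{ij}s_j^{\ast}(g_2)$ from Proposition \ref{lem:global-flat} then gives
\[
(u^{\ast}s_i^{\ast})(\gamma p) = s_i^{\ast}(\rho(\gamma)g) = \rho(\gamma)_{ij}\, s_j^{\ast}(g) = \rho(\gamma)_{ij}\, (u^{\ast}s_j^{\ast})(p)\ ,
\]
which matches exactly the transformation law $\widetilde e_i(\gamma p) = \rho(\gamma)_{ij}\widetilde e_j(p)$ for the parallel frame of $E$. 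Hence $\Phi$ commutes with the deck/monodromy actions.

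I do not expect any real obstacle here: everything is forced once one uses the two canonical parallel frames, and the only point requiring care is bookkeeping of the hermitian duality convention on $V^{\ast}$ so that the pairing on the frame $\{s_i^{\ast}(g)\}$ produces the matrix $gg^{\ast}$, matching the definition of $u$. The argument is essentially a reformulation of the construction of $u$ itself in \textsection\ref{sec:connections-maps}.
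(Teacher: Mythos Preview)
Your proposal is correct and follows essentially the same approach as the paper: define the map by sending the pulled-back parallel frame $u^{\ast}s_i^{\ast}$ to the parallel frame $\widetilde e_i$, then check isometry via $\langle s_i^{\ast}, s_j^{\ast}\rangle = (gg^{\ast})_{ij} = u_{ij} = \langle \widetilde e_i, \widetilde e_j\rangle$, intertwining of connections because parallel frames go to parallel frames, and equivariance via the transformation rule $s_i^{\ast}(g_1g_2) = (g_1)_{ij}s_j^{\ast}(g_2)$. Your write-up is slightly more explicit in separating the three verifications, but the argument is the same.
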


\begin{proof}
Recall the sections of $V^{\ast}$ from Lemma \ref{lem:global-flat}.  Then the bundle isomorphism is defined by identifying 
$[g, s^{\ast}_i(g)]\mapsto \widetilde e_i(p)$, where $u(p)=gg^{\ast}$. By the second statement of Lemma \ref{lem:global-flat}, this identification is equivariant with respect to the action of $\pi_1$. Since the identification is between flat sections, the connections are manifestly intertwined. It remains to check that this is an isometry.  But
$$
\langle s^{\ast}_i, s^{\ast}_j\rangle(u(p))= g_{ik}\overline{g_{jm}}\langle v_k^{\ast}, v_m^{\ast}\rangle=  g_{ik}\overline{g_{jk}} = u_{ij}(p) =\langle \widetilde e_i, \widetilde e_j\rangle(p)\ .
$$
This completes the proof.
\end{proof}

The next proposition is the main consequence of the discussion above.

\begin{prop} \label{prop:levi-civita-main}
Let $E\to X$ be a hermitian rank $2$ vector bundle with a flat $\SL(2,\CBbb)$ connection $\nabla$ and holonomy representation $\rho\colon \pi_1\to \SL(2,\CBbb)$.
Write $\nabla=d_A+\Psi$, where $d_A$ is a unitary connection on $E$ and $\Psi$ is a $1$-form with values in $\sqrt{-1}\g_E$.
Let $u\colon\widetilde X\to \HBbb^3$
 be the $\rho$-equivariant map described above. Then $\sqrt{-1}\g_E$ may be isometrically identified with $u^{\ast} T\HBbb^3$. Under this identification the induced connection $d_A$ corresponds to the pullback of the Levi-Civita connection on $\HBbb^3$, and the $1$-form $-2 \Psi$ corresponds to the differential $du$ of the map $u$.
\end{prop}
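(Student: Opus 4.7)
The plan is to assemble Proposition \ref{prop:levi-civita-main} by composing the three preparatory identifications established earlier: the isomorphism $\End_{0}V\cong T\HBbb^3\otimes\CBbb$ of Proposition \ref{lem:V-M}; the decomposition $\nabla^{\CBbb}=d_{A}^{LC}+\Psi^{LC}$ of the trivial connection from Proposition \ref{prop:main-connection}; and the equivariant flat isometric identification $u^{\ast}V^{\ast}\cong E$ from Proposition \ref{lem:pull-back}.

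First I would pass to endomorphism bundles. Combining $E\cong u^{\ast}V^{\ast}$ with the natural identification $\End_{0}V^{\ast}\cong \End_{0}V$ (which preserves both the induced flat connection and the induced hermitian structure), and then composing with Proposition \ref{lem:V-M}, one obtains an isometric isomorphism of hermitian bundles
\begin{equation*}
\Theta\colon u^{\ast}(T\HBbb^3\otimes\CBbb)\isorightarrow \End_{0}E
\end{equation*}
which intertwines the pullback $u^{\ast}\nabla^{\CBbb}$ with the flat connection $\nabla$ on $\End_{0}E$. One then checks that the real structure $\tau$ of \eqref{eqn:tau} corresponds under $\Theta$ to ordinary hermitian conjugation of endomorphisms of $E$, since Proposition \ref{lem:V-M} was constructed using precisely that conjugation in \eqref{eqn:M-metric}. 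Restricting $\Theta$ to the $\tau$-fixed sets yields the desired isometry $u^{\ast}T\HBbb^3\isorightarrow \sqrt{-1}\g_E$, which is part (i).

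Next I would invoke uniqueness of the decomposition of a flat $\SL(2,\CBbb)$ connection on a hermitian bundle into its unitary and hermitian-valued parts (\emph{i.e.}, orthogonal projection onto the skew-hermitian and hermitian summands of endomorphisms). By Proposition \ref{prop:main-connection}, on the $\HBbb^3$ side this splitting is $\nabla^{\CBbb}=d_{A}^{LC}+\Psi^{LC}$, with $d_{A}^{LC}$ inducing the Levi-Civita connection on the $\tau$-fixed set; on the $E$ side it is $\nabla=d_{A}+\Psi$. Since $\Theta$ is an isometry intertwining flat connections and carrying real structures to real structures, it matches these decompositions summand-by-summand. Therefore $d_{A}$ corresponds to the pullback of the Levi-Civita connection under the isometry from (i), proving the statement about the connection.

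For the remaining identification of $-2\Psi$ with $du$, I would use the explicit formula $M(h)=gT(g)g^{-1}$, $h=gg^{\ast}$, for the isomorphism in Proposition \ref{lem:V-M}, together with the computation $du\,u^{-1}=-2g\Psi g^{-1}$ recorded in \eqref{eqn:u_iso}. In the traceless model \eqref{eqn:t-model}, the vector $du\in T_{h}\HBbb^3$ is represented precisely by $K=du\,u^{-1}$, and this equals the image under $\Theta$ of the endomorphism $-2\Psi$ of $E$, establishing part (iii). The main obstacle will be bookkeeping the several natural identifications at play: (a) the dualization $V^{\ast}\leftrightarrow V$ when passing to traceless endomorphisms, (b) the change between the hermitian model \eqref{eqn:h-model} and the traceless model \eqref{eqn:t-model} of $T\HBbb^3$ (multiplication by $h^{-1}$), and (c) the passage between the $\nabla$-parallel frame $\{\widetilde{e}_{i}\}$ and a local unitary frame for $E$, encoded by $g$. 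Once consistent conventions are fixed, all signs and factors of $2$ reduce cleanly to the two displayed formulas, and the three parts of the Proposition fall out uniformly.
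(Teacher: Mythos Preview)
Your proposal is correct and follows essentially the same route as the paper: compose the identifications from Propositions \ref{lem:pull-back}, \ref{lem:V-M}, and \ref{prop:main-connection}, then read off the $-2\Psi\leftrightarrow du$ correspondence from \eqref{eqn:u_iso} and \eqref{eqn:M_iso}. Your version is slightly more explicit about why the unitary/hermitian decompositions match (via uniqueness and compatibility with the real structure $\tau$), but the substance is identical.
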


\begin{proof}
By Proposition \ref{lem:pull-back}, the connection on $E$ pulls back from the one on $V^{\ast}$. The induced connection on $\End_0 E$ is therefore the pullback of the one on $\End_0 V$. Since these bundles are isometric, the subbundle $\sqrt{-1}\g_E$ identifies with the bundle of traceless hermitian endomorphisms of $V$. By Propositions \ref{lem:V-M} and \ref{prop:main-connection}, the latter is isometric to $T\HBbb^3$, and the induced connection is Levi-Civita.
 The computation in \eqref{eqn:u_iso} shows $du\, u^{-1}=-2g\Psi g^{-1}$. 
 Combined with the identification \eqref{eqn:M_iso}  this yields the claimed relation between $\Psi$ and the differential of the map $u$ in the traceless model. 
 \end{proof}

\begin{proof}[Proof of Theorem \ref{thm:DonaldsonIsomorphism}]
 The proof of (i-iii)  follow from Proposition \ref{prop:levi-civita-main} . 
For (iv), use \eqref{eqn:u_iso} and definition of the metric \eqref{eqn:metric-on-H3} to compute:
$$
{\rm Hopf}(u)=\frac{1}{2}\tr(duu^{-1}\otimes duu^{-1})^{2,0}=2\tr(\Psi\otimes\Psi)^{2,0}.
$$
This completes the proof of the Proposition.
\end{proof}

\begin{remlabel} \upshape
The  construction above is natural with respect to the action of unitary gauge transformations on pairs  $(A,\Psi)$. Namely, modifying $(A,\Psi)$ by $g^{\ast}(A,\Psi)$ where $g\in \mathcal G$ is a unitary gauge transformation results in conjugating the representation $\rho$ and the map $u$ by some element in $\SU(2)$.
\end{remlabel}

\subsubsection{The self-duality equations and harmonic maps}

Up to this point, the choice of hermitian metric on the bundle $E$ was arbitrary and not related to the holonomy representation $\rho$ determined by the flat $\SL(2,\CBbb)$ connection $\nabla$. For this reason, the pair $(A,\Psi)$ resulting from the decomposition of $\nabla$ into its unitary and hermitian part as in Proposition \ref{prop:levi-civita-main} will in general not satisfy any equation apart from the  flatness of  $\nabla$, which is equivalent to the first two equations
of \eqref{eq:sde}.
 Likewise, the construction of the $\rho$-equivariant map $u$ depends on the hermitian metric on $E$ and hence this map will in general not  enjoy any special properties. 
 The link to the extra structure is provided by the following.

\begin{prop}[\cite{Donaldson:87}] \label{prop:donaldson}
	Let $E\to X$ be a rank $2$ vector bundle with hermitian metric $h$ and a flat $\SL(2,\CBbb)$ connection $\nabla$ and corresponding holonomy representation $\map{\rho}{\pi_1}{\SL(2,\CBbb)}$. Denote by $
\nabla=d_A+\Psi
$,
the unique decomposition of $\nabla$ into a unitary connection $d_A$ on $(E,h)$ and a $1$-form $\Psi$ with values in $\sqrt{-1}\g_E$. Let moreover
$
\map{u}{\Xtilde}{\HBbb^3}
$
be a $\rho$-equivariant smooth map as in Proposition
 \ref{prop:levi-civita-main}. Then the pair $(A,\Psi)$  satisfies the self-duality equations \eqref{eq:sde}  if and only if the map $u$ is harmonic.
\end{prop}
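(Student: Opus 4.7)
The plan is to exploit the identifications provided by Proposition~\ref{prop:levi-civita-main} in two separate steps: first use them to read off the hermitian/skew-hermitian decomposition of $\nabla^{2}$, and then translate the remaining codifferential condition directly into the harmonic map equation.

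First I would expand $\nabla^{2}=(d_{A}+\Psi)^{2}=F_{A}+d_{A}\Psi+\Psi\wedge\Psi$ and observe that, because $d_{A}$ is unitary (so its curvature is skew-hermitian, $F_{A}\in\Omega^{2}(X,\g_{E})$) while $\Psi\in\Omega^{1}(X,\sqrt{-1}\g_{E})$ is hermitian-valued, the bracket $\Psi\wedge\Psi=\tfrac{1}{2}[\Psi\wedge\Psi]$ is skew-hermitian and $d_{A}\Psi$ is hermitian. Splitting the flatness equation $\nabla^{2}=0$ into its skew-hermitian and hermitian components therefore yields precisely
\[
F_{A}+[\Psi\wedge\Psi]=0\quad\text{and}\quad d_{A}\Psi=0,
\]
which are the first two equations of \eqref{eq:sde}. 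These are automatic from the hypothesis that $\nabla$ is flat, so only the third equation $d_{A}(\ast\Psi)=0$ is at stake.

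Next I would unpack the harmonic map equation via Proposition~\ref{prop:levi-civita-main}. Under the isometric bundle identification $\sqrt{-1}\g_{E}\cong u^{\ast}T\HBbb^{3}$, the connection $d_{A}$ corresponds to the pullback Levi-Civita connection $u^{\ast}\nabla^{\LC}$, and the hermitian $1$-form $-2\Psi$ corresponds to $du\in\Omega^{1}(X,u^{\ast}T\HBbb^{3})$. The tension field of $u$, whose vanishing defines harmonicity, is
\[
\tau(u)=\tr_{g}\bigl(\nabla du\bigr)=d^{\ast}_{u^{\ast}\nabla^{\LC}}\,du,
\]
an equation which on a Riemann surface is conformally invariant and hence intrinsic to the complex structure $J$ on $X$. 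Since on the two-dimensional source $d^{\ast}_{A}=-\ast d_{A}\ast$ (acting on $1$-forms), the condition $d_{A}(\ast\Psi)=0$ is equivalent to $d^{\ast}_{A}\Psi=0$, which under the identification above is exactly $d^{\ast}_{u^{\ast}\nabla^{\LC}}(du)=0$, i.e.\ $u$ is harmonic. Conversely, if $u$ is harmonic, running the identification backwards gives $d_{A}(\ast\Psi)=0$, so together with the first two equations the pair $(A,\Psi)$ solves \eqref{eq:sde}.

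The main obstacle is bookkeeping rather than conceptual: one must verify that the hermitian/skew-hermitian splitting of $\gfrak_{E}^{\CBbb}$-valued forms is respected by $d_{A}$ and is orthogonal so that the two components of $\nabla^{2}=0$ vanish independently, and that the signs, factors of two, and conformal weights in Proposition~\ref{prop:levi-civita-main}(ii)--(iii) are threaded correctly through the relation between $d^{\ast}_{A}\Psi$ and the tension field of $u$. Once these normalizations are in place, both implications follow immediately from the same dictionary between the gauge-theoretic data $(d_{A},\Psi)$ and the Riemannian-geometric data $(u^{\ast}\nabla^{\LC},du)$.
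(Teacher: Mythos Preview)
Your argument is correct and is precisely the proof the paper sets up but does not write out: the paper states this proposition without proof, citing Donaldson, but the surrounding text already records the two facts you use, namely that flatness of $\nabla$ is equivalent to the first two equations of \eqref{eq:sde} (the sentence immediately preceding the proposition), and that under the identification of Proposition~\ref{prop:levi-civita-main} the pair $(d_A,-2\Psi)$ corresponds to $(u^\ast\nabla^{\LC},du)$, so that $d_A(\ast\Psi)=0$ translates into the harmonic map equation $d_{\nabla^{\LC}}^\ast du=0$ as defined in \textsection\ref{eq:resultdonaldsonharmonic}. There is nothing to compare; you have simply filled in the argument the authors left implicit.
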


\begin{remlabel} \upshape
A hermitian metric $h$ on the bundle $E$ such that the corresponding $\rho$-equivariant map $u$ is harmonic is called a \textbf{harmonic metric}. In \cite{Donaldson:87} it is shown that  a harmonic metric exists whenever the representation $\rho:\pi_1\to \SL(2,\CBbb)$ is completely reducible. 
It is unique if $\rho$ is irreducible. In this case, the resulting solution $(A,\Psi)$ of the self-duality equation is also irreducible.	 
In this paper, we consider monodromies associated to pleated surfaces, and the representations are therefore automatically irreducible (\emph{cf}.\ \cite[p.\ 36]{Bonahon:shearing}).
\end{remlabel}

 \subsection{Laminations} \label{sec:laminations} In this section, we briefly review some of the topological objects that will be used in our description of the images of high energy harmonic maps.

 \subsubsection{Measured foliations and laminations} \label{sec:measured-foliations}
 A {\bf measured foliation} on a surface $\Sigma$ is a partial foliation $\Fcal$ of the surface with a finite number of $k$-pronged singularities, equipped with a measure on transverse arcs.  The  examples we consider in this paper are the {\bf horizontal and vertical foliations} of a holomorphic quadratic differential $q$ with simple zeroes, $q\in \QD^\ast(X)$, which we denote by $\Fcal_q^h$ and $\Fcal_q^v$, respectively. 
In the notation of \textsection{\ref{sec:spectralcurve}}, 
these can be defined as follows. At each point of the spectral curve $\widehat X_q^\times$, consider a (real) unit tangent vector $\hat u$
with $\imag\left(\lambda_{\SW}(\hat u)\right)=0$. Then the flow lines of $\hat u$ integrate locally to give a foliation independent of the choice of sign of $\hat u$ and invariant under the involution $\sigma$. It therefore projects to a foliation on $X^\times$, and this is $\Fcal_q^h$,
the horizontal foliation of $q$.
The  vertical foliation of $q$, $\Fcal_q^v$, is transverse to $\Fcal_q^h$, and is defined similarly using the real part.  We denote the lifts of the foliations to the universal cover $\widetilde X$ by $\widetilde \Fcal_q^h$ and $\widetilde\Fcal_q^v$.

A {\bf critical leaf} of  $\Fcal_q^h$ is a segment of a horizontal leaf  terminating at a zero of $q$. A {\bf saddle connection} of the horizontal (\emph{resp}.\ vertical) foliation is a horizontal (\emph{resp}.\ vertical) leaf joining two zeroes. Following \cite[\textsection{3}]{Levitt83}, when we  refer to a path in $\widetilde \Fcal_q^h$ as a  \emph{horizontal leaf}, we implicitly mean that it either contains no zeroes of $\widetilde q$, or when it meets critical points it either turns consistently to the right or to the left with respect to the cyclic ordering on the critical leaves terminating at a give zero. Saddle connections will play an important technical role in this paper, but there is a distinction between vertical and horizontal saddles, as discussed in the Introduction.

The foliations $\Fcal_q^h$ and $\Fcal_q^v$ come equipped with transverse measures. 
If $k$ is a $C^1$ arc transverse to $\Fcal_q^h$, then we can lift $k$  to a parametrized arc $\hat k$ in $\widehat X_q$ in such a way that\footnote{The negativity is dictated in order to agree with Bonahon's convention; see \cite[\textsection{2}]{Bonahon:shearing} and \textsection{\ref{subsec:BendingCocycles}} below.} 
$\imag(\lambda_{\SW}(\dt{\hat k}))< 0$ at all points of $\hat k$. The measure of $k$ is then the integral of $-\imag\lambda_{\SW}$ along $\hat k$.
We will say that a piecewise $C^1$ arc $k$ is {\bf quasitransverse
to $\Fcal_q^h$} if 
it is a finite union of $C^1$  arcs in $X\setminus Z(q)$, and if it admits a piecewise $C^1$ lift $\hat k$ in $\widehat X_q$ in such a way that
$\imag(\lambda_{\SW}(\dt{\hat k}))\leq 0$ at all points of $\hat k$.
 The definition of a path quasitransverse to $\Fcal_q^v$ is defined similarly using the real part. 
 
 A {\bf measured geodesic lamination} $\Lambda$ on a hyperbolic surface $S$ is a partial foliation of the surface by simple (not necessarily closed) geodesics, together with a measure on transverse arcs. 
 A measured foliation may be ``straightened'' to a measured lamination. For example, given $\Fcal$,  each bi-infinite leaf of $\widetilde \Fcal\subset\widetilde S\simeq \HBbb^2$ defines a unique pair of distinct points in the circle at infinity, and hence a unique geodesic in $\HBbb^2$. The collection of geodesics thus obtained are noninterlacing and form a closed set, and so define a lamination $\widetilde \Lambda$ of $\HBbb^2$.  The construction is equivariant with respect to the action of the fundamental group, and so there is a well defined quotient $\Lambda\subset S$.
 The transverse measure on $\Fcal_q^h$  may then be transported to a measure on arcs transverse to $\Lambda_q^h$. For more details on this construction see \cite{Levitt83}.  We will denote the measured laminations associated to $\Fcal_q^h$ and $\Fcal_q^v$ by $\Lambda_q^h$ and $\Lambda_q^v$, respectively.

 The {\bf Hubbard-Masur theorem} \cite{HM} gives a converse to this construction.  Given a measured foliation $\Fcal$ (\emph{resp}.\ measured lamination $\Lambda$) there is a unique nonzero $q\in \QD(X)$ such that $\Fcal$ is measure equivalent to $\Fcal_q^h$ (\emph{resp}. $\Lambda$ to $\Lambda_q^h$). We shall denote this differential by $\phi_{\HF}(\Fcal)$ (\emph{resp}.\ $\phi_{\HF}(\Lambda)$).  (See \cite{Wolf:96} for a proof closer to the perspective in this paper.)

For a lamination $\Lambda\subset S$, the 
 components of $\HBbb^2\setminus\widetilde \Lambda$ are called {\bf plaques}, and we denote the set of such by $\Pcal(\Lambda)$. When all the plaques are ideal triangles, we say that $\Lambda$ is {\bf maximal}. 
If $\Fcal_q^h$ has saddle connections, then $\Lambda_q^h$ will not be a maximal lamination, and we describe this in more detail in \textsection{\ref{sec:maximalization}}. For a distinct pair $P,Q\in \Pcal(\Lambda)$, we say that $R\in \Pcal(\Lambda)$ {\bf separates} $P$ and $Q$ if any path from $P$ to $Q$ in $\HBbb^2$ intersects $R$.

We end this section with two clarifying remarks.
  First, while a simple example of a measured lamination is a multicurve equipped with atomic transverse measures, a more typical example (obtained as a limit of multicurve examples) will meet any transverse arc in a Cantor set. Second, while geodesic laminations appear to depend on the hyperbolic structure of the surface, using the idea of straightening curves, a geodesic lamination $\Lambda$ in any marked hyperbolic structure $S$ on $\Sigma$ induces a unique geodesic lamination in any other marked hyperbolic surface $S'$ on $\Sigma$. See \cite[p.\ 7]{Bonahon:shearing}. We will often denote these $\Lambda$ without reference to the hyperbolic structure.

 \subsubsection{Train tracks} \label{sec:train-tracks}
 An ingenious construction of Thurston provides for a way to organize nearby measured foliations/laminations as data on a geometric object.  A {\bf train track} on a surface $\Sigma$ is an embedded finite complex $\tau$ of $C^1$-arcs (called {\bf branches}) on $\Sigma$ meeting at vertices (called {\bf switches}) with a well defined common tangency. We can and will assume the switches are always trivalent. Then  one branch at a switch is {\bf incoming} and two are {\bf outgoing} (see \cite[p.\ 11]{PenHar}).
 Let $G=\RBbb$ or $S^1\simeq \RBbb/2\pi\ZBbb$.
 A {\bf weight} on a train track track $\tau$ is an assignment of an element of $G$ to each branch that obeys the {\bf switch conditions}: the weight  on the incoming branch equals the sum of the weights on the outgoing branches.
  We denote by $\Hcal(\tau, G)$ the set of $G$-weights on $\tau$. 

 One way to construct a train track is to consider a small $\epsilon$ neighborhood of a measured geodesic lamination, foliate that neighborhood by leaves transverse to the lamination, and then collapse the neighborhood to the leaf space of the foliation. If the resulting branches are weighted by the measure of arcs that cross the neighborhood, a measured train track that {\bf carries} the lamination results (\emph{cf}.\ \cite[p.\ 73]{PenHar}).

 A useful operation on train tracks is the  {\bf right and left splitting} (see \cite[p.\ 119]{PenHar}).  As one chooses an increasingly small parameter $\epsilon$ in the construction above the train tracks obtained are related by splitting. 
 Let us define splitting carefully.  Recall that a branch between two switches is called {\bf long} if it is  incoming at both ends (see \cite[p.\ 118]{PenHar}). The orientation of $\Sigma$ then orders the outgoing branches at the switches on each end of a long branch, and we label them left (L) and right (R) accordingly. A right splitting is then obtained by modifying the train track locally by replacing the long branch with two branches joining left and right at each switch, and then adding a third branch between them at whose switches the branches labeled L are incoming. The left splitting adds a branch so that the right branches  are incoming. See Figure \ref{fig:splitting}.

\begin{figure}
\begin{tikzpicture}
\draw [thick, blue] (0.5,1.1) -- (0.5,3.1);
\draw (0.5,1.1) to [out=270, in=0] (0,0.5);
\draw (0.5,1.5) to [out=270,in=180] (1,0.5);
\draw (0.5,3) to [out=90, in=0] (0,3.5);
\draw (0.5, 3) to [out=90, in=180] (1,3.5);
\node at (0.5,0) {\small long branch};
\node at (-.25,0.5) {\tiny L};
\node at (1.25,0.5) {\tiny R};
\node at (-.25,3.5) {\tiny R};
\node at (1.25,3.5) {\tiny L};
\draw (3,1) -- (3,3);
\draw (3,1) to [out=270, in=0] (2.5,0.5);
\draw (3,3) to [out=90, in=0] (2.5,3.5);
\draw (4,1) -- (4,3);
\draw (4,1) to [out=270, in=180] (4.5,.5);
\draw (4,3) to [out=90, in=180] (4.5,3.5);
\draw (3,1.25) to [out=90,in=270] (4,2.75);
\node at (3.5,0) {\small right splitting};
\draw (6,1) -- (6,3);
\draw (6,1) to [out=270, in=0] (5.5,0.5);
\draw (6,3) to [out=90, in=0] (5.5,3.5);
\draw (7,1) -- (7,3);
\draw (7,1) to [out=270, in=180] (7.5,.5);
\draw (7,3) to [out=90, in=180] (7.5,3.5);
\draw (6,2.75) to [out=270,in=90] (7,1.25);
\node at (6.625,0) {\small left splitting};
\end{tikzpicture}
\caption{Splitting of train tracks.}
 \label{fig:splitting}
\end{figure}
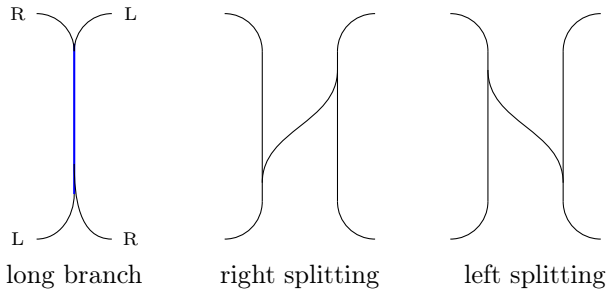

\subsubsection{Maximalizations} \label{sec:maximalization}
As mentioned above, in the case of horizontal saddle connections
the lamination $\Lambda_q^h$ is not maximal.
A maximal lamination can be obtained by adding finitely many leaves to $\Lambda_q^h$ \cite[p.\ 76]{CEG}.
Here we describe this mechanism precisely in terms of the foliation $\Fcal_q^h$. 
Consider a connected configuration $\Scal\subset \Fcal_q^h$ of saddle connections (along with their external critical leaves). We can make a train track $\tau_{\Scal}$ out of $\Scal$ by replacing each zero with a triangle, each of whose vertices is outgoing. 

\begin{definition} \label{def:maximalization}\upshape
A {\bf maximalization of $\Scal$} is a choice of left or right splitting $\widehat \tau_\Scal$ of each branch in $\tau_\Scal$ corresponding to a saddle connection, in such a way that the resulting train track  $\widehat \tau_\Scal$ contains no long branches (see Figure \ref{fig:max}). A {\bf maximalization of $\Fcal_q^h$} is a choice of maximalization of every maximal connected configuration of saddle connections. 
\end{definition}

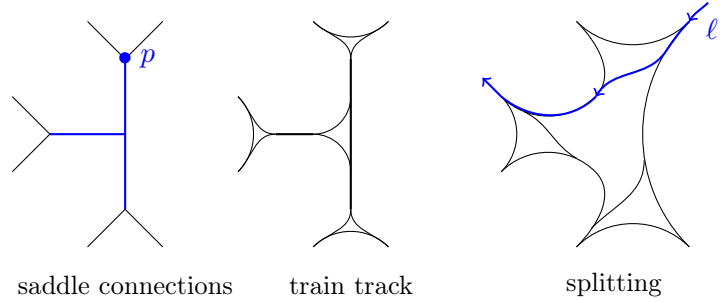
\begin{figure} 
\begin{tikzpicture}
\draw [thick, blue] (1.5,1) -- (1.5,3);
\draw [thick, blue] (0.5,2) -- (1.5,2) ;
\draw (1,3.5) -- (1.5,3) ;
\draw (2,3.5) -- (1.5,3) ;
\draw (1,0.5) -- (1.5,1) ;
\draw (2,0.5) -- (1.5,1) ;
\draw (0,2.5) -- (0.5,2) ;
\draw (0,1.5) -- (0.5,2) ;
\node at (1.5,3) {$\color{blue}\bullet$};
\node at (1.8, 3) {$\color{blue}p$};
\node at (1.5,0) {\small saddle connections};
\draw [thick] (4.5,1) -- (4.5,3);
\draw  [thick] (3.5,2) -- (4,2) ;
\draw (4.5,3) to [out=90, in=315] (4,3.5);
\draw (4.5,3) to [out=90, in=225] (5,3.5);
\draw (4,3.5) to  [out=315,in=225] (5,3.5); 
\draw (4.5,1) to [out=270, in=45] (4,0.5);
\draw (4.5,1) to [out=270, in=135] (5,0.5);
\draw (4,0.5) to [out=45, in=135] (5,0.5);
\draw (4,2) to  [out=0, in=270] (4.5,2.5);
\draw (4,2) to  [out=0, in=90] (4.5,1.5);
\draw (3,2.5) to [out=315,   in=180] (3.5,2);
\draw (3,1.5) to [out=45,   in=180] (3.5,2);
\draw (3,1.5) to [out=45, in=315] (3,2.5);
\node at (4.5,0) {\small train track};
\draw (7.5,3.5) to [out=315, in=225] (9,3.5);
\draw (7.5,0.5) to [out=45, in=135] (9,0.5);
\draw (9,0.5) to [out=135, in=225] (9,3.5);
\draw (7.5,3.5) to [out=315, in=45] (7.75,2.5);
\draw (7.5,0.5) to [out=45, in=315] (7.75, 1.5);
\draw (6.5,1.5) to [out=45, in=135] (7.75,1.5);
\draw (6.5, 2.5) to [out=315, in=225] (7.75, 2.5);
\draw (6.5,2.5) to [out=315, in=45] (6.5,1.5);
\draw (7.7,.75) to [out=45, in=270] (8.4,1.65) ;
\draw (6.8,2.3) to [out=335, in=150] (7.5,1.68) ;
\draw [thick, blue, ->] (8.68,3.09) to [out=235, in=60] (7.76,2.5) ; 
\draw [thick, blue] (6.5,2.5) to [out=320, in=225] (7.76,2.5);
\draw [thick, blue, <-] (9,3.5) to [out=60,in=225] (9.25,3.75);
\draw [thick, blue] (9,3.5) to  [out=225, in=55]  (8.68,3.09); 
\draw [thick, blue, <-] (6.25,2.75) to [out=315,in=135] (6.5,2.5);
\node at (9.3, 3.4) {\color{blue}$\ell$};
\node at (8,0) {\small splitting};
\end{tikzpicture}
\caption{Maximalization.}
\label{fig:max}
\end{figure}
Note that maximalizations always exist: for example, one may choose right splittings for all the saddle connections.
The terminology is justified by the following. 

\begin{lem} \label{lem:maximalization}
A maximalization of $\Fcal_q^h$ uniquely determines a maximal lamination $\Lambda$ containing $\Lambda_q^h$ as a sublamination. 
\end{lem}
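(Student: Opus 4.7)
The plan is to use the combinatorial data of a maximalization to select, for each saddle connection of $\Fcal_q^h$, one distinguished bi-infinite horizontal leaf of $\Fcal_q^h$, and to define $\Lambda$ as the union of $\Lambda_q^h$ with the geodesic straightenings of these chosen leaves.

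First I would analyze the complementary regions of $\Lambda_q^h$ in $S$ and show that the ones that fail to be ideal triangles correspond bijectively to the maximal connected saddle configurations $\Scal_1,\dots,\Scal_k$. Away from saddle connections, the three prong-angles at a simple zero each give rise to an ideal triangle in the complement of $\Lambda_q^h$, because the three pairs of turning bi-infinite leaves at the zero straighten to three disjoint geodesics bounding a triangle. In the presence of a configuration $\Scal_j$, the bi-infinite leaves that would have exited into the saddle directions are destroyed (they terminate at the opposite zero), so several of these prong-triangles merge into a single finite-sided ideal polygonal region $R_j$ whose ideal vertices are distributed among the zeros appearing in $\Scal_j$, with one missing cusp per saddle connection.

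Next I would translate the splitting data into leaf data. A long branch of $\tau_{\Scal_j}$ corresponds to a saddle connection $s$ joining zeros $p$ and $p'$; a right (respectively left) splitting of this branch specifies a unique bi-infinite path of $\Fcal_q^h$ passing through $p$ and $p'$ that turns consistently right (respectively left) at both endpoints, and this is a bona fide ``horizontal leaf'' in the convention recalled just before Definition~\ref{def:maximalization}. Call this leaf $\ell_s$ and let $\gamma_s$ denote its geodesic straightening on $S$. I would then verify that the collection $\{\gamma_s\}$ is simple, pairwise disjoint, and disjoint from $\Lambda_q^h$: by the switch conditions on $\widehat\tau_{\Scal_j}$, distinct $\ell_s$ use distinct prong-pairs at every zero, so their lifts to $\tilde X$ are properly embedded arcs that can be arranged to be leaf-disjoint from each other and from the untouched bi-infinite leaves of $\widetilde{\Fcal}_q^h$; the induced pairs of ideal endpoints on $\partial_{\infty}\HBbb^2$ therefore do not interlace, and the straightening operation produces disjoint geodesics.

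Setting $\Lambda := \Lambda_q^h \cup \bigsqcup_s \gamma_s$, the main obstacle is verifying that $\Lambda$ is maximal, i.e.\ that each polygonal region $R_j$ is dissected by $\{\gamma_s : s \subset \Scal_j\}$ into ideal triangles. This is exactly where the hypothesis that $\widehat\tau_{\Scal_j}$ contains no long branches enters the argument: the splitted train track combinatorially encodes the leaf-pattern at each zero of $\Scal_j$, and the absence of a long branch at a given switch translates into the statement that every prong-angle at every zero of $\Scal_j$ is bounded on both sides by leaves in $\Lambda_q^h \cup \{\gamma_s\}$. A straightforward Euler-characteristic / combinatorial count on the polygon $R_j$ (with one added diagonal per saddle connection and the no-long-branch condition at every zero) then forces all faces to be triangles. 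Uniqueness of $\Lambda$ is automatic, since the pair of prong directions defining $\ell_s$, and hence $\gamma_s$, is read off directly from the left/right choice at $s$.
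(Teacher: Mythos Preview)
Your overall architecture—identify the non-triangular complementary regions of $\Lambda_q^h$ as ideal polygons indexed by the maximal saddle configurations, add one geodesic per saddle connection to triangulate them, and invoke the no-long-branches hypothesis for maximality—matches the paper's. The gap is in how you manufacture the added leaf $\ell_s$.

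You describe $\ell_s$ as the bi-infinite Levitt horizontal leaf that runs along the saddle connection $s$ from $p$ to $p'$ and turns consistently right (or left) at every zero it meets. That object is not what the splitting produces, and in general it does not do the job. If $\Scal$ has an interior zero (all three prongs are saddle connections), or if adjacent saddle connections carry opposite splitting choices, then the train path dictated by $\widehat\tau_{\Scal}$ turns right at one zero and left at the next, so it is \emph{not} a consistently-turning Levitt leaf. Worse, for a chain of saddle connections with identical splitting choices your $\ell_s$ and $\ell_{s'}$ coincide (one consistently-right path traverses several saddle connections), so you over- or under-count diagonals and the disjointness claim ``distinct $\ell_s$ use distinct prong-pairs at every zero'' fails. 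The paper's construction is different: the added leaf is the \emph{train path} in $\widehat\tau_{\Scal}$ that enters along the external critical leaf selected by the splitting at $s$ and then has a \emph{unique} continuation precisely because every subsequent switch is outgoing (this is where the no-long-branches condition is actually used, not only in the Euler-characteristic count). In the foliation picture (Remark \ref{rem:maximalization}) this leaf does not run along $s$; it is a quasi-transverse path built from horizontal half-leaves joined by one short vertical arc that \emph{crosses} $s$.

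You also omit the case in which $\Scal$ contains a closed loop of saddle connections. There the complementary region of $\Lambda_q^h$ is not an ideal polygon but contains a closed geodesic $\overline\gamma$, and the additional leaves must spiral into $\overline\gamma$ rather than connect two external half-leaves; your polygon-with-diagonals picture and the Euler-characteristic count do not apply. The paper handles this separately as its Case (1).
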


\begin{proof}
 Let $\Scal$ be a maximal connected component of saddle connections,
 and let $c$ be a saddle connection. There are two cases: (1) $c$ is part of a closed loop $\gamma$ of saddle connections, (2) there is a zero
$p$ of $q$ at one end of $c$, one of whose
 critical leaves is in the complement of  $\Scal$
 (call this an \emph{external} zero). In case (1), 
$\Lambda$ contains a closed geodesic $\overline\gamma$ homotopic to $\gamma$. 
 The splitting now determines a train path  from the critical leaf of one end point of $c$ that is not part of $\gamma$ to $\overline \gamma$. This corresponds to a leaf of $\Lambda$ that spirals into $\overline \gamma$.  
In case (2),
  the splitting of the saddle connection $c$ selects one of the other critical leaves
 of $p$; namely, the one which is incoming with respect to the switches 
created in the splitting.  Denote this leaf $\ell\subset\Fcal_q^h$.
 By maximality of the component $\Scal$, 
the lift $\widetilde \ell$ of $\ell$ determines a geodesic
 half ray $g$ in $\HBbb^2$ that is asymptotic to
  a leaf of $\widetilde\Lambda_q^h$ on one end. 
Viewing $\ell$ as a train path in $\widehat\tau_\Scal$, there is a 
unique continuation  to a path (still denoted by $\ell$) 
that crosses the split saddle connection which ends at $p$, and
then either
exits through a branch of another external zero, or 
spirals around a closed branch homotopic to a closed loop of
saddle connections.
 Uniqueness follows because all
 further switches the path encounters
 are outgoing by assumption. 
Thus the lift of $\ell$ determines a
 bi-infinite geodesic that is asymptotic to different leaves of $\widetilde \Lambda_q^h$ on either side. By the condition that there are no long branches in $\widehat \tau_\Scal$, the geodesics added in this way are disjoint, and since the interior complementary regions of $\widehat \tau_\Scal$ are triangles, the resulting lamination is maximal.  
\end{proof}

\begin{remlabel} \label{rem:maximalization}  \upshape
Given a maximalization of $\Fcal_q^h$ as in Definition \ref{def:maximalization} with lamination $\Lambda$ as in Lemma \ref{lem:maximalization}:
\begin{compactenum}[(i)]
\item
The  leaves $\Lambda\setminus \Lambda_q^h$ may be represented by  paths that are quasi-transverse to $\Fcal_q^v$, consisting of horizontal leaves coming into and exiting a neighborhood of $\Scal$,  with a small vertical arc cutting one saddle connection of $\Fcal_q^h$ (we shall refer to these as {\bf additional leaves} of $\Fcal_q^h$ or $\Lambda$); 
\item  there is a 1-1 correspondence between zeroes $\widetilde Z(q)\subset\HBbb^2$ and the plaques of $\HBbb^2\setminus \Lambda$. 
\end{compactenum}
\end{remlabel}

The lift $\widehat \Lambda$ to $\widehat X_q$ can be oriented.
For convenience, we always choose this so that the oriented leaves of $\Lambda_q^h$ have $\real\Lambda_{\SW}>0$.
 This then gives an orientation to the homology classes in $H_1^{\odd}(\widehat X_q,\ZBbb)$ corresponding to the saddle connections. Suppose there is a saddle connection $c$ from $p$ to $q$. Then we can change $c$ to an arc consisting of a vertical leaf emanating from $p$, followed by a horizontal leaf shadowing the saddle connection, and then another vertical leaf terminating at $q$. Indeed,  there are two such ways of constructing such a path. However, the orientation of $\widetilde \Lambda$ chooses one of these: namely, the one whose lift intersects $\widetilde \Lambda$ positively.
Notice that these  paths are \emph{not} quasitransverse with respect  to $\Fcal_q^v$. 
We shall call such arcs {\bf modified saddle connections}.

\subsection{High energy harmonic maps} \label{subsec: high energy harmonic maps}

This paper focuses on asymptotics of the $\PSL(2,\C)$ character variety for $\pi_1$, especially as reflected in the associated classes of Higgs bundles.  The previous sections related these bundles to equivariant harmonic maps $u:\tilde{X} \to \H^3$, and it will turn out that one leaves all compacta in the character variety (and the associated moduli spaces of Higgs bundles) exactly when the energy of the associated harmonic maps grows without bound.
In this section, we collect some of the basic analytic estimates  on the geometry of harmonic maps whose energies are tending to infinity. These will be used throughout the paper.

\subsubsection{Minsky's results}\label{subsect:Minskysresults}

The following result due to Minsky plays a crucial role in the subsequent qualitative estimates involving high energy harmonic maps. It  will later also be needed in  \textsection \ref{subsect:realizinglaminations}.

Let $u_n:\widetilde X\to \HBbb^3$ be a sequence of $\rho_n$-equivariant harmonic maps with Hopf differentials $t_n^2q_n$, $q_n\to q$ in $\SQD^\ast(X)$. 
Recall that $Z(q_n)$ is the set of zeroes  of $q_n$, which we  assume to be simple. 
 For a parameter $s_n$, let $\Omega_{s_n}(p)$ be a hexagonal domain for each $p\in Z(q_n)$. The $s_n$ will be chosen so that these domains are disjoint for distinct zeroes of $q_n$. 
 Set 
\begin{equation} \label{eqn:Qn}
\Qcal_n=\bigcup_{p\in Z(q_n)} \Omega_{s_n}(p)\ .
\end{equation} 
 
We also assume that the boundary of each hexagon  $\Omega_{s_n}(p)$ is  formed from alternating horizontal and vertical edges. We let  $\widetilde Z(q_n)$ denote the preimage of the set $Z(q_n)$ under the projection map $\pi\colon \widetilde X\to X$.

\begin{prop}[\emph{cf}.\ {\cite[Thm.\ 4.2]{Minsky:HarmonicThreeManifold}}] \label{prop:realizing-3d}
There are  constants $A, c_0, C_0$, all independent of $n$, and $N$ such that the following hold. For $n\geq N$ and $s_n\leq c_0$, there is a $\rho_n$-equivariant map $\Pi^\ast$  from the leaves of $\widetilde\Fcal_{q_n}^h$ in the complement of $\widetilde\Qcal_n$  to a 
collection $\Lambda_n^{h,\ast}$ of geodesics in $\HBbb^3$ which factors through $u_n$. Moreover, for any $p\in \widetilde X\setminus\widetilde\Qcal_n$, 
$$
d_{\HBbb^3}(u_n(p), \Pi^\ast(p))\leq A\exp(-t_nC_0)\ ,
$$
and the derivative along the horizontal leaf through $p$ $($in the
 $|q_n|$ metric$)$ is
$$
\left| |d\Pi^\ast|-2\right| \leq A\exp(-t_nC_0)\ .
$$
\end{prop}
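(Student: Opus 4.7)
The plan is to reduce the statement to the standard sinh-Gordon analysis of harmonic maps with a given Hopf differential, and then read off the geometric consequences along horizontal leaves. Write the pulled-back metric $u_n^{\ast}ds^{2}_{\HBbb^{3}}$ as $e^{\omega_n}\sigma_{|q_n|}+t_n^{2}q_n+\overline{t_n^{2}q_n}+\text{(mean)}$ in natural coordinates $\zeta=\int\sqrt{q_n}$ on the complement of $Z(q_n)$, where $\sigma_{|q_n|}$ is the flat singular metric of $|q_n|$ and $\omega_n$ encodes the ratio of horizontal and vertical energy densities. The harmonicity of $u_n$, together with the assumption that the target has curvature $-1$, forces $\omega_n$ (after a shift) to satisfy the sinh-Gordon equation $\Delta_{|q_n|}w_n=4t_n^{2}\sinh(2w_n)$ on $\widetilde X\setminus\widetilde Z(q_n)$, with prescribed logarithmic singularities at the branch points $\widetilde Z(q_n)$ and bounded behavior elsewhere.

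Next, invoke Minsky's comparison principle: a barrier argument for $w_n\ge 0$ against the radially symmetric singular solution of the sinh-Gordon equation produces uniform pointwise bounds showing that $w_n\to 0$ exponentially fast in $t_n$ outside any fixed neighborhood of the zero set. Concretely, away from $\widetilde\Qcal_n$ one obtains
\[
|w_n(p)|\leq A_{0}\,\exp(-C_{0}t_n),
\]
and, by differentiating and re-applying elliptic estimates, similar bounds for $\nabla w_n$. In natural coordinates this is the statement that, along each horizontal leaf, the pullback metric is exponentially close to $4t_n^{2}|d\xi|^{2}$ (the horizontal part of $2|q_n|$), while the vertical contribution and the cross term both decay exponentially.

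Now interpret this geometrically. Along a horizontal leaf $\ell\subset\widetilde X\setminus\widetilde\Qcal_n$, the restriction $u_n|_{\ell}$ is a curve in $\HBbb^{3}$ whose speed in the $|q_n|$ metric is $2t_n(1+O(e^{-C_{0}t_n}))$ and whose geodesic curvature in $\HBbb^{3}$ is $O(e^{-C_{0}t_n})$: the first comes directly from $w_n\to 0$, the second from the pointwise bound on the second fundamental form of $u_n(\ell)$, which is controlled by the vertical derivative of $w_n$ along $\ell$ via the harmonic map equation. Thus $u_n(\ell)$ shadows a unique bi-infinite geodesic $g_{\ell}\subset\HBbb^{3}$; define $\Pi^{\ast}(p)$ to be the nearest-point projection of $u_n(p)$ to $g_{\ell}$. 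The map $\Pi^{\ast}$ thereby factors through $u_n$ by construction, and $\rho_n$-equivariance is automatic from the $\rho_n$-equivariance of $u_n$ and the naturality of nearest-point projection. The pointwise distance bound $d_{\HBbb^{3}}(u_n(p),\Pi^{\ast}(p))\leq A\exp(-C_{0}t_n)$ follows by integrating the geodesic-curvature bound along $\ell$, and the derivative bound along $\ell$ in the $|q_n|$-metric is just the speed estimate above, after renormalizing by the factor of $2t_n$ coming from the scaling of the Hopf differential.

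The delicate point, and the reason for removing the hexagonal domains $\Omega_{s_n}(p)$ and for taking $s_n\le c_0$ independent of $n$, is the control of $w_n$ near $\widetilde Z(q_n)$: there the sinh-Gordon equation becomes genuinely singular, and the exponential rate $C_{0}$ degenerates. This is exactly where Minsky's explicit subsolution/supersolution construction (built from the model solution on $\CBbb\setminus\{0\}$ associated to a simple zero) is needed to give uniform bounds outside $\Qcal_n$ with a rate independent of $n$. Once that uniform barrier is in place, everything else is a straightforward consequence of elliptic regularity and the triangle inequality along horizontal leaves. The main obstacle is thus ensuring the barrier comparison gives constants $A,C_{0}$ that do not depend on $n$ and that hold uniformly as $q_n$ varies over a compact subset of $\SQD^{\ast}(X)$; this is exactly the content of Minsky's [\citealp*{Minsky:HarmonicThreeManifold}, Thm.~4.2], applied to the sequence of harmonic maps $u_n$ with rescaled Hopf differentials $t_n^{2}q_n$.
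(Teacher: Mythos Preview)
Your sketch is essentially correct and follows the analytic core of Minsky's argument, which is precisely what the paper invokes: the paper does not give an independent proof of this proposition but simply cites \cite{Minsky:HarmonicThreeManifold}, noting that the argument carries over to the equivariant setting and that the fixed domain $X$ eliminates the ``thin flat cylinder'' complications. Your outline of the sinh-Gordon barrier analysis, the resulting exponential decay of $w_n$, and the consequent small geodesic curvature of images of horizontal leaves is exactly the mechanism underlying Minsky's result.

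Two small points are worth noting. First, your phrase ``after renormalizing by the factor of $2t_n$'' is slightly garbled: in $q_n$-coordinates the pullback metric is $4t_n^2\,dx^2 + O(e^{-ct_n})$ (see Proposition~\ref{prop:pullbackmetric} in the paper), so the speed of $u_n$ along horizontal leaves is $2t_n$ in the $|q_n|$ metric, and the statement $||d\Pi^\ast|-2|\leq Ae^{-t_nC_0}$ as written in the paper should be read accordingly. Second, the paper supplements the citation with a description of Minsky's train track $\tau_n$ (the items (i)--(iii) following the proposition), which packages the straightening of horizontal leaves and is used substantively later in \textsection\ref{sec:train-track-conclusion} and \textsection\ref{sec:shearing_realization}. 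Your nearest-point-projection construction gives the map $\Pi^\ast$ directly but does not by itself furnish this combinatorial structure; for the proposition as stated this is fine, but if you were filling in the later sections you would still need the train track framework.
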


Proposition \ref{prop:realizing-3d} is proven in \cite{Minsky:HarmonicThreeManifold} in the context of harmonic maps from surfaces to complete hyperbolic $3$-manifolds, but the arguments apply equally well in the equivariant case. One important simplification in the situation here is that the domain Riemann surface $X$ is  fixed. As a consequence, the technical issues of ``thin flat cylinders'' that are dealt with in  \cite{Minsky:HarmonicThreeManifold} do not play a role here. In particular, the set $\Pscr_R$ in that reference may be taken to equal to $\Qcal_n$ defined in \eqref{eqn:Qn}.

The Proposition is a consequence of the following construction. For $s_n$ chosen sufficiently small and $n$ sufficiently large there is a train track $\tau_n\subset X\setminus \Qcal_n$ and $\varepsilon_n>0$,
$\varepsilon_n\to 0$ as $n\to +\infty$, satisfying the following.
\begin{compactenum}[(i)]
\item Let $\widetilde \tau_n\subset \widetilde X$ be the preimage of $\tau_n$, and set $\widetilde\tau_n^\ast=u_n(\tau_n)$. Then the branches of $\widetilde\tau_n^\ast$ have length $O(t_n)$ and geodesic curvature $O(\varepsilon_n)$.
\item The images by $u_n$ of the leaves of the horizontal foliation $\widetilde\Fcal_{q_n}^h$ in the complement $\widetilde X\setminus\widetilde \Qcal_n$ can be straightened to give a lamination $\widetilde\Lambda_n^{h,\ast}\subset\HBbb^3$. 
\item The  lamination $\widetilde\Lambda_n^{h,\ast}$ is $C^1_{\varepsilon_n}$-carried by $\widetilde \tau_n^\ast$.
\end{compactenum}

In the case where $\Fcal_{q_n}^h$ has saddle connections and we have chosen a  maximalization in the sense of Definition \ref{def:maximalization}, we can enlarge $\Lambda_n^{h,\ast}$ to a lamination $\Lambda_n^\ast$ as follows. By Remark \ref{rem:maximalization}, the maximalization gives rise to  finitely many quasi-transverse paths in $X$, which we may assume lie in the complement of $\Qcal_n$. (A technical point is that Minsky creates his track by extending components of $Q_n$ to \enquote{slice} through long rectangles of vertical trajectories: it is straightforward to check that this slicing can be done in a way to correspond to the maximalization discussed here.) The image by $u_n$ of the lifts of these can be straightened to geodesics that are asymptotic on one side to leaves in $\Lambda_n^{h,\ast}$. The map $\Pi^\ast$ in Proposition \ref{prop:realizing-3d} can be extended to a map on these leaves satisfying the same estimates. 
\medskip\\ 
We next choose coordinates, which we refer to as {\bf (canonical) $q_n$-coordinates} that are adapted to $q_n$ and hence to the map $u_n$.  To this end, note that, away from the zeroes of $q_n$, we may choose coordinates $z_n=x_n+iy_n$ in a patch so that, in those coordinates, the quadratic differential $q_n$ is expressed as $q_n= dz_n^2$. These are useful because the horizontal lines in this coordinates are both the leaves of the horizontal foliation of $q_n$, and also integrate the directions of the maximal stretch (eigendirection) of the tangent map $du_n$. Naturally, both the domain and the pull-back metric diagonalizes with respect to these coordinates. Following Minsky \cite[eq.\ (3.1)]{Minsky:HarmonicThreeManifold},     the pullback metric $u_n^*ds^2_{\H^3}$ with respect to the harmonic map $u_n$ as above  can be written  in terms of $q_n$-coordinates $(x_n,y_n)$ as 
\begin{equation} \label{eqn:pullback metric}
u_n^*ds^2_{\H^3} = 2t_n^2 (\cosh \sG_n +1) dx_n^2 + 2t_n^2 (\cosh \sG_n - 1) dy_n^2,	
\end{equation}
where $\sG_n = \sinh^{-1} (2\sJ_n)$ and $\sJ_n$ is the Jacobian determinant of the map $u_n$. The factor $t_n^2$ enters since the harmonic map $u_n$ has Hopf differential $t_n^2q_n$.

\begin{prop}\label{prop:pullbackmetric}
The pullback metric by $u_n$ in terms of canonical coordinates for $q_n$ satisfies
\begin{align}
u_s^*ds^2_{\H^3} = 4t_n^2 dx^2 + O\big(\exp(-2ct_n)\big) \label{eqn:small additive error to geodesic}
\end{align}
in $C^k$ for some constant $c>0$.
\end{prop}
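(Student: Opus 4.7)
The plan is to reduce the statement to an exponential decay estimate for $\sG_n$ in $C^k$ via a Taylor expansion, and then invoke Minsky's analysis for that decay. Starting from the explicit formula \eqref{eqn:pullback metric} and expanding
\[
\cosh \sG_n + 1 = 2 + \tfrac{1}{2}\sG_n^2 + O(\sG_n^4), \qquad \cosh \sG_n - 1 = \tfrac{1}{2}\sG_n^2 + O(\sG_n^4),
\]
we obtain
\[
u_n^{\ast} ds_{\HBbb^3}^2 = 4t_n^2\, dx_n^2 + t_n^2\bigl(\sG_n^2 + O(\sG_n^4)\bigr)\bigl(dx_n^2 + dy_n^2\bigr).
\]
Thus it suffices to show that on compact subsets of $X\setminus \Qcal_n$, one has $\|\sG_n\|_{C^k} \le A e^{-ct_n}$ for some constants $A, c > 0$ independent of $n$; then the factor $t_n^2$ is absorbed by slightly decreasing the exponential rate.

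To establish this exponential decay, I would invoke the analytic framework of Minsky underlying Proposition \ref{prop:realizing-3d}. Concretely, since $\sG_n = \sinh^{-1}(2\sJ_n)$ and $\sJ_n$ is the Jacobian of $u_n$, the harmonic map equation together with the prescribed Hopf differential $t_n^2 q_n$ forces $\sG_n$ to satisfy a semilinear elliptic equation of sinh-Gordon type in $q_n$-coordinates with a coefficient proportional to a positive power of $t_n$. In the flat region $X\setminus \Qcal_n$ (where the $|q_n|$-metric is smooth and $q_n$ does not vanish), a maximum principle comparison with explicit exponentially decaying barriers of the form $e^{-ct_n \dist_{|q_n|}(\cdot,\Qcal_n)}$ yields the $C^0$ bound $|\sG_n|\le A e^{-ct_n}$. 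Alternatively, this $C^0$ bound may be read off directly from Proposition \ref{prop:realizing-3d}: the map $\Pi^\ast$ through which $u_n$ is approximated factors through the $1$-dimensional lamination $\Lambda_n^{h,\ast}$, so the Jacobian of the $\Pi^\ast$-approximation vanishes identically; combined with the exponential $C^1$ closeness of $u_n$ to $\Pi^\ast$ along horizontal leaves (and the controlled behavior transverse to them obtained from Minsky's train-track estimates on $\tau_n$), one infers $|\sJ_n| = O(e^{-ct_n})$.

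Promoting the pointwise bound to the $C^k$ bound is then a matter of elliptic regularity: applying standard interior Schauder estimates to the sinh-Gordon type equation (which has smooth coefficients on $X\setminus\Qcal_n$, uniformly bounded as $q_n \to q$ in $\SQD^\ast(X)$) on a sequence of slightly shrinking subdomains, one bootstraps the $C^0$ decay rate, with a loss of at most $\delta$ in the exponential constant $c$ at each step, to obtain $\|\sG_n\|_{C^k(K)} \le A_K e^{-c' t_n}$ on any compact $K \subset X\setminus\Qcal_n$, for some $c' > 0$. Substituting this back into the Taylor expansion yields the desired $C^k$ identity with rate $2c' > 0$.

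The main obstacle is the uniform-in-$n$ $C^k$ control on $\sG_n$: the coefficients of the underlying PDE depend on $q_n$ through derivatives, and one must ensure that the constants appearing in the Schauder bootstrap do not degenerate as $n \to \infty$. This is handled by the compactness of the convergent family $q_n \to q$ in $\SQD^\ast(X)$, which furnishes uniform $C^k$ bounds on $q_n$ on fixed compact subsets of $X\setminus Z(q)$, together with the uniform choice of the hexagonal domains $\Omega_{s_n}(p)$ made in the construction of $\Qcal_n$.
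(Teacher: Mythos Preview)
Your proposal is correct and follows essentially the same route as the paper: both reduce to exponential decay of $\sG_n$ via \eqref{eqn:pullback metric} and invoke Minsky's estimates. The paper is more terse, directly citing Minsky's bound $\sG_n(p) < B/\cosh d$ (with $d$ the $t_n^2 q_n$-distance to the zero set, hence $d\geq ct_n$ outside $\Qcal_n$) and omitting the explicit Schauder bootstrap you carry out for the $C^k$ claim.
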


\begin{proof}
As shown in  \cite[Lemma 3.4]{Minsky:HarmonicThreeManifold} there is a constant $B$ such  the quantity $\sG_n$ satisfies the pointwise estimate
\begin{equation*}  
 \sG_n(p)<\frac{B}{\cosh d}
\end{equation*}
for  every point $p$  at  $t_n^2q_n$-distance at least $d>0$ to the  zero set of $q_n$. Since we are here considering points  outside some fixed neighborhood of the zero set of $q_n$, this distance is bounded below by  $ct_n$ for some constant $c>0$. It follows that
\begin{equation*}
	\sG_n(p)<2Be^{-ct_n}\ ,
\end{equation*}
and consequently
\begin{equation*}
	\cosh\sG_n(p)<1+ 4B^2 e^{-2ct_n}\ .
\end{equation*}
Inserting this last estimate into \eqref{eqn:pullback metric} implies the claim.
\end{proof}
This last estimate says that, away from the zeroes of the Hopf differential, the horizontal trajectories have image under a high energy map $u_n$ that are stretched by the factor $t_n$, up to a small and rapidly decaying error; that the image of those trajectories have exponentially small geodesic curvature; and that the vertical trajectories have exponentially decaying length.

\subsubsection{High energy harmonic maps near the zeroes of $q$}\label{subsect:highenergyharmonicnearzeroes}

 We continue with the notation of the previous section.

\begin{prop}\label{prop: harmonic map C1 localizes near zeroes}
For every fixed $\varepsilon>0$ there exists a constant $N$ 
  such that the following holds. There is an ideal hyperbolic triangle $\Delta\subset\H^3$ such that  for every  $n\geq N$  the distance between the tangent plane $T_p(u_n(\widetilde X))\subset \H^3$ to $\Delta$ is less than $\varepsilon$, for every point $p\in u_n(\widetilde Z(q_n))$. 	
\end{prop}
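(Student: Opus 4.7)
The plan is to analyze $u_n$ near each lifted zero $\widetilde p\in\widetilde Z(q_n)$ via a local model. In holomorphic coordinates in which $q_n$ takes the normal form $z\,dz^2$, the three critical horizontal leaves of $\widetilde\Fcal_{q_n}^h$ emanate from $\widetilde p$ at angles $0, 2\pi/3, 4\pi/3$, and $q_n$ enjoys the three-fold rotational symmetry $z\mapsto e^{2\pi i/3}z$.

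First, I would apply Proposition~\ref{prop:realizing-3d} to each of these three critical leaves. For $s_n$ chosen small relative to the constants in that proposition, the $u_n$-image of each critical leaf outside $\Omega_{s_n}(\widetilde p)$ lies within $A e^{-C_0 t_n}$ of a geodesic in $\HBbb^3$, with the $C^1$-control of \eqref{eqn:small additive error to geodesic}. Letting $s_n\to 0$ slowly, continuity of $u_n$ forces the three geodesic segments to extend to three geodesic rays issuing from the single point $u_n(\widetilde p)$ and terminating at three distinct ideal points of $\partial_\infty\HBbb^3$.

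The crucial step is to show that these three rays are coplanar, bounding an ideal triangle $\Delta_n\subset H_n$ in a totally geodesic plane $H_n\subset\HBbb^3$, and that $T_{u_n(\widetilde p)}(u_n(\widetilde X))$ is within $\varepsilon$ of $T_{u_n(\widetilde p)}H_n$.  For this I would rescale the domain about $\widetilde p$ by the natural factor $t_n^{2/3}$, so that in the coordinate $w = t_n^{2/3} z$ the Hopf differential of the rescaled map becomes $w\,dw^2$, and extract via \eqref{eqn:pullback metric} and elliptic regularity a $C^1_{\mathrm{loc}}$-subsequential limit $u_\infty\colon\CBbb\to\HBbb^3$.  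The map $u_\infty$ is harmonic with Hopf differential $w\,dw^2$, and the three-fold rotational symmetry of this differential, together with a uniqueness argument for the model (a local variant of the reasoning behind Theorem~\ref{thm:DonaldsonCorlette}), forces $u_\infty$ to admit an equivariant order-three symmetry by an isometry of $\HBbb^3$ fixing $u_\infty(0)$.  Such an isometry preserves a unique totally geodesic plane $H_\infty$ through $u_\infty(0)$, and one concludes that $u_\infty(\CBbb)\subset H_\infty$.  Passing back to $u_n$ by the $C^1_{\mathrm{loc}}$ convergence then gives the coplanarity of the three rays and the proximity of the tangent plane to $T_{u_n(\widetilde p)}H_n$.

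To conclude, homogeneity of $\HBbb^3$ lets one identify each $\Delta_n$ with a fixed reference ideal triangle $\Delta\subset\HBbb^3$ via an isometry of $\HBbb^3$, yielding the uniform bound claimed in the statement for all $\widetilde p\in \widetilde Z(q_n)$ and $n\geq N$.  The main obstacle is the identification of $u_\infty$ as a map into a totally geodesic plane: the $C^1_{\mathrm{loc}}$-extraction is a routine elliptic argument given the pullback metric estimate, but pinning down the equivariant rotational symmetry in the target and invoking uniqueness of the model harmonic map on $\CBbb$ with Hopf differential $w\,dw^2$ requires care, and is what ensures that the three rays are truly coplanar rather than forming a nondegenerate tripod in $\HBbb^3$.
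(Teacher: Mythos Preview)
Your rescaling-and-limit approach is a natural idea, but the argument as written has a genuine gap at the step where you deduce planarity of the limit $u_\infty$ from the order-three symmetry.  Even granting the existence of an isometry $g$ of $\HBbb^3$ of order $3$ fixing $u_\infty(0)$ with $u_\infty\circ R=g\circ u_\infty$, the conclusion ``$u_\infty(\CBbb)\subset H_\infty$'' does not follow: three geodesic rays from $u_\infty(0)$ permuted by a rotation about an axis $\ell$ can form a genuine tripod, symmetric about $\ell$ but not contained in the orthogonal plane.  What you would actually need is a \emph{uniqueness} theorem for entire harmonic maps $\CBbb\to\HBbb^3$ with Hopf differential $w\,dw^2$ (so that $u_\infty$ must coincide, up to isometry, with the known planar model into $\HBbb^2\subset\HBbb^3$).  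You gesture toward this (``a local variant of the reasoning behind Theorem~\ref{thm:DonaldsonCorlette}''), but Corlette--Donaldson uniqueness is for irreducible cocompact actions, and there is no such equivariance here; proving uniqueness for this non-compact model is a separate and nontrivial step.  A secondary issue: Proposition~\ref{prop:realizing-3d} controls horizontal leaves \emph{outside} $\widetilde\Qcal_n$, so it does not directly apply to the critical leaves near the zero; you would have to let $s_n\to 0$ carefully and argue by continuity, which is delicate since all the estimates blow up there.

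The paper's proof sidesteps both issues by working with the three \emph{horizontal edges} $h_1,h_2,h_3$ of the hexagon $\Omega_{s_n}(p)$---these lie well away from the zero, so Minsky's estimates apply directly and map them to three long nearly-geodesic arcs in $\HBbb^3$ whose consecutive endpoints are exponentially close (via the short vertical edges).  Elementary hyperbolic geometry then produces an ideal triangle $\Delta$ close to all three arcs; the image $u_n(\widetilde\Omega_{s_n}(p))$ lies in their convex hull and hence near $\Delta$.  For the $C^1$-statement at the zero, the paper compares $u_n$ with the harmonic map $v_n$ on the hexagon having boundary values on $\partial\Delta$: since $\Delta$ is convex in a totally geodesic plane, $v_n$ takes values in that plane, and standard interior estimates for harmonic maps with $C^0$-close boundary data give $C^1$-closeness of $u_n$ and $v_n$ inside.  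This avoids any appeal to a global uniqueness or classification theorem.
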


An analogous statement for two dimensional targets is the main theorem of \cite{Wolf:HighEnergy}.  The present version is a reflection for harmonic maps of aspects of the approximate solutions constructions in \textsection{\ref{subsect:approxsolutions}}. 

\begin{proof}
	For each fixed $n$ and $p\in Z(q_n)$, consider a lift $\widetilde\Omega_{s_n}(p)$ of the hexagon $\Omega_{s_n}(p)\subset X$ to $\widetilde X$. Let $h_1$, $h_2$, $h_3$ denote the three horizontal edges of $\Omega_{s_n}(p)$, which we parametrize in an orientation-preserving way by a parameter $0\leq s\leq 1$. Proposition \ref{prop:realizing-3d} shows the existence of geodesics $c_i\colon [0,1]\to \H^3$ such that the distance between $u_n(h_i(s))$ and $c_i(s)$ is less than   $A\exp(-t_nC_0)$, for all $0\leq s\leq 1$. By Proposition \ref{prop:pullbackmetric} the length of each $c_i$ is of order $t_n$. Furthermore, the distance between  each consecutive  pair of endpoints $u_n(h_i(1))$ and $u_n(h_{i+1}(0))$ satisfies an exponentially small    bound. It follows from elementary hyperbolic geometry that there is an ideal hyperbolic triangle $\Delta\subset\H^3$  which is at distance at most  $\varepsilon$ to the lines $u_n(h_i)$. Since $u_n(\widetilde X)$ is contained in the convex hull of these lines, it follows that $\Delta$ and $u_n(\widetilde X)$ have at most distance $\varepsilon$, for all sufficiently large $n$. To see that also the tangent plane $T_p(u_n(\widetilde X))$ lies  $\varepsilon$-close to $\Delta$, we compare the harmonic map $u_n$ with the harmonic map $v_n$ which maps $\Omega_{s_n}(p)$ to $\H^3$ and has boundary values the edges of $\Delta$. Its image is contained in $\Delta$ and, since the boundary values of $u_n$ and $v_n$ differ by at most $\varepsilon$, it follows  by standard estimates on harmonic maps that both are $C^1$-close in the interior of $\Omega_{s_n}(p)$. This implies the assertion.
	\end{proof}

\subsection{Harmonic maps to $\RBbb$-trees} 

\subsubsection{Definitions} \label{sec:trees}
An  $\RBbb$-{\bf tree} is a complete length space $T$ such that any
two points can be joined by a unique path parametrized by arc
length.  This path is called the geodesic between the points, say
$p,q$, and it is denoted $\overline{pq}$.  We shall be interested in trees admitting isometric actions of $\pi_1$, and we will always assume the action is minimal in the sense that there is no proper $\pi_1$-invariant subset of $T$.
 In such a situation, we obtain a {\bf length function}
$$
\ell_T : \pi_1\lra \RBbb_{\geq 0} : [\gamma]\mapsto \inf_{p\in T}d_T(p,\gamma p)\ .
$$
Scaling the metric (and hence $\ell_T$) by positive constants defines a {\bf projective class} of length functions (\emph{cf}. \cite{Chiswell:76}). 

Examples of $\RBbb$-trees come from the following construction.
Let  $\Fcal$ be a measured foliation on $\Sigma$ with transverse measure $\mu$.  Define the {\bf dual tree} $T_{\Fcal}$ to the foliation as follows: if $\widetilde{\Fcal}$ is the lift to the universal cover, define a pseudodistance $\tilde d$ by
$$
\tilde d(p,q)=\inf\{ \tilde\nu(c) : c \ \text{a rectifiable path between}\ p, q\}\ .
$$
Then the Hausdorffication of $(\widetilde \Sigma, \tilde d)$ is an $\RBbb$-tree with an isometric action of $\pi_1$ (\emph{cf}.\ \cite[Cor.\ 2.6]{Bowditch:98}, and also \cite{MorganShalen:91,Otal:96}).
In the case of a nonzero holomorphic quadratic differential $q$ on a Riemann surface $X$,  we set $T_q:= T_{\Fcal_q^v}$.

 A {\bf morphism} of $\RBbb$-trees is a continuous map $f:T\to T'$ such that given any segment $e\subset T$, then either $f$ is constant on $e$ 
or $e$ decomposes into a finite union of subsegments $e_1\cup \cdots\cup e_k$ such that $f$ restricted to each $e_i$ is an isometry onto its image. It is a fact that in the latter case $f$ is either an isometry on $e$ or a {\bf folding}, meaning that it identifies two or more subsegments.  

Trees are examples of nonpositively curved metric spaces (NPC). Following ideas of Gromov \cite{GroSch}, Korevaar-Schoen \cite{KorSch1,KorSch2}, and independently Jost \cite{Jost:94}, developed a theory of energy minimizing maps from Riemannian domains  to NPC spaces. The last author studied the  case of maps to $\RBbb$-trees (see \cite{WolfT,Wolf:96}), which is the one relevant to this paper.
We  will need only very little from these  results, and we package a summary statement as follows (\emph{cf}.\ \cite{DaWen} for more details).

\begin{thm} \label{thm:KS}
Let $q$ be a nonzero holomorphic quadratic differential on a Riemann surface $X$.
Then the leaf space  projection map $u:\widetilde X\to T_q$ is an equivariant harmonic map. In general, let 
$T$ be an $\RBbb$-tree with an isometric action of $\pi_1$, and let $v:\widetilde X\to T$ be an equivariant harmonic map. Then
\begin{compactenum}[(i)]
\item the map $v$ is uniformly Lipschitz with constant
proportional to $E(u)^{1/2}$ $($the
 constant depends on the choice of conformal metric on $X$$)$;
\item the Hopf differential $\Hopf(v)=4q$
 is well defined and is a holomorphic quadratic differential that is nonzero unless $v$ is constant and the action is trivial;
\item  $v=p\circ u$, where $u: \widetilde X\to T_q$ is projection as above, and $p: T_q\to T$ is a folding.
\end{compactenum}
\end{thm}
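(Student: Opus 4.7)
The plan is to place the theorem inside the Korevaar--Schoen framework for harmonic maps into NPC targets, specialized to $\RBbb$-trees following Gromov--Schoen and Wolf \cite{WolfT,Wolf:96}. Harmonicity of the leaf-space projection $u\colon \widetilde X \to T_q$ is checked directly in natural $q$-coordinates $z = x+iy$ (so $q = dz^2$, with vertical foliation $\{x = \mathrm{const}\}$): the map $u$ descends the $x$-coordinate, its pullback pseudo-metric is $dx^2$ with constant pointwise energy density, and the transverse-measure lower bound $|dv|^2 \ge |\partial_x v|^2$ for any equivariant Lipschitz competitor exhibits $u$ as energy-minimizing. The Lipschitz estimate (i) then follows from the weak subharmonicity of the energy density of harmonic maps into NPC spaces, via a mean value / Moser iteration on a fundamental domain of $\widetilde X$, with the bound propagated to all of $\widetilde X$ by equivariance.

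For (ii), define $\Hopf(v)$ as the $(2,0)$-part of the Korevaar--Schoen pullback directional bilinear form. On the regular set---the complement of a measure-zero branch locus by Gromov--Schoen regularity---$v$ is locally a harmonic real-valued function into an edge, and the classical Bochner computation gives $\bar\partial \Hopf(v) = 0$ there; removable singularities extend this to a global holomorphic quadratic differential on $X$. If $\Hopf(v)$ vanishes, then $v$ is weakly conformal into a one-dimensional target, which forces $dv \equiv 0$ and hence triviality of the action. With the theorem's normalization, define $q$ by $\Hopf(v) = 4q$; this $q$ is precisely the quadratic differential determining the tree $T_q$ and the projection $u$ entering (iii).

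The main work is (iii): showing that vertical leaves of $\widetilde{\Fcal}_q^v$ collapse under $v$. At a regular point in $q$-coordinates the pullback form $v^* g_T$ has the diagonal shape $a\,dx^2 + b\,dy^2$ with $a - b$ pinned by $\Hopf(v)$, and one-dimensionality of the tangent cone to $T$ at regular points forces $b \equiv 0$; thus $\partial_y v \equiv 0$ and $v$ is constant on vertical leaves. Consequently $v$ descends through the tree quotient $u$ to an equivariant map $p\colon T_q \to T$, and the matching Hopf data shows $p$ preserves arc length along segments, making it a morphism of $\RBbb$-trees. The general dichotomy for morphisms of $\RBbb$-trees (isometric on each edge or folding) yields the final statement.

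The principal technical obstacle I anticipate is making the null-direction collapse rigorous: since $v$ is only Lipschitz and the target singular, the derivatives must be interpreted via Korevaar--Schoen directional energies, and the ``rank one'' assertion at regular points requires carefully matching the pullback form with the one-dimensional intrinsic metric of $T$. The cleanest route, pursued in \cite{DaWen}, is to combine a uniqueness statement for equivariant harmonic maps to trees with prescribed Hopf data together with the observation that both $v$ and $p \circ u$ carry the same Hopf differential and coincide on vertical leaves; the factorization then follows by rigidity.
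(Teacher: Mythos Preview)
The paper does not prove this theorem; it is stated as a packaged summary of known results, with attribution to Korevaar--Schoen, Jost, Gromov--Schoen, and especially Wolf \cite{WolfT,Wolf:96}, and a pointer to \cite{DaWen} for details. There is no argument in the paper to compare against.

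Your sketch follows the standard route in those references, and the main ideas---local harmonicity of the leaf projection, subharmonicity of the energy density for the Lipschitz bound, holomorphicity of the Hopf differential via Gromov--Schoen regularity off a small singular set plus removable singularities, and the rank-one collapse of vertical leaves yielding the factorization through $T_q$---are all correct in outline. Two small caveats. First, your one-line justification that $u$ is energy-minimizing (``$|dv|^2 \ge |\partial_x v|^2$'') is not a proof as written: that inequality is trivially true for any map and does not by itself bound $E(v)$ below by $E(u)$. One needs either a calibration/extremal-length argument relating $\int|\partial_x v|$ along horizontal arcs to intersection numbers with the vertical foliation, or simply the observation that $u$ is locally a harmonic real-valued function combined with convexity of energy in NPC targets. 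Second, the normalization $\Hopf(v)=4q$ is a convention specific to this paper (compare Theorem~\ref{thm:DonaldsonIsomorphism}(iv) and the Hitchin map \eqref{eqn:Hitchin-map}); a direct computation of the Hopf differential of the leaf projection in $q$-coordinates gives $q/4$, so do not expect your rank-one bookkeeping to reproduce the factor of $4$---it is absorbed into the definition of $q$ in part (ii), not derived.
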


We shall also need a version of the Korevaar-Schoen strong compactness theorem,
stated here in the limited context that we require. 
For positive constants $t_n\to +\infty$, let $\HBbb_n$ denote  the hyperbolic space $\HBbb^3$, but where the 
 metric has been rescaled:
$ds_{\HBbb_n}=t_n^{-1}ds_{\HBbb^3}$.  
For the following result, see also \cite[Thms.\ 2.2 and 3.1]{DDW:98}.

\begin{thm}[{\cite[Prop.\ 3.7 and Thm.\ 3.9]{KorSch2}}] \label{thm:KS-compactness}
Suppose $u_n:\widetilde X\to \HBbb_n$ is a sequence of $\rho_n$-equivariant continuous finite energy maps, and assume that $u_n$ have a uniform modulus of continuity: for each $z$ there is a monotone function $\omega(z,R)$ so that $\lim_{R\downarrow 0}\omega(z,R)=0$ and
$$
\max_{w\in B_R(z)}d(u_n(z), u_n(w))\leq \omega(z,R) \ .
$$
Then there is an $\RBbb$-tree $T$ with an isometric action of $\pi$ such that the convex hulls of the images of the $u_n$ converge in the Gromov-Hausdorff sense to $T$. Moreover,
\begin{compactenum}[(i)]
\item 
 the $u_n$ converge to a continuous finite energy map $u:\widetilde X\to T$ that is equivariant for this action;
\item 
 if $\displaystyle\lim_{k\to \infty} E(u_k)\neq 0$, then $u$ is nonconstant;
\item if the $u_n$ are equivariant harmonic maps then so is $u$;
and in this case, if $q_n$ $($\emph{resp}.\ $q$$)$ is the Hopf differential of $u_n$ $($\emph{resp}.\ $u$$)$, then $t_n^{-2}q_n\to q$. 
 \end{compactenum}
\end{thm}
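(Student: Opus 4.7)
The approach is to combine the Korevaar--Schoen compactness theory for harmonic maps into NPC targets with the observation that rescaled hyperbolic spaces collapse to $\RBbb$-trees. Since $ds_{\HBbb_n}=t_n^{-1}ds_{\HBbb^3}$ multiplies sectional curvatures by $t_n^2$, each $\HBbb_n$ is $\mathrm{CAT}(-t_n^2)$; any pointed Gromov--Hausdorff subsequential limit of a bounded family in $\{\HBbb_n\}$ is therefore $0$-hyperbolic in Gromov's sense, hence an $\RBbb$-tree.

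First I would establish precompactness and extract the limiting target. Fix a compact fundamental domain $F\subset\widetilde X$ for the $\pi_1$-action and basepoints $p_n:=u_n(\widetilde p_0)\in\HBbb_n$. The modulus $\omega$ together with compactness of $F$ uniformly bounds the diameter of $u_n(F)$ in $\HBbb_n$, and by $\rho_n$-equivariance the convex hull of $u_n(\widetilde X)$ is the $\rho_n(\pi_1)$-orbit of this set. Gromov's compactness theorem then produces, along a subsequence, a pointed Gromov--Hausdorff limit $(T,*)$ carrying a minimal isometric $\pi_1$-action, and the curvature condition forces $T$ to be an $\RBbb$-tree. Equicontinuity of the $u_n$ and an Arzela--Ascoli argument adapted to the Gromov--Hausdorff framework then yield a continuous, $\pi_1$-equivariant subsequential limit $u:\widetilde X\to T$; finite energy and $E(u)\leq\liminf E(u_n)$ follow from the lower-semicontinuity results of Korevaar--Schoen.

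Part (ii) is then contrapositive: a constant equivariant map has zero energy, and upper semicontinuity of the energy for minimizers (combined with the lower bound just obtained) forces $E(u_n)\to 0$ whenever $u$ is constant. Part (iii): harmonicity passes to the limit because a competitor $v$ for $u$ can be approximated by competitors $v_n$ for $u_n$ with $E(v_n)\to E(v)$, and the minimizing inequality $E(u_n)\leq E(v_n)$ passes in the limit to $E(u)\leq E(v)$. The Hopf differential formula follows from the fact that rescaling the target metric by $t_n^{-1}$ multiplies the pullback metric, and hence its $(2,0)$ part, by $t_n^{-2}$; continuous dependence of the Hopf differential under the convergence $u_n\to u$ then gives $t_n^{-2}q_n\to\Hopf(u)=q$, where $q_n$ denotes the Hopf differential of $u_n$ viewed as a map into the unrescaled $\HBbb^3$.

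The main obstacle is giving a clean technical framework in which convergence of maps into a varying sequence of target metric spaces can be compared. The natural device is the ultralimit / asymptotic cone construction employed in \cite{KorSch2}, which places all triples $(\HBbb_n,p_n,u_n)$ on a common footing and allows variational properties, energy minimality, and holomorphic quantities such as the Hopf differential to be transported cleanly to the limit; once this is set up, the remaining ingredients--equivariance of the limit, semicontinuity of energy, preservation of minimality--are largely formal consequences of the NPC structure.
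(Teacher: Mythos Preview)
The paper does not give its own proof of this statement; it is quoted from \cite[Prop.\ 3.7 and Thm.\ 3.9]{KorSch2} (see also \cite{DDW:98}), and only the conclusion is used. So there is no ``paper's proof'' to compare against, and your outline is effectively a summary of the Korevaar--Schoen argument, which is the correct thing to do here.

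Your sketch is broadly accurate, but there is one imprecision worth flagging. In your argument for (ii) you invoke ``upper semicontinuity of the energy for minimizers,'' yet in part (ii) the $u_n$ are \emph{not} assumed to be harmonic---that hypothesis enters only in (iii). What actually drives (ii) is that in the Korevaar--Schoen ultralimit construction the energy is preserved exactly: $E(u)=\lim_n E(u_n)$ (this is the content of \cite[Thm.\ 3.9]{KorSch2}), so if $u$ is constant then $\lim_n E(u_n)=0$. Lower semicontinuity alone would be vacuous here, and minimality is not available. Once you replace that step by the energy-convergence statement, the rest of your outline (curvature blowup forcing the limit to be $0$-hyperbolic, Gromov compactness plus equicontinuity, preservation of harmonicity via competitors, and the scaling argument for the Hopf differential) matches the standard route through \cite{KorSch2}.
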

We refer to the limiting tree $T$ as a {\bf Korevaar-Schoen limit}. 
Note that by Theorem \ref{thm:KS}, $T$ is a folding of $T_q$. 

\subsubsection{The Morgan-Shalen compactification} \label{sec:morgan-shalen}
There is a compactification of $R(\Sigma)$ that restricts on the Fricke space to Thurston's compactification of Teichm\"uller space.  The ideal points are given by projective classes of nontrivial isometric actions of $\pi_1$ on $\RBbb$-trees. 

Given $[\rho]\in R(\Sigma)$, define
$$
\ell_{\rho} : \pi_1\lra \RBbb_{\geq 0} : [\gamma]\mapsto \inf_{x\in \HBbb^3}d_{\HBbb^3}(x,\rho(\gamma) x)\ .
$$
\begin{thm}[\cite{MorganShalen:84}] \label{thm:ms}
Consider a sequence $[\rho_n]\in R(\Sigma)$. Then up to passing to subsequences, one of the following occurs:
\begin{compactenum}[(i)]
\item there is $[\rho]$ such that $[\rho_n]\to [\rho]\in R^0(\Sigma)$;
\item there is a minimal nontrivial action of $\pi_1$ by isometries on an $\RBbb$-tree $T$, and numbers  $\varepsilon_n\downarrow 0$, such that for all $\gamma\in \pi_1$,
$$
\lim_{n\to\infty}\varepsilon_n\ell_{\rho_n}(\gamma)=\ell_T(\gamma)\ .
$$
\end{compactenum}
\end{thm}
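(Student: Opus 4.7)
The plan is to establish a length-spectrum dichotomy and, in the divergent case, invoke the Korevaar-Schoen compactness result (Theorem \ref{thm:KS-compactness}) applied to rescaled equivariant harmonic maps to produce the $\RBbb$-tree. First, fix a finite symmetric generating set $\gamma_1,\dots,\gamma_k$ of $\pi_1$ and set $M_n:=\max_i \ell_{\rho_n}(\gamma_i)$. Trace functions generate the coordinate ring of $R(\Sigma)$, and for $g\in\PSL(2,\CBbb)$ the trace is controlled by the translation length (in the hyperbolic case $|\tr g|=2\cosh(\ell(g)/2)$, with analogous control in the parabolic or elliptic case). Hence if $M_n$ remains bounded along a subsequence, finitely many trace coordinates are bounded, $[\rho_n]$ subconverges in $R(\Sigma)$, and we land in case (i). Assume therefore $M_n\to\infty$ and set $\varepsilon_n:=M_n^{-1}\downarrow 0$.

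Next, build the tree. Replace each $\rho_n$ by its semisimplification, which represents the same point in the GIT quotient $R(\Sigma)$; Theorem \ref{thm:DonaldsonCorlette} then produces a $\rho_n$-equivariant harmonic map $u_n:\widetilde X\to\HBbb^3$. Rescale the target using the parameter $t_n:=M_n$ to view the $u_n$ as maps to $\HBbb_n$ in the notation preceding Theorem \ref{thm:KS-compactness}, so that the $\HBbb_n$-translation length of $\rho_n(\gamma_i)$ is $\varepsilon_n\ell_{\rho_n}(\gamma_i)\leq 1$. Fix a compact fundamental domain $F\subset\widetilde X$ and recenter so that each $u_n$ sends a chosen basepoint of $F$ to a common base of $\HBbb_n$; equivariance then gives uniform bounds $\sup_{z\in F}d_{\HBbb_n}(u_n(z),u_n(\gamma_i z))\leq 1$ for each generator. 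Combined with interior gradient estimates for harmonic maps into NPC targets, this yields a uniform modulus of continuity on compact subsets of $\widetilde X$. Theorem \ref{thm:KS-compactness} then produces, after a further subsequence, an isometric $\pi_1$-action on an $\RBbb$-tree $T$ together with a Korevaar-Schoen limit map $u:\widetilde X\to T$.

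Finally, verify the required properties. Gromov-Hausdorff convergence together with equivariance implies $\varepsilon_n\ell_{\rho_n}(\gamma)\to\ell_T(\gamma)$ for any individual $\gamma\in\pi_1$, and a diagonal extraction on a countable enumeration of $\pi_1$ makes the convergence simultaneous. Some $\ell_T(\gamma_i)$ attains the value $1$ by the choice of $\varepsilon_n$, so the action is nontrivial. To achieve minimality, replace $T$ by its unique minimal $\pi_1$-invariant subtree, an operation that preserves the length function. The main obstacle is supplying the equicontinuity needed by Theorem \ref{thm:KS-compactness}: high-energy harmonic maps can a priori concentrate their stretching and destroy Gromov-Hausdorff compactness. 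The key point is that the length normalization $\varepsilon_n M_n=1$ gives direct control on the displacement of generators in the rescaled target, which coupled with standard NPC gradient estimates transfers to the uniform Lipschitz control on fundamental domains required for the ultralimit argument.
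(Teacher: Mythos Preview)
The paper does not supply a proof of this statement; it is cited from \cite{MorganShalen:84}, whose original argument proceeds through valuations on the coordinate ring of the character variety and Bass--Serre theory rather than harmonic maps. Your strategy---manufacture the tree as a Korevaar--Schoen limit of rescaled equivariant harmonic maps---is a legitimate alternative in the spirit of \cite{DDW:98} and the Bestvina--Paulin construction, but as written it has a genuine gap at the equicontinuity step.

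The problem is the sentence ``equivariance then gives uniform bounds $\sup_{z\in F}d_{\HBbb_n}(u_n(z),u_n(\gamma_i z))\leq 1$.'' Equivariance only yields $d_{\HBbb_n}(u_n(z),u_n(\gamma_i z))=\varepsilon_n\,d_{\HBbb^3}(u_n(z),\rho_n(\gamma_i)u_n(z))$, and the translation length $\ell_{\rho_n}(\gamma_i)$ is the \emph{infimum} of the displacement function $x\mapsto d_{\HBbb^3}(x,\rho_n(\gamma_i)x)$, not its value at $u_n(z)$. Nothing you have said forces $u_n(F)$ to lie near the axes of the $\rho_n(\gamma_i)$, so the displacement there is a priori uncontrolled relative to $M_n$ and the interior gradient estimate has no input. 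The clean repair is to abandon $M_n$ as the scale and run the dichotomy on the energy instead: if $E(u_n)$ stays bounded then the Lipschitz bound of Theorem~\ref{thm:KS}(i) plus properness of the energy on $R(\Sigma)$ gives subconvergence, landing you in (i); if $E(u_n)\to\infty$, set $t_n:=E(u_n)^{1/2}$ so that Theorem~\ref{thm:KS}(i) directly supplies the uniform modulus of continuity needed for Theorem~\ref{thm:KS-compactness}, and take $\varepsilon_n=t_n^{-1}$. Nontriviality of the limiting action then follows from Theorem~\ref{thm:KS-compactness}(ii) rather than from your normalization $\varepsilon_n M_n=1$, which you no longer have.
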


For the next result we refer to \cite[Thm.\ 3.2]{DDW:98}, and we note that in the proof of that result harmonicity is not used.

\begin{thm} \label{thm:DDW}
Suppose that there is a constant $C>0$ such that the rescalings $t_n$
in Theorem \ref{thm:KS-compactness} satify:
$$
C^{-1} E_n^{1/2}\leq t_n\leq C E_n^{1/2} \ ,
$$
where $E_n$ is the energy of the $\rho_n$-equivariant harmonic map. Then 
the length function  of the action of $\pi_1$ on the Korevaar-Schoen
limit appearing in Theorem \ref{thm:KS-compactness} is in the projective class of the Morgan-Shalen limit of the sequence $[\rho_n]$.
\end{thm}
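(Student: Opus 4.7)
The plan is to identify the length function of the Korevaar--Schoen limit with a rescaled limit of translation lengths $t_n^{-1}\ell_{\rho_n}$, and then to argue that this rescaling produces a projectively unique nontrivial class, so it must coincide with the Morgan--Shalen projective class.

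First I would show that for each $\gamma \in \pi_1$ the limit $\ell_T(\gamma) = \lim_{n\to\infty} t_n^{-1}\ell_{\rho_n}(\gamma)$ exists and equals the translation length on the Korevaar--Schoen limit tree. Upper boundedness is immediate: by Theorem \ref{thm:KS}(i) applied in the rescaled target $\HBbb_n$, the map $u_n : \widetilde X \to \HBbb_n$ is Lipschitz with constant $\lesssim t_n^{-1} E_n^{1/2}$, which is uniformly bounded under the hypothesis $t_n \asymp E_n^{1/2}$. Applied to a path in $\widetilde X$ from a fixed basepoint $\tilde p_0$ to $\gamma\cdot \tilde p_0$, this yields a uniform bound $t_n^{-1}\ell_{\rho_n}(\gamma) \leq L\,d_X(\tilde p_0,\gamma \tilde p_0)$. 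Continuity of translation lengths under equivariant Gromov--Hausdorff convergence of isometric actions with bounded orbits (a standard fact: one compares infima $\inf_p d(p,\gamma p)$ over compact sets containing almost-minimizers) then gives convergence to $\ell_T(\gamma)$.

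Next I would verify that $\ell_T \not\equiv 0$. By the hypothesis $C^{-1}E_n^{1/2} \leq t_n \leq CE_n^{1/2}$, the rescaled energies $E(u_n)/t_n^2$ stay bounded below away from zero, so Theorem \ref{thm:KS-compactness}(ii) implies the limit map $u: \widetilde X \to T$ is nonconstant. Combined with Theorems \ref{thm:KS-compactness}(iii) and \ref{thm:KS}(ii), the Hopf differential of $u$ is a nonzero holomorphic quadratic differential, so the $\pi_1$-action on $T$ is nontrivial and thus $\ell_T$ is a nonzero length function. In particular there is some $\gamma_0$ with $\ell_T(\gamma_0) > 0$, giving a lower bound $t_n^{-1}\ell_{\rho_n}(\gamma_0) \geq c > 0$ for large $n$.

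Finally I would compare the two rescalings. Let $\varepsilon_n \downarrow 0$ be the Morgan--Shalen rescaling of Theorem \ref{thm:ms}(ii), so $\varepsilon_n\ell_{\rho_n} \to \ell_{T_{MS}}$ pointwise with $\ell_{T_{MS}} \not\equiv 0$. Choose $\gamma_1 \in \pi_1$ with $\ell_{T_{MS}}(\gamma_1) > 0$. Then the ratio $\varepsilon_n t_n$ satisfies
\[
\varepsilon_n t_n = \frac{\varepsilon_n \ell_{\rho_n}(\gamma_1)}{t_n^{-1}\ell_{\rho_n}(\gamma_1)} \longrightarrow \frac{\ell_{T_{MS}}(\gamma_1)}{\ell_T(\gamma_1)}=:\lambda,
\]
provided $\ell_T(\gamma_1) > 0$; the upper Lipschitz bound forces $\ell_T(\gamma_1) < \infty$, and the lower bound via $\gamma_0$ together with the analogous computation for $\gamma_0$ shows both ratios are comparable and finite, so $\varepsilon_n t_n$ converges to some $\lambda \in (0,\infty)$. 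Passing to the limit in $\varepsilon_n\ell_{\rho_n}(\gamma) = (\varepsilon_n t_n)\cdot t_n^{-1}\ell_{\rho_n}(\gamma)$ yields $\ell_{T_{MS}}(\gamma) = \lambda\,\ell_T(\gamma)$ for every $\gamma$, which is precisely the assertion that $\ell_T$ and $\ell_{T_{MS}}$ lie in the same projective class.

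The main obstacle is the compatibility of the two rescalings in the third step; everything rests on producing \emph{both} a positive lower bound for $t_n^{-1}\ell_{\rho_n}(\gamma_0)$ (which requires $\ell_T \not\equiv 0$ and uses the energy hypothesis in an essential way) and a uniform upper bound (which uses the Korevaar--Schoen Lipschitz estimate in the rescaled metric). Without the two-sided comparison $t_n \asymp E_n^{1/2}$, the rescaled limit map could degenerate to a constant, the limit tree could collapse, and the identification with the Morgan--Shalen class would fail.
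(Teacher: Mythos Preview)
The paper does not supply its own proof of this statement; it only refers to \cite[Thm.\ 3.2]{DDW:98} and adds the remark that ``in the proof of that result harmonicity is not used.'' Your three-step outline (convergence of translation lengths under equivariant Gromov--Hausdorff convergence, nontriviality of the limiting action, and comparison of the two rescalings $t_n^{-1}$ and $\varepsilon_n$) is the natural approach and is essentially what the cited reference does.

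Two points are worth flagging. First, in Step~1 you should note that the convex hull $C_n$ of the image of $u_n$ is $\rho_n$-invariant and hence contains the axis of every loxodromic $\rho_n(\gamma)$; this is why the translation length computed on $C_n$ agrees with the translation length in all of $\HBbb^3$, so that Gromov--Hausdorff convergence of the $C_n$ genuinely controls $t_n^{-1}\ell_{\rho_n}(\gamma)$. Without this, the infimum over the convex hull could in principle be strictly larger than the global infimum.

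Second, your Step~2 invokes Theorem~\ref{thm:KS-compactness}(iii) and Theorem~\ref{thm:KS}(ii) to pass from nonconstancy of $u$ to nontriviality of the action via the Hopf differential, and this uses that the limit map is harmonic. The paper, however, applies Theorem~\ref{thm:DDW} in Lemma~\ref{lem:gh-convergence}(ii) to the maps $w_n=f_n\circ v_n$, which are \emph{not} harmonic; this is precisely why the paper singles out that harmonicity is not used in the cited proof. To cover that case you need a direct argument: if $\ell_T\equiv 0$ then every $\gamma$ has a fixed point in $T$, and for a group acting on a tree this forces a global fixed point, so the equivariant limit map $u$ lands in a point, contradicting Theorem~\ref{thm:KS-compactness}(ii). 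Your Step~3 is fine once you use $\gamma_0$ to bound $\varepsilon_n t_n$ from above and $\gamma_1$ to bound $(\varepsilon_n t_n)^{-1}$ from above, which together pin down $\lambda\in(0,\infty)$.
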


\section{Bending} \label{sec:bending}

In this section we introduce a geometric notion of bending along $\rho$-equivariant   maps $u\colon\tilde X\to \H^3$,  and  of pairs $(A,\Psi)$. When $(A,\Psi)$ is a Higgs pair, the connection $\nabla=d_A+\Psi$ has monodromy $\rho$, and $u$ is the $\rho$-equivariant harmonic map from Theorem \ref{thm:DonaldsonIsomorphism}, then
we prove that these notions coincide asymptotically at high energy (see Theorem \ref{thm:asymptotic-bending}).

\subsection{Bending  of maps and connections}
\subsubsection{Bending of  maps} \label{sec:bending-maps}
We begin with a definition.
\begin{definition} \label{def:tent}\upshape
A {\bf tent} $T$ in $\HBbb^3$ is a pair of totally geodesic compatibly  oriented half planes meeting along a geodesic (see Figure \ref{fig:tent}). The geodesic is called the {\bf crease} and is denoted $\gamma_T$. By \enquote{compatibility of the orientations}, we will mean the induced orientation on $\gamma_T$ from the two half planes is opposite.
The dihedral angle of the two planes is called the {\bf angle of the tent} and is denoted $\beta_T$. By convention, if the half planes coincide with same orientation, then $\beta_T=0$; if equality holds with opposite orientation, then $\beta_T=\pi$. 
\end{definition}

We will use the following intrinsic way of measure the angle of a tent. 
A {\bf crossing} of a tent $T$ is a continuous path $c:[0,L]\to T\subset\HBbb^3$ satisfying the following conditions. 
\begin{compactenum}[(i)]
\item $c(0)$ and $c(L)$ lie in different components of $T\setminus \gamma_T$, say $T_-$ and $T_+$, respectively;
\item  there is $0<L_1<L$ such that $c$ restricted to the interval $[0,L_1]$ is a $C^1$ curve in $T_-$ meeting $\gamma_T$ at $c(L_1)$ transversely;
\item  there is $L_1\leq L_2<L$ such that $c$ restricted to the interval $[L_2,L]$ is a $C^1$ curve in $T_+$ meeting $\gamma_T$ at $c(L_2)$ transversely;
\item $c$ restricted to $[L_1,L_2]$ is a portion of $\gamma_T$.
\end{compactenum}
The orientation of the tent gives a choice of tangent $N(L_1)$ to $\gamma_T$ at the crease where a crossing intersects $c(L_1)$. More precisely, $N(L_1)$ is oriented to the left with respect to $c$. Let $N(t)$ be the parallel translate of $N(L_1)$ along $c$.
Let $n_0$ and $n_L$ denote the unit normals to $T_-$ and $T_+$, compatible with the orientations. Let $\widetilde n(t)$ denote the parallel translation of $n_0$ along $c$. Then $\widetilde n(L)$ and $n_L$ lie in the plane orthogonal to $N(L)$. This plane inherits an orientation from $N(L)$ and the orientation on $\HBbb^3$.  Then $\beta_T$ is the angle from $n_L$ to $\widetilde n(L)$ with respect to this orientation.

\begin{figure} 
\begin{tikzpicture}
\draw  [pattern=north east lines, pattern color=yellow!50] 
(1,-2) to [out=175, in=275] (-3,0) to [out=85, in=200] (-1,2) to (-1,2) -- (1,-2);
\draw  [pattern= horizontal lines, pattern color=gray!20] 
(-1,2) to [out=-5, in=95] (1.75,0.5) to [out=-90, in=60] (1,-2) to (1,-2) -- (-1,2);
\draw [thick] (-0.2,0.4) -- (0.3,0.9);
\draw [thick] (-0.2,0.4) -- (0.35,0.4);
\draw [thick] (0.2,0.4) to [out=90, in=-45] (0.1,0.7);
\node at (0.5, 0.6) {$\scriptstyle \beta_T$};
\end{tikzpicture}
\caption{A tent.}
\label{fig:tent}
\end{figure}
Let $u:\widetilde X\to \HBbb^3$ be a continuous $\rho$-equivariant  map, and fix $\widetilde p, \widetilde q\in \widetilde X$. We make  the following

\medskip\noindent{\bf Assumption 1.} 
  $u$ is smooth at both $\widetilde p$ and $\widetilde q$ and $du$ has maximal rank there.

\begin{definition} \label{def:bending-map}\upshape
The {\bf bending $\Theta_u(\widetilde p, \widetilde q)\in \RBbb/2\pi \ZBbb$ of $u$ from $\widetilde p$ to $\widetilde q$},  is
 defined as follows. Let $\DBbb_{u(\widetilde p)}$ and $\DBbb_{u(\widetilde q)}$ denote the oriented totally geodesic planes in $\HBbb^3$ tangent to the images of $du(p)$ and $du(q)$.
\begin{compactenum}[(i)]
\item If $\DBbb_{u(\widetilde p)}$ and $\DBbb_{u(\widetilde q)}$  meet along a geodesic $\gamma_T$, 
let $\beta_T$ be the dihedral angle of the tent constructed from the two half planes in $\DBbb_{u(\widetilde p)}\setminus\gamma_T$ and $\DBbb_{u(\widetilde q)}\setminus\gamma_T$ which contain $u(\widetilde p)$ and $u(\widetilde q)$, respectively, and where the orientation of the tent comes from the orientation on $\DBbb_{u(\widetilde p)}$.
Then set
$\Theta_u(\widetilde p, \widetilde q)$ equal to $\beta_T$ 
if 
the orientation of $\DBbb_{u(\widetilde q)}$ is compatible with the orientation of the tent in the sense of Definition \ref{def:tent}, and to $\pi+\beta_T$
 if the orientation is incompatible.
\item If $\DBbb_{u(\widetilde p)}$ and $\DBbb_{u(\widetilde q)}$  do not intersect, let $c$ be the geodesic between the planes, oriented at one endpoint to agree with the normal of $\DBbb_{u(\widetilde p)}$. If this orientation of $c$ agrees with normal to $\DBbb_{u(\widetilde q)}$ at the other end point, set 
$\Theta_u(\widetilde p, \widetilde q)=0$. If the orientation is opposite, set $\Theta_u(\widetilde p, \widetilde q)=\pi$.
\item If $\DBbb_{u(\widetilde p)}$ and $\DBbb_{u(\widetilde q)}$  coincide, set $\Theta_u(\widetilde p, \widetilde q)=0$ if they have the same orientation, and set $\Theta_u(\widetilde p, \widetilde q)=\pi$ if they have opposite orientations.
\end{compactenum}
\end{definition}
\noindent
We note that
by $\rho$-equivariance of $u$ we clearly have 
$$\Theta_u(g\widetilde p, g\widetilde q)
=\Theta_u(\widetilde p, \widetilde q)
$$
for all $\widetilde p, \widetilde q$ satisfying Assumption 1, and all $g\in \pi_1$. It is also clear that 
$$
\Theta_u(\widetilde p, \widetilde q)=\Theta_u(\widetilde q, \widetilde p)\ .
$$

\subsubsection{Bending of connections} \label{subsec:HorizontalLifts}

Let $d_A$ be a unitary connection on $E$, inducing a connection (also denoted by $d_A$)  on the bundle $\sqrt{-1}\g_E$
of traceless hermitian endomorphisms of $E$.
Fix a $1$-form $\Psi \in \Omega^{1}(X,\sqrt{-1}\g_E)$. 
We will suppose that the connection $\nabla = d_A+\Psi$ is flat with monodromy $\rho$. 
Let $\map{k}{[0,L]}{X}$ be  a piecewise  $C^1$ curve.

\medskip\noindent{\bf Assumption 2.} 
 The  linear map
  \begin{equation*}
  \Psi(k(\sigma))\colon T_{k(\sigma)}X\to \sqrt{-1}\g_{E,k(\sigma)}
  \end{equation*}
has maximal rank at $\sigma=0, L$.
\medskip

By analogy to the bending of maps  in the previous section, we let $N(0)$ and $N(L)$ be endomorphisms that are a positive multiple of $\Psi(J(\dt k(0))$ and $\Psi(J(\dt k(L))$.
We define the bending angle $\Theta_k(A, \Psi)$ of the pair $(A,\Psi)$ along $k$  using parallel translation with respect to $A$ in place of the Levi-Civita connection. Namely, consider any endomorphism field $V(\sigma)$ along $k$ that is a positive multiple of the endomorphism field $\sigma\mapsto \Psi(\dt k(\sigma))$.
If $k$ is $C^1$ on subintervals $[\sigma_{i-1}, \sigma_i]$, $i=1,\ldots, m$, 
let $\Pi_\sigma^{k,A}$ denote parallel transport in $\sqrt{-1}\gfrak_E$ along $k$ with respect to $A$. 
   Then for $\sigma\in[\sigma_{i-1}, \sigma_i]$ let 
\begin{equation} \label{eqn:TotalParallelTransport}
  \widetilde n(\sigma)
:=\Pi_{\sigma}^{k,A}\Pi_{\sigma_{i-1}}^{k,A}
\cdots \Pi_{\sigma_1}^{k,A}n(0)\ ,
\end{equation}
be the total parallel transport,
where $n(0)$ denotes the endomorphism $\sqrt{-1}[N(0),V(0)]$. Let $P(L)\subset \sqrt{-1}\g_{E,k(\sigma)}$ be the orthogonal complement to $N(L)$, 
and use $N(L)$ and  the orientation of $\sqrt{-1}\g_{E,k(L)}$  to give $P(L)$ an orientation. 
We now define the \textbf{bending of the pair $(A,\Psi)$} along the path $k$,
\[
\Theta_k(A, \Psi)\in \RBbb/2\pi\ZBbb\ ,
\]
to be the angle from $\sqrt{-1}[N(L),V(L)]$ to the orthogonal projection of the endomorphism $\widetilde n(L)$ to $P(L)$, when the latter is nonzero (otherwise bending is undefined).

It is immediate from this definition that the bending $\Theta_k(A,\Psi)$ is invariant under the action of unitary gauge transformations on $(A,\Psi)$. We may therefore write $\Theta_k([(A,\Psi)])$ for the bending of the gauge equivalence class of the pair $(A,\Psi)$.

\subsection{Asymptotic bending of Higgs pairs}\label{subsect:limbendingHiggs}

In this section we relate the total bending in connections to periods of Prym differentials on the spectral curve.

\subsubsection{Horizontal lifts for limiting connections}

Recall  the definition of the spectral curve $\pi: \widehat X_q\to X$ associated to $q\in \QD^{\ast}(X)$ in \eqref{eqn:spectral_curve}.
Our first goal here is to compute the parallel transport in the bundle $\sqrt{-1}\g_E$ of hermitian endomorphisms with respect to the (singular) flat connection $d_{A_{\infty}}$ from \eqref{eq:genlimitingconf}. The   connection  $ A_{\infty}$  induces a unitary connection on the pullback bundle $\pi^\ast E$, which we denote by $\hat A_{\infty}$. After pulling back to the spectral curve, the   calculation of the parallel transport can be carried out in terms of a suitably  chosen oriented frame 
which we define as follows (\emph{cf}.\ \eqref{eqn:W1}):
\begin{align}
\begin{split}\label{eqn:W}
W_1&=\left(\begin{matrix} 0& -i\lambda^{-1}\Vert\lambda\Vert \\ 
 i\lambda\Vert\lambda\Vert^{-1}&0\end{matrix}\right), \\
W_2&=\left(\begin{matrix} 0& \lambda^{-1}\Vert\lambda\Vert \\  \lambda\Vert\lambda\Vert^{-1}&0\end{matrix}\right), \\
W_3&=\left(\begin{matrix} -1&0 \\ 0&1\end{matrix}\right),
\end{split}
\end{align}
with commutation relations
\begin{equation} \label{eqn:commutation}
[W_i, W_j]=2i\, \text{sgn}(ijk)\, W_k \ ,
\end{equation}
 
From \eqref{eqn:pullback-Phi} we have
\begin{equation} \label{eqn:pullback-Psi}
\pi^{\ast}\Psi_{\infty} =\pi^{\ast}\Phi_{\infty}+\pi^{\ast}\Phi_{\infty}^{\ast}=2{\rm 
Re}(\lambda_{\SW})\otimes W_2 \ .
\end{equation}

\begin{prop}\label{lem:W-1} The following hold: 
\begin{compactenum}[(i)]
	\item The hermitian endomorphism $W_2$ lies in $\pi^{\ast} L^{\mathbb C}_{\Phi_{\infty}}$. 
	\item 
The collection $\{W_1, W_2, W_3\}$ gives an $\hat A_{\infty}^0$-parallel oriented orthonormal frame for the bundle $\sqrt{-1} \gfrak_{\widehat E}$.
\end{compactenum}
\end{prop}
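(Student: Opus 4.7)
Part (i) is immediate from the formula $\pi^{\ast}\Phi_{\infty} = \lambda_{\SW}\otimes W_2$ in \eqref{eqn:pullback-Phi}: since $[W_2,W_2]=0$, automatically $[\pi^{\ast}\Phi_{\infty}\wedge W_2]=0$, hence $W_2 \in \pi^{\ast} L^{\CBbb}_{\Phi_{\infty}}$.

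For part (ii), orthonormality and orientation are direct matrix computations. Using the inner product $\langle A,B\rangle = \tfrac{1}{2}\tr(AB)$ on traceless hermitian endomorphisms, one checks $W_i^2 = I$ for $i=1,2,3$ and $\tr(W_iW_j)=0$ for $i\neq j$, whence $\langle W_i, W_j\rangle = \delta_{ij}$. A further direct check confirms the cyclic commutation relations $[W_1,W_2]=2iW_3$, $[W_2,W_3]=2iW_1$, $[W_3,W_1]=2iW_2$, which match the convention \eqref{eqn:commutation} and certify positive orientation.

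For $\hat A_{\infty}^0$-parallelism, write $\pi^{\ast}A_{\infty}^0 = \pi^{\ast}A_0 + i\alpha W_3$, where $\alpha := \tfrac{1}{2}\Im\bar\partial\log\Vert q\Vert$. Since $W_3 = \mathrm{diag}(-1,1)$ is a parallel section of $\End_0 E$ under $A_0$ (which respects the splitting $E = K_X^{-1/2}\oplus K_X^{1/2}$) and commutes with $iW_3$, we have $d_{\hat A_{\infty}^0}W_3 = 0$ after pullback. For $W_2$, work in the unit frame $u = \lambda/\Vert\lambda\Vert$ of $\pi^{\ast}K_X$ so that $W_2 = \smat{0}{u^{-1}}{u}{0}$. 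The Chern connection on $\pi^{\ast}K_X$ reads $\nabla u = i\omega u$ with the real $1$-form $\omega = 2\Im\partial\log\Vert\lambda\Vert$; applying the induced connection on the off-diagonal entries (which are sections of $\pi^{\ast}K_X^{\pm 1}$) yields $d_{\pi^{\ast}A_0}W_2 = \omega W_1$, while the correction term contributes $[i\alpha W_3, W_2] = 2\alpha W_1$. The key identification $\pi^{\ast}\Vert q\Vert = \Vert\lambda\Vert^2$ gives $\pi^{\ast}\alpha = \Im\bar\partial\log\Vert\lambda\Vert = -\tfrac{1}{2}\omega$, so the two contributions cancel and $d_{\hat A_{\infty}^0}W_2 = 0$. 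Finally, the relation $W_1 = -\tfrac{i}{2}[W_2, W_3]$ together with the Leibniz rule gives $d_{\hat A_{\infty}^0}W_1 = 0$.

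The main technical point is bookkeeping of the metric conventions, namely verifying $\pi^{\ast}\Vert q\Vert = \Vert\lambda\Vert^2$ on $\widehat X_q$, since this is precisely what makes the correction term $i\alpha W_3$ in the definition \eqref{eq:stdlimconn} cancel the Chern connection of the unit frame $\lambda/\Vert\lambda\Vert$. Once that is in hand, the parallelism reduces to the two-line calculation above.
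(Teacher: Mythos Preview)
Your argument is correct and follows essentially the same direct computation as the paper: both reduce parallelism to comparing the off-diagonal derivative $d(\lambda^{-1}\Vert\lambda\Vert)$ with the correction term $\tfrac12\Im\bar\partial\log\Vert q\Vert$, and your key identity $\pi^\ast\Vert q\Vert=\Vert\lambda\Vert^2$ is exactly what makes this cancellation work. The only organizational difference is that the paper verifies $d_{\hat A_\infty^0}W_1=0$ directly, whereas you check $W_2$ and $W_3$ and then obtain $W_1=-\tfrac{i}{2}[W_2,W_3]$ via the Leibniz rule; this is a perfectly good (and slightly cleaner) variant of the same calculation.
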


\begin{proof}
	The proof    is a straightforward calculation. We only check that $d_{\hat A_{\infty}^0}W_1=0$. For this we use that one can locally express $\lambda$ as $q^{\frac{1}{2}}$ so that
	\begin{equation*}
	d\big(\lambda^{-1}\Vert\lambda\Vert\big)	=
	 d\big(q^{-1/4}\bar q^{1/4} \big)=\frac{1}{4}q^{-1/4}\bar q^{-3/4}\bar\partial \bar q - \frac{1}{4}q^{-5/4}\bar q^{1/4} \partial   q= \frac{1}{4}q^{-1/4}\bar q^{1/4}\big(\bar\partial\log \bar q-\partial\log q\big),
	\end{equation*}
	using that $\bar\partial q=0$.  On the other hand, recall from \eqref{eq:stdlimconn} that
	\begin{equation*}
		A_{\infty}^0 =A_0 + \frac{1}{2} \left(\Im\, \bar \partial \log \Vert q\Vert\right)\begin{pmatrix}
 -i & 0 \\ 0 & i
 \end{pmatrix}=A_0 +\frac{1}{8} \big(\bar\partial \log\bar q-  \partial \log  q\big)\begin{pmatrix}
 -1 & 0 \\ 0 & 1
 \end{pmatrix},
	\end{equation*}
where $A_0$ denotes the Chern connection. Now with $d_{\hat A_{\infty}^0}W_1=dW_1+[\hat A_{\infty}^0\wedge W_1]$, the last two calculations show that the upper right entry of $d_{\hat A_{\infty}^0}W_1$ vanishes, and similarly for the other entries.
\end{proof}

For the following, we make the same assumptions on the path $k$ as in \textsection{\ref{subsect:pathanditslift}}.
\begin{prop}  \label{prop:LimitingHorizontalLift}
	Let $A_{\infty}$ be the unitary connection associated to a limiting configuration in $\Hscr_\infty^{-1}(q)$, and write
$
A_{\infty} = A^{0}_{\infty} + \eta
$, 
$\eta=\widehat\eta\otimes W_2$, for $\widehat\eta$ a harmonic Prym differential (see \textsection{\ref{sec:prym-limiting}}).
 Define the
 function $\map{\vartheta}{[0,L]}{\R}$  by
\begin{equation} \label{eqn:theta}
	\vartheta(\sigma) \deq -2i \cdot \int_{\khat([0,\sigma])} \widehat\eta\ .
\end{equation}
Then for $j\in\{1,2,3\}$ the parallel transport of the hermitian endomorphisms $W_{i}(\khat_{0})$ of $\pi^{\ast} E_{\khat(0)}$  along the path $\khat$ with respect to the connection $d_{\Ahat_{\infty}}$ is given by
\begin{align}
\begin{split} \label{eqn:W-parallel}
	\Pi_{\sigma}^{\khat,\Ahat_{\infty}}W_{2}(\khat(0)) &= W_{2}(\khat(\sigma)) \\
	\Pi_{\sigma}^{\khat,\Ahat_{\infty}}W_{1}(\khat(0)) &= \cos(\vartheta(\sigma)) \cdot W_{1}(\khat(\sigma)) - \sin(\vartheta(\sigma) \cdot W_{3}(\khat(\sigma)) \\
	\Pi_{\sigma}^{\khat,\Ahat_{\infty}}W_{3}(\khat(0)) &= \sin(\vartheta(\sigma) \cdot W_{1}(\khat(\sigma)) + \cos(\vartheta(\sigma) \cdot W_{3}(\khat(\sigma))\ ,
\end{split}
\end{align}
for $0\le \sigma \le L$.
\end{prop}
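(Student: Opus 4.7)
The plan is to reduce the parallel transport equation for $\widehat A_\infty = \widehat A_\infty^0 + \pi^\ast\eta$ to a system of linear ODEs in the frame $\{W_1, W_2, W_3\}$ and then solve it explicitly. By Proposition~\ref{lem:W-1}(ii), this frame is $\widehat A_\infty^0$-parallel along $\widehat k$, so any section $V(\sigma) = \sum_{i=1}^3 v_i(\sigma)\, W_i(\widehat k(\sigma))$ satisfies
\[
d_{\widehat A_\infty} V = \sum_{i=1}^3 \dot v_i \, W_i + [\pi^\ast\eta(\dt{\widehat k}),\, V]\,,
\]
and $V$ is $\widehat A_\infty$-parallel if and only if the right-hand side vanishes.

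First I would substitute $\pi^\ast\eta = \widehat\eta \otimes W_2$ and compute the brackets using the commutation relations \eqref{eqn:commutation}, which give $[W_2, W_1] = -2i W_3$, $[W_2, W_3] = 2i W_1$, and $[W_2, W_2] = 0$. Setting $\alpha(\sigma) := \widehat\eta(\dt{\widehat k}(\sigma))$, which is purely imaginary since $\widehat\eta \in \Hcal^1_{\odd}(\widehat X_q, i\R)$, the parallel transport equation becomes the decoupled system
\begin{align*}
\dot v_2 &= 0\,, \\
\dot v_1 &= -2i\alpha\, v_3\,, \\
\dot v_3 &= \phantom{-}2i\alpha\, v_1\,.
\end{align*}
Since $2i\alpha$ is real and equals $\vartheta'(\sigma)$ by the definition \eqref{eqn:theta}, the first equation immediately gives $\Pi_\sigma^{\widehat k, \widehat A_\infty} W_2(\widehat k(0)) = W_2(\widehat k(\sigma))$, which is the first formula in \eqref{eqn:W-parallel}.

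For the remaining two formulas, I would introduce $z(\sigma) := v_1(\sigma) + i v_3(\sigma)$. The second and third ODEs combine into $\dot z = -i \vartheta'(\sigma) z$, whose solution is $z(\sigma) = e^{-i\vartheta(\sigma)} z(0)$. Taking the two initial conditions $z(0) = 1$ and $z(0) = i$ and separating real and imaginary parts yields precisely the second and third formulas in \eqref{eqn:W-parallel}.

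The only mild obstacle is bookkeeping: one must carefully track the factor $i$ arising from $\widehat\eta$ being imaginary against the factor $2i$ from the commutation relations, so that the final rotation parameter is real and matches the normalization $\vartheta = -2i\int \widehat\eta$ in \eqref{eqn:theta}. Everything else is routine integration of a $2\times 2$ system with skew-symmetric coefficient matrix.
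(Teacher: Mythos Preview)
Your proposal is correct and follows essentially the same approach as the paper: both use that $\{W_1,W_2,W_3\}$ is $\widehat A_\infty^0$-parallel (Proposition~\ref{lem:W-1}(ii)) together with the commutation relations \eqref{eqn:commutation}, reducing the problem to a rotation in the $(W_1,W_3)$-plane governed by $\dot\vartheta = -2i\widehat\eta$. The paper is slightly more terse---it writes down the candidate parallel sections $\tilde W_1,\tilde W_3$ and verifies directly that $d_{\widehat A_\infty}\tilde W_i=0$---whereas you derive and solve the ODE system, but these are equivalent presentations of the same computation. (One small slip: you write ``$2i\alpha$ equals $\vartheta'(\sigma)$'' when in fact $\vartheta' = -2i\alpha$; your subsequent equation $\dot z = -i\vartheta' z$ is nonetheless correct, so the sign error is self-correcting.)
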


\begin{proof}
The first line of \eqref{eqn:W-parallel}  is clear, since $W_2$ is parallel and commutes with $\eta$. For the rest, 
 using \eqref{eqn:commutation},
\begin{align*}
d_{\widehat A_\infty}W_1 &=\hat\eta\otimes [W_2, W_1]= -2i\hat\eta\otimes W_3\ , \\
d_{\widehat A_\infty}W_3 &=\hat\eta\otimes [W_2, W_3]= 2i\hat\eta\otimes W_1\ .
\end{align*}
Writing
\begin{align*}
\tilde W_1&= \cos\vartheta\cdot W_1 - \sin\vartheta\cdot W_3\ , \\
\tilde W_3&= \sin\vartheta\cdot W_1 + \cos\vartheta\cdot W_3\ ,
\end{align*}
we see that $ d_{\widehat A_\infty}\tilde W^i=0$ if the derivative $\dt{\vartheta}=-2i\hat\eta$. The result follows. 
\end{proof}

\subsubsection{Quasi-transverse paths with vertical ends}\label{subsect:pathanditslift} 
Fix a holomorphic quadratic differential $q \in \SQD^{\ast}(X)$, and consider a piecewise $C^1$ path $k:[0,L]\to X$ that is quasitransverse to the horizontal foliation $\mathcal{F}_{q}^{h}$ and meets the zeroes of $q$ precisely at its endpoints.
In particular,
 this means that the parameter interval of $k$ admits a subdivision $0=\sigma_0<\sigma_1<\cdots<\sigma_m=L$ such that $k$ restricted to $[\sigma_{i-1},\sigma_i]$ alternates between vertical and horizontal paths. We say that $k$ has \textbf{vertical ends} if the following conditions are satisfied:

\begin{compactenum}[(i)]
	\item the limits $\lim_{\sigma\downarrow 0}\dt k(\sigma)$ and $\lim_{\sigma\uparrow L}\dt k(\sigma)$ are both nonzero;
	\item  the restrictions $k\bigr|_{[0,\sigma_1]}$ and $k\bigr|_{[\sigma_{m-1},L]}$ are both vertical.
\end{compactenum}

We will denote by $\partial k$ the section of $k^\ast(K_X^{-1})$ induced by the derivative $\dt k$ of $k$.
The quadratic differential $q$ may be viewed as a section of ${\rm Sym}^2(K_X)$, and so it defines a function on ${\rm Sym}^2(K_X^{-1})$. We will denote this function applied to $\partial k\otimes\partial k$ by $q(\partial k,\partial k)$. In local coordinates where $q=q(z)dz^2$, this is simply
$q(\partial k,\partial k)(\sigma)=q(z(\sigma))(\dt z(\sigma))^2$. With this understood, if $k$ is parametrized by arc length locally near $\sigma=0,L$, condition (ii) above implies that
\begin{equation} \label{eqn:asymp-vert}
  \frac{q(\partial k,\partial k)(\sigma)}{\Vert q\Vert (k(\sigma))} = -1 
\end{equation}
for $\sigma$ in $[0,\sigma_1]$ or $[\sigma_{m-1},L]$. 
Recall from \textsection{\ref{sec:measured-foliations}} that since $k$ is assumed to be quasitransverse, we may find a lift $\hat k :[0,L]\to \widehat X_q$ of the path $k$ to the spectral curve 
such that $\imag(\lambda_{\SW}(\dt{\hat k}))\leq 0$. The path $\hat k$ is piecewise $C^1$ and meets the zeroes of $\lambda_{\SW}$ precisely at its endpoints. Using condition (ii) above, it is easy to show
that  the endomorphisms $W_i(\hat k(\sigma))$ in \eqref{eqn:W} extend continuously to the closed interval $[0,L]$.

In the following we suppose that $t>t_0$ is sufficiently large. Let $(A_t,\Psi_t)$ be a solution to the self-duality equations and consider a nearby approximate solution $(A_t^{\app},\Psi_t^{\app})$ such that the difference between these two pairs is exponentially small in $t$ (\emph{cf}.~\textsection\ref{subsect:approxsolutions}).
We indicate with a hat the respective pullbacks of $\Psi_t$ and $\Psi_t^{\app}$ to $\sqrt{-1}\gfrak_{\widehat E}$-valued differential forms along $\hat k$ on the spectral curve, \emph{i.e.}\ we let
\begin{equation*}
 \widehat\Psi_{t} := \pi^{\ast} \Psi_{t} \in \Omega^{1}(\widehat X_q,\sqrt{-1}
 \gfrak_{\widehat E})\ ,
 \end{equation*}
 and similarly for $\Psi^{\app}_{t}$.

\begin{prop}\label{prop:NormalizedApproximatePsi}
Fix a piecewise $C^1$ path $k:[0,L]\to X$ that is quasitransverse to the horizontal foliation $\mathcal{F}_{q}^{h}$ with vertical ends and meets the zeroes of $q$ precisely at its endpoints. Then for $\sigma = 0,L$ we have that
\begin{equation*}
	\widehat\Psi_t^{\app}(\partial \hat k(\sigma)) = W_1(\hat k(\sigma))\qquad\textrm{and}\qquad \widehat\Psi_t^{\app}(J\circ \partial \hat k(\sigma)) = W_2(\hat k(\sigma)).
\end{equation*}
\end{prop}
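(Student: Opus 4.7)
The plan is a direct calculation at each endpoint, organized around a cancellation induced by the vertical-end hypothesis. At $\sigma\in\{0,L\}$, the point $k(\sigma)\in Z(q)$ sits where the cutoff satisfies $\chi(\|q\|)=1$, so the approximate Higgs field reduces to the fiducial model $\Phi_t^{\app}=\smat{0}{\|q\|^{1/2}e^{h_t(\|q\|)}}{\|q\|^{-1/2}e^{-h_t(\|q\|)}q}{0}$. Choose a local holomorphic coordinate $z$ near $k(\sigma)$ in which the conformal metric is $|dz|^2$ (so pointwise $\|q\|=|q|$), and set $v:=\dot k(\sigma)\in T^{(1,0)}X$, a unit vector by the arc-length condition of \eqref{eqn:asymp-vert}. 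The vertical-end relation $qv^2=-\|q\|$ is the algebraic identity that, together with its conjugate $\bar q\bar v=-\|q\|v$, produces the cancellation that collapses the off-diagonals of $\Psi_t^{\app}=\Phi_t^{\app}+(\Phi_t^{\app})^{\ast_h}$.

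Expanding $\Psi_t^{\app}(\partial k(\sigma))=\phi_1\,v+\phi_1^{\ast_h}\,\bar v$ in the unitary frame $(e_1,e_2)$ of $E$ and substituting the vertical identity, the $e^{+h_t}$ and $e^{-h_t}$ contributions to each off-diagonal entry arrive with opposite signs and combine into a single $\sinh h_t$:
\[
\Psi_t^{\app}(\partial k(\sigma))\;=\;2\,\|q\|^{1/2}\sinh\bigl(h_t(\|q\|)\bigr)\,\smat{0}{v}{\bar v}{0}.
\]
Replacing $\partial k$ by $J\circ\partial k$ (horizontal direction, $q(Jv)^2=+\|q\|$) flips the relevant sign, so $\sinh$ becomes $\cosh$ and the matrix acquires factors of $\pm i$:
\[
\Psi_t^{\app}(J\circ\partial k(\sigma))\;=\;2\,\|q\|^{1/2}\cosh\bigl(h_t(\|q\|)\bigr)\,\smat{0}{iv}{-i\bar v}{0}.
\]

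The identification with $W_1(\hat k(0))$ and $W_2(\hat k(0))$ is then immediate. In a local coordinate $\zeta$ on $\widehat X_q$ with $\zeta^2=z$ one has $\|\lambda\|=|\zeta|$. The quasi-transverse orientation of $\hat k$ demands $\Im\bigl(\lambda_{\SW}(\partial\hat k)\bigr)\leq 0$, while $(\lambda_{\SW}(\partial\hat k))^2=qv^2=-|\zeta|^2$ forces $\lambda_{\SW}(\partial\hat k)=\zeta v=-i|\zeta|$. This pins down $\zeta/|\zeta|\to -i\bar v$ as $\sigma\to 0$, and substitution into \eqref{eqn:W} yields exactly $W_1(\hat k(0))=\smat{0}{v}{\bar v}{0}$ and $W_2(\hat k(0))=\smat{0}{iv}{-i\bar v}{0}$. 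The proof concludes by invoking Lemma~\ref{f_t-h_t-function}(iv): the Painlev\'e~III normalization of $h_t$ at $r=0$ is precisely what makes the scalar prefactors $2\|q\|^{1/2}\sinh h_t$ and $2\|q\|^{1/2}\cosh h_t$ both tend to $1$, rather than to a bounded but non-unit constant. The main technical point is this PIII normalization; the rest is routine bookkeeping around the vertical identity and the choice of the branch of $\widehat X_q$ selected by quasi-transversality.
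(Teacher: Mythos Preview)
Your proof is correct and follows essentially the same route as the paper's: both evaluate the approximate Higgs field at a vertical end using its explicit form near a zero, invoke the vertical-end identity (your $qv^2=-\|q\|$ is equivalent to the paper's $\lambda_{\SW}(\partial\hat k)/\|\lambda\|=-i$), and pass to the limit $\|q\|\to 0$ via the Painlev\'e~III asymptotic of Lemma~\ref{f_t-h_t-function}(iv). Your organization---computing $\Psi_t^{\app}(\partial k)$ downstairs to get the clean $\sinh h_t/\cosh h_t$ factorization, and then separately matching the matrix shape with $W_1,W_2$ via the branch of $\zeta$ selected by quasi-transversality---is slightly more explicit than the paper's choice to work on the spectral curve throughout, but the two are the same computation in different packaging.
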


\begin{proof}
  Recall that near the zeroes of $q$,
 \begin{equation*} 
 	\Phi^{\app}_{t} = \begin{pmatrix} 0 & e^{h_{t}(\Vert q\Vert)}\,\Vert q\Vert^{1/2} \\ e^{-h_{t}(\Vert q\Vert)}\,\Vert q\Vert^{-1/2}\,q & 0 \end{pmatrix}\ .
\end{equation*}
 In terms of the tautological section,  $\pi^{\ast} q=\lambda^2$, the pullback of $\Phi^{\app}_{t}$ to the spectral curve can be written in the form
\begin{equation} \label{eqn:PhihatApp}
	\widehat\Phi^{\app}_{t} = \begin{pmatrix} 0 & e^{h_{t}(\Vert\lambda\Vert^{2})}\,\Vert\lambda\Vert\,\lambda^{-1} \\ e^{-h_{t}(\Vert\lambda\Vert^{2})}\,\Vert\lambda\Vert^{-1}\,\lambda & 0 \end{pmatrix} \otimes \lambda_{\SW}\ .
\end{equation}
Similarly, 
\begin{equation}  \label{eqn:PhihatAppStar}
	\big(\widehat\Phi^{\app}_{t}\big)^{\ast} =  \begin{pmatrix} 0 & e^{-h_{t}(\Vert\lambda\Vert^{2})}\,\Vert\lambda\Vert\,\lambda^{-1} \\ e^{h_{t}(\Vert\lambda\Vert^{2})}\,\Vert\lambda\Vert^{-1}\,\lambda & 0 \end{pmatrix} \otimes \overline{\lambda_{\SW}}\ .
\end{equation}
Now we calculate:
\begin{align*}
\lambda_{\SW}(\partial \hat k)&=\lambda(\hat k)\otimes\pi^\ast(\partial k) \\
(\lambda_{\SW}(\partial \hat k))^2&= (\pi^\ast q)(\hat k)\otimes \pi^\ast(\partial k)^2 =\pi^\ast(q(\partial k, \partial k))\ .
\end{align*}
Since $k$ is assumed to be quasitransverse with vertical ends, by the condition \eqref{eqn:asymp-vert} it follows that, locally near $\sigma=0,L$,
$$
\left(\frac{\lambda_{\SW}(\partial \hat k)}{\Vert\lambda\Vert\circ\hat k}\right)^2=
\pi^\ast\left(\frac{q(\partial k, \partial k)}{\Vert q\Vert \circ k}\right) = -1\ ,
$$
and so by the choice of lift we have
\begin{equation} \label{eqn:sw-limit}
\frac{\lambda_{\SW}(\partial \hat k)}{\Vert\lambda\Vert\circ\hat k} = - i\ .
\end{equation}
Similarly,
\begin{equation} \label{eqn:sw-limit2}
\frac{\lambda_{\SW}(J\circ\partial \hat k)}{\Vert\lambda\Vert\circ\hat k} =  1\ .
\end{equation}
Inserting $\partial \hat k$ into \eqref{eqn:PhihatApp} and rearranging the resulting terms slightly yields along $\hat k$ the endomorphism field
\begin{equation*}
	\widehat\Phi_t^{\app}(\partial \hat k)= \begin{pmatrix} 0 & e^{h_{t}(\Vert\lambda\Vert^{2})}\, \|\lambda\|^2\lambda^{-1} \\ e^{-h_{t}(\Vert\lambda\Vert^{2})}\lambda & 0 \end{pmatrix} \frac{\lambda_{\SW}(\partial \hat k)}{\Vert\lambda\Vert },
\end{equation*}
and similarly
\begin{equation*}
	\big(\widehat\Phi_t^{\app}\big)^{\ast}(\partial \hat k)=\begin{pmatrix} 0 & e^{-h_{t}(\Vert\lambda\Vert^{2})}\|\lambda\|^2\lambda^{-1} \\ e^{h_{t}(\Vert\lambda\Vert^{2})}\lambda & 0 \end{pmatrix}  \frac{ \overline{\lambda_{\SW}}(\partial \hat k)}{\Vert\lambda\Vert }  .
\end{equation*}
By Lemma \ref{f_t-h_t-function} (iv),  $
\exp(\pm h_t(\Vert \lambda\Vert^2))\sim \Vert \lambda\Vert^{\mp 1}$, for $\Vert\lambda\Vert$ small. Together with \eqref{eqn:sw-limit} this implies the convergence
\begin{equation*}
	\widehat\Psi_t^{\app}(\partial \hat k) \to  \begin{pmatrix}
 0& -i	\Vert\lambda\Vert\lambda^{-1} \\
 i\Vert\lambda\Vert^{-1}\lambda &0
 \end{pmatrix} = W_1
\end{equation*}
as $\Vert\lambda\Vert\to0$.
In a completely analogous way one obtains that along $\hat k$  
\begin{equation*}
	\widehat\Psi_t^{\app}(J\circ\partial \hat k) \to  \begin{pmatrix}
 0&  	\Vert\lambda\Vert\lambda^{-1} \\
 \Vert\lambda\Vert^{-1}\lambda &0
 \end{pmatrix} = W_2
\end{equation*}
as $\Vert\lambda\Vert\to0$. This proves the Proposition.
\end{proof}

\subsubsection{Limit of bending for connections}\label{sect:limittotalbending}

We relate the limit as $t\to\infty$ of the bending $\Theta_k(A_t, \Psi_t)$ defined in \textsection{\ref{subsec:HorizontalLifts}} to periods of Prym differentials on the spectral curve. This is the key result of this section.

\begin{prop} \label{prop:LimitBendingOfPairs}
 	Fix a holomorphic quadratic differential $q \in \SQD^{\ast}(X)$. Let $\map{k}{[0,L]}{X}$ be a piecewise $C^{1}$ path, and fix a lift $\map{\khat}{[0,L]}{\Xhat_{q}}$ to the spectral curve such that $\pi \circ \khat = k$. Assume that $k$ is quasi-transverse to the horizontal foliation $\mathcal{F}_{q}^{h}$ with vertical ends, and meets the zeroes of $q$ precisely at its endpoints. Consider a family $[(A_{t},t\,\Psi_{t})] \in \Hscr^{-1}(t^2 q_{t})$ 
for $q_t\in \SQD^\ast(X)$. Letting $t\to \infty$, suppose that $q_t\to q$ and that $[(A_{t},t\,\Psi_{t})]$ converges to $[(A_{\infty},\Psi_{\infty})] \in \Hscr_\infty^{-1}(q)$ 
in the sense of Definition \ref{def:ConvergenceOfHiggsPairs}.
Write $A_{\infty} = A^{0}_{\infty} + \eta$ with a unique $1$-form $\eta \in \mathcal{H}^{1}(X^{\times}_{q},L_{q})$ as in Proposition \ref{prop:LimitingHorizontalLift}. Then
\begin{equation}\label{eq:limitbendingangle}
\lim_{t\to\infty} \Theta_k([(A_t, \Psi_t)]) = -2i  \int_{\khat} \hat\eta \mod 2\pi\ZBbb\ ,
\end{equation}
where $\hat\eta \in \mathcal{H}^{1}_{\mathrm{odd}}(\Xhat_{q}, i\RBbb)$ is the Prym differential corresponding to $\eta$ from Proposition \ref{prop:prym-limiting}.
\end{prop}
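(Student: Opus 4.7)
The plan is to reduce the computation of $\Theta_k([(A_t, \Psi_t)])$ to parallel transport with respect to the limiting connection $A_{\infty} = A_{\infty}^0 + \eta$ on the spectral curve, where Proposition \ref{prop:LimitingHorizontalLift} furnishes an explicit formula. First, by the (Approximation) axiom of Definition \ref{def:ConvergenceOfHiggsPairs}, one replaces $(A_t,\Psi_t)$ by the approximate solution $(A_t^{\app}+\eta_t, \Psi_t^{\app})$ at the cost of a $C^\ell$-error that decays like $e^{-\beta t}$. Since parallel transport along the compact path $k$ depends continuously on the connection in $C^0$-norm and the endomorphisms $N(\sigma),V(\sigma)$ depend continuously on $\Psi_t$ in the same norm, the bending $\Theta_k([(A_t,\Psi_t)])$ differs from its analogue for the approximate pair by a quantity going to zero exponentially.

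Next, I would pull everything back to the spectral curve along the prescribed lift $\hat k$. By Proposition \ref{prop:NormalizedApproximatePsi}, the vertical-ends hypothesis forces the pulled-back approximate Higgs field to satisfy $\widehat{\Psi}_t^{\app}(\partial \hat k(\sigma)) = W_1(\hat k(\sigma))$ and $\widehat{\Psi}_t^{\app}(J\partial \hat k(\sigma)) = W_2(\hat k(\sigma))$ at $\sigma = 0,L$. Consequently, up to a positive scalar (which is irrelevant to the angle being measured), one may take $V(\sigma) = W_1(\hat k(\sigma))$ and $N(\sigma) = W_2(\hat k(\sigma))$ at the endpoints, and the commutation relations \eqref{eqn:commutation} give
\[
n(0) = \sqrt{-1}\,[W_2(\hat k(0)), W_1(\hat k(0))] = 2\,W_3(\hat k(0))\ ,
\]
with the same identification at $\sigma = L$ for $\sqrt{-1}[N(L),V(L)]$. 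Then one uses the (Convergence) axiom—$A_t^{\app}+\eta_t \to A_\infty$ in $C^\infty_{\mathrm{loc}}(X_q^\times)$ at exponential rate—to identify the limit of parallel transport of $W_3(\hat k(0))$ along the interior portion of $\hat k$ with parallel transport with respect to $\widehat{A}_\infty$. Applying the third line of \eqref{eqn:W-parallel} in Proposition \ref{prop:LimitingHorizontalLift} yields
\[
\lim_{t\to\infty}\Pi_L^{\hat k, \widehat A_t}\,W_3(\hat k(0)) = \sin(\vartheta(L))\,W_1(\hat k(L)) + \cos(\vartheta(L))\,W_3(\hat k(L))\ ,
\]
where $\vartheta(L) = -2i\int_{\hat k}\hat\eta$. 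The orthogonal plane $P(L)$ to $N(L) \propto W_2(\hat k(L))$ is spanned by $W_1(\hat k(L))$ and $W_3(\hat k(L))$, the projection onto $P(L)$ acts as the identity on the transported vector, and by construction of the orientation on $P(L)$ the oriented angle measured from $W_3(\hat k(L))$ to $\sin(\vartheta(L))W_1 + \cos(\vartheta(L))W_3$ is exactly $\vartheta(L)$ modulo $2\pi\mathbb{Z}$. This is precisely \eqref{eq:limitbendingangle}.

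The delicate point will be the endpoint behavior, where the limit connection $A_\infty$ is genuinely singular at the zeros of $q$ and one cannot simply invoke $C^\infty_{\mathrm{loc}}$-convergence uniformly down to $\sigma=0,L$. The vertical ends assumption is essential here: it guarantees the normalizations in Proposition \ref{prop:NormalizedApproximatePsi} so that the endomorphism fields $N,V$ are continuous up to the endpoints with explicit limits $W_1, W_2$, and it ensures that in the relevant radial gauge one can push the comparison between parallel transport under $\widehat A_t^{\app}+\hat\eta_t$ and under $\widehat A_\infty$ all the way to the branch points with error tending to zero. Concretely, the radial-gauge singularities of $\widehat A_\infty^0$ are diagonal (acting in $\mathfrak{u}(1)\subset \sqrt{-1}\g_E$), while the perturbation $\hat\eta\otimes W_2$ commutes with $W_2$ and acts as a finite, $L^1$-bounded rotation on the $(W_1,W_3)$-plane; combining these facts with the (Singularities) axiom should control the limit of parallel transport near each endpoint and close the argument.
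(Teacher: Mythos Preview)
Your overall architecture is right and matches the paper's: reduce to the approximate solution via the (Approximation) axiom, pull back to the spectral curve, identify the endpoint endomorphisms with $W_1,W_2$ via Proposition~\ref{prop:NormalizedApproximatePsi}, and then compute parallel transport with respect to the limiting connection using Proposition~\ref{prop:LimitingHorizontalLift}. The identification of the angle with $\vartheta(L)$ at the end is exactly correct.

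There is one genuine gap. The (Approximation) axiom replaces $(A_t,\Psi_t)$ by $(A_t^{\app}(q_t)+\eta_t,\Psi_t^{\app}(q_t))$, which is the approximate solution built from $q_t$, not from $q$. But Proposition~\ref{prop:NormalizedApproximatePsi} requires the path to have vertical ends \emph{relative to the differential used to build $\Psi_t^{\app}$} and to meet \emph{its} zeroes at the endpoints. Your fixed path $k$ is vertical for $q$ and meets zeroes of $q$; it is neither vertical for $q_t$ nor does it terminate at zeroes of $q_t$. So the endpoint normalization $\widehat\Psi_t^{\app}(\partial\hat k)=W_1$, $\widehat\Psi_t^{\app}(J\partial\hat k)=W_2$ is not available as stated. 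The paper handles this by introducing an auxiliary family of paths $k_t$ that are quasitransverse with vertical ends for $q_t$, meet the zeroes of $q_t$ at their endpoints, and converge to $k$ in $C^1$ on each subarc; the endpoint identities are then applied to $k_t$ on $\widehat X_{q_t}$, and one takes $t\to\infty$ only at the very end.

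The same mismatch undercuts your endpoint argument. The (Singularities) axiom puts $A_t$ in radial gauge $F_t(r)\,d\theta$ in polar coordinates centered at zeroes of $q_t$. The mechanism that makes the singular limit harmless is that a path with vertical ends for $q_t$ is \emph{radial} in these coordinates, so $k_t^\ast(d\theta)=0$ and the pulled-back connection vanishes identically near the endpoints; this is what gives the uniform (in $t$) $C^\ell$ convergence of $k_t^\ast A_s^{\app}(q_t)$ to $k_t^\ast A_\infty^0(q_t)$ as $s\to\infty$ (this is the paper's Step~4). Your remarks about diagonal singularities and $L^1$-bounded rotations do not exploit this and would not by themselves control parallel transport near $\sigma=0,L$, since for the fixed path $k$ the angular component $\dot\theta$ in the $q_t$-centered coordinates does not vanish. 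Once you introduce $k_t$, the rest of your argument goes through essentially as you wrote it; the paper further decouples the limit by using a two-parameter family $(A_s^{\app}(q_t)+\eta_t,\Psi_s^{\app}(q_t))$ and letting $s\to\infty$ uniformly in $t$ before sending $t\to\infty$, but this is a matter of bookkeeping rather than a different idea.
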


\begin{proof}
The proof is in seven steps.

\medskip\noindent \textbf{Step 1.}
By Definition \ref{def:ConvergenceOfHiggsPairs} (Approximation), there exists a family of $1$-forms $\eta_{t} \in \Om^{1}(X, \g_E)$ as in eq.\ \eqref{eq:genlimitingconf} such that the difference $(A^{\app}_{t}(q_{t})+\eta_{t},\Psi^{\app}_{t}(q_{t})) - (A_{t},\Psi_{t})$  satisfies an  exponentially decaying $C^{\ell}$ bound in the parameter $t$. Hence the difference of the holonomies along the path $k$ in \eqref{eqn:TotalParallelTransport} corresponding to the connections $A^{\app}_{t}(q_{t})+\eta_{t}$ and $A_{t}$ tends to zero as $t \to \infty$. We conclude that it suffices to prove the claim with the family $(A_{t},\Psi_{t})$ replaced by the family $(A^{\app}_{t}(q_{t})+\eta_{t}, \Psi^{\app}_{t}(q_{t}))$.

\medskip\noindent \textbf{Step 2.}
Recall from \textsection \ref{subsect:pathanditslift} that by our assumptions on the path $k$, the parameter interval of $k$ admits a subdivision $0=\sigma_0<\sigma_1<\cdots<\sigma_m=L$ such that $k$ restricted to the subintervals $[\sigma_{i-1},\sigma_i]$ alternates between vertical and horizontal paths. Since $q_{t} \to q$ as $t \to \infty$, and hence also the zeroes of $q_{t}$ converge to the zeroes of $q$, we may choose a family $\map{k_{t}}{[0,L]}{X}$ of piecewise $C^{1}$ paths with the following properties:
\begin{compactenum}[(i)]
	\item $k_{t}$ meets the zeroes of $q_{t}$ precisely at its endpoints;
	\item $k_{t}$ is quasi-transverse to the horizontal foliation $\mathcal{F}_{q_{t}}^{h}$ with vertical ends;
	\item $k_{t} \to k$ in $C^{1}$ as $t \to \infty$ on each subinterval $[\sigma_{i-1},\sigma_i]$ for $1 \le i \le m$.
\end{compactenum}
For each $t$, we then fix a lift $\map{\khat_{t}}{[0,L]}{\Xhat_{q_{t}}}$ to the spectral curve $\pi_{t}: \Xhat_{q_{t}} \to X$ such that $\pi_{t} \circ \khat = k$.

\medskip\noindent \textbf{Step 3.}
For each fixed parameter $t$ consider the family $(A^{\app}_{s}(q_{t})+\eta_{t}, \Psi^{\app}_{s}(q_{t}))$ for $s > 0$. We recall from \textsection \ref{subsec:HorizontalLifts} the definition of bending, and apply it to the pair $(A^{\app}_{s}(q_{t})+\eta_{t}, \Psi^{\app}_{s}(q_{t}))$ and the path $k_{t}$.

\medskip

We shall be working on the spectral curve $\Xhat_{q_{t}}$. Let us denote by $(\widehat A^{\app}_{s}(q_{t})+ \widehat \eta_{t}, \widehat \Psi^{\app}_{s}(q_{t}))$ the pull back of the pair $(A^{\app}_{s}(q_{t})+\eta_{t}, \Psi^{\app}_{s}(q_{t}))$ along the projection $\pi_{t}: \Xhat_{q_{t}} \to X$, and fix a lift $\map{\khat_{t}}{[0,L]}{\Xhat_{q_{t}}}$ of $k_{t}$ such that $\pi \circ \khat_{t} = k_{t}$. Here $\hat\eta_{t} \in \mathcal{H}^{1}_{\mathrm{odd}}(\Xhat_{q_{t}}, i\RBbb)$ is the Prym differential corresponding to $\eta_{t}$ as in \textsection \ref{subsect:prymdifferentials}. Keeping $\sigma = 0,L$ fixed, by Proposition \ref{prop:NormalizedApproximatePsi} we may define the endomorphisms
\begin{equation*}
\widehat V_{t}(\sigma):= \widehat\Psi^{\app}_{s}(q_{t})(\partial \hat k_t(\sigma)) =  W_{1}(\hat k_{t}(\sigma))  
\end{equation*}
and
\begin{equation*}
\widehat N_{t}(\sigma):= \widehat\Psi^{\app}_{s}(q_{t})(J\circ\partial \hat k_t(\sigma)) =  W_{2}(\hat k_{t}(\sigma))\ . 
\end{equation*}
 
Note that these do not depend on $s$. Using the commutation relations from \eqref{eqn:commutation} it follows that
\begin{equation*}
\sqrt{-1}[\widehat N_{t}(\sigma),\widehat V_{t}(\sigma)] = \sqrt{-1}[W_{2}(\hat k_{t}(\sigma)),W_{1}(\hat k_{t}(\sigma))] = 2 \, W_{3}(\hat k_{t}(\sigma))\ .
\end{equation*}
Next we define the endomorphism
\[
\widehat n_{t}(0) := \sqrt{-1}[\widehat N_{t}(0),\widehat V_{t}(0)] = 2 \, W_{3}(\hat k_{t}(0)) 
\]
and consider its parallel transport
\begin{equation} \label{eqn:ParallelTransportInStep2}
\widetilde n_{t,s}(L) := \Pi_{L}^{\hat k_{t},\widehat A^{\app}_{s}(q_{t})+ \widehat \eta_{t}} \, \widehat n_{t}(0)
\end{equation}
in $\sqrt{-1}\gfrak_E$ along the path $\hat k_{t}$ with respect to the connection $\widehat A^{\app}_{s}(q_{t})+ \widehat \eta_{t}$. Let $\hat P_{t}(L) \subset \sqrt{-1}\,\widehat\g_{E,\hat k_{t}(L)}$ be the orthogonal complement to $\widehat N_{t}(L) = W_{2}(\hat k_{t}(L))$. By Proposition \ref{lem:W-1}, a frame for this complement is determined by $W_{1}(\hat k_{t}(L))$ and $W_{3}(\hat k_{t}(L))$. We use this ordering of the frame to define an orientation on the plane $\hat P_{t}(L)$. The bending
\begin{equation} \label{eqn:bendingStep2}
\Theta_k(A^{\app}_{s}(q_{t})+\eta_{t}, \Psi^{\app}_{s}(q_{t})) \in \RBbb/2\pi\ZBbb
\end{equation}
is then given by the angle from $\sqrt{-1}[\widehat N_{t}(L),\widehat V_{t}(L)] = 2 \, W_{3}(\hat k_{t}(L))$ to the orthogonal projection of the endomorphism $\widetilde n_{t,s}(L)$ to $\hat P_{t}(L)$ with respect to this orientation.

\medskip\noindent \textbf{Step 4.}
Recall from \textsection \ref{subsect:approxsolutions} that $A^{\app}_{s}(q_{t}) \to A^0_{\infty}(q_{t})$ as $s \to \infty$ in $C^{\infty}$ locally on compact subsets of $X_{q_{t}}^{\times}$, where $A^0_{\infty}(q_{t})$ is the Fuchsian connection from \eqref{eq:stdlimconn}. Then clearly we also have the local $C^{\infty}$ convergence $A^{\app}_{s}(q_{t}) +\eta_t\to A^0_{\infty}(q_{t})+\eta_t$ as $s \to \infty$. 
In preparation for Step 5, we now prove that there exists $t_{0} = t_{0}(q)>0$ such that the following holds: For every $\varepsilon>0$ and $\ell\geq0$ there exists $s_{0} = s_{0}(\varepsilon,q,\ell) \geq t_{0}$ such that
\begin{equation} \label{eqn:LpConvergenceOfConnectionsOnPath}
	\big\lVert k_{t}^{\ast} \big(A^{\app}_{s}(q_{t})+\eta_t\big) - k_{t}^{\ast}\big( A^0_{\infty}(q_{t}) +\eta_t\big)\big\rVert_{ C^{\ell}([0,L])} < \varepsilon
\end{equation}
for all $s \geq s_{0}$ and every $t \geq t_{0}$.
Here $k_{t}^{\ast} \big(A^{\app}_{s}(q_{t})+\eta_t\big)$ denotes the pull back of the connection $A^{\app}_{s}(q_{t})$ along the path $k_{t}: [0,L] \to X$, and likewise for $k_{t}^{\ast} \big( A^0_{\infty}(q_{t}) +\eta_t\big)$.

\medskip

Locally on each punctured disk $\D_{p}^{\times}$,   endowed with polar coordinates $(r,\theta)$, the connection $A^{\app}_{s}(q_{t})$ takes the form
\begin{equation}
A^{\app}_{s}(q_{t})(r,\theta)=f_s(r)\begin{pmatrix}
-i&0\\0&i
\end{pmatrix}d\theta\ ,
\end{equation}
with a smooth function $f_s\colon[0,\infty)\to \R$ as in \textsection \ref{subsect:approxsolutions}. Hence  writing the radial and angular components of the path $\sigma\mapsto k_{t}(\sigma)$ as $k_{t}(\sigma)=(r(\sigma),\theta(\sigma))$ it follows that
\begin{equation}
k_{t}^{\ast} A^{\app}_{s}(q_{t})(\sigma) =f_s(r(\sigma)) \begin{pmatrix}
-i&0\\0&i
\end{pmatrix} \dt \theta(\sigma) \, d\sigma\ .
\end{equation}
Since by assumption $k_{t}$ has vertical ends and meets the zeroes of $q_{t}$ precisely at its endpoints, we see that $\dt\theta(\sigma) $ and hence $k_{t}^{\ast} A^{\app}_{s}(q_{t})$ vanishes identically outside  some proper subinterval $[L_1,L_2]\subset [0,L]$. This subinterval may be chosen independently of $t$. 
   Definition \ref{def:ConvergenceOfHiggsPairs} (iii)  implies that  (after shrinking the disk $\D_{p}$ slightly if necessary, so that $k_{t}([L_1,L_2])$ lies outside  $\D_{p}$)   the family of functions $s\mapsto f_s \circ r$ converges in $C^{\ell}([L_1,L_2])$ to the function  $f_{\infty}\circ r $ as $s \to \infty$. This proves the claim.

\medskip\noindent \textbf{Step 5.}
Keep the constant $t_{0} = t_{0}(q) > 0$ from Step 4. We consider the bending in \eqref{eqn:bendingStep2} for large $s$, and prove that for every $\varepsilon>0$ there exists $s_{0} = s_{0}(\varepsilon,q) \geq t_{0}$ such that
\begin{equation} \label{eqn:ConvergenceOfBending}
	\left| \Theta_k(A^{\app}_{s}(q_{t})+\eta_{t}, \Psi^{\app}_{s}(q_{t})) - \left( -2i  \int_{\khat_{t}} \hat\eta_{t} \mod 2\pi\ZBbb \right) \right| < \varepsilon
\end{equation}
for all $s \geq s_{0}$ and every $t \geq t_{0}$.

\medskip

To see this, first note that after passing to the spectral curve $\Xhat_{q_{t}}$, by Step 3 we have the estimate
\begin{equation} \label{eqn:LpConvergenceOfConnectionsOnPathUpstairs}
	\lVert \hat k_{t}^{\ast} \widehat A^{\app}_{s}(q_{t}) - \hat k_{t}^{\ast} \widehat A^0_{\infty}(q_{t}) \rVert_{L^{p}([0,L])} < \varepsilon
\end{equation}
for all $s \geq s_{0}$ and every $t \geq t_{0}$, where $\widehat A^0_{\infty}(q_{t})$ denotes the pull back of $A^0_{\infty}(q_{t})$ along the projection $\pi: \Xhat_{q_{t}} \to X$. Let us now compare the parallel transports
\[
\widetilde n_{t,s}(L) = -2 \, \Pi_{L}^{\hat k_{t},\widehat A^{\app}_{s}(q_{t})+ \widehat \eta_{t}} W_{3}(\hat k_{t}(0))
\]
from \eqref{eqn:ParallelTransportInStep2} with the parallel transport
\[
\widetilde n_{t,\infty}(L) := -2 \, \Pi_{L}^{\hat k_{t},\widehat A^0_{\infty}(q_{t}) + \widehat \eta_{t}} W_{3}(\hat k_{t}(0)).
\]
It follows from \eqref{eqn:LpConvergenceOfConnectionsOnPathUpstairs} that there is some constant $C>0$ such that
\begin{equation} \label{eqn:ConvergenceParallelTransport}
	 \left| \widetilde n_{t,s}(L) - \widetilde n_{t,\infty}(L) \right| < C \varepsilon
\end{equation}
for all $s \geq s_{0}$ and every $t \geq t_{0}$. Now by Proposition \ref{prop:LimitingHorizontalLift}, we have
\[
\widetilde n_{t,\infty}(L) = \sin(\vartheta_{t}(L)) \cdot W_{1}(\khat_{t}(L)) + \cos(\vartheta_{t}(L)) \cdot W_{3}(\khat_{t}(L)),
\]
where
\[
\vartheta_{t}(L) = -2i \cdot \int_{\khat_{t}} \widehat\eta_{t}\ .
\]
Observe that the endomorphism $\widetilde n_{t,\infty}(L)$ is contained in the plane $\hat P_{t}(L)$ defined in Step 2, and that the angle from $\sqrt{-1}[\widehat N_{t}(L),\widehat V_{t}(L)] = 2 \, W_{3}(\khat_{t}(L))$ to $\widetilde n_{t,\infty}(L)$ with respect to the orientation on $P_{t}(L)$ is given by $\vartheta_{t}(L)$.
 
The estimate in \eqref{eqn:ConvergenceOfBending} now follows from \eqref{eqn:ConvergenceParallelTransport} and the definition of bending in Step 2.

\medskip\noindent \textbf{Step 6.}
By assumption and Steps 1 and 2, letting $t \to \infty$ we have that $q_{t} \to q$, $\eta_{t} \to \eta$ and $k_{t} \to k$, which immediately implies that
\begin{equation} \label{eqn:ConvergenceEta}
\lim_{t\to\infty} \int_{\khat_{t}} \widehat\eta_{t} = \int_{\khat} \widehat\eta\ .
\end{equation}

\medskip\noindent \textbf{Step 7.}
Combining Steps 5 and 6 we infer that in the estimate
\begin{multline*}
	\left| \Theta_k(A^{\app}_{t}(q_{t})+\eta_{t}, \Psi^{\app}_{t}(q_{t})) - \left( -2i  \int_{\khat} \hat\eta \mod 2\pi\ZBbb \right) \right| \\
	\leq \left| \Theta_k(A^{\app}_{t}(q_{t})+\eta_{t}, \Psi^{\app}_{t}(q_{t})) - \left( -2i  \int_{\khat_{t}} \hat\eta_{t} \mod 2\pi\ZBbb \right) \right| \\
	 + \left| \left( -2i  \int_{\khat_{t}} \hat\eta_{t} \right) - \left( -2i  \int_{\khat} \hat\eta \right) \right|
\end{multline*}
both terms on the right-hand side tend to zero as $t \to \infty$. The Proposition is proved.
\end{proof}

\begin{remlabel} \upshape
Proposition \ref{prop:LimitBendingOfPairs} and eq.\ \eqref{eq:limitbendingangle} apply equally well to the modified saddle connections (which are not quasitransverse). 
\end{remlabel}

 \subsection{Comparison of bending} \label{sec:bending-comparison}

In this section we show that for large energy, the bending of equivariant harmonic maps defined in \textsection{\ref{sec:bending-maps}}
nearly coincides with the bending of the associated Higgs pair along quasitransverse paths. 
The main result is Theorem \ref{thm:asymptotic-bending} below. First, we need a somewhat standard preliminary result on parallel translation
 which we provide in the next subsection.

\subsubsection{Parallel translation for $C^1$-close curves}\label{sec:C1-close} 
Let $c$ and $c_0$ be piecewise $C^1$ curves $[0,L]\to \HBbb^3$. 
Fix $\varepsilon>0$. We say that $c$ and $c_0$ are {\bf $C^0_\varepsilon$-close} if 
\begin{equation} \label{eqn:C0-close}
\max_{0\leq t\leq L} d_{\HBbb^3}(c(t), c_0(t))<\varepsilon\ .
\end{equation} 

Let us view $c$ and $c_0$ as curves in the hermitian model $\mathscr D$ of $\HBbb^3$ (see \textsection{\ref{subsect:matrixmodel}}).
Recall the metric on $T\HBbb^3\otimes \CBbb$ defined in \eqref{eqn:M-metric} for the trace model. A $C^0$-bound on the distance in $\HBbb^3$ between $c$ and $c_0$ induces one on the pointwise norms
of $(1-cc_0^{-1})$ and $(1-c_0c^{-1})$. Using this fact it is easy to prove the following.

\begin{lem} \label{lem:trace-estimate}
There  are constants $C(\varepsilon)\geq 1$, $\displaystyle\lim_{\varepsilon\to 0}C(\varepsilon)=1$,
with the following significance. If $c$ and $c_0$ are $C^0_\varepsilon$-close, then for all $M\in T\HBbb^3\otimes \CBbb$, and all $t\in [0,L]$,
$$
C(\varepsilon)^{-1}\Vert M\Vert_{ c(t)}\leq \Vert M\Vert_{ c_0(t)}\leq C(\varepsilon)\Vert M\Vert_{ c(t)}\ .
$$
\end{lem}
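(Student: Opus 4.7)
The plan is to use the $\SL(2,\CBbb)$-invariance of the hermitian structure \eqref{eqn:M-metric} to reduce the pointwise comparison at $(c(t), c_0(t))$ to a comparison at the single basepoint $I \in \Dscr$, where the computation becomes transparent. Fixing $t$ and writing $c = c(t)$, $c_0 = c_0(t)$, the isometry $\Phi_{c_0^{-1/2}}\colon h \mapsto c_0^{-1/2} h\, c_0^{-1/2}$ of $\HBbb^3$ sends $c_0 \mapsto I$ and $c \mapsto G := c_0^{-1/2} c\, c_0^{-1/2} \in \Dscr$, with $d_{\HBbb^3}(G, I) = d_{\HBbb^3}(c, c_0) < \varepsilon$. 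Its differential in the traceless model is $M \mapsto \widetilde M := c_0^{-1/2} M c_0^{1/2}$, and a short direct calculation (using the cyclicity of trace) shows that this extends complex-linearly to a bijection of $T\HBbb^3 \otimes \CBbb$ preserving the hermitian extension \eqref{eqn:M-metric}. Via the trivialization from Proposition~\ref{lem:V-M}, a single $M \in T\HBbb^3 \otimes \CBbb$ can be viewed as an element both of $T_c\HBbb^3 \otimes \CBbb$ and of $T_{c_0}\HBbb^3 \otimes \CBbb$, and the plan is to exploit the identities
\[
\Vert M\Vert_{c_0}^2 = \Vert \widetilde M\Vert_I^2 = \tfrac{1}{2}\tr(\widetilde M \widetilde M^*),\qquad \Vert M\Vert_c^2 = \Vert \widetilde M\Vert_G^2 = \tfrac{1}{2}\tr(\widetilde M G \widetilde M^* G^{-1}),
\]
to reduce the problem to comparing $\Vert \widetilde M\Vert_I^2$ and $\Vert \widetilde M\Vert_G^2$ for $G$ close to $I$.

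The next step is to translate the hyperbolic distance bound into an operator-norm bound. Writing $G = \exp(X)$ for a traceless hermitian $X$, the geodesic description at $I$ gives $d_{\HBbb^3}(G, I) = \sqrt{\tfrac{1}{2}\tr(X^2)}$, so together with the hypothesis $d_{\HBbb^3}(G,I) < \varepsilon$ one obtains $\Vert X\Vert_{\mathrm{op}} = O(\varepsilon)$ and hence $\Vert G^{\pm 1} - I\Vert_{\mathrm{op}} \leq \delta(\varepsilon)$ for a function $\delta$ with $\delta(\varepsilon) \to 0$ as $\varepsilon \to 0$. This is precisely the bound hinted at in the paragraph preceding the Lemma, modulo conjugation by $c_0^{\pm 1/2}$ (which sends $G$ to $cc_0^{-1}$ and $G^{-1}$ to $c_0 c^{-1}$ and changes norms only up to a common similarity).

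With $G = I + A$ and $G^{-1} = I + B$ where $\Vert A\Vert_{\mathrm{op}}, \Vert B\Vert_{\mathrm{op}} \leq \delta(\varepsilon)$, expand
\[
\tr(\widetilde M G \widetilde M^* G^{-1}) - \tr(\widetilde M \widetilde M^*) = \tr(\widetilde M A \widetilde M^*) + \tr(\widetilde M \widetilde M^* B) + \tr(\widetilde M A \widetilde M^* B),
\]
and bound each term on the right by a constant multiple of $\delta(\varepsilon) \Vert \widetilde M\Vert_{HS}^2$ using $|\tr(PQ)| \leq \Vert P\Vert_{HS}\Vert Q\Vert_{HS}$ combined with the submultiplicativity $\Vert PQ\Vert_{HS} \leq \Vert P\Vert_{\mathrm{op}}\Vert Q\Vert_{HS}$. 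Since $\Vert \widetilde M\Vert_{HS}^2 = 2\Vert M\Vert_{c_0}^2$, this produces $\bigl|\Vert M\Vert_c^2 - \Vert M\Vert_{c_0}^2\bigr| \leq D(\varepsilon) \Vert M\Vert_{c_0}^2$ for some $D(\varepsilon) \to 0$, and setting $C(\varepsilon) := \sqrt{(1+D(\varepsilon))/(1-D(\varepsilon))}$ (for $\varepsilon$ small enough that $D(\varepsilon) < 1$) yields the claimed two-sided bound. No serious obstacle is expected: the reduction to $I$ makes the constants manifestly independent of the basepoint $c_0$, and because the estimate is homogeneous of degree two in $M$ it is automatically uniform in $M$ and $t$.
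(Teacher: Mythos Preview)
Your proof is correct and is essentially a fleshed-out version of the paper's one-line hint (bounding the norms of $1-cc_0^{-1}$ and $1-c_0c^{-1}$): after your conjugation by $c_0^{-1/2}$, those matrices become (conjugates of) $I-G$ and $I-G^{-1}$, which is exactly what you estimate. The explicit reduction to the basepoint $I$ via the $\SL(2,\CBbb)$-isometry is a clean addition that makes the uniformity in $t$ and $c_0$ manifest, but the underlying idea is the same.
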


\begin{definition} \label{def:C1-close}\upshape
Let $c$ and $c_0$ be as above. 
Fix $\varepsilon>0$. We say that  $c$ and $c_0$ are {\bf 
$C^1_\varepsilon$-close} if they are $C^0_\varepsilon$-close, and 
$$\max_{0\leq t\leq L}
\left\Vert \dt c c^{-1}-\dt c_0 c_0^{-1}
\right\Vert_{ c_0(t)}<\varepsilon\ .
$$
\end{definition}
We emphasize that here we view $\dt c c^{-1}$ and $\dt c_0 c_0^{-1}$ as sections  of the trivial bundle $T\HBbb^3\otimes\CBbb\simeq \HBbb^3\times \CBbb^3$, and using this trivialization we compare vectors at arbitrary fibers. 
Note that because of Lemma \ref{lem:trace-estimate}, the relationship of being $C^1$-close is symmetric (after possibly multiplying  $\varepsilon$ by a distortion that is nearly $1$).
The curves are not assumed to be parametrized by arc length.

\begin{lem} \label{lem:v-estimate}
Let $c$ and $c_0$ be curves in $\HBbb^3$. Suppose $v(0)\in T_{c(0)}\HBbb^3$, $v_0(0)\in T_{c_0(0)}\HBbb^3$ are unit vectors, and let $v(t)$ and $v_0(t)$ denote parallel translation along $c(t)$ and $c_0(t)$, respectively. If $c$ and $c_0$ are $C^1_\varepsilon$-close with $0<\varepsilon\leq 1/4L$, then
$$
\max_{0\leq t\leq L} \Vert v(t)-v_0(t)\Vert_{ c_0(t)}\leq 2
 \Vert v(0)-v_0(0)\Vert_{ c_0(0)}+4L\varepsilon\ .
$$
\end{lem}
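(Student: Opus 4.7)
The plan is to derive an ODE for $w(t) = v(t) - v_0(t)$, viewed as sections of the trivial bundle $T\HBbb^3 \otimes \CBbb \simeq \HBbb^3 \times \CBbb^3$ from \textsection{\ref{subsect:matrixmodel}}, and then apply a Gr\"onwall-type estimate. By Lemma \ref{lem:levi-civita}, parallel transport in the traceless model is governed by $\dt v = \tfrac{1}{2}[\dt c c^{-1}, v]$. Writing $a(t) = \tfrac{1}{2}\dt c(t) c(t)^{-1}$ and $a_0(t) = \tfrac{1}{2}\dt c_0(t) c_0(t)^{-1}$, we have $\dt v = [a, v]$ and $\dt v_0 = [a_0, v_0]$. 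I would then compute the covariant derivative of $w$ along $c_0$:
\[
D_t^{c_0} w = \dt w - [a_0, w] = [a, v] - [a_0, v_0] - [a_0, v-v_0] = [a-a_0, v].
\]

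Next, I would use that the Levi-Civita connection is metric-compatible, so the pointwise norm $\|w(t)\|_{c_0(t)}$ (measured in the metric at $c_0(t)$) satisfies
\[
\frac{d}{dt}\|w\|_{c_0(t)} \;\leq\; \|D_t^{c_0} w\|_{c_0(t)} \;=\; \|[a-a_0, v]\|_{c_0(t)}.
\]
A standard commutator bound in $\sl(2,\CBbb)$ equipped with the metric \eqref{eqn:M-metric} (which at each point $c_0(t)$ is isometric via conjugation to the metric at the basepoint, and so has uniform structure constants) yields $\|[a-a_0,v]\|_{c_0(t)} \leq C \|a-a_0\|_{c_0(t)}\, \|v\|_{c_0(t)}$ for a fixed constant $C$ depending only on the normalization. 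The $C^1_\varepsilon$-closeness gives $\|a-a_0\|_{c_0(t)} < \varepsilon/2$, and since $v_0$ is parallel along $c_0$ we have $\|v_0\|_{c_0(t)} = \|v_0(0)\|_{c_0(0)} = 1$, so by the triangle inequality $\|v\|_{c_0(t)} \leq 1 + \|w\|_{c_0(t)}$.

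Combining these estimates yields a linear differential inequality
\[
\tfrac{d}{dt}\|w\|_{c_0(t)} \;\leq\; C_1 \varepsilon \,(1 + \|w\|_{c_0(t)})
\]
for some absolute constant $C_1$. Gr\"onwall's inequality then gives $\|w(t)\|_{c_0(t)} \leq e^{C_1 \varepsilon t}\|w(0)\|_{c_0(0)} + (e^{C_1 \varepsilon t}-1)$. Under the hypothesis $\varepsilon \leq 1/(4L)$ and for $t \leq L$, the exponent $C_1 \varepsilon t$ is bounded by $C_1/4$, so (with the appropriate normalization of the commutator constant, which makes $C_1 \leq 2$) we have $e^{C_1 \varepsilon t} \leq 2$ and $e^{C_1 \varepsilon t}-1 \leq 2C_1 \varepsilon t \leq 4L\varepsilon$. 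Substituting gives the claimed bound.

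The main technical obstacle is pinning down the commutator constant $C_1$ with the exact normalization used for the metric on $T\HBbb^3 \otimes \CBbb$. This involves noting that the adjoint representation of $\SU(2)$ acts by isometries on $\sl(2,\CBbb)$ with the standard Hilbert-Schmidt norm, reducing the commutator estimate to a uniform bound at a single basepoint. The stated constants ($2$ and $4$) in the lemma reflect an implicit choice matching the $\sl(2,\CBbb)$ structure constants in this normalization; all other steps are essentially routine Gr\"onwall bookkeeping.
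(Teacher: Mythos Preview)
Your argument is correct and follows essentially the same route as the paper: both derive that the covariant derivative of $w=v-v_0$ along $c_0$ equals $[a-a_0,v]$ (the paper writes this as $\tfrac12[r,v_0+R]$ after a direct computation that amounts to verifying metric compatibility), and both control the norm via a differential inequality. The only cosmetic differences are that the paper differentiates $\|R\|^2_{c_0}$ by hand and integrates to the maximum point rather than invoking Gr\"onwall, and it uses the commutator bound $\|[A,B]\|\le 2\|A\|\|B\|$ implicitly where you flag it explicitly.
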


\begin{proof}
By \eqref{eqn:levi-civita} we have
$$
\dt v(t)=\frac{1}{2}[\dt c c^{-1}, v(t)]\ ,\ \dt v_0(t)=\frac{1}{2}[\dt c_0 c_0^{-1}, v_0(t)]\ .
$$
Write:
$$
v(t)=v_0(t)+R(t)\ , \ \dt c c^{-1} = \dt c_0 c_0^{-1} + r(t)\ ,
$$
for traceless matrix valued functions $R(t), r(t)$. 
Hence, 
\begin{equation} \label{eqn:Rdot}
2\dt R(t)=[r(t), v_0(t)+R(t)]+[\dt c_0 c_0^{-1}, R(t)] \ .
\end{equation}
Now
\begin{align*}
\frac{d}{dt}\Vert R(t) \Vert^2_{ c_0}=
\frac{d}{dt}\tr(Rc_0 R^\ast c_0^{-1})&=
\tr(\dt Rc_0 R^\ast c_0^{-1})+\tr(Rc_0 \dt R^\ast c_0^{-1})\\
&\qquad +\tr(R\dt c_0 R^\ast c_0^{-1})-\tr(Rc_0 R^\ast c_0^{-1}\dt c_0 c_0^{-1})\ .
\end{align*}
One can see that the last two terms on the right hand side of  the equation above are cancelled by the last term on the right hand side of \eqref{eqn:Rdot} (and the similar equation for the adjoint). Thus we are left with
$$
\frac{d}{dt}\tr(Rc_0 R^\ast c_0^{-1})
=\frac{1}{2}\tr\left([r, v_0+R]c_0R^\ast c_0^{-1} \right)
+\frac{1}{2}\tr\left(Rc_0[v_0^\ast+R^\ast, r^\ast]c_0^{-1}\right)
\ .
$$
Since the norm of $r$ is less than $\varepsilon$, and $v_0(t)$ is a unit vector, we see 
that
\begin{equation} \label{eqn:diff-inequality}
\frac{d}{dt}\Vert R(t) \Vert^2_{ c_0}\leq 2\varepsilon\left(\Vert R(t) \Vert^2_{ c_0}+\Vert R(t) \Vert_{ c_0} \right)\ .
\end{equation}
Let $0\leq t_m\leq L$ be the point at which $\Vert R(t) \Vert^2_{ c_0}$ attains it maximum. Then from \eqref{eqn:diff-inequality} we have
\begin{align}
\Vert R(t_m) \Vert^2_{ c_0}-\Vert R(0) \Vert^2_{ c_0}
&= \int_0^{t_m}\frac{d}{dt}\Vert R(t) \Vert^2_{ c_0}\, dt\notag \\
&\leq 2\varepsilon \int_0^{t_m}\left(\Vert R(t) \Vert^2_{ c_0}+\Vert R(t) \Vert_{ c_0} \right)\, dt\notag\\
\Vert R(t_m) \Vert^2_{ c_0}-\Vert R(0) \Vert^2_{ c_0}
&\leq 2L\varepsilon\left(\Vert R(t_m) \Vert^2_{ c_0}+\Vert R(t_m) \Vert_{ c_0} \right)\ \label{eqn:final-inequality}.
\end{align}
Since we assume $\varepsilon\leq 1/4L$, it follows from \eqref{eqn:final-inequality} that 
$$
\Vert R(t_m) \Vert_{ c_0}\leq 2(\Vert R(0) \Vert_{ c_0}+2L\varepsilon)\ .
$$
This completes the proof.
\end{proof}

\subsubsection{Asymptotic equivalence of bending}

\begin{thm} \label{thm:asymptotic-bending}
Let $K\subset\QD^\ast(X)$ be a cone on a compact subset of $\SQD^\ast(X)$, and fix $\delta > 0$. 
Let $u:\widetilde X\to \HBbb^3$ be a $\rho$-equivariant harmonic
map, $\nabla=d_A+\Psi$ Higgs pair. Let $\Hopf(u)=-q\in K$, and
$\widetilde p^-$, $\widetilde p^+$ lifts of zeroes $p^-$, $p^+$
of $q$. Let $k$ be a quasi-transverse path $($or
 modified saddle connection$)$ from $p^-$ to $p^+$ that lifts to
a path $\widetilde k$ from $\widetilde p^-$ to $\widetilde p^+$.
Assume $k$ has small vertical ends and meets the zeroes of $q$
only at $p^-$ and $p^+$. Then if $\Vert q\Vert_1$ is
sufficiently large $($depending on $K,\varepsilon, k$$)$ we have
$$
| \Theta_u(\widetilde p^-, \widetilde p^+)-\Theta_k(A,\Psi)|<\delta\ .
$$
\end{thm}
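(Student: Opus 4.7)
The plan is to translate the bending of the Higgs pair into a geometric quantity in $\HBbb^{3}$ via Theorem \ref{thm:DonaldsonIsomorphism}, and then to compare that quantity directly with the dihedral-angle-based $\Theta_u$ using the high-energy asymptotics of Section \ref{subsec: high energy harmonic maps}.

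Under the isometric identification $\sqrt{-1}\gfrak_E \simeq u^{\ast}T\HBbb^{3}$ from Theorem \ref{thm:DonaldsonIsomorphism}, $d_A$-parallel transport along $k$ corresponds to Levi-Civita parallel transport along $u\circ\widetilde k$, and $\Psi(v)$ corresponds to $-\tfrac{1}{2}du(v)$. Consequently $V(\sigma)=\Psi(\dot k(\sigma))$ and $N(\sigma)=\Psi(J\dot k(\sigma))$ correspond to multiples of the tangent and conormal to the image curve, and the bracket $\sqrt{-1}[N,V]$ corresponds, via the standard identification of $\sqrt{-1}\sufrak(2)$ with $\RBbb^{3}$ sending the Lie bracket to the cross product, to a positive multiple of the normal to the immersed tangent plane $\DBbb_{u(\widetilde k(\sigma))}$ wherever $du$ has maximal rank. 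With this dictionary, $\Theta_k(A,\Psi)$ becomes the angle, measured in the plane orthogonal to $N(L)$, between the Levi-Civita parallel transport along $u\circ\widetilde k$ of the normal to $\DBbb_{u(\widetilde p^{-})}$ and the normal to $\DBbb_{u(\widetilde p^{+})}$.

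By contrast, $\Theta_u(\widetilde p^{-},\widetilde p^{+})$ is (in case (i) of Definition \ref{def:bending-map}) the same kind of angle, but computed by parallel transport along any crossing $c$ of the tent $T=\DBbb_{u(\widetilde p^-)}\cup\gamma_T\cup\DBbb_{u(\widetilde p^+)}$. The strategy is therefore to construct such a crossing that is $C^{1}_{\varepsilon}$-close to $u\circ\widetilde k$ in the sense of Definition \ref{def:C1-close}. For this we invoke three ingredients, valid for $\Vert q\Vert_{1}$ large (depending on $K$, $\delta$, $k$): Proposition \ref{prop: harmonic map C1 localizes near zeroes} ensures that the tangent planes $\DBbb_{u(\widetilde p^{\pm})}$ lie within $\varepsilon$ of fixed ideal triangles, so that the edges of those triangles (images of the critical leaves at $\widetilde p^{\pm}$) lie within $\varepsilon$ of $\DBbb_{u(\widetilde p^{\pm})}$; Proposition \ref{prop:realizing-3d} produces geodesics in $\HBbb^{3}$ shadowed by $u$ applied to each horizontal leaf in $k$ to within exponentially small error; and Proposition \ref{prop:pullbackmetric} implies that the image under $u$ of each vertical sub-arc of $k$ has length $O(e^{-ct})$. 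Concatenating these straightened horizontal arcs with the negligible vertical bridges, and invoking the orientation conventions of Definition \ref{def:bending-map} together with the quasi-transversality of $k$, yields a piecewise geodesic path in $T_{-}\cup\gamma_T\cup T_{+}$ that is a crossing of $T$ and that is $C^{1}_\varepsilon$-close to $u\circ\widetilde k$. Cases (ii) and (iii) of Definition \ref{def:bending-map} arise as degenerate limits and are handled by the same sort of argument.

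The final step applies Lemma \ref{lem:v-estimate} to the Levi-Civita parallel transport along $c$ versus along $u\circ\widetilde k$: their $C^{1}_\varepsilon$-closeness forces the parallel-transported normals to differ by less than any prescribed amount, and hence so do the resulting angles. Choosing $\varepsilon$ small enough in advance produces the bound $|\Theta_u(\widetilde p^{-},\widetilde p^{+})-\Theta_k(A,\Psi)|<\delta$ once $\Vert q\Vert_{1}$ is sufficiently large. The main technical obstacle lies in the preceding paragraph: ensuring that the assembled path $c$ simultaneously lies in the tent $T$ (up to exponentially small error) and approximates $u\circ\widetilde k$ in $C^1$, especially when $k$ alternates between several horizontal and vertical segments rather than being a single modified saddle connection, in which case the construction is essentially transparent.
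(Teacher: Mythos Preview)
Your proposal is correct and follows essentially the same approach as the paper: use Theorem \ref{thm:DonaldsonIsomorphism} to reinterpret $\Theta_k(A,\Psi)$ as Levi-Civita parallel transport of a normal vector along $u\circ\widetilde k$, invoke the high-energy estimates (Propositions \ref{prop:realizing-3d}, \ref{prop:pullbackmetric}, \ref{prop: harmonic map C1 localizes near zeroes}) to show that $u\circ\widetilde k$ is $C^1_\varepsilon$-close to a path in the tent, and conclude via Lemma \ref{lem:v-estimate}. One small organizational difference worth noting: you identify $n(0)=\sqrt{-1}[N(0),V(0)]$ with the normal to $\DBbb_{u(\widetilde p^-)}$ directly through the cross-product structure on $\sqrt{-1}\sufrak(2)\simeq\RBbb^3$, whereas the paper obtains this identification by invoking Proposition \ref{prop:NormalizedApproximatePsi} (the explicit form of $\widehat\Psi^{\app}_t$ at the endpoints). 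Your route is more intrinsic, but you should still check that the orientation conventions built into Definitions \ref{def:tent} and \ref{def:bending-map} agree with the sign coming from the bracket; the paper's appeal to the explicit frame $(W_1,W_2,W_3)$ makes that verification automatic.
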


\begin{proof}
Write $0=\sigma_0<\sigma_1<\cdots<\sigma_m=L$, so that $k$ restricted to $[\sigma_{i-1},\sigma_i]$ alternates between $C^1$ vertical and horizontal paths. By assumption, $k\bigr|_{[0,\sigma_1]}$ and $k\bigr|_{[\sigma_{m-1},L]}$ are vertical. 
Let $T$ be the tent with crease $\gamma_T$ associated to the totally geodesic planes $\DBbb_{u(\widetilde p^-)}$ and $\DBbb_{u(\widetilde p^+)}$.  
By Proposition \ref{prop: harmonic map C1 localizes near zeroes}, the images by $u$ of sufficiently small  hexagonal domains $\Omega_{\widetilde p^-}$ and $\Omega_{\widetilde p^+}$ are $C^1$-close to the planes $\DBbb_{u(\tilde p^-)}$ and $\DBbb_{u(\tilde p^+)}$. By Proposition \ref{prop:pullbackmetric} it follows that the image of $k\bigr|_{[\sigma_1,\sigma_{m-1}]}$ is $C^1_\varepsilon$-close to $\gamma_T$. 
By Proposition \ref{prop:NormalizedApproximatePsi},
for sufficiently small vertical ends,  the normal vector to $\DBbb_{u(\widetilde p^-)}$ is close to the vector $n(0)$
in \textsection{\ref{subsec:HorizontalLifts}}, and similarly at 
$\DBbb_{u(\widetilde p^+)}$. 
Hence, by Lemma \ref{lem:v-estimate}, parallel translation of the normal vector to $\Omega_{u(\widetilde p^-)}$ along $\gamma_T$ is close to the parallel translation $\widetilde n$ along $k$. 
The result now follows from Theorem \ref{thm:DonaldsonIsomorphism} (i) and (ii), and the discussion in \textsection{\ref{sec:bending-maps}}.
\end{proof}

\section{Pleated surfaces} \label{sec:pleated-surfaces}

In this section we review the notion of a transverse cocycle for a lamination. The key results are: Lemma \ref{lem:finite-approximation-bending} where we relate the bending cocycle of a pleated surface to its geometric bending in the sense of \textsection{\ref{sec:bending-maps}}; Theorem \ref{thm:prym-cocycle}, where we relate the group of bending cocycles to the torus of Prym differentials; and Theorem \ref{thm:bending-periods}, where we show that the limit of a bending cocycle is determined by the periods of a Prym differential.

\subsection{Transverse cocycles} \label{sec:transverse}

\subsubsection{Definitions} \label{subsec: transverse cocycles}
Let $\Lambda$ be a maximal geodesic lamination on a hyperbolic surface $S$ with underlying smooth surface $\Sigma$.  
Recall from \textsection{\ref{sec:measured-foliations}} that $\Pcal(\Lambda)$ denotes the set of plaques in $\HBbb^2\setminus\widetilde \Lambda$, and note that there is a free action of $\pi_1$ on $\Pcal(\Lambda)$ with finite quotient. 
 Let $G$ denote an abelian group that is either $\RBbb$ or $S^1\simeq \RBbb/2\pi\ZBbb$. A {\bf $G$-valued transverse cocycle} for $\Lambda$ is a map $\alphabold$ which sends every arc $k$ transverse to $\Lambda$ to an element $\alphabold(k) \in G$, and which satisfies the following two properties. First, for $k = k_1 \cup k_2$ a decomposition of $k$ into two subarcs with disjoint interiors, we have $\alphabold(k) = \alphabold(k_1) + \alphabold(k_2)$. Second, $\alphabold$ is invariant under  $\Lambda$-transverse homotopies, in the sense that if $k$ can be taken to $k'$ by a homotopy of $S$ that preserves $\Lambda$, then $\alphabold(k) = \alphabold(k')$.
In particular (see \cite[p.\ 7]{Bonahon:shearing}),  a $G$-valued transverse cocycle may be taken to be a function $\alphabold:\Pcal(\Lambda)\times\Pcal(\Lambda)\to G$ satisfying:
 \begin{compactenum}[(i)]
  \item Equivariance: $ \alphabold(g  P, g  Q)= \alphabold(  P,   Q)$ for all $g\in \Gamma$;
 \item Symmetry: $ \alphabold(  P,   Q)= \alphabold(  Q,   P)$;
  \item Additivity:  $ \alphabold(  P,   Q)= \alphabold(  P,   R)+ \alphabold(  R,   Q)$, if $  R$ separates $  P$ from $  Q$.
  \end{compactenum}
 We shall henceforth denote transverse cocycles by $\sigmabold$ for $G=\RBbb$ and $\betabold$ for $G=S^1$, and
 in the latter case  we will continue to use additive notation in (3).
 Denote by $\Hcal(\Lambda, G)$ the space of transverse cocycles. 
 For $G=\R$, this is a real vector space of dimension $6g-6$ (see \cite[p.\ 119]{Bonahon:97}). We refer to elements  $\sigmabold\in\Hcal(\Lambda, \RBbb)$ as {\bf shearing cocycles}. 
 The space $\Hcal(\Lambda, S^1)$ has two components, each of which is a $(6g-6)$-dimensional torus. We denote by $\Hcal^o(\Lambda, S^1)$ the component containing the identity cocycle: $\betabold(P,Q)=0$ for all $P,Q$. We refer to elements  $\betabold\in\Hcal^o(\Lambda, S^1)$ as {\bf bending cocycles}.  We will sometimes make reference to a norm $\Vert\cdot\Vert$ on $\Hcal(\Lambda, \RBbb)$, which is fixed once and for all.

Convergence of transverse cocycles for families of laminations  may be defined in a weak sense as functions on pairs of plaques. More precisely, recall from Remark \ref{rem:maximalization} that there is a 1-1 correspondence between the lifts of zeroes of $q$ and the plaques $\Pcal(\Lambda)$ of any maximalization $\Lambda$ of $\Lambda_q^h$. If $q_n\to q\in \QD^\ast(X)$, then $\Lambda_{q_n}^h$ converges in the Hausdorff sense to a lamination with $\Lambda_q^h$ as a sublamination. For maximalizations $\Lambda_n$, we define convergence $\Lambda_n\to \Lambda$  again in the Hausdorff sense. In this case, for $n$ sufficiently large we have bijections
$$
r^{\Lambda}_{\Lambda_n}: \Pcal(\Lambda)\isorightarrow \Pcal(\Lambda_n)\ .
$$
\begin{definition} \label{def:convergence-cocycles}\upshape
With the notation above, suppose $\Lambda_n\to \Lambda$. 
Let $\alphabold_n$ (\emph{resp}.\ $\alphabold$) be either shearing or bending cocycles for $\Lambda_n$ (\emph{resp}.\ $\Lambda$). We say that {\bf $\alphabold_n$ converges to $\alphabold$} (and write $\alphabold_n\to \alphabold$)
if 
$$
\lim_{n\to\infty}\alphabold_n(r^{\Lambda}_{\Lambda_n}(P),r^{\Lambda}_{\Lambda_n}(Q))=\alphabold(P,Q)\quad\forall\ P,Q\in \Pcal(\Lambda)\ .
$$
 \end{definition}
We note for clarification that $r^{\Lambda}_{\Lambda_n}$ does not, in general, preserve the separation relations of plaques, so the pull-back $\alpha_n\circ r^\Lambda_{\Lambda_n}$ of a cocycle $\alpha_n$ on $\Lambda_n$ will not necessarily satisfy the additivity condition on $\Pcal(\Lambda)$.

We will also need the following elementary properties of cocycles. 

\begin{prop} \label{prop:cocycle-properties}
Let $q\in \QD^\ast(X)$ and $\Lambda$ a maximalization of $\Lambda_q^h$.
\begin{compactenum}[(i)]
\item There is a finite set $\Pcal'\subset\Pcal(\Lambda)$ such that if $\alphabold\in \Hcal(\Lambda, G)$ vanishes on $\Pcal'\times\Pcal'$, then $\alphabold$ vanishes identically.
\item There is a complete train track  $\tau$ carrying $\Lambda_q^h$, and a bijection: $\Hcal(\tau,G)\simeq \Hcal(\Lambda, G)$. 
\end{compactenum}
\end{prop}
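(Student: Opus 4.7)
My plan is to prove (ii) first, as (i) will follow readily from it together with finite-dimensionality.

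For (ii), I would begin by producing a complete train track $\tau$ carrying $\Lambda_q^h$ (and in fact carrying all of $\Lambda$, including the finitely many additional leaves from the maximalization of Remark~\ref{rem:maximalization}). The standard construction is to thicken $\Lambda$ to a small fibered neighborhood $N(\Lambda)$ foliated by transverse \emph{ties}, then collapse each tie to a point; completeness can be arranged by filling the complementary trigons (\emph{cf.}~\cite[Ch.~1]{PenHar}). With $\tau$ fixed, I would construct the map $\Hcal(\Lambda,G)\to\Hcal(\tau,G)$ by setting, for each branch $b$, the weight $w_{\alphabold}(b):=\alphabold(k_b)$, where $k_b$ is any tie of $b$. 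Transverse homotopy invariance of $\alphabold$ makes this independent of the choice of tie, and additivity of $\alphabold$ on concatenated arcs at a switch yields the switch condition, so $w_{\alphabold}\in\Hcal(\tau,G)$. The inverse map sends a weight system $w\in\Hcal(\tau,G)$ to the cocycle $\alphabold_w$ whose value on a transverse arc is the sum of the weights of the branches it crosses (with appropriate signs handled by orientation of ties); homotopy invariance and additivity are straightforward. That these two assignments are mutually inverse is then immediate from their definitions on transverse arcs.

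For (i), I would use the bijection of (ii). The train track $\tau$ has only finitely many branches $b_1,\dots,b_N$. For each $b_j$ pick one tie $k_{b_j}$, and let $P_j^{\pm}\in\Pcal(\Lambda)$ be the two plaques containing the endpoints of $k_{b_j}$ (after possibly shortening $k_{b_j}$ slightly so its endpoints lie in complementary plaques of $\Lambda$, which is possible because plaques are open and the endpoints lie off $\Lambda$). Define $\Pcal':=\{P_j^{\pm}: 1\le j\le N\}$; this is finite. If $\alphabold\in\Hcal(\Lambda,G)$ vanishes on $\Pcal'\times\Pcal'$, then in particular $\alphabold(k_{b_j})=\alphabold(P_j^{-},P_j^{+})=0$ for every $j$, so the associated weight $w_{\alphabold}$ vanishes on every branch, and hence $\alphabold\equiv 0$ by the bijection in (ii).

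The main obstacle is really a technical one in (ii): verifying that the train track can be chosen so that each individual tie endpoint lies in a well-defined plaque of $\Lambda$ (not on a leaf), and that the weight assignment respects the switch conditions in the presence of the additional leaves introduced by the maximalization. This is handled by picking $\tau$ carefully relative to the splitting data of Definition~\ref{def:maximalization} so that the additional leaves run along specific branches of $\tau$ in a controlled way; after this, the rest of the argument follows the standard dictionary between weighted train tracks and transverse cocycles as in \cite[\S2]{Bonahon:97}.
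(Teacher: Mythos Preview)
Your proof is correct and follows essentially the same approach as the paper: both reduce the problem to the standard dictionary between transverse cocycles for a maximal lamination and weight systems on a snugly carrying train track (\cite[Thm.~11]{Bonahon:97}), and then use the finiteness of branches to extract $\Pcal'$. The only structural difference is in (ii): you build $\tau$ directly by thickening the maximal lamination $\Lambda$ itself, whereas the paper first takes a complete track $\tau$ carrying $\Lambda_q^h$, then obtains a track $\tau''$ snugly carrying $\Lambda$ by \emph{splitting} $\tau$ along the branches corresponding to saddle connections, and finally uses the invariance $\Hcal(\tau'',G)\simeq\Hcal(\tau,G)$ under splitting to get the bijection with the original $\tau$; your route is slightly more direct but equivalent.
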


\begin{proof} (1) follows by finite dimensionality. More precisely, 
let $\tau'$ be a train track that \emph{snugly} carries $\Lambda$. Then by \cite[Thm.\ 11]{Bonahon:97}, $\Hcal(\Lambda, G)\simeq \Hcal(\tau', G)$. The identification assigns weights to branches of $\tau'$ that are equal to the value of the cocycle on the plaques defined by the complementary regions. Since there are only finitely many of these, the claim follows. 
For (2), the existence of $\tau$ follows from  \cite{PenHar}.
A train track $\tau''$ that snugly carries $\Lambda$ can be obtained by splitting $\tau$ along branches corresponding to saddle connections. Hence, 
$\Hcal(\Lambda, G)\simeq \Hcal(\tau'', G)$ as above. On the other hand, 
$\Hcal(\tau'', G)\simeq\Hcal(\tau, G)$ from properties of splittings. This completes the proof.
\end{proof}

 \subsubsection{Shearing cocycles} \label{subsec:ShearingCocycles} 
Given a marked hyperbolic surface $S$ with maximal geodesic lamination $\Lambda$ there is a uniquely defined transverse cocycle $\sigmabold\in \Hcal(\Lambda, \RBbb)$ called the {\bf shearing cocycle of $S$}. For the precise definition see \cite[p.\ 10]{Bonahon:shearing}. 
We will need the following formula for $\sigmabold$.

Let $P,Q\in \Pcal(\Lambda)$ and choose an arc $k$ from $P$ to $Q$ in $\widetilde S$ that is transverse to $\widetilde \Lambda$. 
 For each component $d$ of $k \setminus \widetilde\Lambda$ distinct from $P$ and $Q$,
  let $x_d^+$ and $x_d^-$ be the positive and negative endpoints of the (oriented) segment $d$, respectively. We let $d_-=P\cap k$ and $d_+=Q\cap k$. Define $x_{d_-}^+$ to be the positive endpoint of $d_-$, and  
  and $x_{d_+}^-$ the negative endpoint of $d_+$.  
We denote the leaves of $\widetilde \Lambda$ passing through $x_d^\pm$ by $g_d^\pm$, and similarly for $x_{d_\pm}$. 
  For each component $d$, $d_\pm$, let $h: g_d^\pm \to \RBbb$ denote the (signed) distance from the foot\footnote{The {\bf foot} of an edge of an ideal triangle is the point of intersection with the orthogonal geodesic from the opposing vertex.} determined by viewing the geodesics as boundaries of the ideal triangle corresponding to the component $d$. With this understood, we have the following expression for the shearing cocycle of $S$ (\cite[Lemma 7]{Bonahon:shearing}).

\begin{equation} \label{eqn:shearing-cocycle}
\sigmabold(P,Q)= h(x_{d_-}^+) - h(x_{d_+}^-)+\sum_{d\neq d_+, d_-} \left(h(x_d^+)-h(x_d^-)\right)\ .
\end{equation}
\noindent
We also note the following.
\begin{compactenum}[(i)]
\item
If $\sigmabold$ is the shearing cocycle of a hyperbolic surface $S$ and $\alphabold\in \Hcal(\Lambda, \RBbb)$ with $\Vert\alphabold\Vert$ sufficiently small, then $\sigmabold+\alphabold$ is the shearing cocycle of some hyperbolic surface \cite[Prop.\ 13]{Bonahon:shearing}. This is the generalization of Thurston's earthquake map.
\item
The map $T(\Sigma)\to \Hcal(\Lambda, \RBbb)$ which associates the shearing cocycle to a hyperbolic metric is injective onto an open convex polyhedral cone $\Ccal(\Lambda)$ \cite[Cor.\ 21]{Bonahon:shearing}.  
\end{compactenum}

\subsubsection{Bending cocycles} \label{subsec:BendingCocycles}
 Recall from the introduction that a pleated surface  $\mathsf{P}= (S, f, \Lambda, \rho)$ 
consists of a marked hyperbolic surface $S$, a maximal geodesic lamination $\Lambda\subset \Sigma$, and a map $f: \widetilde S\to \HBbb^3$  that is totally geodesic on the components of $\widetilde S\setminus\widetilde \Lambda$, maps leaves of $\widetilde\Lambda$ isometrically to geodesics, and is $\rho$-equivariant for a representation $\rho\colon \pi_1\to \PSL(2,\CBbb)$. Such a $\rho$, which in this paper we take to be in $R^o(\Sigma)$, is necessarily irreducible (see \cite[p.\ 36]{Bonahon:shearing}). We sometimes denote pleated surfaces by just $ f: \widetilde S\to \HBbb^3$ when context provides the other data.

In addition to the shearing cocycle for the hyperbolic surface $S$, there  a uniquely defined bending cocycle $\betabold\in \Hcal^0(\Lambda, S^1)$. As in the previous section, we will need a particular formula for this which we describe below.

Choose a $\rho$-equivariant differentiable vector field $v$ on $\H^3$ ``transverse to the image'' $f(\tilde{\Lambda})$ of $\tilde{\Lambda}$ under $f$. For the existence of such  we refer to \cite{Bonahon:97}, \textsection{11}.  
As in \textsection{\ref{subsec:ShearingCocycles}},
let $k$ be an arc transverse to $\widetilde\Lambda$ from plaque $P$ to $Q$.
At each endpoint $x_d^{\pm}$ we have two vectors: the ambient vector field $v$ restricted to $x_d^{\pm}$, and the vector $n$ which is normal to the plane containing the  plaque $R$ of $\tilde{S} \setminus \tilde{\Lambda}$ which contains $d$. 
Here the orientation of $n$ is such that the induced orientation of $f(R)$ by $f$ followed by $n$ is the orientation of $\H^3$.
Orient these leaves 
of $\Lambda$ (thought of here as a leaf of $\tilde{\Lambda} \subset \H^3$) 
so that its orientation is from 
\emph{right to left} with respect to $k$. The final geometric object we need is the normal plane $N$ to the image $f(g_d^\pm)$, which inherits an orientation from $f(g_d^\pm)$ and the orientation of $\H^3$.

 Set $a_{n,v}(x_d^{\pm})$ to be the angle from the projection of $v$ onto the normal plane $N$ to $n \in N$. Then we have the following expression for the bending cocycle (see \cite[Lemma~36]{Bonahon:97}). 

	\begin{equation} \label{equation: intrinsic bending formula}
	\betabold(P,Q) = a_{n,v}(x_{d_-}^+) - 	a_{n,v}(x_{d_+}^-)+ \sum_{d \neq d_+, d_-} [a_{n,v}(x_d^+) - a_{n,v}(x_d^-)] \in \R/2\pi\Z	\ .
	\end{equation}

We will use some of the details behind this expression. By \cite[Lemmas 4 and 5]{Bonahon:97} there are constants $K$, $A$, and $B$, depending only on $k$, such that   the number of components $d$ of $k\setminus \Lambda$ with divergence radius $r(d)=r\in \NBbb$ is at most $K$, and the length $\ell(d)$ of any such  component is bounded by $Be^{-Ar(d)}$.  Write the sum in \eqref{equation: intrinsic bending formula} as 
\begin{equation} \label{eqn:resum}
\sum_{d \neq d_+, d_-} [a_{n,v}(x_d^+) - a_{n,v}(x_d^-)]
=
\sum_{r=0}^\infty\sum_{\stackrel{d \neq d_+, d_-}{r(d)=r}}
[a_{n,v}(x_d^+) - a_{n,v}(x_d^-)]\ .
\end{equation}
Since $v$ is Lipschitz 
there is a constant $c_0>0$ 
such that
$$
\left|a_{n,v}(x_d^+) - a_{n,v}(x_d^-)\right|\leq c_0\ell(d)\leq c_oBe^{-Ar(d)}\ .
$$
Hence, the tail in the sum \eqref{eqn:resum} is estimated by 
\begin{equation} \label{eqn:estimate-tail}
\biggl| \sum_{r=R}^\infty\sum_{\stackrel{d \neq d_+, d_-}{r(d)=r}}
[a_{n,v}(x_d^+) - a_{n,v}(x_d^-)]\biggr|\leq
c_0KB\sum_{r=R}^\infty e^{-Ar}\leq \frac{c_0KB}{A}e^{-AR}\ .
\end{equation}

Bonahon proves that given a bending cocycle $\betabold\in \Hcal^o(\Lambda, S^1)$ and a hyperbolic surface $S$, there is an equivariant map $f: \widetilde S\to \HBbb^3$, well defined up to isometries, totally geodesic on the plaques and pleated along the  lamination $\Lambda$, and with bending cocycle $\betabold$. The map $f$ is by construction equivariant with respect to some representation whose conjugacy class $[\rho]\in R^o(\Sigma)$ depends only on the isomorphism class of the marked hyperbolic surface $S$, the lamination $\Lambda$, and the bending cocycle $\betabold$. Indeed, this construction gives a parametrization of (a portion of) $R^o(\Sigma)$. Set $[\rho]=B_\Lambda(\sigmabold, \betabold)$, where $\sigmabold\in \Ccal(\Lambda)$ is the shearing cocycle of $S$. 
Then we have
\begin{thm}[{\cite[Thm.\ D]{Bonahon:shearing}}]
The map \eqref{eqn:bonahon}
is a biholomorphism onto an open subset.
\end{thm}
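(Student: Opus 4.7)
My strategy would be to construct $B_\Lambda(\sigmabold, \betabold)$ explicitly as a developing map and then analyze the parametric dependence. Fix a base plaque $P_0 \in \Pcal(\Lambda)$ and an isometric totally geodesic embedding $f_0 \colon P_0 \hookrightarrow \HBbb^3$. For an adjacent plaque $P_1$ sharing a leaf $\ell$ with $P_0$, extend $f$ to $P_1$ by applying the unique element of $\PSL(2,\CBbb)$ that fixes the geodesic $f_0(\ell)$ and implements a complex translation by $\sigmabold(P_0,P_1) + i\,\betabold(P_0,P_1) \in \CBbb/2\pi i\ZBbb$---that is, a hyperbolic screw motion along $f_0(\ell)$ whose real part is a shear and whose imaginary part is a bending. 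Iterating across chains of plaques and using additivity of the cocycle defines $f$ on $\bigcup_{P \in \Pcal(\Lambda)} P$. The first real work would be extending this continuously across $\widetilde\Lambda$ to a Lipschitz $\rho$-equivariant pleated map $f \colon \widetilde S \to \HBbb^3$; the convergence when the leaves of $\Lambda$ accumulate is controlled by the same sort of exponential divergence-radius estimates already used in \eqref{eqn:resum}--\eqref{eqn:estimate-tail}.

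Holomorphicity of $B_\Lambda$ is then manifest from the construction. In a lift to $\SL(2,\CBbb)$ and in a frame diagonalizing the axis of the image of a crossing leaf $\ell$ between plaques $P, P'$, the transition across $\ell$ is implemented by the matrix
\[
\exp\!\left(\tfrac{1}{2}\bigl(\sigmabold(P,P') + i\,\betabold(P,P')\bigr) \begin{pmatrix} 1 & 0 \\ 0 & -1 \end{pmatrix}\right),
\]
which depends holomorphically on the complexified cocycle $\sigmabold + i\betabold \in \Hcal(\Lambda, \CBbb/2\pi i\ZBbb)$. Since the monodromy representation $\rho$ is assembled out of uniformly convergent compositions of such transitions, the induced map $B_\Lambda$ is holomorphic with respect to the natural complex structures on both sides.

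To show $B_\Lambda$ is a local biholomorphism I would examine its differential along the Fuchsian slice $\{\betabold = 0\}$, where $B_\Lambda$ restricts to Bonahon's real shearing parametrization $\Ccal(\Lambda) \simeq T(\Sigma) \subset R^o(\Sigma)$, which is already a real-analytic diffeomorphism onto its image. By the holomorphic dependence, the imaginary direction $\sqrt{-1}\,\Hcal^o(\Lambda, S^1)$ is carried to the $J$-translate of the shearing direction in $T_{[\rho]} R^o(\Sigma)$, so the complexified differential is an isomorphism along the Fuchsian locus; holomorphy then propagates this to a local biholomorphism on a neighborhood of that locus, from which one extends to the full domain by analytic continuation. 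For global injectivity I would argue that the cocycle $\sigmabold + i\betabold$ can be recovered from the pleated surface: once $[\rho]$ and its $\rho$-equivariant pleated surface with pleating locus $\Lambda$ are known, the relative complex displacement of adjacent plaques reads off the value of the cocycle on each crossing leaf. The main obstacle I anticipate is the analytic regularity needed for the gluing step when $\Lambda$ is not a multicurve but meets transverse arcs in a Cantor set; this is precisely where Bonahon's H\"older-type estimates on transverse cocycles become essential, both to define the infinite compositions in $\PSL(2,\CBbb)$ and to prove holomorphy of the limiting monodromy in the complexified cocycle.
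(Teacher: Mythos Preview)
The paper does not prove this theorem; it is quoted from Bonahon's article \cite{Bonahon:shearing} with only the statement given. So there is no proof in the paper to compare against.

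That said, your sketch is close in spirit to Bonahon's actual argument, and a few comments are in order. First, the phrase ``adjacent plaque $P_1$ sharing a leaf $\ell$ with $P_0$'' is misleading: for a generic maximal lamination no two plaques share a boundary leaf, and between any two plaques there are uncountably many leaves. Bonahon's construction does not iterate across ``adjacent'' plaques but rather defines $f$ on a finite set $\Pcal'$ of plaques (as in the present paper's proof of Theorem~\ref{thm:bonahon-image}), interpolates across the intervening wedges, and then takes a limit as $\Pcal'$ exhausts $\Pcal(\Lambda)$; convergence is exactly the H\"older/divergence-radius machinery you mention. Second, your argument for the local biholomorphism---checking that the real differential is an isomorphism along the Fuchsian slice and then invoking holomorphy---only gives nondegeneracy on a neighborhood of that slice; analytic continuation of a nondegeneracy condition is not automatic, and Bonahon instead computes the derivative directly at an arbitrary point of the domain. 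Third, for injectivity the recovery of the cocycle from the pleated surface is precisely the content of formulas \eqref{eqn:shearing-cocycle} and \eqref{equation: intrinsic bending formula}, together with the uniqueness of the $\rho$-equivariant pleated surface with given pleating locus (Bonahon's Lemma~29, used elsewhere in this paper). Your outline is honest about where the work lies, but as written it is a plan rather than a proof.
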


\subsubsection{Bending cocycles and geometric bending} \label{sec:cocycle-vs-geometric-bending}
The following result will be crucial for the analysis later on. It  provides a relationship between the bending cocycle discussed here and the geometric bending introduced in \textsection{\ref{sec:bending-maps}}.
\begin{lem} \label{lem:finite-approximation-bending}
  Fix $\delta>0$ and some positive integer $M$. There is $\varepsilon_0>0$ depending only on $\delta$, $M$, and $\Lambda$, with the following property.  Let $f: \widetilde S\to \HBbb^3$ be a pleated surface with pleating lamination $\Lambda$.
Further, given $P,Q\in \Pcal(\Lambda)$ and a transverse arc $k$ from $P$ to $Q$, suppose $k$ can be written as a union $k_1\cup \cdots\cup k_m$, where $m\leq M$, and for each $i$ the pointed geodesics bounding the plaques intersecting $k_i$ are $C^1_\varepsilon$-close for $\varepsilon\leq \varepsilon_0$ at their intersections with $k_i$. Then there is a finite collection 
 $\{P_i\}_{i=1}^N$ of plaques separating $P$ and $Q$, $P_0=P$, $P_N=Q$,
such that for any choice of points $\widetilde p_i\in P_i$ in the interiors of the plaques,
\begin{equation} \label{eqn:geometric-estimate}
\left|
\betabold(P,Q)-\sum_{i=1}^N\Theta_{f}(\widetilde p_{i-1}, \widetilde p_i)
\right|<\delta\ .
\end{equation}
\end{lem}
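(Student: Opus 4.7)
The plan is to localize the discrepancy between the abstract cocycle $\betabold$ and the geometric dihedral angles $\Theta_{f}$ to the non-commutativity of small rotations about nearly-parallel axes, a phenomenon made quantitatively small by the $C^{1}_{\varepsilon}$-closeness hypothesis, while truncating the infinite sum defining $\betabold$ via the decay estimate \eqref{eqn:estimate-tail}.

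First, I would choose $R=R(\delta)$ so that, by the exponential tail bound \eqref{eqn:estimate-tail}, the total contribution to \eqref{equation: intrinsic bending formula} from components of $k\setminus\widetilde\Lambda$ of divergence radius $>R$ is less than $\delta/3$. By \cite[Lemmas 4 and 5]{Bonahon:97} there are only finitely many components with $r(d)\leq R$; let $P_{0}=P,\,P_{1},\,\ldots,\,P_{N}=Q$ be the finite sequence of plaques $\{P,Q\}\cup\{R_{d}:r(d)\leq R\}$ enumerated in the order they are traversed by $k$. Note that $N$ is finite but depends on $R$, hence ultimately on $\delta$ and on $\Lambda$.

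Next, for two \emph{adjacent} plaques $P_{i-1},P_{i}$ sharing a bounding leaf $g$, the formula \eqref{equation: intrinsic bending formula} reduces to a single difference $a_{n,v}(x_{d_{-}}^{+})-a_{n,v}(x_{d_{+}}^{-})$ evaluated at two points on $g$ with a common normal plane; the $v$-contributions cancel and one recovers exactly the dihedral angle between the supporting planes of $P_{i-1}$ and $P_{i}$, that is, $\Theta_{f}(\widetilde p_{i-1},\widetilde p_{i})$ up to the orientation conventions of Definition \ref{def:bending-map}. So in the adjacent case the two quantities agree identically. For non-adjacent consecutive $P_{i-1},P_{i}$ separated by skipped plaques of divergence radius $>R$, one must compare $\betabold(P_{i-1},P_{i})$ with the single angle $\Theta_{f}(\widetilde p_{i-1},\widetilde p_{i})$: the former, up to the tail error, is the finite sum of bending angles at the ``major'' intermediate leaves, while the latter is the angle of the composite hyperbolic rotation that realizes the net change of supporting plane.

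The $C^{1}_{\varepsilon}$-closeness hypothesis is what makes these two quantities agree. Within the ambient sub-arc $k_{j}$, every leaf of $\widetilde\Lambda$ that is crossed has its tangent geodesic in $\HBbb^{3}$ lying in a uniformly $\varepsilon$-narrow cone, so all bending rotations between consecutive plaques of $k_{j}$ have nearly parallel axes and can be linearized in a common neighborhood of this axis. A Baker-Campbell-Hausdorff type estimate in $\mathfrak{sl}(2,\CBbb)$ then gives the difference between the sum of rotation angles and the angle of the composite rotation as quadratic in $\varepsilon$ times an absolutely convergent sum of bendings, the latter being uniformly bounded in terms of $\Lambda$ and $k$ via the absolute convergence of \eqref{equation: intrinsic bending formula} already used in \eqref{eqn:estimate-tail}. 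Aggregating these per-gap errors over the finitely many non-adjacent pairs in our list, adding the tail error $\delta/3$ and a Lipschitz-in-$v$ variation term for the ambient reference field, and choosing $\varepsilon_{0}=\varepsilon_{0}(\delta,M,\Lambda)$ small enough, yields the desired bound \eqref{eqn:geometric-estimate}.

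The main obstacle is making the non-commutativity estimate quantitatively uniform: the per-leaf quadratic-in-$\varepsilon$ error must compound to a total below $\delta$ even though the number of leaves traversed within each $k_{j}$ is a priori infinite. The required control arises from the interplay of the exponential tail decay \eqref{eqn:estimate-tail} with the $C^{1}_{\varepsilon}$-closeness hypothesis: the former suppresses the contributions of deep plaques, while the latter controls the linearization about the common near-axis for the shallow ones, so that the two sources of error may be balanced by the single parameter $\varepsilon_{0}$.
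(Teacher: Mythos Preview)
Your overall scaffolding is right and matches the paper: truncate Bonahon's formula \eqref{equation: intrinsic bending formula} via the tail bound \eqref{eqn:estimate-tail} to get a finite list of separating plaques $P_0,\ldots,P_N$, then regroup the surviving terms as $\sum_{i=1}^N\bigl[a_{n,v}(x_{d_{i-1}}^+)-a_{n,v}(x_{d_i}^-)\bigr]$ and compare each summand $\Delta_i$ with the dihedral angle $\Theta_i=\Theta_f(\widetilde p_{i-1},\widetilde p_i)$. Your observation that for \emph{adjacent} plaques the $v$-contributions cancel and one recovers the dihedral angle exactly is also correct and is the model case.

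The gap is in the non-adjacent step, where you invoke a Baker--Campbell--Hausdorff estimate and identify $\Theta_i$ with ``the angle of the composite hyperbolic rotation that realizes the net change of supporting plane.'' This identification is not justified and is in fact the heart of the lemma. The quantity $\Theta_f(\widetilde p_{i-1},\widetilde p_i)$ is defined in Definition~\ref{def:bending-map} by a three-way case split: it is a dihedral angle when $\DBbb_{i-1}$ and $\DBbb_{i}$ intersect, but is declared to be $0$ or $\pi$ when they do not. There is no single ``composite rotation'' taking $\DBbb_{i-1}$ to $\DBbb_{i}$ whose angle one can extract; and even granting BCH control on the composite isometry, you must still argue that the crease $\gamma_T=\DBbb_{i-1}\cap\DBbb_{i}$ exists and lies close to the nearly-common leaf in order to relate the dihedral angle to the angle between the normals $n_{i-1},n_i$ measured in the nearly-common normal plane. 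Your argument contains no mechanism for this, and it genuinely fails when the total bending is near $0$ or $\pi$: there the planes may be disjoint or may meet far from the leaves, and a small BCH error in $\mathfrak{sl}(2,\CBbb)$ does not by itself pin down $\Theta_i$.

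The paper handles exactly this point by elementary hyperbolic trigonometry rather than BCH. It sets up a right-angled triangle with vertices $y_{i-1}$ (foot of the perpendicular from $x_{d_i}^-$ to $\DBbb_{i-1}$), $x_{d_i}^-$, and a point $z_{i-1}$ on $\gamma_T$, and runs a trichotomy: if $|\Theta_i|,|\Theta_i-\pi|\geq\delta/4N$, the hyperbolic law of sines forces the crease $\gamma_T$ to be $O(\varepsilon)$-close to the leaves, whence parallel transport along short arcs gives $|\Delta_i-\Theta_i|$ small; symmetrically if $|\Delta_i|,|\Delta_i-\pi|\geq\delta/4N$; and in the remaining case both $\Delta_i$ and $\Theta_i$ are close to $0$ or to $\pi$, and an orientation-compatibility check (using continuity of $v$) shows they are close to the \emph{same} one. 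This case analysis is what replaces, and is not implied by, your BCH linearization.
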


\begin{proof}
Recall that $\beta(P,Q)=\beta(k)$. 
By the estimate in \eqref{eqn:estimate-tail}, we may find plaques $P_i$ as in the statement of the Lemma such that if we set $d_i=k\cap P_i$, then 
\begin{equation} \label{eqn:first-estimate}
\left| \beta(k)-\sum_{i=1}^N[a_{n,v}(x_{d_{i-1}}^+)- a_{n,v}(x_{d_i}^- )]\right|<\delta/2\ .
\end{equation}
We also assume, after a possible further subdivision, the leaves $g_{i-1}^+$ and $g_i^-$ of $\Lambda$ through $x_{d_{i-1}}^+$ and $x_{d_i}^-$ are $C^1_\varepsilon$-close for every $i=1,\ldots, N$ (where $\varepsilon$ is to be determined). This does not affect \eqref{eqn:first-estimate}.   It suffices to show that for $\varepsilon$ sufficiently small the following holds:
\begin{equation} \label{eqn:second-estimate}
\left| a_{n,v}(x_{d_{i-1}}^+)- a_{n,v}(x_{d_i}^-) -\Theta_{f}(x_{d_{i-1}}^+, x_{d_i}^- )\right|<\delta/2N\ .
\end{equation}
Here we have extended the definition of $\Theta_{f}(x_{d_{i-1}}^+, x_{d_i}^- )$ from that of $\Theta_{f}(p_{i-1}, p_{i})$ by using the tangent planes to the plaques containing $x_{d_{i-1}}^+$ and $x_{d_i}^-$ to determine the dihedral angles.
To simplify the notation, set 
$$
\Delta_i:= a_{n,v}(x_{d_{i-1}}^+)- a_{n,v}(x_{d_i}^-) \quad ,\quad 
\Theta_i:=
\Theta_{f}(x_{d_{i-1}}^+, x_{d_i}^- )\ ,
$$
and let $\DBbb_i$ denote the totally geodesic plane containing the plaque $P_i$. 
Eq.\ \eqref{eqn:second-estimate} follows from simple estimates
in $\HBbb^3$. The idea is that either \emph{both} $\Delta_i$ and
$\Theta_i$
 are close to $0$, close to $\pi$, or the points $x_{d_{i-1}}^+$ and $x_{d_i}^-$ are close to the intersection   $\gamma_T:= \DBbb_{i-1}\cap \DBbb_i$. If the latter holds, then parallel translation of the vector $v$ to the crease of the tent formed by $\DBbb_{i-1}$ and $\DBbb_i$ only changes $v$ by a small amount, and so the difference $\Delta_i$ of angles of the parallel translates is nearly the dihedral angle of the tent.

\medskip
\par\noindent{\bf Step 1.}
Let $y_{i-1}$ be the endpoint of the geodesic $A$ from $x_{d_{i}}^-$ to the plane $\DBbb_{i-1}$. 
Define $a_{n,v}(y_{i-1})$ to be the angle from the projection of $v$ to the normal to $\DBbb_{i-1}$, where the projection is onto the parallel translation of the normal plane to the leaf $g_{i-1}^+$ from $x_{d_{i-1}}^+$ to $y_{i-1}$. By the hypothesis that $g_{i-1}^+$ and $g_i^-$ are $C^1_\varepsilon$ close at their basepoints $x_{d_{i-1}}^+$ and $x_{d_{i}}^-$, we see that $x_{d_{i-1}}^+$ and $y_{i-1}$ are at most a distance $2\varepsilon$ apart; then since $v$ is continuous, we have that $a_{n,v}(y_{i-1})$ and $a_{n,v}(x_{d_{i-1}}^+)$ are equal up to an error comparable to $\epsilon$. 
Let $|A|$ denote the length of $A$, and note that the normal to $\DBbb_{i-1}$ is tangent to the segment $A$ at the point $y_{i-1}$. Let us denote the normals to the planes $\DBbb_{i-1}$ at $y_{i-1}$ and $\DBbb_{i}$ at $x_{d_{i}}^-$ by $n_{i-1}$ and $n_i$, respectively.

\medskip
\par\noindent{\bf Step 2.}  Let $z_{i-1}\in \DBbb_{i-1}$ be the endpoint of the geodesic segment from  $\DBbb_i$ and $\DBbb_{i-1}$, in the case where the planes do not intersect, and when they do intersect $z_{i-1}\in \gamma_T$ is the endpoint of the geodesic from $y_{i-1}$ to $\gamma_T$. In either case, let $B$  the geodesic from $y_{i-1}$ to $z_{i-1}$. 
 The points $\{y_{i-1}, x_{d_{i}}^-, z_{i-1}\}$ give a geodesic triangle in $\HBbb^3$ with sides $A$, $B$, and a third geodesic $C$ from $x_{d_{i}}^-$ to $z_{i-1}$ with length $|C|$. Let $\alpha,\beta,\gamma=\pi/2$ be the corresponding angles of this right-angled geodesic triangle.

\medskip
\par\noindent{\bf Step 3.}  Suppose that
$|\Theta_i|\geq\delta/4N$, and
 $|\Theta_i-\pi|\geq \delta/4N$. 
 By Definition \ref{def:bending-map}, this means in particular that $\DBbb_{i}$ and $\DBbb_{i-1}$ intersect along a geodesic $\gamma_T$. 
 
Now $A$ is orthogonal to $\DBbb_{i-1}$, so its parallel translate along $B$ is orthogonal to $\gamma_T$ at $B \cap \gamma_T$.  As $B$ is also orthogonal there to $\gamma_T$, we see that $\gamma_T$ meets the triangle $ABC$ orthogonally, and hence
$C$ also meets $\gamma_T$ orthogonally. Thus, $\alpha=\Theta_i-\pi$, so the assumption implies $|\alpha|\geq\delta/4N$. 
By the hyperbolic law of sines,
$$
\sinh |B|=\sinh |A|\cdot\frac{\sin\beta}{\sin \alpha} \ ,
$$
which implies that $|B|$ and $|C|$ are of the order of $|A|=O(\varepsilon)$. Thus, $\gamma_T$ is within $O(\varepsilon)$ of the points $y_{i-1}$ and $x^-_{d_i}$.  
Moreover, since the dihedral angle is bounded away from $0$ and $\pi$, $\gamma_T$ must be $C^1_\varepsilon$-close to the leaves $g^+_{i-1}$ and $g^-_i$. In particular, the normal planes to all three are close.
The quantity $\Delta_i$ can then be computed by parallel translation along $B$ and $C$. By Lemma \ref{lem:v-estimate}, it follows that $\Delta_i$ and $\Theta_i$ are close; in particular, less that $\delta/2N$ for $\varepsilon$ sufficiently small.

\medskip
\par\noindent{\bf Step 4.} Suppose that $|\Delta_i|\geq\delta/4N$, and $|\Delta_i-\pi|\geq\delta/4N$. Then $\beta$ is bounded away from $\pi/2$. For a general  right-angled  hyperbolic triangle one has the relation
$$
 \tanh |B|=\sinh|A|\cdot  \tan\beta\ . 
$$ 
 Since $\cos\beta$ is bounded away from zero,   
one arrives at an estimate of the form: $\tanh |B|\leq c_0 \sinh |A|$ for some constant $c_0$ depending on this bound. It again follows that 
$\gamma_T$ is close to the points $x^+_{d_{i-1}}$ to $x^-_{d_i}$, and therefore arguing as in the previous step, $\Delta_i$ and $\Theta_i$ are close.

\medskip
\par\noindent{\bf Step 5.} Suppose that neither of the assumptions of Step 3 or 4 hold. If $|\Delta_i-\pi|< \delta/4N$, for example, then since $v$ is continuous it follows that the orientations of $\DBbb_{i-1}$ and $\DBbb_{i}$ are compatible. Since the assumption of Step 3 fails, this forces $|\Theta_i-\pi|<\delta/4N$. A similar argument holds if $|\Delta_i|< \delta/4N$, and so in either case
$|\Delta_i-\Theta_i|< \delta/2N$. 

\end{proof}

\subsection{Cocycles and Prym differentials} \label{subsec: transverse cocycles and Prym differentials}

In this section we relate the notion of a bending cocycle to the spectral data parametrization of Higgs bundles discussed in \textsection{\ref{sec:spectralcurve}}.  Let $q\in \QD^\ast(X)$, and choose any maximal geodesic lamination $\Lambda$ containing $\Lambda_q^h$ as a sublamination. Thus, if the horizontal foliation of $q$ has no saddle connections, then $\Lambda=\Lambda_q^h$.
  Let $\widehat X_q\to X$ be the spectral curve associated to $q$. Recall that the Prym variety $\Prym(\widehat X_q, X)$ contains $J_2(X)$ as a subgroup. 
The goal is to prove the following

\begin{thm} \label{thm:prym-cocycle}
There is a group isomorphism,  
$$
 {\mathcal H}^o(\Lambda, S^1) \simeq
\Prym(\widehat X_q,X)/J_2(X)\ .
$$
\end{thm}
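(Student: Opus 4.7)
The plan is to construct an explicit surjective group homomorphism
\begin{equation*}
	\Upsilon \colon \Prym(\widehat X_q, X) \to \mathcal{H}^o(\Lambda, S^1)
\end{equation*}
and identify its kernel with the subgroup $\pi^* J_2(X) \subset \Prym(\widehat X_q, X)$. The definition of $\Upsilon$ is motivated by the asymptotic bending formula in Proposition~\ref{prop:LimitBendingOfPairs}: given a harmonic Prym differential $\widehat\eta \in \mathcal{H}^1_{\mathrm{odd}}(\widehat X_q, i\R)$ (using the identification in Proposition~\ref{prop:prym-limiting}) and a piecewise $C^1$ arc $k$ in $X$ quasi-transverse to $\mathcal{F}^h_q$ with vertical ends at zeroes (or a modified saddle connection from a maximalization, \textsection\ref{sec:maximalization}), I would set
\begin{equation*}
	\Upsilon([\widehat\eta])(k) := -2i \int_{\widehat k} \widehat\eta \pmod{2\pi\Z},
\end{equation*}
where $\widehat k$ is the lift to $\widehat X_q$ with $\Im(\lambda_{\SW}(\dot{\widehat k})) \leq 0$. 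For an arbitrary arc transverse to $\Lambda$, the value would then be defined by approximation using quasi-transverse arcs with small vertical ends.

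Next I would verify the cocycle axioms: additivity under concatenation and invariance under $\Lambda$-transverse homotopy are immediate from additivity and the closedness of $\widehat\eta$; symmetry in the orientation of $k$ follows since reversing $k$ takes $\widehat k$ to $\sigma(\widehat k)$, so that $\sigma^*\widehat\eta = -\widehat\eta$ combined with the reversal of orientation in the integral leaves the value invariant; and equivariance with respect to $\pi_1$ is automatic. To verify that $\Upsilon$ descends from $\mathcal{H}^1_{\mathrm{odd}}(\widehat X_q, i\R)$ to $\Prym(\widehat X_q, X)$, I would adapt the gauge transformation argument of Proposition~\ref{prop:prym-limiting}: adding $\widehat\eta_0 \in H^1_{\mathrm{odd}}(\widehat X_q, 2\pi i\Z)$ and setting $g(w) := \exp\big(\int_{w_0}^{w} \widehat\eta_0\big)$ for a basepoint $w_0$ fixed by $\sigma$ produces a single-valued function $g \colon \widehat X_q \to \C^*$ with $g \circ \sigma = g^{-1}$, so $g(p) \in \{\pm 1\}$ at every branch point $p \in \pi^{-1}(Z(q))$. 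Hence $\int_{\widehat k} \widehat\eta_0 \in i\pi\Z \pmod{2\pi i\Z}$ for $\widehat k$ joining branch points, and multiplication by $-2i$ lands in $2\pi\Z$, giving $0 \pmod{2\pi}$.

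By Proposition~\ref{bound.stratum} combined with \eqref{eqn:prym-prym}, and by the discussion in \textsection\ref{subsec: transverse cocycles}, both source and target of $\Upsilon$ are compact Lie groups of the same real dimension $6g-6$. Since $\Upsilon$ is $\R$-linear in $\widehat\eta$, it is a Lie group homomorphism; non-degeneracy of the intersection pairing on $H^1_{\mathrm{odd}}(\widehat X_q, \R)$ would then show that the differential at the origin is an isomorphism, so $\Upsilon$ is a covering homomorphism with finite kernel. The inclusion $\pi^* J_2(X) \subset \ker \Upsilon$ follows from the spectral correspondence (Theorem~\ref{thm:bnr} and \textsection\ref{subsec:BNR correspondence}): for $\alpha \in J_2(X)$, tensoring the Prym line bundle $\mathcal{L}$ by $\pi^* \alpha^{-1}$ leaves $\mathcal{E} = \pi_*(\mathcal{L} \otimes \pi^* K_X^{1/2})$ and the associated Higgs pair unchanged, hence leaves the limiting configuration and the bending cocycle of its associated equivariant pleated surface unchanged. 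Counting $|\pi^* J_2(X)| = 2^{2g}$ and comparing with the $2^{2g}$-sheeted cover \eqref{eqn:limit-config-bending-cocycle} would conclude that $\ker \Upsilon = \pi^* J_2(X)$ and that $\Upsilon$ descends to the claimed isomorphism.

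The main obstacle I anticipate is the careful treatment when $\mathcal{F}^h_q$ has horizontal saddle connections, so that $\Lambda$ is obtained by a maximalization of $\Lambda_q^h$. The additional leaves in $\Lambda \setminus \Lambda_q^h$ are represented by paths that are quasi-transverse to $\mathcal{F}^v_q$ rather than to $\mathcal{F}^h_q$ (Remark~\ref{rem:maximalization}(i)), and so the integration formula for $\Upsilon$ on these leaves must be interpreted through the modified saddle connections of \textsection\ref{sec:maximalization}. A coherent compatibility check between the orientation of modified saddle connections, the sheet convention on $\widehat X_q$, and the bending cocycle axioms is needed in order to ensure that the resulting $\Upsilon$ is an honest cocycle on the extended lamination and is, up to the action of $\pi^*J_2(X)$, independent of the choice of maximalization.
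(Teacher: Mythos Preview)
Your overall plan---define a period map from Prym differentials to $S^1$-valued cocycles, verify the cocycle axioms, use the dimension count to deduce it is a covering homomorphism, and then identify the kernel---is exactly the paper's strategy. The map you call $\Upsilon$ is the paper's map $B$ in \eqref{eqn:beta2}, and your verification of symmetry via $\sigma^*\widehat\eta=-\widehat\eta$ combined with reversal of orientation is precisely the paper's computation \eqref{eqn:reverse}. The injectivity of the differential, which you attribute to nondegeneracy of the intersection pairing, is obtained in the paper more directly: by Remark~\ref{rem:period}, the cocycle value $\betabold_{\widehat\eta}(P,Q)$ is literally a period of $\widehat\eta$ on an odd homology class, and these classes generate $H_1^{\odd}(\widehat X_q,\Z)$, so vanishing of the cocycle forces vanishing of the class.

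The genuine gap is your identification of the kernel. Your argument that $\pi^*J_2(X)\subset\ker\Upsilon$ is circular in two places. First, the claim that tensoring $\mathcal L$ by $\pi^*\alpha$ leaves $\mathcal E=\pi_*(\mathcal L\otimes\pi^*K_X^{1/2})$ unchanged is false: by the projection formula $\mathcal E$ becomes $\mathcal E\otimes\alpha$, a different $\SL(2,\C)$ Higgs bundle (though the same $\PSL(2,\C)$ one). Second, and more seriously, you invoke ``the bending cocycle of its associated equivariant pleated surface'' and the $2^{2g}$-sheeted cover \eqref{eqn:limit-config-bending-cocycle}. But \eqref{eqn:limit-config-bending-cocycle} is \emph{derived from} Theorem~\ref{thm:prym-cocycle} (it is the composition of Proposition~\ref{prop:prym-limiting} with the isomorphism of Theorem~\ref{thm:prym-cocycle}), and the relation between $\Upsilon$ and the bending cocycle of an actual pleated surface is the content of the Main Theorem, which in turn relies on Theorem~\ref{thm:prym-cocycle}. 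So neither can be used here.

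The paper avoids this by a direct lattice computation (Lemma~\ref{lem:lattices}). The kernel of the period map $B$ on $H^1_{\odd}(\widehat X_q,i\R)$ is the lattice $L=\Hom(H_1^{\odd}(\widehat X_q,\Z),2\pi i\Z)$, which strictly contains $H^1_{\odd}(\widehat X_q,2\pi i\Z)$ because the integral cohomology of $\widehat X_q$ does not split into even and odd parts over $\Z$. Using the explicit homology basis of \textsection\ref{sec:branched_homology}, the quotient $L/H^1_{\odd}(\widehat X_q,2\pi i\Z)$ is seen to be generated by the classes $\pi i(\widehat\delta_j-\sigma^*\widehat\delta_j)$, $j=1,\dots,2g$, and identified with $J_2(X)$. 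Combined with \eqref{eqn:prym-prym}, this gives $\Prym/J_2(X)\simeq H^1_{\odd}(\widehat X_q,i\R)/L\simeq\mathcal H^o(\Lambda,S^1)$. Your concern about maximalizations is legitimate but minor: the paper observes (Proposition~\ref{prop:cocycle-properties}(ii) and the proof of Corollary~\ref{cor:invariance-of-basepoint}) that the isomorphism factors through $\mathcal H^o(\tau,S^1)$ for a complete train track $\tau$ carrying $\Lambda_q^h$, and is therefore independent of the maximalization.
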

This result is essentially contained in \cite[\textsection{3.2}]{PenHar}  and \cite[p.\ 13]{Bonahon:shearing}. The idea is to view a bending cocycle in terms of periods of a Prym differential. The choice of sign is fixed by a choice of orientation of the lift of the lamination on the spectral curve.  Since this correspondence is so central to the present paper, we present the details below.

As mentioned in the Introduction, there is a nice
 interpretation of Theorem \ref{thm:prym-cocycle} which goes as follows: the space $\Prym(\widehat X_q,X)/J_2(X)$ is the fiber over $q$ of the Hitchin fibration  for (a component of) the moduli space
of $\PSL(2,\CBbb)$-Higgs bundles, whereas
  ${\mathcal H}^o(\Lambda, S^1)$ is a torus fiber over $\Ccal(\Lambda)$ in Bonahon's parametrization of the character variety $R(\Sigma)$.
 Via the nonabelian Hodge correspondence, $\QD(X)\simeq \Ccal(\Lambda)$, and the moduli space of Higgs bundles is homeomorphic to $R(\Sigma)$.

\subsubsection{Homology of branched covers} \label{sec:branched_homology}
Here we digress to make precise the construction of a homology basis for the spectral curve. 
Consider the general case of a closed, oriented surface
  $\Sigma$. Suppose $p: \widehat \Sigma\to \Sigma$ is a connected branched double cover of $\Sigma$ 
with branching set $B$ and involution $\sigma$, and let $p_{\ast}$ denote the induced map 
$H_1(\widehat \Sigma )\to H_1(\Sigma)$ on homology. Let $g, \widehat g$ be the genera of $\Sigma, \widehat \Sigma$. 
Recall by the Hurwitz formula that $2\widehat g=2g+(2g+\# B-2)$, where we have split the sum to indicate the dimensions of the even and odd homology of $\widehat \Sigma$ under the involution $\sigma$.

\begin{prop} \label{prop:homology}
There is an exact sequence
\begin{equation} \label{eqn:exact-sequence}
0\lra \ZBbb\lra  H_1(\Sigma,B) \stackrel{\phi}{\lra} H_1(\widehat \Sigma) \stackrel{p_{\ast}}{\lra} H_1(\Sigma)\lra 0\ ,
\end{equation}
where
the map $\phi$ is surjective onto the odd homology of $\widehat \Sigma$.
\end{prop}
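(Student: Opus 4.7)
My plan is to construct $\phi$ explicitly on singular chains and then verify exactness using chain-level arguments combined with a $\sigma$-equivariant decomposition of homology.

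I define $\phi$ as follows. Given a relative $1$-cycle $c$ on $(\Sigma,B)$, choose a singular $1$-chain $\widehat c$ on $\widehat\Sigma$ with $p_\sharp\widehat c=c$; since $p\colon\widehat B\to B$ is a bijection and $\partial c$ is supported in $B$, I may arrange $\partial\widehat c\in C_0(\widehat B)$. Because $\widehat B$ is the fixed-point set of $\sigma$, the chain $\widehat c-\sigma_\sharp\widehat c$ has vanishing boundary, and I set $\phi([c]):=[\widehat c-\sigma_\sharp\widehat c]\in H_1(\widehat\Sigma)$. Two lifts differ by a chain in $\ker p_\sharp$, which, away from $\widehat B$, decomposes into pairs $\widehat\tau-\sigma_\sharp\widehat\tau$ whose antisymmetrizations are exact, giving well-definedness on homology. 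By construction $\sigma_\ast\phi=-\phi$, so $\mathrm{Im}\,\phi\subseteq H_1^{\mathrm{odd}}(\widehat\Sigma)$, and $p_\ast\phi=0$ since $p_\sharp\sigma_\sharp=p_\sharp$.

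For exactness the core step is $\mathrm{Im}\,\phi=\ker p_\ast=H_1^{\mathrm{odd}}(\widehat\Sigma)$. The transfer identity $p^\ast p_\ast=1+\sigma_\ast$ (valid for the branched cover by extending from the unbranched cover $\widehat\Sigma\setminus\widehat B\to\Sigma\setminus B$ and using that loops around $\widehat B$ are null-homologous in $\widehat\Sigma$) yields $\ker p_\ast=H_1^{\mathrm{odd}}(\widehat\Sigma)$ integrally, since $H_1(\widehat\Sigma)$ is torsion-free. To show $\phi$ surjects onto $H_1^{\mathrm{odd}}(\widehat\Sigma)$, given an odd cycle $\widehat d$ I group its simplices into $\sigma$-orbits to write $\widehat d=\widehat e-\sigma_\sharp\widehat e$ and apply a boundary correction near $\widehat B$ to ensure $\partial\widehat e\subset C_0(\widehat B)$; then $c:=p_\sharp\widehat e$ satisfies $\phi(c)=\widehat d$. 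Surjectivity of $p_\ast$ onto $H_1(\Sigma)$ follows by a monodromy analysis: any class in $H_1(\Sigma)$ is represented by a loop in $\Sigma\setminus B$, which either lifts to $\widehat\Sigma\setminus\widehat B$ directly (trivial monodromy) or can be modified by a branch-point loop (null-homologous in $\widehat\Sigma$) to produce an element of $H_1(\widehat\Sigma)$ with the prescribed image.

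Finally, the rank count
\[
\rank H_1(\Sigma,B)-\rank H_1^{\mathrm{odd}}(\widehat\Sigma)=(2g+\#B-1)-(2g+\#B-2)=1,
\]
together with torsion-freeness of $H_1(\Sigma,B)$, yields $\ker\phi\cong\ZBbb$. The main obstacle will be the integral chain-level surjectivity of $\phi$ onto $H_1^{\mathrm{odd}}(\widehat\Sigma)$: decomposing an anti-invariant cycle as a strict antisymmetrization with boundary in the fixed set requires careful handling of potential $2$-torsion artifacts, and the cleanest approach is to work with an explicit $\sigma$-equivariant CW structure adapted to $\widehat B$.
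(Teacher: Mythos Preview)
Your chain-level construction of $\phi$ is not well defined, and this is a genuine gap rather than a technicality. Concretely, take a closed loop $c_k$ in $\Sigma$ (viewed as a relative cycle in $H_1(\Sigma,B)$) that lifts to a closed loop $\hat c_k$ in $\widehat\Sigma$. Then both $\hat c_k$ and $\sigma_\sharp\hat c_k$ are admissible lifts, and your recipe returns $[\hat c_k-\sigma_\sharp\hat c_k]$ for the first and $-[\hat c_k-\sigma_\sharp\hat c_k]$ for the second. These classes are nonzero in $H_1^{\mathrm{odd}}(\widehat\Sigma)$ in general (indeed, in the paper they are basis elements), so the two answers disagree. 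Your claimed justification fails because if $\hat d:=\hat c_1-\hat c_2\in\ker p_\sharp$ is the difference of two lifts, then on chains supported away from $\widehat B$ one has $\hat d\in\operatorname{Im}(1-\sigma_\sharp)$, hence $(1-\sigma_\sharp)\hat d=2\hat d$; this is not a boundary unless $[\hat d]=0$, and $[\hat d]$ can be any element of $\ker p_\ast$. The paper is explicit about this: $\phi$ is \emph{not} canonical and is defined by fixing, once and for all, lifts of a chosen set of generators $a_i,b_j,c_k$ of $H_1(\Sigma,B)$ (see the remark immediately following the proof).

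Once $\phi$ is defined on generators, your homological toolkit (the transfer identity $p^\ast p_\ast=1+\sigma_\ast$ to identify $\ker p_\ast$ with the odd part, and the rank count for $\ker\phi$) would work and is arguably cleaner than the paper's route. The paper instead argues everything by hand with the explicit generators: it writes down intersection numbers $\hat a_i\cdot\hat b_j$, $\hat a_i\cdot\hat c_k$, etc., uses these to show that any odd class has the form \eqref{eqn:gamma-hat} and hence lies in $\operatorname{Im}\phi$, and then computes $\ker\phi$ directly as the multiples of $\sum_i a_i$ by pairing against the $\hat a_i,\hat b_j$. So the paper's proof is entirely elementary intersection-form linear algebra; your approach, once repaired by making the lift choices explicit (or, equivalently, by fixing a $\sigma$-equivariant CW structure \emph{and} a choice of representative in each free $\sigma$-orbit of $1$-cells), trades that computation for the transfer map and an Euler-characteristic style count.
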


\begin{proof}
A topological model for the branched cover is given by decomposing $B$ into pairs and introducing branch cuts. In this setting, 
generators of the homology of $\widehat \Sigma$ are given
as follows. First, choose generators $c_1, \ldots, c_{2g}$ for $H_1(\Sigma)$. Let $\hat c_1, \ldots, \hat c_{2g}$ be lifts in $H_1(\widehat \Sigma)$, \emph{i.e.}\ $p_{\ast}(\hat c_k)=c_k$. We set $\phi(c_k)=\hat c_k-\sigma(\hat c_k)$.
Set $N=\# B/2$. Now choose generators $\{a_i, b_j\}$, $i=1,\ldots, N$, $j=1, \ldots, N-1$,  of $H_1(\Sigma, B)$ as in the diagram below.
Define closed curves $\hat a_i$, $\hat b_i$ on $\widehat \Sigma$ as follows:
choose lifts $\hat\alpha_i$, $\hat\beta_i$ of $a_i$, $b_i$, and set
$\hat a_i=\hat\alpha_i-\sigma(\hat \alpha_i)$, $\hat b_i=\hat\beta_i-\sigma(\hat \beta_i)$. Then $\hat a_i=\phi(a_i)$, $\hat b_j=\phi(b_j)$.
With  the orientation indicated, there is a  single relation: $\sum_{i=1}^{ N}\hat a_i=0$. 
The collection $\{\hat a_i, \hat b_j, \hat c_k, \sigma(\hat c_k)\}$ generate $H_1(\widehat \Sigma)$. See Figure \ref{fig:branched}.

\begin{figure} 
\begin{tikzpicture}
\draw  [pattern=north west lines, pattern color=yellow!50] 
(-4,0.75) -- (4,0.75) to
 (4,0.75) -- (4, 1.5) to [out=180, in=180] (4,2.5)
 to (4,2.5) -- (4,3.25) to (4,3.25) -- (-4,3.25) to
 (-4,3.25) -- (-4, 2.5) to (-4,2.5) to [out=0, in=0] (-4,1.5)
 to (-4,1.5) -- (-4,0.75);
 \draw [fill=white, thick] (-1.5,2) ellipse (.75cm and .5cm);
 \draw [fill=white, thick] (1.5,2) ellipse (.75cm and .5cm);
 \draw [thick] (-3.7,2) to [out=35, in=145] (-2.25,2);
 \draw [thick] (2.25,2) to [out=35, in=145] (3.7,2);
 \draw [thick] (-0.75,2) to [out=35, in=145] (0.75,2);
 \draw [thick, dotted] (-3.7,2) to [out=-35, in=-145] (-2.25,2);
 \draw [thick, dotted] (2.25,2) to [out=-35, in=-145] (3.7,2);
 \draw [thick, dotted] (-0.75,2) to [out=-35, in=-145] (0.75,2);
 \draw [pattern=north west lines, pattern color=gray!20] (0,-2) ellipse (4.5cm and 1.2cm);
 \draw [thick, ->] (0,.5) -- (0,-.5);
 \node at (0.5,0) {$\pi$};
 \draw [thick] (-3.7,-2) -- (3.7,-2);
 \node at (-3.7,-2) {$\bullet$};
 \node at (-2.25,-2) {$\bullet$};
 \node at (-0.75,-2) {$\bullet$};
 \node at (0.75,-2) {$\bullet$};
 \node at (3.7,-2) {$\bullet$};
 \node at (2.25,-2) {$\bullet$};
 \node at (-2.9,-2.3) {$a_1$};
 \node at (-1.4,-2.3) {$b_1$};
 \node at ( 0,-2.3) {$a_2$};
 \node at (1.4,-2.3) {$b_2$};
 \node at (2.9,-2.3) {$a_3$};
  \node at (-2.9,1.5) {$\widehat a_1$};
 \node at (-1.5,2.8) {$\widehat b_1$};
 \node at ( 0,1.5) {$\widehat a_2$};
 \node at (1.5,2.8) {$\widehat b_2$};
 \node at (3,1.5) {$\widehat a_3$};
 \end{tikzpicture}
\caption{Branched surface.}
\label{fig:branched}
\end{figure}
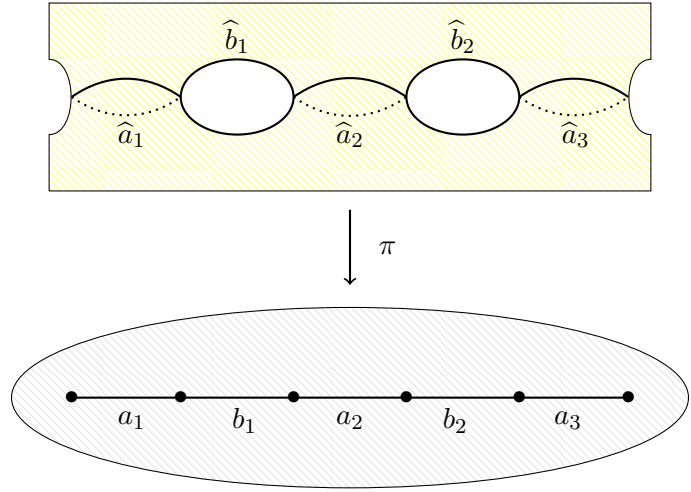

Note that with the appropriate choice of orientations we have the following intersection numbers:
\begin{align*}
\hat a_i \cdot \hat a_j &=\hat b_i\cdot \hat b_j=0 \\
\hat a_i \cdot \hat b_i = +1 \ , \ \hat a_{i+1} \cdot \hat b_i &= -1\ ,\ \hat a_i \cdot \hat b_j = 0\ \text{otherwise}\ .
\end{align*}
These are compatible with the relation.
The intersection numbers of the $\{\hat c_k\}$ and $\{ \sigma(\hat c_k)\}$ are the same as those of $\{c_k\}$ on $\Sigma$, with the additional relations 
\begin{equation} \label{eqn:intersection-numbers}
\hat a_i\cdot \hat c_k=\hat b_j\cdot \hat c_k =\hat c_k\cdot \sigma(\hat c_{\ell})=0\ ,
\end{equation}
for all $i, j, k, \ell$.

By construction, $p_{\ast}\circ\, \phi=0$. We show that $\phi$ is surjective onto the odd homology, which will prove exactness of the second part of the sequence. Indeed, for $\hat\gamma\in H_1(\widehat \Sigma)$, write
$$
\hat\gamma = \sum_{i=1}^N r_i \hat a_i+\sum_{j=1}^{N-1} s_j \hat b_j+\sum_{k=1}^{2g} m_k \hat c_k
+\sum_{k=1}^{2g} n_k \sigma(\hat c_k)\ .
$$
If $\hat\gamma$ is odd then since $\hat a_i$, $\hat b_j$ are also odd we have 
$$
-\hat\gamma=\sigma(\hat \gamma) 
=-\sum_{i=1}^N r_i \hat a_i-\sum_{j=1}^{N-1} s_j \hat b_j+\sum_{k=1}^{2g} m_k \sigma(\hat c_k)
+\sum_{k=1}^{2g} n_k \hat c_k\ .
$$
So $n_k=-m_k$, and 
\begin{align}
\hat\gamma &= \sum_{i=1}^N r_i \hat a_i+\sum_{j=1}^{N-1} s_j \hat b_j+\sum_{k=1}^{2g} m_k(\hat c_k-\sigma(\hat c_k))\label{eqn:gamma-hat} \\ 
&=\phi\left(\sum_{i=1}^N r_i a_i+\sum_{j=1}^{N-1} s_j b_j+\sum_{k=1}^{2g} m_k  c_k\ .
\right)\notag
\end{align}
Let us verify that  $\ker\phi\simeq \ZBbb$. Suppose  $\gamma\in \ker\phi$. We can write
\begin{align*}
\gamma& =\sum_{i=1}^{N} r_i  a_i + \sum_{j=1}^{N-1}s_j  b_j+\sum_{k=1}^{2g} m_k c_k\ ,\\ 
0=\phi(\gamma)& =\sum_{i=1}^{N} r_i \hat a_i + \sum_{j=1}^{N-1}s_j \hat b_j+\sum_{k=1}^{2g} m_k(\hat c_k-\sigma(\hat c_k)) \ .
\end{align*}
Taking intersections with appropriate elements $\hat c_k$ and $\sigma(\hat c_k)$, and using \eqref{eqn:intersection-numbers}, it is easy to see that $m_k=0$ for all $k$. 
Now
$$
0=\phi(\gamma) \cdot \hat a_i =\sum_{j=1}^{N-1}s_j \hat b_j\cdot\hat a_i \ \Rightarrow\ 0=s_{i-1}-s_i\ ,
$$
which implies $s_j=0$ for all $j$. Similarly, 
$$
0=\phi(\gamma)\cdot\hat b_j = \sum_{i=1}^{N} r_i \hat a_i \cdot b_j= r_j-r_{j+1}\ ,
$$
which implies $r_j$ is a fixed constant for all $j$. Hence, $\gamma$ is a multiple of the class $\displaystyle \sum_{i=1}^N a_i$.
This completes the proof of \eqref{eqn:exact-sequence}.
\end{proof}

\begin{remlabel} \label{rem:choice} \upshape
The map $\phi$ is not canonically determined but depends rather on the choices of lifts of the cycles $a_i, b_i, c_i$. 
\end{remlabel}

\subsubsection{Periods of Prym differentials} \label{sec:periods}
 Let us ignore the hyperbolic structure and regard $\Lambda\subset \Sigma$ (recall the discussion at the end of \textsection{\ref{sec:measured-foliations}}). Because $\Lambda$ is maximal, if we choose a collection $B$ of points, one in each component of $\Sigma\setminus\Lambda$, we may define a double cover $\hat\Sigma\to\Sigma$ branched at $B$. The preimage $\hat\Lambda$ is now orientable and we fix such once and for all. Recall the result of the  previous section. In this case, the map $\phi$ in \eqref{eqn:exact-sequence} is actually determined uniquely (see Remark \ref{rem:choice}). Indeed, 
we may assume representatives for the cycles $a_i, b_i, c_i$ are transverse to $\Lambda$. Then choose the lifts to $\widehat \Sigma$ to be positively oriented with respect to the orientation of $\hat\Lambda$. This determines a choice of lifts: the only possible ambiguity would be the existence of cycles not meeting $\Lambda$, but this is ruled out by maximality.

 Notice that we have an identification of the $\pi_1$-orbits of plaques with the set $B$;  let us denote this $\pi_1$-invariant map $p:\Pcal(\Lambda)\to B$.
 Let $\tilde \gamma$ be a transverse $C^1$ curve from  plaques $ P$ and $Q$ in $\widetilde \Sigma$, and let $\gamma$ be the projection from corresponding points $p$ to $q$ in $B$.
 As in the previous paragraph, there is a unique lift $\widehat \gamma$ that is positively oriented transverse to $\widehat \Lambda$, \emph{i.e.}\ the lamination is oriented to the left at a point of intersection of $\widehat \gamma\cap \widehat\Lambda$.
Now given a closed  $1$-form $\hat\alpha$, $[\hat\alpha]\in H^1_{\odd}(\widehat \Sigma, \RBbb)$, define
 \begin{equation} \label{eqn:def-cocycle}
 \sigmabold_{\hat\alpha}( P,  Q) := 2 \int_{\hat\gamma}\hat\alpha\ .
 \end{equation}
 The factor of $2$ is added here for convenience (see Remark \ref{rem:period} below).

 We first 
note that $\sigmabold_{\hat\alpha}$ is well defined. First, it is independent of the choice of $\widetilde \gamma$; for a relative homotopy of $\widetilde\gamma$ induces one on $\gamma$, and therefore $\widehat\gamma$, and this does not affect the integral of the closed form $\widehat\alpha$. Second, it is independent of choice of representative $\widehat \alpha$. Any other choice can be written as $\widehat \alpha+ df$, for an odd $\RBbb$-valued function $f$, and since the endpoints of $\widehat \gamma$ lie on the fixed point set of $\sigma$, this contributes nothing to the integral. 

With this understood, we prove the following:

 \begin{prop} \label{prop:cocycle}
 The function $\sigmabold_{\hat\alpha}$ in \eqref{eqn:def-cocycle} defines a transverse cocycle depending only on the class of $\hat\alpha$.
 \end{prop}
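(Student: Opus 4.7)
The well-definedness of $\sigmabold_{\widehat\alpha}$ as a function on $\Pcal(\Lambda)\times\Pcal(\Lambda)$ — both its independence from the choice of transverse path $\widetilde\gamma$ and from the cohomology representative of $\widehat\alpha$ — has already been addressed in the paragraph preceding the proposition, so my plan is to verify the three cocycle axioms: equivariance, symmetry, and additivity.

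I would dispatch equivariance first: for any $g\in\pi_1$, the path $g\widetilde\gamma$ projects to the same path $\gamma$ on $\Sigma$, so the right-hand side of \eqref{eqn:def-cocycle} is manifestly unchanged. Additivity would come next: if $R\in\Pcal(\Lambda)$ separates $P$ from $Q$ (with $r=p(R)$), then $\gamma$ decomposes as $\gamma_{1}*\gamma_{2}$ running through $r$; since positivity of a crossing with $\widehat\Lambda$ is a purely local condition, the positively oriented lift $\widehat\gamma$ decomposes correspondingly as $\widehat\gamma_{1}*\widehat\gamma_{2}$, and additivity of the integral over $\widehat\gamma$ yields $\sigmabold_{\widehat\alpha}(P,Q)=\sigmabold_{\widehat\alpha}(P,R)+\sigmabold_{\widehat\alpha}(R,Q)$.

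The substance of the argument lies in the symmetry axiom. My key observation will be that the positively oriented lift of the reversed path $\bar\gamma$ (from $q$ to $p$) is \emph{not} the chain-reversal $\overline{\widehat\gamma}$, but rather $\overline{\sigma(\widehat\gamma)}$. To justify this, I would note that $\sigma$ is an orientation-preserving involution of $\widehat\Sigma$ while reversing the fixed orientation of $\widehat\Lambda$ (by construction of $\widehat\Sigma$ as the double cover trivializing the orientation monodromy of $\Lambda$), so $\sigma(\widehat\gamma)$ crosses $\widehat\Lambda$ with all negative intersections, and hence $\overline{\sigma(\widehat\gamma)}$ crosses with all positive intersections. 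Using oddness of $\widehat\alpha$, i.e.\ $\sigma^{\ast}\widehat\alpha=-\widehat\alpha$, the symmetry calculation then reads
\begin{equation*}
\sigmabold_{\widehat\alpha}(Q,P)=2\int_{\overline{\sigma(\widehat\gamma)}}\widehat\alpha=-2\int_{\widehat\gamma}\sigma^{\ast}\widehat\alpha=2\int_{\widehat\gamma}\widehat\alpha=\sigmabold_{\widehat\alpha}(P,Q).
\end{equation*}

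I expect the main obstacle to be the orientation bookkeeping in this symmetry step — in particular, correctly verifying that $\sigma$ reverses the chosen orientation of $\widehat\Lambda$, so that $\overline{\sigma(\widehat\gamma)}$ is the correct candidate for the positively oriented lift of $\bar\gamma$. Everything else reduces to homotopy invariance and additivity of integrals of closed $1$-forms.
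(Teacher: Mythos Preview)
Your proposal is correct and follows essentially the same approach as the paper: equivariance is immediate, additivity comes from decomposing $\gamma$ through the branch point $r=p(R)$ and using additivity of the integral, and symmetry hinges on the key observation $\widehat{\overline\gamma}=\overline{\sigma(\widehat\gamma)}$ together with oddness of $\widehat\alpha$. The paper is slightly more explicit about the homotopy needed in the additivity step (deforming $\widetilde\gamma$ so that it actually passes through the branch point associated to $R$), but this is implicit in your appeal to well-definedness, and your justification of the symmetry identity is if anything more detailed than the paper's.
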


 \begin{proof}
 Equivariance is clear, since the path $\tilde \gamma$ and $g\tilde\gamma$ define the same curve $\gamma$ on $X$. If the plaque $ R$ separates $ P$ and $ Q$, let  $\tilde \gamma: [a,b]\to {\mathbb H}$, $\tilde\gamma(a)= P$, $\tilde\gamma(b)= Q$.  Then there exist  $a<t_1<t_2<b$ such that 
 $\tilde \gamma(t_1), \tilde\gamma(t_2)\in \partial R$, and 
 $\tilde\gamma[a,t_1)\cap  R=\tilde\gamma(t_2,b]\cap  R=\emptyset$.  After a homotopy, we may assume $\tilde\gamma(t_1,t_2)\subset R$, and after a further homotopy we may assume there is $t_1<c<t_2$ such that $\tilde\gamma(c)$ is the point in $B$ associated to $ R$. It follows that $\tilde\gamma$
  can be written as a sum of  quasitransverse paths from $ P$ to $ R$, and $ R$ to $ Q$. 
 The additivity then follows from the additivity of the integral in \eqref{eqn:def-cocycle}.  
 It remains to prove symmetry. Let $\overline \gamma$ denote the curve $\gamma$ with the reverse orientation.
 Then we  note the following.
 \begin{equation} \label{eqn:orientation}
 \widehat{\overline\gamma}=\overline{\sigma(\hat\gamma)}
 \end{equation}
 Indeed, 
 $\sigma(\hat\gamma)$ is negatively oriented with respect to $\widehat\Lambda$, and so
  both sides of \eqref{eqn:orientation} are positively oriented lifts of $\overline\gamma$.
 Using \eqref{eqn:orientation}, we have
 \begin{equation} \label{eqn:reverse}
 \frac{1}{2}\sigmabold_{\hat\alpha}( Q,  P)=\int_{\widehat{\overline\gamma}}\hat\alpha =\int_{\overline{\sigma(\hat\gamma)}}\hat\alpha 
 =-\int_{\sigma(\hat\gamma)}\hat\alpha=-\int_{\hat\gamma}\sigma^{\ast}\hat\alpha 
 =\int_{\hat\gamma}\hat\alpha=\frac{1}{2}\sigmabold_{\hat\alpha}( P,  Q)\ .
\end{equation}
 This completes the proof.
 \end{proof}
 \begin{remlabel} \label{rem:period} \upshape
 Note that $\sigmabold_{\hat\alpha}( P,  Q)$ is equal to a period of the differential $\hat\alpha$.  Indeed, from the discussion above, the curve $\hat\gamma_{ P Q}=\hat\gamma\cup \widehat{\overline\gamma}$ is a closed oriented curve on $\widehat X_q$, and by \eqref{eqn:reverse},
 $$
 2\sigmabold_{\hat\alpha}( P,  Q)=\sigmabold_{\hat\alpha}( P,  Q)+\sigmabold_{\hat\alpha}( Q,  P)
 =2\int_{\hat\gamma}\hat\alpha+2\int_{\widehat{\overline\gamma}}\hat\alpha=2\int_{\hat\gamma_{ P Q}}\hat\alpha\ .
 $$
  Conversely, by Proposition \ref{prop:homology}, every element of $H^{\odd}_1(\widehat \Sigma, {\mathbb Z})$ is represented by a linear combination of
   oriented curves of the form $\hat\gamma_{ P Q}$, for some lifts $ P$, $ Q$ of some points $p,q\in B$.  It follows that the periods of $\hat\alpha$, and hence $[\hat\alpha]$ itself,  are determined by $\sigmabold_{\hat\alpha}$.
 \end{remlabel}

 We now return to the case where $\Sigma$ has a Riemann surface structure $X$ and the lamination comes from a holomorphic quadratic differential.
 \begin{exlabel} \label{ex:SW-cocycle} \upshape
Let $q\in \QD^\ast(X)$, and let $\Lambda$ be a maximalization of $\Lambda_q^h$ in the sense of Lemma \ref{lem:maximalization}. 
The Seiberg-Witten differential $\lambda_{\SW}$ from \eqref{eqn:SW-differential} is a holomorphic Prym differential on $\widehat X_q$.  We can orient the lift $\widehat\Lambda_q^h$ by the condition $\real\lambda_{\SW}>0$. 
The harmonic Prym differential $\real\lambda_{\SW}$ defines a canonical transverse cocycle $\sigmabold^{can}_{q}$. 
 By the previous remark, $\sigmabold^{can}_{q}$ is determined by the real parts of the periods of \eqref{eqn:central_charge}.
 \end{exlabel}
 
By Proposition \ref{prop:cocycle}, we have a map 
\begin{equation} \label{eqn:beta1}
T: H^1_{\odd}(\widehat X_q,{\mathbb R})\longrightarrow {\mathcal H}(\Lambda, {\mathbb R}) \ ,\
[\hat\alpha]\mapsto \sigmabold_{\hat\alpha}\ .
\end{equation}
We can do a similar construction for bending cocycles. If $[\hat\eta]\in H^1_{\odd}(\widehat X_q, i\RBbb)$, set 
\begin{equation} \label{eqn:def-bending-cocycle}
\betabold_{\hat\eta}( P,  Q) := -2i\int_{\hat\gamma}\hat\eta\quad\mod 2\pi\ .
\end{equation}
By Remark \ref{rem:period}, $\betabold_{\hat\eta}$ only depends on the class of $[\hat\eta]$ modulo the lattice
\begin{equation}\label{eqn:Lambda}
L=L(\widehat X_q):=\Hom\left(H_1^{\odd}(\widehat X_q,\ZBbb), 2\pi i\ZBbb\right)\ .
\end{equation}
Hence, we have a map
\begin{equation} \label{eqn:beta2}
B: H^1_{\odd}(\widehat X_q,i{\mathbb R})/L\longrightarrow {\mathcal H}^o(\Lambda, \RBbb/2\pi\ZBbb) \ ,\
[\hat\eta]\mapsto \betabold_{\hat\eta}\ .
\end{equation}

Clearly, $T$ is linear. By Remark \ref{rem:period}, it is also injective. For if $\sigmabold_{\hat \eta}\equiv 0$, then the periods of $\hat\eta$ must all vanish; hence, $[\hat\eta]=0$.   By \cite[Prop.\ 1]{Bonahon:shearing}, the dimensions of the two sides of \eqref{eqn:beta1} agree. 
In the case of the map $B$, notice that the lattices on either side are isomorphic under the map $T$. This proves the following result.

\begin{cor} \label{cor:prym}
The maps $T$ and $B$ in \eqref{eqn:beta1}, \eqref{eqn:beta2} are  isomorphisms.
\end{cor}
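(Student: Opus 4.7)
The plan is to deduce the result for $B$ from the result for $T$, since the lattice $L$ is built precisely so that the natural scaling identifies the source of $B$ with a quotient of the source of $T$. So the main task is to show that $T$ is a linear isomorphism between finite-dimensional real vector spaces, which I would do by checking injectivity and matching dimensions.

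First I would note that linearity of $T$ is immediate from the linearity of the defining integral in \eqref{eqn:def-cocycle}. For injectivity, suppose $T([\hat\alpha])=\sigmabold_{\hat\alpha}\equiv 0$. By the discussion of Remark \ref{rem:period}, the value $2\sigmabold_{\hat\alpha}(P,Q)$ equals the period of $\hat\alpha$ over the closed cycle $\hat\gamma_{PQ}$, and moreover every class in $H_1^{\odd}(\widehat X_q,\ZBbb)$ is a $\ZBbb$-linear combination of such cycles (this is exactly the content of the surjectivity of $\phi$ onto the odd homology in Proposition \ref{prop:homology}). Hence all odd periods of $\hat\alpha$ vanish, and since the harmonic representative is determined by its periods, $[\hat\alpha]=0$.

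Next I would match dimensions. On the cocycle side, the dimension of $\mathcal{H}(\Lambda,\RBbb)$ is $6g-6$ (\cite[Thm.\ 11]{Bonahon:97} and \textsection{\ref{subsec: transverse cocycles}}). On the cohomology side, the Hurwitz computation recalled in \textsection{\ref{sec:branched_homology}} gives $\widehat g=4g-3$ and, since the branch locus has cardinality $4g-4$, the odd part has dimension $2g+\#B-2 = 6g-6$. Thus $T$ is an injective linear map between real vector spaces of equal finite dimension, hence an isomorphism.

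Finally, I would deduce that $B$ is an isomorphism. The map $\hat\eta\mapsto -i\hat\eta$ identifies $H^1_{\odd}(\widehat X_q,i\RBbb)$ with $H^1_{\odd}(\widehat X_q,\RBbb)$, and under this identification one checks directly from \eqref{eqn:def-cocycle} and \eqref{eqn:def-bending-cocycle} that $\betabold_{\hat\eta}$ is the reduction of $\sigmabold_{-i\hat\eta}$ modulo $2\pi\ZBbb$. Under $T$, the lattice $L$ of \eqref{eqn:Lambda} corresponds exactly to cocycles whose values on every pair $(P,Q)$ lie in $2\pi\ZBbb$, since (again by Remark \ref{rem:period}) these values are the odd integer periods. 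Passing to the quotient therefore gives a bijection onto the set of transverse $(\RBbb/2\pi\ZBbb)$-valued cocycles in the image, and because this image is connected and contains the zero cocycle, it equals the identity component $\mathcal{H}^o(\Lambda,S^1)$. The main technical point to verify carefully will be this last identification of the image with $\mathcal{H}^o(\Lambda,S^1)$ rather than all of $\mathcal{H}(\Lambda,S^1)$; but that follows from continuity of $B$ along the straight-line homotopy $t\hat\eta$ back to the zero cocycle.
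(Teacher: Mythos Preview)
Your proposal is correct and follows essentially the same approach as the paper: linearity of $T$, injectivity via Remark \ref{rem:period}, a dimension count ($6g-6$ on both sides), and then passing to the quotient by the lattice for $B$. You supply more detail than the paper's terse argument---in particular the explicit Hurwitz computation on the odd-cohomology side and the connectedness argument for why the image of $B$ lands in $\mathcal{H}^o(\Lambda,S^1)$---but the structure is the same.
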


 We observe the following: 
\begin{lem} \label{lem:lattices}
The inclusion induces an exact sequence:
$$
0\lra H^1_{\odd}(\widehat X_q, 2\pi i\ZBbb)\lra L\lra J_2(X)\lra 0\ .
$$
\end{lem}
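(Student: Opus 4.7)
The plan is to prove this by dualizing an integral short exact sequence of homology and identifying the resulting finite cokernel with $J_{2}(X)$. First I will combine Proposition~\ref{prop:homology} with the surjectivity of $p_{\ast}$ to obtain the short exact sequence
\[
0\to H_{1}^{\odd}(\widehat X_{q},\ZBbb)\to H_{1}(\widehat X_{q},\ZBbb)\xrightarrow{p_{\ast}}H_{1}(X,\ZBbb)\to 0,
\]
the exactness in the middle being the statement that $\ker p_{\ast}$ equals the image of $\phi$, which in turn equals $H_{1}^{\odd}(\widehat X_{q},\ZBbb)$, as recorded in Proposition~\ref{prop:homology}. Since $H_{1}(\widehat X_{q},\ZBbb)$ is torsion-free, the $\sigma$-eigensubgroups $H_{1}^{\ev}$ and $H_{1}^{\odd}$ are saturated (so their quotients are free) and satisfy $H_{1}^{\ev}\cap H_{1}^{\odd}=0$. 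By universal coefficients $H^{1}(\widehat X_{q},2\pi i\ZBbb)=\Hom(H_{1}(\widehat X_{q},\ZBbb),2\pi i\ZBbb)$, and an integral class $\alpha$ is odd if and only if $\alpha|_{H_{1}^{\ev}}=0$, so
\[
H^{1}_{\odd}(\widehat X_{q},2\pi i\ZBbb)\cong\Hom\bigl(H_{1}(\widehat X_{q})/H_{1}^{\ev},\,2\pi i\ZBbb\bigr),
\]
under which identification the map of the lemma becomes restriction along the inclusion $H_{1}^{\odd}\hookrightarrow H_{1}(\widehat X_{q})/H_{1}^{\ev}$.

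Next I will apply $\Hom(\,\cdot\,,2\pi i\ZBbb)$ to the short exact sequence
\[
0\to H_{1}^{\odd}\to H_{1}(\widehat X_{q})/H_{1}^{\ev}\to \bar H\to 0,\qquad \bar H:=H_{1}(\widehat X_{q})/(H_{1}^{\ev}+H_{1}^{\odd}),
\]
noting that $\bar H$ is a finite $2$-torsion group since $2c=(c+\sigma c)+(c-\sigma c)$ already lies in $H_{1}^{\ev}+H_{1}^{\odd}$. Standard Ext computations then yield: $\Hom(\bar H,2\pi i\ZBbb)=0$ because $\bar H$ is finite and $2\pi i\ZBbb$ is torsion-free; $\Ext^{1}(H_{1}(\widehat X_{q})/H_{1}^{\ev},2\pi i\ZBbb)=0$ because the quotient is free; and $\Ext^{1}(\bar H,2\pi i\ZBbb)\cong\bar H$ for any finite abelian group. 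This produces the short exact sequence
\[
0\to H^{1}_{\odd}(\widehat X_{q},2\pi i\ZBbb)\to L\to\bar H\to 0,
\]
reducing the lemma to the identification $\bar H\cong J_{2}(X)$.

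The main obstacle is this final identification, which I will handle by passing through $H_{1}(X,\ZBbb/2\ZBbb)$. Since $p_{\ast}$ kills $H_{1}^{\odd}$, it induces an isomorphism $H_{1}(\widehat X_{q})/H_{1}^{\odd}\cong H_{1}(X)$, so taking the further quotient by the image of $H_{1}^{\ev}$ gives $\bar H\cong H_{1}(X,\ZBbb)/p_{\ast}(H_{1}^{\ev})$. The key computation is the equality $p_{\ast}(H_{1}^{\ev})=2\,H_{1}(X,\ZBbb)$: the inclusion $\supseteq$ is immediate from $p_{\ast}(\tilde c+\sigma\tilde c)=2\,p_{\ast}\tilde c$ for an arbitrary lift $\tilde c\in H_{1}(\widehat X_{q})$, while $\subseteq$ follows from the explicit basis in the proof of Proposition~\ref{prop:homology}: among the generators $\{\hat a_{i},\hat b_{j},\hat c_{k},\sigma(\hat c_{k})\}$ of $H_{1}(\widehat X_{q},\ZBbb)$, the action of $\sigma$ negates the $\hat a_{i}$ and $\hat b_{j}$ and swaps $\hat c_{k}\leftrightarrow\sigma(\hat c_{k})$, so $H_{1}^{\ev}$ is spanned by $\{\hat c_{k}+\sigma(\hat c_{k})\}_{k=1}^{2g}$, each of which maps under $p_{\ast}$ to $2c_{k}$. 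This gives $\bar H\cong H_{1}(X,\ZBbb/2\ZBbb)$, identified with $J_{2}(X)$ via Poincar\'e duality with $\ZBbb/2$-coefficients combined with the standard isomorphism $J_{2}(X)=\tfrac{1}{2}H^{1}(X,\ZBbb)/H^{1}(X,\ZBbb)\cong H^{1}(X,\ZBbb/2\ZBbb)$.
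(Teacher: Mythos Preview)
Your proof is correct. It takes a genuinely different route from the paper's argument, so let me compare.

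The paper works directly with Poincar\'e duals of the explicit basis $\{\hat a_i,\hat b_j,\hat c_k,\sigma(\hat c_k)\}$ from Proposition~\ref{prop:homology}: writing $\hat\delta_k$ for the dual of $\hat c_k$, it observes that the classes $\delta_k:=\pi i(\hat\delta_k-\sigma^\ast\hat\delta_k)$ lie in $L$ but are only half-integral, and that $L$ is generated over $H^1_{\odd}(\widehat X_q,2\pi i\ZBbb)$ by these $2g$ elements of order two. This immediately exhibits the quotient as $(\ZBbb/2)^{2g}\cong J_2(X)$.

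You instead dualize the homology picture: identifying $H^1_{\odd}(\widehat X_q,2\pi i\ZBbb)\cong\Hom(H_1/H_1^{\ev},2\pi i\ZBbb)$ and $L=\Hom(H_1^{\odd},2\pi i\ZBbb)$, you apply $\Hom(-,2\pi i\ZBbb)$ to the short exact sequence $0\to H_1^{\odd}\to H_1/H_1^{\ev}\to\bar H\to 0$ and read off the cokernel as $\Ext^1(\bar H,2\pi i\ZBbb)\cong\bar H$. The identification $\bar H\cong H_1(X,\ZBbb/2)\cong J_2(X)$ then comes from $p_\ast(H_1^{\ev})=2H_1(X)$, for which you still invoke the explicit basis (for the inclusion $\subseteq$).

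The paper's argument is shorter and hands-on; yours is more structural, makes clear that the map $L\to J_2(X)$ is natural (induced by $p_\ast$ rather than by a choice of generators), and isolates exactly where the explicit basis is needed. Either way the computation ultimately rests on Proposition~\ref{prop:homology}.
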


\begin{proof}
This is most easily seen in terms of the explicit generators in \textsection{\ref{sec:branched_homology}}. Let $\hat\alpha_j$, $\hat\beta_j$, and $\hat\delta_j$ be Poincar\'e duals of $\hat a_j$, $\hat b_j$, and $\hat c_j$, respectively. Then $\delta_j:=\pi i(\hat\delta_j-\sigma^\ast\hat\delta_j)\in \Lambda$, $j=1,\ldots, 2g$, is not $2\pi i$-integral, but  $2\delta_j\in H^1_{\odd}(\widehat X_q, 2\pi i\ZBbb)$. It is easily seen that $L$ is generated by  $H^1_{\odd}(\widehat X_q, 2\pi i\ZBbb)$ and all such elements $\delta_j$.   This proves the result.
\end{proof}

\begin{proof}[Proof of Theorem \ref{thm:prym-cocycle}]
Immediate from Corollary \ref{cor:prym},  Lemma \ref{lem:lattices}, and eq.\ \eqref{eqn:prym-prym}.
\end{proof}

\begin{definition} \label{def:gamma-SW}
The complex cocycle $\Gamma_{\SW}\in \Hcal^o(\Lambda, \RBbb+i\RBbb/2\pi\ZBbb)$ is the one determined as in Corollary \ref{cor:prym} by the periods of the real and imaginary parts of $\lambda_{\SW}$.
\end{definition}

\subsection{Approximation by pleated surfaces}

  In this section, we define what it means for a family of harmonic maps and a family of pleated surfaces to be asymptotic, and we relate the corresponding notions of bending.
  The intuition behind the definition below may be summarized as follows: the image of a pleated surface $f:\widetilde S\to \HBbb^3$, for a representation outside a large compact set, consists of a configuration of plaques sheared far apart from one another and related by long leaves of the lamination. At the same time, the image of a harmonic map in a neighborhood of the zeroes of a quadratic differential is nearly planar, whereas leaves of the horizontal foliation are nearly geodesic. The approximation requires these planes and approximate geodesics to be close to the plaques and leaves of the lamination of the pleated surface.

We furthermore assume that we have chosen maximal laminations 
$\Lambda_n$ (\emph{resp}.\ $\Lambda$) containing $\Lambda_{q_n}^h$
 (\emph{resp}.\ $\Lambda_q^h$), and that $\Lambda_n\to \Lambda$ 
in the Hausdorff sense. 
\medskip\\
If there exists a pleated surface 
$\sfP_n = (S_{n},f_{n},\Lambda_{n},\rho_{n})$, then 
by the discussion in \textsection{\ref{sec:laminations}} there is a bijective correspondence between the zeroes $\widetilde Z(q)$ and plaques of $\widetilde \Lambda_n$,
 and each bi-infinite leaf in $\Fcal_{q_n}^h$ determines a
 leaf in $\Lambda_{q_n}^h$. The choice of maximalization
 $\Lambda_n$ is determined by a finite choice
 of ``additional'' leaves (see \textsection{\ref{sec:maximalization}}). We also recall from \textsection \ref{subsect:highenergyharmonicnearzeroes}  the definition of the hexagonal sets $\Qcal_n$ and $\widetilde \Qcal_n$. With this understood, we make the following

\begin{definition} \label{def:asymptotic-surfaces}\upshape
A sequence of pleated surfaces 
$\sfP_n = (S_{n},f_{n},\Lambda_{n},\rho_{n})$
is {\bf asymptotic to $u_n$} if for any $\varepsilon>0$ there is $N$ so that if $n\geq N$ the following holds.  
\begin{compactenum}[(i)]
\item The image by $u_n$ of the horizontal leaves in $\widetilde X\setminus \widetilde \Qcal_n$ are $C^1_\varepsilon$-close to the corresponding leaves in $\widetilde \Lambda_n$;
\item if $\widetilde p\in \widetilde Z(q_n)$, then
$u_n(\widetilde p)$ is $\varepsilon$-close to the image by $f_{n}$ 
of the corresponding plaque $P$ in $\widetilde S_n$. Moreover,  the
parallel translation of the tangent  plane to the image of $u_n$
at 
 $ u_n(\widetilde p)$ along the geodesic to $f_n(P)$
 makes an angle less than $\varepsilon$ with the totally
geodesic subspace containing  $f_n(P)$.
\end{compactenum}
\end{definition}

With this definition we are  in a position to compare the notion of bending for sequences of harmonic maps and of pleated surfaces that are asymptotic to each other. 

\begin{prop}
 \label{prop: bending for asymptotic maps and pleated surfaces} 
Let $\sfP_n = (S_{n},f_{n},\Lambda_{n},\rho_{n})$ be a sequence of 
pleated surfaces that is asymptotic to the sequence 
$u_{n}:\widetilde X\to \HBbb^3$ of $\rho_n$-equivariant harmonic maps 
in the sense of Definition \ref{def:asymptotic-surfaces}. Denote 
by $\betabold_n\in \Hcal^o(\Lambda_n,S^1)$ the bending 
cocycles of $\sfP_n$. 
Fix $\delta>0$.
Then for any  $P,Q\in\Pcal(\Lambda)$, there are 
plaques $\{P_i\}_{i=0}^N$ between $P$ and $Q$, $P_0=P$, $P_N=Q$, 
with centers $\widetilde p_i$, such that
$$
\lim_{n\to \infty} \Bigl(\betabold_n(r^\Lambda_{\Lambda_n}(P),
r^\Lambda_{\Lambda_n}(Q))-\sum_{i=1}^N\Theta_{u_n}
(\widetilde p_{i-1},\widetilde p_i)\Bigr)\leq \delta\ .
$$

\begin{proof}
We shall use the set up of Lemma 
\ref{lem:finite-approximation-bending}. Note that by 
the convergence $\Lambda_n\to \Lambda$, the approximation of the 
bending cocycle by sums over finitely many plaques is uniform. By 
a further subdivision, we may assume that between any 
two centers $\widetilde p_{i-1}^{(n)}\to \widetilde p_{i-1}$ and 
$\widetilde p_i^{(n)}\to \widetilde p_i$ there are quasitransverse 
 arcs (or modified saddle connections) 
$k_i^{(n)}$ with small vertical ends that meet the zeroes of the 
Hopf differentials $q_n$ only at $\widetilde p_{i-1}^{(n)}$ 
and $\widetilde p_i^{(n)}$. Then the images by $u_n$ of the horizontal 
parts of $k_i^{(n)}$ are
 $C^1_\varepsilon$-close, 
and therefore by the asymptotic 
assumption the same is true for the 
leaves of $\Lambda_n$ along $k_i^{(n)}$.
   Thus, the hypotheses of Lemma
 \ref{lem:finite-approximation-bending} are satisfied 
for sufficiently large $n$, and we have 
\begin{equation} \label{eqn:pleated-surface-estimate}
\Bigl|\betabold_n(r^\Lambda_{\Lambda_n}(P),r^\Lambda_{\Lambda_n}(Q))-\sum_{i=1}^N\Theta_{f_n}(\widetilde p_{i-1}^{(n)},\widetilde p_i^{(n)})\Bigr|<\delta/2\ ,
\end{equation}
for large enough $n$. 
On the other hand, 
an argument analogous to the one used in the proof of that lemma shows that
\begin{equation} \label{eqn:bending-estimate}
\left| \Theta_{f_n}(\widetilde p_{i-1}^{(n)}, \widetilde p_i^{(n)})-
\Theta_{u_n}(\widetilde p_{i-1}^{(n)}, \widetilde p_i^{(n)}) \right| \leq \delta/2N\ ,
\end{equation}
for large $n$.
Indeed,  
suppose that not both 
$\Theta_{f_n}(\widetilde p_{i-1}^{(n)}, \widetilde p_i^{(n)})$
and 
$\Theta_{u_n}(\widetilde p_{i-1}^{(n)},
\widetilde p_i^{(n)})$
 are within $\delta/4N$ of $\pi$, and neither
are  they both within $\delta/4N$ of $0$.
Then the image by  $u_n$  of the horizontal parts of $k_i^{(n)}$
is arbitrarily close to the crease of the tent formed by the
totally geodesic planes associated to the plaques $P_{i-1}$ and
$P_i$. By Definition
 \ref{def:asymptotic-surfaces} (ii), these are also
close to the planes tangent to the image of $u_n$ at  $p_{i-1}$
and $p_i$. 
The angle
$\Theta_{f_n}(\widetilde p_{i-1}^{(n)}, \widetilde p_i^{(n)})$
  can be computed by parallel
translation of the normal vectors to the plaques, as discussed
after Definition \ref{def:tent}. These normal vectors are close
to the normal vectors to the planes defined by $u_n$. By Lemma
\ref{lem:v-estimate}, the parallel translations along
$u_n(k_i^{(n)})$  are also close to the parallel translations
along the crease.
Note that in the statement of that lemma, the term
$L\varepsilon$ is small, since $\varepsilon$ is exponentially
small compared to the  length of $u_n(k_i^{(n)})$ 
by Proposition \ref{prop:pullbackmetric}.

Combining \eqref{eqn:bending-estimate} 
with \eqref{eqn:pleated-surface-estimate}, 
$$
\Bigl|\betabold_n(r^\Lambda_{\Lambda_n}(P),r^\Lambda_{\Lambda_n}(Q))-\sum_{i=1}^N\Theta_{u_n}(\widetilde p_{i-1}^{(n)},\widetilde p_i^{(n)})\Bigr|<\delta\ .
$$
Since this holds for fixed $N$ and $\delta$,  and any sufficiently
large $n$, this  completes the proof.
\end{proof}
\end{prop}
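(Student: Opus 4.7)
The plan is to reduce the proposition to two comparisons: first, comparing the bending cocycle of the pleated surface $\sfP_n$ to a finite sum of geometric bending angles $\Theta_{f_n}$ of the map $f_n$ itself; second, comparing these geometric bending angles of $f_n$ to the corresponding angles $\Theta_{u_n}$ of the harmonic map. Both comparisons are enabled by the asymptotic assumption of Definition \ref{def:asymptotic-surfaces}, which ensures that the leaves of $\Lambda_n$ along any transverse path are $C^1_\varepsilon$-close to the images of the horizontal leaves of $\Fcal_{q_n}^h$ under $u_n$, and that the tangent planes of $u_n$ at the preimages of zeroes lie near the totally geodesic extensions of the corresponding plaques of $f_n$.

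First I would fix $\delta > 0$, choose plaques $P_0 = P, P_1, \ldots, P_N = Q$ separating $P$ and $Q$ with centers $\widetilde p_i$, and select arcs $k_i^{(n)}$ between consecutive centers $\widetilde p_{i-1}^{(n)}, \widetilde p_i^{(n)}$ (converging to $\widetilde p_{i-1}, \widetilde p_i$) that are quasitransverse to $\Fcal_{q_n}^h$ with small vertical ends, meeting $Z(q_n)$ only at their endpoints, and crossing the images of the additional leaves from the maximalization via modified saddle connections when necessary. By the Hausdorff convergence $\Lambda_n \to \Lambda$, the plaques $r^\Lambda_{\Lambda_n}(P_i)$ are well defined for large $n$, and the centers persist under the bijection. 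Applying Lemma \ref{lem:finite-approximation-bending} to the pleated surfaces $\sfP_n$ along the path obtained by concatenating the $k_i^{(n)}$, where the $C^1_\varepsilon$-closeness hypothesis is furnished by the asymptotic property together with the approximate geodesicity of $u_n$ on horizontal leaves (Proposition \ref{prop:realizing-3d}), yields for $N$ chosen large enough a constant,
\[
\Bigl| \betabold_n(r^\Lambda_{\Lambda_n}(P), r^\Lambda_{\Lambda_n}(Q)) - \sum_{i=1}^N \Theta_{f_n}(\widetilde p_{i-1}^{(n)}, \widetilde p_i^{(n)}) \Bigr| < \delta/2,
\]
for all sufficiently large $n$.

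Next I would show that $|\Theta_{f_n}(\widetilde p_{i-1}^{(n)}, \widetilde p_i^{(n)}) - \Theta_{u_n}(\widetilde p_{i-1}^{(n)}, \widetilde p_i^{(n)})| \leq \delta/2N$ for each $i$ and large $n$. This uses the same case analysis as in the proof of Lemma \ref{lem:finite-approximation-bending}: if neither dihedral angle is close to $0$ nor to $\pi$, then the creases of the two tents (one built from tangent planes to $f_n(P_{i-1}^{(n)})$ and $f_n(P_i^{(n)})$, the other from the tangent planes $u_n$ produces at the zeroes) are both arbitrarily close to the horizontal portion of $u_n(k_i^{(n)})$, by Definition \ref{def:asymptotic-surfaces}(ii); the bending angle along either crease is computed by parallel translation of normal vectors, and Lemma \ref{lem:v-estimate} bounds the discrepancy, noting that $L\varepsilon$ remains small because $\varepsilon$ decays exponentially in $t_n$ (Proposition \ref{prop:pullbackmetric}) while the length of $u_n(k_i^{(n)})$ grows only polynomially. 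The degenerate cases where one of the angles is close to $0$ or $\pi$ are handled by the continuity argument given in Step 5 of the proof of Lemma \ref{lem:finite-approximation-bending}: the tangent planes of $u_n$ and of $f_n$ must have compatible orientations in the same regime, forcing both angles to lie in the same small neighborhood of $0$ or $\pi$.

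Summing the $N$ estimates and combining with the first inequality produces the required bound by $\delta$. The main obstacle in executing this plan is ensuring that the constants from Lemma \ref{lem:finite-approximation-bending} (which depend on the lamination and on an upper bound $M$ for the number of subarcs) can be chosen uniformly as $n \to \infty$, despite the fact that the laminations $\Lambda_n$ are varying; this requires using the convergence $\Lambda_n \to \Lambda$ to transfer a fixed train-track model and divergence-radius estimates from $\Lambda$ to all $\Lambda_n$ with $n \geq N$, so that the tail estimate \eqref{eqn:estimate-tail} truncates uniformly.
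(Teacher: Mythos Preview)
Your proposal is correct and follows essentially the same approach as the paper: the two-step comparison (bending cocycle to $\sum\Theta_{f_n}$ via Lemma~\ref{lem:finite-approximation-bending}, then $\Theta_{f_n}$ to $\Theta_{u_n}$ via the same case analysis and Lemma~\ref{lem:v-estimate} with the $L\varepsilon$ observation) matches the paper's argument, and the uniformity concern you flag is exactly what the paper handles by invoking the convergence $\Lambda_n\to\Lambda$.
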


\subsubsection{Bending cocycles and periods}\label{sec: two bendings}

We now combine the considerations above with the results of \textsection{\ref{sec:bending}}. 

\begin{thm} \label{thm:bending-periods}
 Let $\sfP_n = (S_{n},f_{n},\Lambda_{n},\rho_{n})$ be a sequence of pleated surfaces with bending cocycles $\betabold_n$. 
 We assume the following two conditions:
 \begin{compactenum}[(i)]
 	\item The sequence $\sfP_n$
 is asymptotic to the sequence $u_{n}:\widetilde X\to \HBbb^3$ of $\rho_n$-equivariant harmonic maps in the sense of Definition \ref{def:asymptotic-surfaces};
\item the sequence $(A_n,\Psi_n)$ of Higgs pairs for $\rho_n$
converges to a limiting configuration with associated Prym differential $\widehat \eta$.
\end{compactenum}
Let $\betabold_{\widehat \eta}$ be defined as in \eqref{eqn:def-bending-cocycle}. Then
in $\Hcal^o(\Lambda, S^1)$: 
$\displaystyle
\lim_{n\to \infty}\betabold_n=\betabold_{\widehat \eta}
$ . 
\end{thm}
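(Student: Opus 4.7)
The plan is to combine three ingredients developed in the paper: the comparison of the pleated-surface bending cocycle with a finite sum of geometric bending angles of the harmonic maps (Proposition~\ref{prop: bending for asymptotic maps and pleated surfaces}), the asymptotic equivalence between the geometric bending of a harmonic map and the connection-theoretic bending of the corresponding Higgs pair along a quasi-transverse path (Theorem~\ref{thm:asymptotic-bending}), and the identification of the limit of the latter with a period of a Prym differential (Proposition~\ref{prop:LimitBendingOfPairs}). Convergence in $\Hcal^o(\Lambda,S^1)$ is tested on a finite set $\Pcal'\subset\Pcal(\Lambda)\times\Pcal(\Lambda)$ by Proposition~\ref{prop:cocycle-properties}(i), so it suffices to show $\betabold_n(r^{\Lambda}_{\Lambda_n}(P),r^{\Lambda}_{\Lambda_n}(Q))\to\betabold_{\widehat\eta}(P,Q)$ for every fixed pair $P,Q\in\Pcal(\Lambda)$.

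Fix such a pair and $\delta>0$. First I would apply Proposition~\ref{prop: bending for asymptotic maps and pleated surfaces} to produce intermediate plaques $P=P_0,P_1,\ldots,P_N=Q$ with centers $\widetilde p_i$, so that
\[
\Bigl|\betabold_n(r^{\Lambda}_{\Lambda_n}(P),r^{\Lambda}_{\Lambda_n}(Q))-\sum_{i=1}^{N}\Theta_{u_n}(\widetilde p_{i-1},\widetilde p_i)\Bigr|\le \delta
\]
for all $n$ large. By further subdividing (if needed) I may assume, for each $i$, the existence of a quasi-transverse path or modified saddle connection $k_i$ from the projection of $\widetilde p_{i-1}$ to that of $\widetilde p_i$, meeting $Z(q)$ only at its endpoints, with small vertical ends, and lifting to a path $\widetilde k_i$ in $\widetilde X$ joining $\widetilde p_{i-1}$ and $\widetilde p_i$. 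Then Theorem~\ref{thm:asymptotic-bending} applied to each $k_i$ gives, for $n$ sufficiently large,
\[
\bigl|\Theta_{u_n}(\widetilde p_{i-1},\widetilde p_i)-\Theta_{k_i}(A_n,\Psi_n)\bigr|<\delta/N,
\]
so that after summing the error across the $N$ pieces is controlled by a uniform multiple of $\delta$.

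Next I would invoke Proposition~\ref{prop:LimitBendingOfPairs}, which in view of hypothesis~(ii) gives
\[
\lim_{n\to\infty}\Theta_{k_i}(A_n,\Psi_n)=-2i\int_{\hat k_i}\widehat\eta \mod 2\pi\ZBbb,
\]
where $\hat k_i$ is the canonical lift to $\widehat X_q$ specified in \textsection\ref{subsect:pathanditslift} (with the corresponding convention for modified saddle connections). Summing over $i$, the concatenation $\hat k_1\cup\cdots\cup\hat k_N$ is a lift of a transverse path from $P$ to $Q$ with the correct orientation relative to $\widehat\Lambda$, so by the definition~\eqref{eqn:def-bending-cocycle} of $\betabold_{\widehat\eta}$,
\[
\lim_{n\to\infty}\sum_{i=1}^{N}\Theta_{k_i}(A_n,\Psi_n)\equiv\betabold_{\widehat\eta}(P,Q)\pmod{2\pi}.
\]
Combining the three estimates, $\limsup_{n}|\betabold_n(r^{\Lambda}_{\Lambda_n}(P),r^{\Lambda}_{\Lambda_n}(Q))-\betabold_{\widehat\eta}(P,Q)|\le C\delta$ for a universal constant $C$, and since $\delta>0$ was arbitrary the desired convergence follows.

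The main obstacle is verifying that the lifts $\hat k_i$ concatenate to a lift of the full transverse arc from $P$ to $Q$ with the orientation prescribed in \textsection\ref{sec:periods}, so that the sum of the periods equals $\betabold_{\widehat\eta}(P,Q)$ as defined; this is where the conventions regarding the orientation of $\widehat\Lambda$ (chosen so that $\real\lambda_{\SW}>0$) and the choice of maximalization must be carefully tracked, especially for those $k_i$ crossing saddle connections, which require the modified-saddle-connection version of Proposition~\ref{prop:LimitBendingOfPairs}. Apart from this bookkeeping, all the analytic estimates are uniform in the index $n$ thanks to the convergence $q_n\to q$ and $\Lambda_n\to\Lambda$ in the Hausdorff sense.
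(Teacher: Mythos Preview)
Your proposal is correct and follows essentially the same three-step strategy as the paper: approximate $\betabold_n(P,Q)$ by a finite sum of geometric bendings $\Theta_{u_n}$ via Proposition~\ref{prop: bending for asymptotic maps and pleated surfaces}, replace these by the connection-theoretic bendings $\Theta_{k_i}(A_n,\Psi_n)$ via Theorem~\ref{thm:asymptotic-bending}, and identify the limit with a Prym period via Proposition~\ref{prop:LimitBendingOfPairs}. The only cosmetic difference is that the paper invokes additivity of $\Theta_k(A_n,\Psi_n)$ to collapse the sum into a single bending along the concatenated path $k$ before passing to the limit, whereas you take the limit on each piece and then sum the resulting integrals $\int_{\hat k_i}\widehat\eta$; your version makes the additivity step more transparent, and your explicit remark about tracking the orientation convention for the lifts $\hat k_i$ (so that their concatenation computes $\betabold_{\widehat\eta}(P,Q)$) is a point the paper leaves implicit.
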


\begin{proof}
Let $\betabold$ be any subsequential limit of $\betabold_n$. By Proposition \ref{prop: bending for asymptotic maps and pleated surfaces} it suffices to estimate the geometric bending $\Theta_{u_n}(\widetilde p_{i-1}, \widetilde p_i)$. 
Using the assumption in the proof of that result, we have a quasitransverse path $k_i$ from $p_{i-1}$ to $p_i$ that intersects the zeroes of $q$ only at the endpoints. 
By Theorem \ref{thm:asymptotic-bending}, it follows that 
$\betabold_n(P,Q)$ is approximated by the sum of $\Theta_{k_i}(A_n, \Psi_n)$. Since the latter is additive, $\betabold_n(P,Q)$ is approximated by $\Theta_k(A_n,\Psi_n)$, where $k$ is the image of a path from $\widetilde p$ to $\widetilde q$. 
By Proposition \ref{prop:LimitBendingOfPairs}, this converges as $n\to \infty$ to the period of $\widehat \eta$. 
\end{proof}

\section{Realization of pleated surfaces} \label{sec:realization}
The goal of this section is to prove the following result, which is part (i) of the Main Theorem.

\begin{thm} \label{thm:bonahon-image}
Let $[\rho_n]\in R^o(\Sigma)$ be a divergent sequence, $u_n:\widetilde X\to \HBbb^3$ the $\rho_n$-equivariant harmonic maps, and $t_n^2q_n$ the Hopf differentials of $u_n$, where $q_n\in \SQD^\ast(X)$, $t_n\to +\infty$. We assume $q_n\to q\in \SQD^\ast(X)$, and in some $($hence any$)$ realization of the associated geodesic laminations,   choose maximal laminations $\Lambda_n$ $($\emph{resp}.\ $\Lambda$$)$ containing $\Lambda_{q_n}^h$ $($\emph{resp}.\ $\Lambda_q^h$$)$, with $\Lambda_n\to \Lambda$. 
Then there is $N$ such that for all $n\geq N$, the class $[\rho_n]$ is in the image of the map $B_{\Lambda_n}$ in \eqref{eqn:bonahon}, \emph{i.e.}\ there is a pleated surface
$\sfP_n = (S_{n},f_{n},\Lambda_{n},\rho_{n})$. 
\end{thm}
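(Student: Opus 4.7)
The plan is to build the pleated surface in three stages: extract a $\rho_n$-equivariant geodesic configuration in $\HBbb^3$ from the harmonic map $u_n$; identify a reference hyperbolic structure on $\Sigma$ whose shearing cocycle nearly matches that of the configuration; and then use a Thurston-type perturbation in shearing coordinates to realize the full pleated map.

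Stage one. Apply Proposition~\ref{prop:realizing-3d} to the high-energy sequence $u_n$: for $n$ sufficiently large there is a $\rho_n$-equivariant map $\Pi^\ast$ straightening the image of each leaf of $\widetilde\Fcal_{q_n}^h\setminus\widetilde\Qcal_n$ to a geodesic in $\HBbb^3$, producing a $\rho_n$-equivariant lamination $\widetilde\Lambda_n^{h,\ast}$. Using \textsection{\ref{sec:maximalization}} and Remark~\ref{rem:maximalization}, the finitely many quasi-transverse arcs used to maximalize $\Lambda_{q_n}^h$ to $\Lambda_n$ also have nearly-geodesic $u_n$-images which straighten to further geodesics, giving an enlarged $\rho_n$-equivariant arrangement $\widetilde\Lambda_n^{\ast}\subset\HBbb^3$ with the combinatorics of $\widetilde\Lambda_n$. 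By Proposition~\ref{prop: harmonic map C1 localizes near zeroes}, for each lifted zero $\widetilde p\in\widetilde Z(q_n)$ the image $u_n(\widetilde\Omega_{s_n}(p))$ lies $\varepsilon_n$-close to a definite ideal triangle $\widetilde\Delta_p\subset\HBbb^3$ whose three sides are geodesics in $\widetilde\Lambda_n^{\ast}$. Together, $\widetilde\Lambda_n^{\ast}$ and $\{\widetilde\Delta_p\}$ form a $\rho_n$-equivariant candidate pleated image in $\HBbb^3$ carrying the combinatorics of $\widetilde\Lambda_n$, and automatically satisfy conditions (i)--(ii) of Definition~\ref{def:asymptotic-surfaces}.

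Stage two. Let $S_n^{\mathrm{ref}}$ be the marked hyperbolic structure on $\Sigma$ for which the unique harmonic diffeomorphism $v_n:X\to S_n^{\mathrm{ref}}$ has Hopf differential $t_n^2\,q_n$; this structure is produced by the Hopf-differential parametrization of $T(\Sigma)$ recalled in \textsection{\ref{sec:intro-moduli}}. The image $v_n(\Fcal_{q_n}^h)$ straightens to a geodesic lamination on $S_n^{\mathrm{ref}}$, which, maximalized in the same combinatorial way used to form $\Lambda_n$, yields a maximal geodesic lamination $\Lambda_n^{\mathrm{ref}}$ on $S_n^{\mathrm{ref}}$ combinatorially isomorphic to $\Lambda_n$. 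Denote its shearing cocycle by $\sigmabold_n^{\mathrm{ref}}\in\Ccal(\Lambda_n)$. On the other hand, the $\HBbb^3$-configuration of Stage one determines a real-valued shear cocycle $\sigmabold_n^{\HBbb^3}\in\Hcal(\Lambda_n,\RBbb)$ by measuring signed relative translations between adjacent ideal triangles $\widetilde\Delta_p,\widetilde\Delta_{p'}$ along their common geodesic in $\widetilde\Lambda_n^{\ast}$. Proposition~\ref{prop:pullbackmetric} and the exponential control in Proposition~\ref{prop:realizing-3d} imply that, up to the $2t_n$ rescaling, $\sigmabold_n^{\HBbb^3}-\sigmabold_n^{\mathrm{ref}}$ is exponentially small in $t_n$ within the finite-dimensional space $\Hcal(\Lambda_n,\RBbb)$.

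Stage three. Invoking the earthquake/open-cone property of shearing cocycles (item (i) following \eqref{eqn:shearing-cocycle}), deform $S_n^{\mathrm{ref}}$ along the small cocycle $\sigmabold_n^{\HBbb^3}-\sigmabold_n^{\mathrm{ref}}$ to obtain a marked hyperbolic structure $S_n$ whose shearing cocycle along $\Lambda_n$ is exactly $\sigmabold_n^{\HBbb^3}$. Define $f_n:\widetilde S_n\to\HBbb^3$ inductively on plaques: send a basepoint plaque $P_0$ isometrically onto the corresponding $\widetilde\Delta_{p_0}$, and extend across each shared leaf using the unique orientation-preserving isometry of $\HBbb^3$ determined by the shear-bend data recorded in $\widetilde\Lambda_n^{\ast}\cup\{\widetilde\Delta_p\}$. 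Because the shears of $S_n$ were adjusted to match those of the $\HBbb^3$-configuration, the extension is consistent on all of $\widetilde S_n$ and yields a map totally geodesic on plaques and pleated along $\widetilde\Lambda_n$; its $\rho_n$-equivariance is inherited from the $\rho_n$-equivariance of $\widetilde\Lambda_n^{\ast}$ and of the family $\{\widetilde\Delta_p\}$. The main obstacle lies in the global coherence of Stage two, namely verifying that the Stage one complex shear-bend obeys the cocycle additivity axiom on all pairs of plaques of $\widetilde\Lambda_n$ (not merely on finitely many of them), and that the perturbed $\sigmabold_n^{\HBbb^3}$ indeed remains inside the open cone $\Ccal(\Lambda_n)$; both points are settled by combining the finite-dimensional cocycle calculus of \textsection{\ref{subsec: transverse cocycles}} with the uniform exponential decay in Propositions~\ref{prop:realizing-3d}--\ref{prop:pullbackmetric}, and constitute the technical core of the argument.
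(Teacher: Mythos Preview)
Your three-stage strategy matches the paper's closely: extract a geodesic configuration in $\HBbb^3$ via the high-energy harmonic map estimates; introduce the companion hyperbolic surface $\widehat S_n$ (your $S_n^{\mathrm{ref}}$) defined by $\Hopf(v_n)=t_n^2q_n$; compare the two resulting shearing cocycles; and perturb into the open cone $\Ccal(\Lambda_n)$. The paper does exactly this in \textsection\ref{sec:realization}.

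There are, however, two substantive points where your proposal diverges from what actually works.

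\emph{The shearing-cocycle comparison is not exponentially small.} You assert that, after rescaling by $2t_n$, the difference $\sigmabold_n^{\HBbb^3}-\sigmabold_n^{\mathrm{ref}}$ is exponentially small in $t_n$. The paper (Lemma~\ref{lem:norm}) only obtains $|\sigmabold_n(P,Q)-\widehat\sigmabold_n(P,Q)|\leq\delta t_n$ for any prescribed $\delta>0$, i.e.\ $o(t_n)$. The reason is that while the tails of the sums \eqref{eqn:shearing-cocycle} and \eqref{eqn:realization-transverse-cocycle} decay exponentially, the dominant finite sums compare foot-of-geodesic positions $h_n(x_d^\pm)$ and $h_n^\ast(x_d^{\pm,\ast})$, and these are each of order $t_n$ with only $o(t_n)$ agreement coming from Proposition~\ref{prop: harmonic map C1 localizes near zeroes}. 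This weaker bound still suffices, since Proposition~\ref{prop:pleated-surface} only needs $\Vert\sigmabold_n-\widehat\sigmabold_n\Vert<A_n/2C$ with $A_n\geq A_0t_n$; but you should not claim more.

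\emph{The plaque-by-plaque extension in Stage three is ill-posed.} You propose to build $f_n$ by sending a base plaque isometrically to its triangle and then ``extending across each shared leaf.'' For a generic maximal lamination the plaques do \emph{not} share boundary leaves: between two plaques there is typically a Cantor set of leaves of $\widetilde\Lambda_n$, and no inductive adjacency structure exists. The paper handles this correctly by first producing an honest \emph{realization} $\varphi:\widetilde{\widehat S_n}\to\HBbb^3$ (Proposition~\ref{prop:realization}), i.e.\ a continuous equivariant map defined on the whole surface, sending each leaf of $\widetilde\Lambda_n$ to a geodesic; the shearing cocycle $\sigmabold_n$ is then defined via the formula \eqref{eqn:realization-transverse-cocycle} using this realization. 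The pleated map itself is constructed in \textsection 5.2.2 by Bonahon's method: one defines approximations $f_{n,\Pcal'}$ over finite plaque sets $\Pcal'$, filling in the wedges between them by diagonals and totally geodesic extensions, and then passes to a locally uniform limit as $\Pcal'$ exhausts $\Pcal(\Lambda_n)$. Equivariance and the correct shear-bend data follow as in \cite[Lemmas 14 and 16]{Bonahon:shearing}. Your sketch needs to be replaced by this limiting construction.
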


\subsection{Realizing laminations}\label{subsect:realizinglaminations}

\begin{definition}[{\emph{cf}.\ \cite[Def.\ I.5.3.4]{CEG}}]\upshape
Suppose $\Lambda\subset S$ is a geodesic lamination and  $\rho:\pi_1(S)\to \PSL(2,\CBbb)$. Then $\Lambda$ is {\bf realizable} if there exists a continuous $\rho$-equivariant map $ \varphi:\widetilde S\to \HBbb^3$ that takes the leaves of $\widetilde \Lambda$ homeomorphically onto geodesics in $\HBbb^3$. 
\end{definition}

The goal of this subsection is to prove the following. 

\begin{prop} \label{prop:realization}
Assume the hypotheses of Theorem \ref{thm:bonahon-image}. Then for $n$ sufficiently large, there is a marked hyperbolic surface $\widehat S_n$ such that the geodesic lamination  $\Lambda_n\subset \widehat S_n$ is realizable for $\rho_n$. 
\end{prop}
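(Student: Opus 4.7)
The plan is to produce the realization by first invoking Minsky's straightening (Proposition~\ref{prop:realizing-3d}) to build a target geodesic configuration in $\HBbb^3$, and then defining a map from a suitable hyperbolic surface into that configuration.

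First, for $n$ large, Proposition~\ref{prop:realizing-3d} yields a $\rho_n$-equivariant map $\Pi^\ast$ from leaves of $\widetilde\Fcal_{q_n}^h$ in $\widetilde X\setminus\widetilde\Qcal_n$ to a collection of geodesics comprising a $\rho_n$-equivariant geodesic lamination $\widetilde\Lambda_n^{h,\ast}\subset\HBbb^3$, $C^1_{\varepsilon_n}$-carried by an image train track $\widetilde\tau_n^\ast$ with $\varepsilon_n\to 0$. As explained in the discussion following that Proposition, $\Pi^\ast$ extends along the additional quasi-transverse paths that implement the chosen maximalization $\Lambda_n\supset\Lambda_{q_n}^h$, and this augmented collection $\widetilde\Lambda_n^\ast$ is a $\rho_n$-equivariant geodesic lamination in $\HBbb^3$ whose incidence and accumulation patterns match those of the abstract leaves of $\widetilde\Lambda_n$. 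Because the branches of $\widetilde\tau_n^\ast$ have length $O(t_n)$ and curvature $O(\varepsilon_n)$, for $n$ large distinct leaves of $\widetilde\Lambda_n^\ast$ are distinct geodesics.

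Second, I would take $\widehat S_n$ to be the marked hyperbolic structure on $\Sigma$ obtained as the target of the harmonic diffeomorphism $X\to\widehat S_n$ with Hopf differential $t_n^2q_n$. Its geodesification of $\Lambda_n$ is a maximal geodesic lamination, and there is a canonical $\pi_1$-equivariant bijection between leaves of $\widetilde\Lambda_n\subset\widetilde{\widehat S_n}$ and geodesics of $\widetilde\Lambda_n^\ast$ respecting incidence with plaques. The realization map $\varphi_n\colon\widetilde{\widehat S_n}\to\HBbb^3$ is then defined by sending each leaf $\widetilde\ell$ isometrically onto its corresponding geodesic $\widetilde\ell^\ast\in\widetilde\Lambda_n^\ast$ (with a $\pi_1$-equivariant choice of basepoints and orientations fixed on a fundamental domain) and extending continuously across each complementary plaque. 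The extension is unobstructed since plaques are contractible and the boundary geodesic data are prescribed.

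The main obstacle will be the asymptotic compatibility at ideal vertices: leaves of $\widetilde\Lambda_n$ sharing an ideal endpoint on $\partial\widetilde{\widehat S_n}$ must have image geodesics that share an endpoint in $\partial\HBbb^3$, so that plaques of $\widetilde\Lambda_n^\ast$ are bona fide ideal-triangle regions in $\HBbb^3$. For horizontal leaves meeting at a critical point of $\widetilde q_n$, this uses the exponential bound $d_{\HBbb^3}(u_n,\Pi^\ast)=O(\exp(-t_nC_0))$ from Proposition~\ref{prop:realizing-3d} together with Proposition~\ref{prop: harmonic map C1 localizes near zeroes}, which forces the image under $u_n$ of a small hexagon around each zero of $q_n$ to lie $\varepsilon$-close to a single totally geodesic plane in $\HBbb^3$, and hence the three geodesics of $\widetilde\Lambda_n^\ast$ abutting that vertex to share endpoints pairwise. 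For additional leaves of the maximalization that spiral into closed loops of saddle connections, the compatibility follows from the corresponding spiraling of geodesics in $\widetilde\Lambda_n^\ast$ around the straightened closed curve, which is a consequence of the carrying property of $\widetilde\tau_n^\ast$. Once these asymptotics are verified, $\varphi_n$ sends leaves homeomorphically to geodesics and is $\rho_n$-equivariant by construction, realizing $\Lambda_n$ for $\rho_n$.
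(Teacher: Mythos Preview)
Your outline has the right ingredients (the companion surface $\widehat S_n$, Minsky's straightened lamination $\widetilde\Lambda_n^\ast$, the leafwise correspondence), but the construction of $\varphi_n$ has a genuine gap: sending each leaf \emph{isometrically} to its image geodesic does not produce a continuous $\rho_n$-equivariant map on $\widetilde\Lambda_n$.

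Two concrete failures. First, if $\Lambda_n$ contains a closed leaf $\gamma$ (coming from a closed loop of horizontal saddle connections, as in Lemma~\ref{lem:maximalization}), then equivariance of an isometric map along the lift of $\gamma$ forces $\ell_{\widehat S_n}(\gamma)=\ell_{\rho_n}(\gamma)$, and there is no reason these translation lengths agree exactly; the estimates from Propositions~\ref{prop:realizing-3d} and~\ref{prop:realizing-2d} only give approximate agreement. So ``isometric'' is simply incompatible with equivariance in this case. Second, and more fundamentally, $\widetilde\Lambda_n$ is typically a Cantor set transversally, and a single basepoint per leaf (even chosen continuously on a transversal in a fundamental domain) does not control the map at points of the same leaf far from the basepoint: two leaves that are close near the chosen transversal may wander apart and return close at a distant point, where the isometric parametrizations have drifted by different amounts. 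Your phrase ``basepoints and orientations fixed on a fundamental domain'' does not address this transversal continuity.

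The paper avoids both problems by abandoning the isometric idea. It fixes one short transverse arc $c_i$ for each branch of the carrying train track $\tau_n$, and at every crossing $p\in g\cap\widetilde c_i$ of a leaf $g$ with such an arc it sets $\varphi(p)$ to be the nearest-point projection of $u_n\circ\widetilde v_n^{-1}(p)$ onto the corresponding geodesic $g^\ast\subset\HBbb^3$; between consecutive crossings on $g$ it extends by a homothety $g_{p_1p_2}\to g^\ast_{p_1^\ast p_2^\ast}$. This pins each leaf at many points given by a manifestly continuous formula, so transversal continuity is automatic, and the resulting map is a homeomorphism of each leaf onto its geodesic. The extension off $\widetilde\Lambda_n$ is then just Tietze on the closed set $\widetilde\Lambda_n$, followed by a geodesic homotopy to $u_n\circ\widetilde v_n^{-1}$ on the complement of a neighborhood of the lamination. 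Finally, the ``asymptotic compatibility at ideal vertices'' you flag as the main obstacle is not needed for realizability at all; it enters only in the next step (\textsection\ref{sec:shearing_realization}) when one defines the shearing cocycle $\sigmabold_n$ using the feet of the ideal triangles $R^\ast$.
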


The proof of Proposition \ref{prop:realization} will proceed by
using the result of  Minsky  on high energy harmonic maps into  $\H^3$ that we summarized in \textsection \ref{subsect:Minskysresults}.
 This will lead to  a suitable collection of train tracks carrying the laminations $\Lambda_n$.

\subsubsection{The companion surface} \label{sec:companion}
By \cite[Thm.\ 3.1]{Wolf:thesis} and \cite[Thm.\ 11.2]{Hitchin:87}
there is a marked hyperbolic surface $\widehat S_n$ such that the harmonic diffeomorphism $v_n:X\to \widehat S_n$ has Hopf differential $t_n^2q_n$. Moreover, the class $[\widehat S_n]\in T(\Sigma)$ is uniquely determined by $t_n^2q_n$. We let $\widetilde v_n : \widetilde X\to \HBbb^2$ denote the lift to the universal cover. Via $v_n$, the laminations $\Lambda_n$ are realized as geodesic laminations in $\widehat S_n$. As previously, we continue to use the notation $\Lambda_n\subset \widehat S_n$ to simplify the notation. Also, denote the lift by $\widetilde \Lambda_n\subset\HBbb^2$.

\begin{prop}[{\cite[Thm.\ 7.1]{Minsky:92a}}] \label{prop:realizing-2d}
For $A, c_0, C_0$ as in Proposition \ref{prop:realizing-3d} and $n\geq N$ and $s_n\leq c_0$, there is a $\pi_1$-equivariant map $\Pi_\ast$  from the leaves of $\widetilde\Fcal_{q_n}^h$ in the complement of $\Qcal_n$  to the leaves of $\widetilde\Lambda_n^{h}\subset\HBbb^2$ which factors through $v_n$. Moreover, for any $p\in \widetilde X\setminus\widetilde\Qcal_n$, 
$$
d_{\HBbb^3}(v_n(p), \Pi_\ast(p))\leq A\exp(-t_nC_0)\ ,
$$
and the derivative along the horizontal leaf through $p$ $($in
 the $|q_n|$ metric$)$ is
$$
\left| |d\Pi_\ast|-2\right| \leq A\exp(-t_nC_0)\ .
$$
\end{prop}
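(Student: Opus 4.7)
The plan is to mirror the proof strategy behind Proposition \ref{prop:realizing-3d}, reducing everything to Minsky's pointwise control of the function $\sG_n$ away from the zeroes of the Hopf differential. First, I would recall the construction of the companion surface $\widehat S_n$: by \cite{Wolf:thesis} there is a unique marked hyperbolic structure $\widehat S_n$ on $\Sigma$ such that $v_n: X \to \widehat S_n$ is the harmonic diffeomorphism with Hopf differential $t_n^2 q_n$. In canonical $q_n$-coordinates $(x_n, y_n)$ defined on any patch of $X^\times$, the pullback metric takes the diagonal form
\[
v_n^\ast ds^2_{\widehat S_n} = 2t_n^2(\cosh \sG_n + 1)\, dx_n^2 + 2t_n^2(\cosh \sG_n - 1)\, dy_n^2,
\]
exactly as in \eqref{eqn:pullback metric}. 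Minsky's pointwise estimate $\sG_n(p) \leq B/\cosh(d)$, applied with $d$ the $|t_n^2 q_n|$-distance from $p$ to $Z(q_n)$, yields $\sG_n \leq 2B\,e^{-ct_n}$ on $X\setminus \Qcal_n$ provided $s_n \leq c_0$ is small enough. This is the exact analog of Proposition \ref{prop:pullbackmetric} for the 2D target, giving
\[
v_n^\ast ds^2_{\widehat S_n} = 4t_n^2\, dx_n^2 + O(e^{-2ct_n})
\]
in $C^k$ outside $\widetilde \Qcal_n$.

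Next I would run the shadowing argument on a single horizontal leaf $\ell \subset \widetilde\Fcal_{q_n}^h \cap (\widetilde X \setminus \widetilde \Qcal_n)$. The estimate above shows that $\widetilde v_n(\ell) \subset \HBbb^2$ is a $C^1$ curve with speed $2t_n(1 + O(e^{-2ct_n}))$ and geodesic curvature $O(e^{-ct_n})$; by a standard comparison argument for ODEs with exponentially small forcing, there is a unique hyperbolic geodesic $\gamma_\ell \subset \HBbb^2$ lying within Hausdorff distance $A\, e^{-t_n C_0}$ of $\widetilde v_n(\ell)$. Define $\Pi_\ast(p)$, for $p \in \ell$, to be the nearest-point projection of $\widetilde v_n(p)$ onto $\gamma_\ell$. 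Both required bounds are then read off the metric expansion: the distance bound $d_{\HBbb^2}(v_n(p), \Pi_\ast(p)) \leq A e^{-t_n C_0}$ is immediate from the shadowing estimate, while $||d\Pi_\ast| - 2| \leq A e^{-t_n C_0}$ follows because differentiation along $\ell$ of the projection differs from the horizontal component $2t_n\, dx_n$ of $d\widetilde v_n$ only by the curvature-driven projection error, which is of the same exponentially small order (and the $2$ reflects that $\ell$ is parametrized with respect to $|q_n|$-length, not the $|t_n^2 q_n|$-length by which the leaf is stretched).

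The final step is to identify the family $\{\gamma_\ell\}$ with $\widetilde \Lambda_n^h$ and to verify equivariance and uniformity. Equivariance of $\Pi_\ast$ under the deck action follows from $\pi_1$-equivariance of $\widetilde v_n$ together with the uniqueness of the shadowing geodesic. The identification with $\widetilde \Lambda_n^h$ is built into the construction: by definition, $\Lambda_n^h$ is the geodesic straightening of $\Fcal_{q_n}^h$ on $\widehat S_n$ as described in \textsection \ref{sec:measured-foliations}, and the straightened image of $v_n(\ell)$ on $\widehat S_n$ must be exactly the geodesic in the homotopy class of $v_n(\ell)$, which is $\gamma_\ell$ down in $\widehat S_n$. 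The main obstacle, and the only place care is needed, is checking that the constants $A, c_0, C_0, N$ can be taken to match those of Proposition \ref{prop:realizing-3d}; this is automatic because both rely only on the compactness of the range of $(q_n)$ in $\SQD^\ast(X)$ and on Minsky's bound for $\sG_n$, which is target-dimension independent.
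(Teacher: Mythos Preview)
The paper does not supply its own proof of this proposition; it is quoted directly from Minsky \cite[Thm.\ 7.1]{Minsky:92a}, just as Proposition~\ref{prop:realizing-3d} is quoted from \cite{Minsky:HarmonicThreeManifold}. Your sketch reconstructs the standard argument behind Minsky's result---the diagonal form of the pullback metric in $q_n$-coordinates, the decay of $\sG_n$ away from the zeroes, and the resulting exponentially small geodesic curvature of the $\widetilde v_n$-images of horizontal leaves, followed by nearest-point projection onto the straightened geodesic---and this is indeed the approach in Minsky's paper. So your proposal is correct and matches the cited source.

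One small point worth tightening: your claim that the constants $A, c_0, C_0$ can be taken \emph{identical} to those in Proposition~\ref{prop:realizing-3d} deserves a word. The paper's phrasing ``For $A, c_0, C_0$ as in Proposition~\ref{prop:realizing-3d}'' should be read as asserting that constants \emph{of the same type and origin} work, not that the numerical values literally coincide; your justification via compactness of $\{q_n\}$ in $\SQD^\ast(X)$ and the target-independence of Minsky's $\sG_n$-bound is the right one, but strictly speaking one then takes the worse of the two sets of constants.
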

We note that in the case of a maximalization $\Lambda_n$ of $\Lambda_n^h$, $\Pi_\ast$ can be extended to the additional leaves as  remarked in \textsection{\ref{subsect:Minskysresults}}.

The train track used in the proof of Proposition \ref{prop:realizing-3d} may be chosen so that for  $n$ sufficiently large the following holds.
\begin{compactenum}[(i)]
\item Let  $\widetilde\tau_{n,\ast}=\widetilde v_n(\widetilde \tau_n)\subset\HBbb^2$. Then the branches of $\widetilde\tau_{n,\ast}$ have length comparable to $t_n$ and geodesic curvature $O(\varepsilon_n)$ and meet tangentially.
\item The collection $\widetilde \Lambda_n^h$ is $C^1_{\varepsilon_n}$-carried by $\widetilde \tau_{n,\ast}$.
\end{compactenum}

Let $\widehat\sigmabold_n$ denote the shearing cocycle of $\widehat S_n$ with respect to the lamination $\Lambda_n$. 
We will need the following result from \cite{Wolf:thesis}. (Stronger estimates are implicit in \cite{DumasWolf15}.)
\begin{lem} \label{lem:wolf}
For any $\delta>0$ there is $N$ such that for all $n\geq N$,
$$
\Vert\widehat\sigmabold_n-t_n\sigmabold^{can}_{q_n}\Vert<\delta t_n\ .
$$
\end{lem}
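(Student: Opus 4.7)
The plan is to compute the shearing cocycle $\widehat\sigmabold_n$ directly from the formula \eqref{eqn:shearing-cocycle} applied to the image lamination $\widetilde\Lambda_n\subset\HBbb^2$, and then to identify the resulting sum with the canonical cocycle $\sigmabold^{can}_{q_n}$ modulo an $O(1)$ error. The whole computation runs in the hyperbolic plane $\HBbb^2$, where the harmonic diffeomorphism $v_n:X\to\widehat S_n$ has been lifted to $\widetilde v_n:\widetilde X\to\HBbb^2$.

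First, fix two plaques $P,Q\in\Pcal(\Lambda_n)$ and a transverse arc $k$ from $P$ to $Q$. By Proposition \ref{prop:realizing-2d}, away from the hexagonal domains $\widetilde \Qcal_n$ around zeroes of $q_n$, each horizontal leaf of $\widetilde \Fcal_{q_n}^h$ is sent by $\widetilde v_n$ into an exponentially small tubular neighborhood of the corresponding leaf of $\widetilde \Lambda_n$, and stretched horizontally by a factor $2t_n(1+O(e^{-C_0 t_n}))$. Moreover, the complement $\widetilde X\setminus\widetilde\Qcal_n$ is foliated by vertical segments whose $\widetilde v_n$-images, by Proposition \ref{prop:pullbackmetric}, have length $O(e^{-ct_n})$. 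Pulling back via $\widetilde v_n$ allows us to replace the formula \eqref{eqn:shearing-cocycle}, which is written in terms of feet on the geodesic leaves of $\widetilde\Lambda_n$, with a corresponding expression on $\widetilde X$ that is, up to an exponentially small error, a sum of signed $|q_n|$-horizontal distances along a pulled-back transverse arc $k'$ in $\widetilde X$.

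Second, I would identify this horizontal sum with the period of $\real\lambda_{\SW}$ along the canonical lift of $k$ to $\widehat X_{q_n}$. Indeed, the foot of a horizontal leaf in an ideal triangle complementary to $\widetilde\Lambda_n^h$ projects, under $\widetilde v_n^{-1}$, to the point on that horizontal leaf closest (in the $|q_n|$ metric) to the zero of $q_n$ opposite to it, up to an error that is controlled by the diameter of the hexagons $\Omega_{s_n}(p)$ and the exponentially small corrections in Minsky's estimate. Hence, after summing telescopically along the components of $k'\setminus\widetilde\Lambda_{q_n}^h$, the cocycle $\widehat\sigmabold_n(P,Q)$ equals $2t_n$ times the integral of $\real\lambda_{\SW}$ along the canonical lift of $k$ to $\widehat X_{q_n}$, up to an error with three contributions: (a) an exponentially small Minsky error from each component, summable by \cite[Lemmas 4 and 5]{Bonahon:97} used in the shearing formula exactly as in the tail estimate \eqref{eqn:estimate-tail}; (b) a uniformly bounded error from the finitely many hexagons $\Omega_{s_n}(p)$ crossed by $k$, whose contribution is $O(1)$ as the choice of $s_n$ is fixed; and (c) an $O(1)$ contribution from the additional leaves added in the maximalization $\Lambda_n$. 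By the identification of \textsection{\ref{sec:periods}} and Example \ref{ex:SW-cocycle}, the main term equals $t_n\sigmabold^{can}_{q_n}(P,Q)$.

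Third, the two cocycles $\widehat\sigmabold_n-t_n\sigmabold^{can}_{q_n}$ agree up to an $O(1)$ error on a fixed finite set of plaque pairs. Applying Proposition \ref{prop:cocycle-properties}(i), this suffices to control $\Vert\widehat\sigmabold_n-t_n\sigmabold^{can}_{q_n}\Vert$, giving an $O(1)$ estimate and in particular the desired bound $o(t_n)$ since $t_n\to\infty$.

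The main technical obstacle is step two: carefully quantifying the discrepancy between the foot of a horizontal leaf in an ideal triangle in $\HBbb^2$ and the nearest point on the corresponding horizontal leaf in $\widetilde X$ to the relevant zero of $q_n$. This requires combining Minsky's $C^1$ estimate (Proposition \ref{prop:realizing-2d}) with the local model for harmonic maps near a zero of the Hopf differential (Proposition \ref{prop: harmonic map C1 localizes near zeroes}), so that the hexagon $\Omega_{s_n}(p)$ is mapped into a neighborhood of an ideal vertex of $\widetilde\Lambda_n$ whose diameter is independent of $n$. Once this is in place, the uniform bound on the number and size of the excluded hexagonal contributions gives the $O(1)$ error claimed above.
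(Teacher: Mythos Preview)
The paper does not give a proof of this lemma; it is stated as a result from \cite{Wolf:thesis}, with a parenthetical remark that stronger estimates are implicit in \cite{DumasWolf15}. So there is nothing in the paper to compare your argument against line by line.

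Your high-level strategy is the natural one and is consonant with what those references do: use the high-energy harmonic map asymptotics (Propositions \ref{prop:realizing-2d}, \ref{prop:pullbackmetric}, \ref{prop: harmonic map C1 localizes near zeroes}) to evaluate Bonahon's shearing formula \eqref{eqn:shearing-cocycle} on $\widehat S_n$, and then recognize the dominant term as the period of $\real\lambda_{\SW}$. Your reduction in Step~3 to a finite set of plaque pairs via Proposition \ref{prop:cocycle-properties}(i) is also correct, and the claimed $O(1)$ error is consistent with the ``stronger estimates'' the paper alludes to.

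The part of your sketch that would need real work to make rigorous is the ``telescoping'' in Step~2. For a generic $q_n$ the lamination $\Lambda_{q_n}^h$ is minimal, the intersection $k\cap\widetilde\Lambda_n$ is a Cantor set, and the countably many components $d$ of $k\setminus\widetilde\Lambda_n$ do not come in a sequence with shared boundary leaves --- there are no ``consecutive'' components in the sense your telescoping seems to require. The sum $\sum_d(h(x_d^+)-h(x_d^-))$ converges absolutely by Bonahon's exponential divergence estimates, but it does not cancel termwise in the way you describe. What one actually does (and what your final paragraph gestures toward) is to first truncate the sum to the finitely many plaques of small divergence radius, controlling the tail exactly as in \eqref{eqn:estimate-tail}, and then for each of those finitely many plaques carry out the foot identification you describe using the local model near the zero. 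Once the sum is finite, the comparison with $\int_{\hat k}\real\lambda_{\SW}$ goes through by additivity of the cocycle (\textsection\ref{subsec: transverse cocycles}, property (iii)) rather than by telescoping. With that correction, your outline is sound.
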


\subsubsection{Proof of Proposition \ref{prop:realization}} \label{sec:train-track-conclusion}

We first choose a constant $\delta>0$ so that there are disjoint  arcs $c_i$, one for each branch $b_i$ of $\tau_n$, so that $c_i$ intersects only $b_i$, and this only once. The endpoints of $c_i$ lie in exactly two components of $X\setminus \tau_n$ (see Figure \ref{fig:realization}). Viewed on $\widehat S_n$, we may assume that the endpoints of $c_i$ are in $\widehat S_n\setminus N_{2\delta}$, where $N_r$ is the $r$ neighborhood of $\tau_{n,\ast}$. We furthermore assume $\Lambda_n\subset N_\delta$. These assumptions are made possible by Proposition \ref{prop:realizing-2d}.

Let $g$ be a leaf of $\widetilde\Lambda_n\subset \HBbb^2$. Then $u_n\circ \widetilde{v_n}^{-1}$ produces a well-defined geodesic $g^\ast\subset\HBbb^3$. Namely, if $\ell\subset\widetilde \Fcal_{q_n}^h$ follows a train path such that the straightening of $\widetilde{v_n}(\ell)$ is $g$, then $g^\ast$ is the straightening of $u_n(\ell)$.
For every intersection point $p$ in $ g\cap \widetilde c_i$, we map $p$ to the point $\varphi(p)=p^\ast$ given by the nearest point projection of $u_n\circ \widetilde{v_n}^{-1}(p)$ onto $g^\ast$. Let $p_1$ and $p_2$ be consecutive points on $ g$, in the sense that there is no   other point in $ g\cap \widetilde c_i$ in the geodesic segment $ g_{p_1p_2}$.   Extend the map along the segment
$$\varphi: g_{p_1p_2}\isorightarrow  g^\ast_{p_1^\ast p_2^\ast}
$$
as a homothety. Continuing in this way we obtain a continuous map $\varphi: \widetilde\Lambda_n\to \HBbb^3$ mapping leaves homeomorphically to geodesics. Moreover, it is clearly equivariant. 
Since $\widetilde \Lambda_n$ is a closed subset, by the Tietze extension theorem we can extend $\varphi$ to a continuous map $\widetilde{\overline N}_\delta$ with the same Lipschitz constant. Now we use a geodesic homotopy to join $u_n\circ \widetilde{v_n}^{-1}$ on the complement of $\widetilde N_{2\delta}$ to this extension. This defines the map $\varphi$, and it is equivariant. 

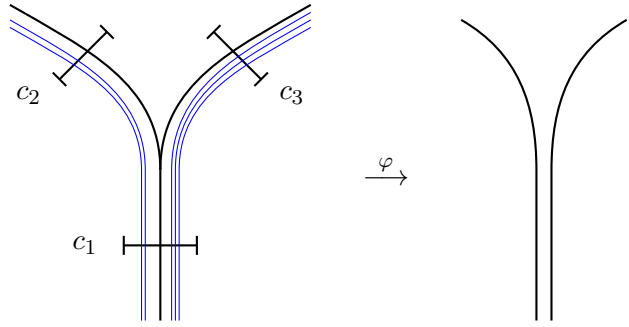
\begin{figure} 
\begin{tikzpicture}
\draw [thick] (0,0) -- (0,2);
\draw [thick] (0,2) to [out=90, in=-30] (-2,4.2);
\draw [thick] (0,2) to [out=90,in=210] (2,4.2);
\draw [thick, |-|] (-0.5,1) -- (0.5,1);
\draw [thick, |-|] (-1.35,3.2) -- (-.7,3.85);
\draw [thick, |-|] (1.35,3.2) -- (.7,3.85);
\draw [color=blue] (-.2,0) -- (-.2,2) to [out=90, in=-30] (-2,4);
\draw [color=blue] (-.25,0) -- (-.25,2) to [out=90, in=-30] (-2,3.9);
\draw [color=blue] (.15,0) -- (.15,2) to [out=90, in=210] (2,4.1);
\draw [color=blue] (.2,0) -- (.2,2) to [out=90, in=210] (2,4);
\draw [color=blue] (.25,0) -- (.25,2) to [out=90, in=210] (2,3.9);
\node at (-1,1) {$c_1$};
\node at (-1.75,3) {$c_2$};
\node at (1.75,3) {$c_3$};
\node at (3,2) {$\stackrel{\varphi}{\lra}$};
\draw [thick] (5,0) -- (5,2) to [out=90, in=-30] (4,4);
\draw [thick] (5.2,0) -- (5.2,2) to [out=90, in=210] (6.2,4);
\end{tikzpicture}
\caption{Realization of $\Lambda_n$.}
\label{fig:realization}
\end{figure}

\subsection{Perturbing the companion surface}
For closed $3$-manifolds the existence of a realization of a lamination leads to a pleated surface (see \cite[Thm.\ I.5.3.9]{CEG}). The goal of this section is to prove the same in the equivariant case that we consider.  The rough idea is that the companion surfaces $\widehat S_n$ obey the same asymptotics as the image of the equivariant harmonic maps $u_n$, so the hyperbolic structure on the putative pleated surface should be obtained from a small perturbation of that on $\widehat S_n$. For a similar construction, see \cite{Bonahon:shearing}, proof of Lemma 30. 

\subsubsection{The shearing cocycle from the realization} \label{sec:shearing_realization}
We first describe a shearing cocycle associated to the realization of $\Lambda_n$ obtained in Proposition \ref{prop:realization}. 
In order to do this, recall the notation of \textsection{\ref{subsec:ShearingCocycles}}.

Let $ \varphi_n: \widetilde{\widehat S_n}\to \HBbb$ be a realization of $\Lambda_n$. 
Let $k$ be a transverse path to $\Lambda_n$ from plaque $P$ to plaque $Q$, and let $d$ be a component of $k\setminus \Lambda_n$.
Then $d$ corresponds to a plaque $R\in \Pcal(\Lambda_n)$, and therefore an ideal triangle (also denoted $R$) in $\HBbb^2$.
(Here $R$ depends on $n$, but in this passage, the index $n$ will not vary, so we suppress the notational dependence.)
Recall the lamination $\Lambda_n^\ast$ constructed in Proposition~\ref{prop:realizing-3d}.
 We first observe that under the correspondence between leaves of $\Lambda_n$ and $\Lambda_n^\ast$, the geodesics in $\Lambda_n^\ast$ associated to the edges of $R$ form an ideal triangle $R^\ast\subset\HBbb^3$.
This is because first,  two geodesics in $\Lambda_n^\ast$ corresponding to a pair of edges of $R$ must be asymptotic on one end, since the map $u_n$ is Lipschitz. Second, if the edges of $R$ do not form a triangle in $\Lambda_n^\ast$,
  then two such geodesics would collapse, and this is ruled out (for sufficiently large $n$) by Proposition \ref{prop: harmonic map C1 localizes near zeroes}. 

With this understood,
set $x_d^{\pm,\ast}=\varphi(x_d^\pm)$. 
For each $d$, 
let $h_n: g_d^{\pm}\to\RBbb$ denote the signed distance to the foot of the geodesic, as described in \textsection{\ref{subsec:ShearingCocycles}. 
Similarly, define  $h_n^\ast: g_d^{\pm,\ast}\to\RBbb$ for the corresponding ideal triangles in $\HBbb^3$. We then define:

\begin{equation} \label{eqn:realization-transverse-cocycle}
\sigmabold_n(P,Q):=
h_n^\ast(x_{d^-}^{+,\ast})-h_n^\ast(x_{d^+}^{-,\ast})
+\sum_{d\neq d^+, d^-} (h_n^\ast(x_{d}^{+,\ast})-h_n^\ast(x_{d}^{-,\ast}))
\end{equation}

\begin{lem}
Eq.\ \eqref{eqn:realization-transverse-cocycle} defines a transverse cocycle $\sigmabold_n\in \Hcal(\Lambda_n, \RBbb)$. 
\end{lem}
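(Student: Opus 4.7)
The plan is to mirror Bonahon's proof of the analogous statement in \cite[Lemma~7]{Bonahon:shearing} for the shearing cocycle in $\HBbb^2$ (cf.\ our formula \eqref{eqn:shearing-cocycle}), replacing the ambient $\HBbb^2$ by $\HBbb^3$ throughout. The formula \eqref{eqn:realization-transverse-cocycle} is identical in structure to \eqref{eqn:shearing-cocycle} with $h_n$ replaced by $h_n^{\ast}$, and the key geometric ingredients---signed distances to feet of opposite vertices of an ideal triangle---are defined in exactly the same way for totally geodesic ideal triangles in $\HBbb^3$ as in $\HBbb^2$, since each such triangle lies in a unique totally geodesic hyperbolic plane on which $\PSL(2,\CBbb)$ restricts to act by isometries.

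First I will check that the ideal triangles $R^{\ast}\subset \HBbb^3$ referenced in the formula are well defined. This was already addressed in the paragraph preceding the Lemma: the three geodesics associated via $\varphi_n$ to the edges of a plaque $R$ are pairwise asymptotic (because $\varphi_n$ extends the Lipschitz map $u_n\circ\widetilde v_n^{-1}$ and carries asymptotic leaves of $\widetilde\Lambda_n$ to asymptotic geodesics), and they remain pairwise distinct for $n$ large by Proposition~\ref{prop: harmonic map C1 localizes near zeroes}. Hence $R^{\ast}$ is a nondegenerate ideal triangle in $\HBbb^3$ and the signed distances $h_n^{\ast}$ are well-defined and $\PSL(2,\CBbb)$-invariant.

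Next I will verify convergence of the sum in \eqref{eqn:realization-transverse-cocycle}. The divergence-radius estimates of \cite[Lemmas~4,~5]{Bonahon:97} (used in the same way in \textsection{\ref{subsec:BendingCocycles}}) give an exponential bound $\ell(d)\leq Be^{-Ar(d)}$ on the $\widehat S_n$-length of $d\subset k$; the distances $h_n^{\ast}(x_d^{\pm,\ast})$ are uniformly controlled by $\ell(d)$ because the two feet on a given edge of $R^{\ast}$ determined by $g_d^+$ and $g_d^-$ approach each other at a comparable exponential rate (the nearest-point projection to a geodesic in $\HBbb^3$ is Lipschitz). This gives absolute convergence, exactly as in \eqref{eqn:estimate-tail}.

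The four remaining properties are then verified as follows.
\emph{Equivariance} under $\pi_1$ acting via $\rho_n$: the realization $\varphi_n$ is $\rho_n$-equivariant, so each $R^{\ast}$ is carried to $(\gamma R)^{\ast}$ by $\rho_n(\gamma)$, and the $\PSL(2,\CBbb)$-invariance of $h_n^{\ast}$ finishes the check.
\emph{Independence of the transverse arc $k$ from $P$ to $Q$}: any two such arcs differ by a transverse homotopy which (after isotopy) amounts to pushing across finitely many plaques; the contribution of each such push is a telescoping cancellation in the $h_n^{\ast}$-sum, in complete parallel to Bonahon's argument for $\sigmabold$ in $\HBbb^2$.
\emph{Additivity} when $R$ separates $P$ and $Q$: choose $k$ passing through the interior of $R$ and split the sum at the two boundary crossings with $R$, using the telescoping identity for $h_n^{\ast}$ along the two edges of $R$ that are met.
\emph{Symmetry} $\sigmabold_n(P,Q)=\sigmabold_n(Q,P)$: reversing the orientation of $k$ swaps $d^+$ with $d^-$ and swaps $x_d^+$ with $x_d^-$ on every intermediate $d$, changing the sign of every term, while also swapping the roles of the ``endpoint'' terms, and a direct bookkeeping (identical to the proof of \cite[Lemma~7]{Bonahon:shearing}) shows the total is unchanged.

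The main obstacle is really just Step~1---confirming that the geodesics produced by $\varphi_n$ along the three edges of each plaque actually form a genuine ideal triangle in $\HBbb^3$, uniformly for all $n$ large---since the rest of the argument is formal and parallels Bonahon's planar case. That obstacle has, however, already been overcome in the discussion preceding the Lemma via Proposition~\ref{prop: harmonic map C1 localizes near zeroes}.
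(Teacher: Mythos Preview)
Your approach is essentially the same as the paper's: both mirror Bonahon's proof of \cite[Lemma~7]{Bonahon:shearing}, noting that the argument transfers verbatim to $\HBbb^3$ since each ideal triangle $R^\ast$ lies in a totally geodesic plane. The paper's proof is in fact considerably more terse than yours: it establishes convergence of the sum and then simply declares that ``the symmetry and additivity conditions \ldots\ are clear.''

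One point of imprecision worth fixing is your convergence step. The sentence about ``the two feet on a given edge of $R^\ast$ determined by $g_d^+$ and $g_d^-$ approaching each other'' and ``nearest-point projection to a geodesic being Lipschitz'' is not quite the right mechanism: the two values $h_n^\ast(x_d^{+,\ast})$ and $h_n^\ast(x_d^{-,\ast})$ live on \emph{different} edges of $R^\ast$, so nearest-point projection to a single geodesic is not what is being used. The clean bound (the one the paper uses, citing the proof of \cite[Lemma~8]{Bonahon:shearing}) is
\[
\bigl| h_n^\ast(x_d^{+,\ast}) - h_n^\ast(x_d^{-,\ast}) \bigr| \;\le\; d_{\HBbb^3}\bigl(x_d^{+,\ast}, x_d^{-,\ast}\bigr)\;\le\; M_n\,\ell(d),
\]
where the first inequality is an elementary fact about signed distances to feet in an ideal hyperbolic triangle, and the second uses that the realization map $\varphi_n$ (not nearest-point projection) is $M_n$-Lipschitz. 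Combined with $\ell(d)\le B e^{-Ar(d)}$ from \cite[Lemma~5]{Bonahon:shearing}, this gives the absolute convergence you want.
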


\begin{proof}
For each $d$,
the quantity $| h_n^\ast(x_{d}^{+,\ast})-h_n^\ast(x_{d}^{-,\ast})|$ may be bounded by the distance from 
$x_{d}^{+,\ast}$ to $x_{d}^{-,\ast}$
(see \cite[proof of Lemma 8]{Bonahon:shearing}).
The map $\varphi$ is Lipschitz with constant $M_n$, say, so 
\begin{equation} \label{eqn:hast}
\left| h_n^\ast(x_{d}^{+,\ast})-h_n^\ast(x_{d}^{-,\ast})\right|\leq M_n\ell(d)\ ,
\end{equation}
where $\ell(d)$ is the hyperbolic length of $d$.
By the estimate in \cite[Lemma 5]{Bonahon:shearing}, the sum 
 in \eqref{eqn:realization-transverse-cocycle} converges, and 
 $\sigmabold_n$ is therefore well defined. 
 The symmetry and  additivity conditions of \textsection{\ref{subsec: transverse cocycles}} are clear. 
\end{proof}

We shall require a more precise relationship between $\sigmabold_n$ and $\widehat\sigmabold_n$. 

\begin{lem} \label{lem:norm}
Fix a finite set $\Pcal'\subset\Pcal(\Lambda)$ and $\delta>0$. Then
there is $N$ such that for all $n\geq N$ and 
 all $P,Q\in \Pcal'$, 
 $$
\left| \sigmabold_n(P,Q)-\widehat\sigmabold_n(P,Q)\right|\leq \delta t_n\ .
 $$
\end{lem}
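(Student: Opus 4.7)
The strategy is to compare the formulas \eqref{eqn:shearing-cocycle} and \eqref{eqn:realization-transverse-cocycle} term-by-term on a common transverse arc $k\subset \widetilde X$ joining lifts of the plaques $P$ and $Q$. Both sums record the same combinatorial data (the components $d$ of $k\setminus\widetilde\Lambda_n$ and their endpoints $x_d^\pm$), but the feet $h_n(x_d^\pm)$ are computed in $\widetilde{\widehat S_n}\subset\HBbb^2$ via $\widetilde v_n$, while the feet $h_n^\ast(x_d^{\pm,\ast})$ are computed in $\HBbb^3$ via the realization $\varphi_n$, which is built from $u_n\circ \widetilde v_n^{-1}$ on horizontal leaves (see the construction in \textsection{\ref{sec:train-track-conclusion}}). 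The goal is to pull both geometries back to the horizontal foliation $\widetilde\Fcal_{q_n}^h$ on $X$ and exploit the fact that Minsky's estimates give an approximate leaf-wise isometry between these two pictures, with error exponentially small in $t_n$.

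The first step is to truncate both sums. The Lipschitz constant $M_n$ of $\varphi_n$ is of order $t_n$ (since $\varphi_n$ stretches horizontal leaves by approximately $2t_n$), so by \eqref{eqn:hast} combined with the estimate \cite[Lemma 5]{Bonahon:shearing} that at most $K$ components $d$ have divergence radius $r$ and each such has length $\ell(d)\leq Be^{-Ar}$, the tail of \eqref{eqn:realization-transverse-cocycle} beyond divergence radius $R$ is bounded by $C t_n e^{-AR}$; the tail of \eqref{eqn:shearing-cocycle} obeys the same bound since the feet in $\HBbb^2$ are 1-Lipschitz in the hyperbolic metric on $\widehat S_n$. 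Choosing $R_n := (A^{-1})\log(4 t_n/\delta) + O(1)$ makes both tails at most $\delta t_n /4$, leaving only $O(\log t_n)$ main terms (including the boundary terms $h(x_{d_-}^+)$ and $h(x_{d_+}^-)$).

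For the remaining finitely many terms one controls the differences $[h_n^\ast(x_d^{+,\ast})-h_n^\ast(x_d^{-,\ast})]-[h_n(x_d^+)-h_n(x_d^-)]$ using Propositions \ref{prop:realizing-2d} and \ref{prop:realizing-3d}. Those propositions state that the derivatives along horizontal leaves of both $\Pi_\ast$ (into $\HBbb^2$) and $\Pi^\ast$ (into $\HBbb^3$) differ from $2$ by at most $A\exp(-C_0 t_n)$. Consequently, the natural identification of a leaf $g$ of $\widetilde\Lambda_n^h\subset\HBbb^2$ with its counterpart $g^\ast\subset\HBbb^3$, obtained by pulling both parametrizations back to the common horizontal leaf in $\widetilde X$, is an isometry up to an accumulated additive error of $O(t_n\exp(-C_0 t_n))$ on a length-$O(t_n)$ segment, which is $o(1)$ as $n\to\infty$. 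Since the foot of an ideal triangle depends only on the three ideal endpoints of its edges, and these ideal endpoints are determined by the asymptotic behavior of the leaves (preserved under the approximate leaf-wise isometry because neighboring leaves of the train track shadow one another with the same exponential accuracy), each of the $O(\log t_n)$ main differences is $o(1)$ uniformly. Summing contributes $o(\log t_n)=o(t_n)$, which combined with the tail estimate yields $|\sigmabold_n(P,Q)-\widehat\sigmabold_n(P,Q)|\leq\delta t_n$ for $n$ large; finiteness of $\Pcal'$ then gives a uniform $N$.

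The main obstacle is the control on foot positions in the third paragraph: the feet $x_d^\pm$ and $x_d^{\pm,\ast}$ lie on different leaves (those of $d$'s triangle's companions), so one must simultaneously track the approximate-isometric correspondence of many nearby leaves, not just one. This is handled by using the train track $\tau_n$ of \textsection{\ref{subsect:Minskysresults}}, which carries $\widetilde\Lambda_n^h$ with uniformly small geodesic curvature $O(\varepsilon_n)$ in both $\HBbb^2$ and $\HBbb^3$, ensuring that the asymptotic ideal endpoints of leaves sharing a train path are matched with the same exponentially small error.
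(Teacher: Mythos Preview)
Your overall strategy—truncate the sums \eqref{eqn:shearing-cocycle} and \eqref{eqn:realization-transverse-cocycle} and compare finitely many main terms via Minsky's leafwise estimates—matches the paper's. But the execution diverges in two places, and in each your version has a gap.

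\textbf{Truncation.} You invoke Bonahon's Lemmas~4--5 with fixed constants $A,B,K$, but these depend on the hyperbolic structure $\widehat S_n$ (through the transverse arc $k$), and the $\widehat S_n$ diverge in Teichm\"uller space; you give no reason why $A,B,K$ should be uniform in $n$. The paper's key observation here is that Bonahon's return-time constant $A_n$ (his Lemma~3) actually \emph{scales like} $t_n$, because leaves of $\Lambda_n$ in $\widehat S_n$ are stretched by a factor comparable to $t_n$ (Proposition~\ref{prop:pullbackmetric}). This yields $\ell(d)\leq B\exp(-t_nA_0\, r(d))$ with $B$ uniform, so the tails are already $O(\exp(-Ct_n))$ after a \emph{fixed} divergence radius, and the number $N$ of surviving plaques is independent of $n$. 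With your weaker truncation you are left with $O(\log t_n)$ terms, which forces a much sharper per-term estimate than the paper needs.

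\textbf{Main-term comparison.} You assert that each difference
\[
\bigl[h_n^\ast(x_d^{+,\ast})-h_n^\ast(x_d^{-,\ast})\bigr]-\bigl[h_n(x_d^+)-h_n(x_d^-)\bigr]
\]
is $o(1)$, arguing that feet are determined by ideal endpoints and that these are ``preserved under the approximate leaf-wise isometry''. But Minsky's estimates control only length-$O(t_n)$ segments of leaves; the feet sit at distance $O(t_n)$ along those leaves and are fixed by where the leaves land at infinity, which your argument does not pin down. The paper avoids this issue entirely: it invokes Proposition~\ref{prop: harmonic map C1 localizes near zeroes} (applied to $v_n$) and the explicit construction of $\varphi$ to show that \emph{both} $t_n^{-1}h_n(x_{d_i}^\pm)$ and $t_n^{-1}h_n^\ast(x_{d_i}^{\pm,\ast})$ converge to the \emph{same} bounded intersection number $i(\Fcal_{q_n}^v,k_n^\pm)$. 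This gives only $o(t_n)$ error per term, but with a fixed $N$ (from the improved truncation) the total is $o(t_n)\leq\delta t_n$, which is exactly what the lemma requires.
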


\begin{proof}
Let $A_n$ be the constant defined in \cite[Lemma 3]{Bonahon:shearing} for the hyperbolic structure $\widehat S_n$ (this depends on $P$ and $Q$), which gives a lower bound on the length of a leaf in $\Lambda_n$ that intersects a transverse arc from $P$ to $Q$ multiple times. 
Since the laminations $\Lambda_n$ converge, and since the $\widehat S_n$-length of a leaf of $\Lambda_n$ is
 stretched by a factor of $t_n$ (see Proposition \ref{prop:pullbackmetric}),  it follows that there is a constant $A_0>0$ such that $A_n\geq A_0 t_n$ for all $n$ and all choices of pairs in $\Pcal'$. 

Appealing again to \cite[Lemma 5]{Bonahon:shearing}, we have
\begin{equation}\label{eqn:ell-estimate}
\ell(d)\leq B\exp(-t_nA_0r(d))\ ,
\end{equation}
 where $r(d)$ is the divergence radius of $d$ and $B$ is independent of $n$. Using \eqref{eqn:hast} and \eqref{eqn:ell-estimate}, then 
  as in the proof of Lemma \ref{lem:finite-approximation-bending}, there is a sequence of plaques separating $P$ and $Q$, $P=P_0, P_1, \ldots, P_N=Q$, such that 
$$
 \sigmabold_n(P,Q)=\sum_{i=1}^N(h_n^\ast(x_{d_{i-1}}^{+,\ast})-h_n^\ast(x_{d_i}^{-,\ast})) + E^\ast_n\ ,
$$
where $E^\ast_n=O(\exp(-Ct_n))$ for some $C>0$.  Similarly, we have
$$
 \widehat\sigmabold_n(P,Q)=\sum_{i=1}^N(h_n(x_{d_{i-1}}^{+})-h_n(x_{d_i}^{-})) + E_n\ ,
$$
where $E_n=O(\exp(-Ct_n))$.

Each component $d_i$ is associated with  a zero $p_i\in Z(q_n)$. 
By Proposition \ref{prop: harmonic map C1 localizes near zeroes} applied to the map $v_n$ near the zero $ p_i$, there are curves $k_n^\pm$ in $\Fcal_{q_n}^h$ such that the intersection $i(\Fcal_{q_n}^v, k_n)$ is uniformly bounded, and 
$$
\lim_{n\to \infty}\left\{
t_n^{-1}h_n(x^\pm_{d_i})\pm i(\Fcal_{q_n}^v, k_n^\pm)\right\}=0\ .
$$
From the construction of the map $\varphi$ in \textsection{\ref{sec:train-track-conclusion}}, it follows that for the same curves, 
$$
\lim_{n\to \infty}\left\{
t_n^{-1}h_n^\ast(x^{\pm,\ast}_{d_i})\pm i(\Fcal_{q_n}^v, k_n^\pm)\right\}=0\ .
$$
 Since the sums in the expressions for $\sigmabold_n$
 and $\widehat\sigmabold_n$ are finite (independent of $n$), and $P$ and $Q$ range over a finite set, we can satisfy the desired inequality  for any $\delta>0$
if $n$ is sufficiently large.
\end{proof}


Recall the open convex polyhedral cone $\Ccal(\Lambda_n)$ from the end of \textsection{\ref{subsec:ShearingCocycles}}.

\begin{prop} \label{prop:pleated-surface}
For $n$ sufficiently large, $\sigmabold_n\in \Ccal(\Lambda_n)$. Hence, there is a marked hyperbolic surface $S_n$ with shearing cocycle $\sigmabold_n$. 
\end{prop}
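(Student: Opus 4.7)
The strategy is to show that $\sigmabold_n$ is a norm-small perturbation of $\widehat\sigmabold_n$ in the finite-dimensional space $\Hcal(\Lambda_n, \RBbb)$, and then conclude by Bonahon's openness result (bullet (i) of \textsection{\ref{subsec:ShearingCocycles}}). The key input that enables this is that $\widehat\sigmabold_n$, being the shearing cocycle of the companion hyperbolic surface $\widehat S_n$, already lies inside the open cone $\Ccal(\Lambda_n)$.

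First I would upgrade the finite-pair estimate of Lemma \ref{lem:norm} to a norm estimate. By Proposition \ref{prop:cocycle-properties}(i) one can fix a finite set $\Pcal' \subset \Pcal(\Lambda)$ such that a cocycle in $\Hcal(\Lambda, \RBbb)$ is uniquely determined by its restriction to $\Pcal' \times \Pcal'$. For $n$ large, the Hausdorff convergence $\Lambda_n \to \Lambda$ supplies bijections $r^{\Lambda}_{\Lambda_n}:\Pcal(\Lambda)\to\Pcal(\Lambda_n)$, and these transport $\Pcal'$ to a finite determining subset in each of the $(6g-6)$-dimensional spaces $\Hcal(\Lambda_n,\RBbb)$. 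Finite-dimensional norm equivalence then gives
\begin{equation*}
\Vert \sigmabold_n - \widehat\sigmabold_n \Vert \;\leq\; C \max_{P,Q \in r^{\Lambda}_{\Lambda_n}(\Pcal')} \bigl|\sigmabold_n(P,Q) - \widehat\sigmabold_n(P,Q)\bigr| \;\leq\; C \delta\, t_n,
\end{equation*}
for every $\delta>0$ and every $n$ sufficiently large (depending on $\delta$).

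Since $\Ccal(\Lambda_n)$ is a cone, it is equivalent to prove $t_n^{-1}\sigmabold_n \in \Ccal(\Lambda_n)$. The rescaled companion cocycle $t_n^{-1}\widehat\sigmabold_n$ belongs to $\Ccal(\Lambda_n)$, and by the above $\Vert t_n^{-1}\sigmabold_n - t_n^{-1}\widehat\sigmabold_n\Vert \leq C\delta$. Using Lemma \ref{lem:wolf} and $q_n \to q$, both rescaled sequences converge (in the sense of Definition \ref{def:convergence-cocycles}) to the canonical cocycle $\sigmabold^{can}_q$. Bonahon's openness theorem then produces, for each $n$, a radius $\epsilon_n > 0$ such that the $\Vert\cdot\Vert$-ball of radius $\epsilon_n$ around $t_n^{-1}\widehat\sigmabold_n$ is contained in $\Ccal(\Lambda_n)$, and choosing $\delta < \epsilon_n/C$ would place $t_n^{-1}\sigmabold_n$ in the cone and produce the desired hyperbolic surface $S_n$.

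The principal difficulty will be to secure a uniform lower bound on the openness radii $\epsilon_n$, since the companion surfaces $\widehat S_n$ leave every compact subset of $T(\Sigma)$ and $t_n^{-1}\widehat\sigmabold_n$ may therefore approach the boundary $\partial \Ccal(\Lambda_n)$. My plan to handle this is to pass to a common train track carrying both $\Lambda_n$ and $\Lambda$ for $n$ large (available by Proposition \ref{prop:cocycle-properties}(ii) and Hausdorff convergence), use it to identify all of the $\Hcal(\Lambda_n, \RBbb)$ and the cones $\Ccal(\Lambda_n)$ with objects sitting inside $\Hcal(\Lambda,\RBbb)$ that vary continuously in $n$, and combine this with a diagonal argument: choosing $\delta = \delta_n \to 0$ slowly enough via the strongest forms of Lemmas \ref{lem:wolf} and \ref{lem:norm}, the perturbation $C\delta_n$ will be dominated by $\epsilon_n$ for all sufficiently large $n$, yielding $\sigmabold_n \in \Ccal(\Lambda_n)$.
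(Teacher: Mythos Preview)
Your overall strategy matches the paper's: show $\sigmabold_n$ is a small perturbation of $\widehat\sigmabold_n$ via Lemma~\ref{lem:norm} and finite-dimensionality, then invoke Bonahon's openness result (Prop.~13 of \cite{Bonahon:shearing}). The upgrade of the pair estimate to a norm estimate is exactly what the paper does.

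The gap is in your last paragraph. You correctly diagnose the difficulty---after rescaling, $t_n^{-1}\widehat\sigmabold_n$ converges to $\sigmabold^{can}_q$, which lies on $\partial\Ccal(\Lambda)$ (it is a transverse measure, not a shearing cocycle of a hyperbolic metric), so the openness radii $\epsilon_n$ may tend to zero. But your proposed remedy does not close the gap: Lemma~\ref{lem:norm} only asserts that for each fixed $\delta>0$ there is some $N(\delta)$, with no control on the rate. A diagonal choice $\delta_n\to 0$ can be arranged to satisfy $n\geq N(\delta_n)$, but there is no mechanism to force $C\delta_n<\epsilon_n$ without an independent lower bound on $\epsilon_n$; ``slowly enough'' is the wrong direction and in any case cannot beat an unknown rate.

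The paper resolves this by working \emph{without} rescaling and using the \emph{quantitative} form of Bonahon's openness. In the proof of \cite[Prop.~13]{Bonahon:shearing}, the radius of the ball around $\widehat\sigmabold_n$ that stays in $\Ccal(\Lambda_n)$ is $A_n/C$, where $C$ depends only on the combinatorics of a train track carrying $\Lambda_n$ (hence uniform in $n$), and $A_n$ is the constant from \cite[Lemma~3]{Bonahon:shearing} for the hyperbolic surface $\widehat S_n$. This is precisely the constant appearing in the proof of Lemma~\ref{lem:norm}, where it is shown that $A_n\geq A_0 t_n$ for a uniform $A_0>0$. Thus the (unrescaled) openness radius grows like $t_n$, and choosing a \emph{fixed} $\delta<A_0/2C$ in Lemma~\ref{lem:norm} gives $\Vert\sigmabold_n-\widehat\sigmabold_n\Vert<A_n/2C$ for all large $n$, which is exactly what is needed. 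No diagonal argument is required.
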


\begin{proof}
The constant $C$ defined in \cite[Lemma 6]{Bonahon:shearing} only depends on the combinatorics of the train track supporting the laminations (\emph{cf}.\ \cite[p.\ 26]{Bonahon:shearing}), and therefore may be taken independent of $n$. By Proposition \ref{prop:cocycle-properties} (i), one can choose a sufficiently large finite set $\Pcal'\subset\Pcal(\Lambda_n)$ so that transverse  cocycles are determined by their values on $\Pcal'$. Then
 using Lemma \ref{lem:norm} with $\delta<A_0/2C$, we conclude that 
$$
\Vert \sigmabold_n-\widehat\sigmabold_n\Vert< A_n/2C\ .
$$
for $n$ sufficiently large. The result then follows from \cite{Bonahon:shearing}, Proposition 13 and the proof thereof.
\end{proof}

\subsubsection{Proof of Theorem \ref{thm:bonahon-image}}
It remains to prove the existence of a $\rho_n$-equivariant pleated surface map $\widetilde S_n\to \HBbb^3$. 
Here we copy a construction in \cite{Bonahon:shearing}.
Let $\Pcal'\subset\Pcal(\Lambda_n)$ be a finite collection of plaques. For each $P\in \Pcal'$, define $f_{n,\Pcal'}$ on $P\subset \widetilde S_n\setminus \Lambda_n$,  to be the oriented isometry with 
 the corresponding plaque $P^\ast\subset \HBbb^3$.  
 The complement $\widetilde S_n\setminus \cup_{P\in \Pcal'} P$ consists of a union of wedges. For each wedge $\Sigma$, the boundary consists of two geodesics $g$ and $h$ belonging to plaques in $\Pcal'$. Choose (if necessary) a diagonal $\gamma$ in $\Sigma$ joining opposite endpoints of $g$ and $h$, and map $\gamma$ to the corresponding geodesic in $\HBbb^3$. The diagonal $\gamma$ splits $\Sigma$ into two wedges, 
 and  there is a unique way to extend $f_{n,\Pcal'}$ across these to make a continuous and totally geodesic map $\widetilde S_n\to \HBbb^3$, albeit without any equivariance property. 
 Using Lemma \ref{lem:norm}, as the finite sets $\Pcal'$ exhaust $\Pcal(\Lambda_n)$,
 the $f_{n,\Pcal'}$ converge locally uniformly to a map $f_n$. The fact that $f_n$ is $\rho_n$-equivariant and has shearing cocycle $\sigma_n$ follows as in \cite[proof of Lemmas 14 and 16]{Bonahon:shearing}.

\section{Proofs} \label{sec:proofs}

\subsection{Limiting trees} \label{sec:limit_trees}
 We begin by 
  proving a general result on ``factorization'' of equivariant harmonic maps to $\HBbb^2$ and $\HBbb^3$.   Let $[\rho_n]\in R^o(\Sigma)$.
 Suppose we are given a sequence of pleated surfaces  $\Psf_n=(S_n, f_n, \Lambda_n, \rho_n)$, where $\Lambda_n$ carries a transverse measure.
 Let $v_n:\widetilde X\to \HBbb^2$ denote the lift of the degree one harmonic diffeomorphism $X\to S_n$, the hyperbolic surface underlying $\Psf_n$. We also set $w_n=f_n\circ v_n: \widetilde X\to \HBbb^3$. Note that $w_n$ is $\rho_n$-equivariant, and since $f_n$ is totally geodesic on the complement of $\Lambda_n$, which has measure zero,
  $w_n$ is an $L^2_1$-map with the same energy as $v_n$. 
Finally, as usual, we let $u_n: \widetilde X\to \HBbb^3$ denote the $\rho_n$-equivariant harmonic map.

Let $q_n=\Hopf(u_n)$, $\psi_n=\Hopf(v_n)$. We may assume (after passing to a subsequence) that
$$
\frac{q_n}{\Vert q_n\Vert_1}\lra q\ ,\ \frac{\psi_n}{\Vert\psi_n\Vert_1} \lra \psi\ .
$$
Let $\phi_{\HF}(\Lambda_n)$ denote the  Hubbard-Masur differential whose horizontal measured foliation is measure equivalent to the one corresponding to the lamination $\Lambda_n$ (see \textsection{\ref{sec:measured-foliations}).

\begin{prop} \label{prop:hopf}
Suppose the following hold:\smallskip

\begin{compactenum}[(i)]
\item $\psi\in \SQD^\ast(X)$;\\
\item
$\displaystyle \lim_{n\to \infty}
\frac{\Vert 4\psi_n-\phi_{\HF}(\Lambda_n)\Vert_1}{\Vert\psi_n\Vert_1}=0
$.
\end{compactenum}
\smallskip
Then 
\begin{equation*} \label{eqn:convergence-qd}
\lim_{n\to \infty}\frac{\Vert q_n-\psi_n\Vert_1}{\Vert \psi_n\Vert_1}=0\ .
\end{equation*}
\end{prop}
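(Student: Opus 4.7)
The plan is to use $w_n := f_n\circ v_n$ as a $\rho_n$-equivariant $L^2_1$-competitor for the harmonic map $u_n$, combine the resulting energy inequality with classical high-energy asymptotics, and identify the Hopf differentials via Korevaar-Schoen tree limits.

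First, since $f_n$ is an isometric embedding on each plaque of $\widetilde S_n\setminus\widetilde\Lambda_n$ (which has full measure in $\widetilde S_n$), one has $E(w_n)=E(v_n)$ and $\Hopf(w_n)=\psi_n$. By the energy-minimizing property of $u_n$ we obtain $E(u_n)\le E(v_n)$. Standard identities for harmonic diffeomorphisms between closed hyperbolic surfaces ($\mathcal{H}(v_n)\mathcal{L}(v_n)=|\psi_n|_{ds^2}^2$ and $\int_X(\mathcal{H}-\mathcal{L})\,dA=2\pi(2g-2)$) yield $e(v_n)=\sqrt{J(v_n)^2+4|\psi_n|_{ds^2}^2}$, whence $E(v_n)=2\|\psi_n\|_1+O(1)$ as $\|\psi_n\|_1\to\infty$. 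A direct pointwise algebraic comparison for any smooth map to $\HBbb^3$ gives $e(u_n)\ge 2|q_n|_{ds^2}$ and hence $2\|q_n\|_1\le E(u_n)$. Combining the three estimates produces the key inequality $\|q_n\|_1\le\|\psi_n\|_1+O(1)$.

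Setting $t_n^2=\|\psi_n\|_1$ and rescaling the target by $1/t_n$, the rescaled energies are uniformly bounded, and by Theorem~\ref{thm:KS-compactness} subsequences of $u_n/t_n$ and $v_n/t_n$ converge to equivariant harmonic limit maps $u_\infty\colon\widetilde X\to T$ and $v_\infty\colon\widetilde X\to T'$ into $\RBbb$-trees, with Hopf differentials $\phi:=\lim q_n/t_n^2=\Hopf(u_\infty)$ and $\psi=\Hopf(v_\infty)$. Since $\psi\in\SQD^\ast(X)$, the tree $T'$ is identified by \cite{Wolf:thesis,WolfT} with the dual tree of $\psi$ (and $v_\infty$ with the corresponding leaf-space projection). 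For harmonic maps into $\RBbb$-trees one has $E=2\|\Hopf\|_1$, so the $L^1$-inequality above descends to $\|\phi\|_1\le\|\psi\|_1$. The $1$-Lipschitz property of $f_n$ gives $\ell_{\rho_n}(\gamma)\le\ell_{S_n}(\gamma)$ for every $\gamma\in\pi_1$, and combined with Theorem~\ref{thm:DDW} this yields $\ell_T\le\ell_{T'}$.

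The principal difficulty is then to upgrade these two inequalities to the equality $\phi=\psi$, ruling out both a loss of norm and the possibility of a nontrivial folding in the factorization of $u_\infty$. For this we invoke hypothesis (ii) together with the high-energy asymptotics of Minsky (Proposition~\ref{prop:realizing-3d}): the images under $u_n$ of the horizontal leaves of $q_n$ are exponentially close to a family of geodesics in $\HBbb^3$ that asymptotically coincides with the pleating locus of the pleated surface approximating $u_n$. Hypothesis (ii) then implies that the horizontal measured foliations of $q_n$ and $\psi_n$ are asymptotically measure equivalent, so by the Hubbard-Masur theorem $\phi$ is a positive multiple of $\psi$; the simplicity of the zeros of $\psi$ (hypothesis (i)) rules out folding in the factorization of Theorem~\ref{thm:KS}(iii); and the inequality $\|\phi\|_1\le\|\psi\|_1$ pins the scalar to $1$. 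Since this argument applies to any subsequence, the full sequence satisfies $\|q_n-\psi_n\|_1/\|\psi_n\|_1\to 0$, as required.
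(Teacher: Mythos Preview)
Your argument has a genuine gap at the decisive step. Hypothesis (ii) relates $\psi_n$ to the Hubbard--Masur differential of the \emph{given} lamination $\Lambda_n$; it says nothing about $q_n$. Proposition~\ref{prop:realizing-3d} tells you only that $u_n$ sends horizontal leaves of $q_n$ near geodesics in $\HBbb^3$; it does not identify those geodesics with the pleating locus $f_n(\widetilde\Lambda_n)$ of the \emph{prescribed} pleated surfaces $\Psf_n$. The phrase ``the pleated surface approximating $u_n$'' is doing illicit work: the pleated surfaces in \textsection\ref{sec:realization} are \emph{constructed from} $u_n$, whereas here $\Psf_n$ is handed to you with no a priori relationship to $u_n$ beyond sharing the monodromy $\rho_n$. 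So the implication ``the horizontal measured foliations of $q_n$ and $\psi_n$ are asymptotically measure equivalent'' is precisely the conclusion you are trying to prove, not a consequence of the hypotheses. (A related issue: you never establish a lower bound $E(u_n)\geq c\,E(v_n)$, so with your rescaling $t_n^2=\|\psi_n\|_1$ the limit $u_\infty$ could a priori be constant and $\phi=0$.)

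The paper closes this gap by an entirely different mechanism that never compares laminations directly. It first proves the two-sided energy bound (Lemma~\ref{lem:energy-estimate}), then passes to the Korevaar--Schoen tree limit for \emph{both} sequences $u_n$ and $w_n=f_n\circ v_n$, obtaining $u,w:\widetilde X\to T$ into the \emph{same} Morgan--Shalen tree, together with a limiting folding $f:T_\psi\to T$ with $w=f\circ v$. Hypothesis~(i) is used not to ``rule out folding'' (folding can still occur at trivalent vertices) but only to ensure that \emph{adjacent} critical edges at a zero cannot fold together; this is exactly what is needed in Lemma~\ref{lem:v-map} to show that $z\mapsto d_T(w(z),Q)$ is subharmonic and hence that $w$ has well-defined Hopf differential equal to $\psi$. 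The heart of the proof is then a maximum-principle argument for the function $D(z)=d_T(u(z),w(z))$ on $X$: subharmonicity forces $D$ to be constant, and a short tree-geometric computation shows that $u$ and $w$ have equal Hopf differentials. Uniqueness of equivariant harmonic maps to trees then gives $u=w$, from which $E(u_n)/E(v_n)\to 1$ and the desired $L^1$-convergence follow.
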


\begin{lem} \label{lem:energy-estimate}
Under assumption (ii) above, there is a constant $0<c\leq 1$ such that 
$$c\cdot E(v_n)\leq E(u_n)\leq E(v_n)\ .$$
\end{lem}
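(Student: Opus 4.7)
The upper bound $E(u_n)\le E(v_n)$ will be immediate. Since $f_n$ is totally geodesic on each plaque of $\widetilde S_n\setminus\widetilde\Lambda_n$ (a subset of full measure in $\widetilde S_n$) and an isometry along leaves of $\widetilde\Lambda_n$, the composition $w_n=f_n\circ v_n$ is an $L^2_1$, $\rho_n$-equivariant map whose pointwise energy density equals that of $v_n$ almost everywhere; hence $E(w_n)=E(v_n)$. Since $u_n$ is energy-minimizing in its $\rho_n$-equivariance class, $E(u_n)\le E(w_n)=E(v_n)$.

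For the lower bound we would argue by contradiction. Suppose along a subsequence that $E(u_n)/E(v_n)\to 0$, and set $t_n^2:=E(v_n)$. The case in which $t_n$ remains bounded reduces to a standard compactness argument for harmonic maps (using that $\ell_{\rho_n}(\gamma)\le\ell_{S_n}(\gamma)$ keeps $[\rho_n]$ in a compact subset of $R^o(\Sigma)$); we may therefore assume $t_n\to\infty$. Rescaling the target metrics by $t_n^{-1}$ and applying the Korevaar-Schoen compactness theorem (Theorem~\ref{thm:KS-compactness}) to the sequence $u_n$, we obtain a subsequential limit: a zero-energy (hence constant) $\rho$-equivariant harmonic map $u_\infty\colon\widetilde X\to T^u$ to an $\RBbb$-tree. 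Triviality of the induced $\pi_1$-action on $T^u$ means
\[
\ell_{\rho_n}(\gamma)/t_n\to 0\quad\text{for every }\gamma\in\pi_1\ ,
\]
and by continuity of length functions under this KS-convergence the same rescaled vanishing extends to sequences of measured laminations whose $t_n^{-2}$-normalizations converge in $\PML$.

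To derive a contradiction we exploit the pleated-surface structure. Let $\mu_n$ denote the canonical transverse measure of $\Lambda_n$. Because $f_n$ is an isometry along each leaf of $\widetilde\Lambda_n$, one has the identity
\[
\ell_{\rho_n}(\mu_n)=\ell_{S_n}(\mu_n)\ .
\]
Assumption (ii) implies that $[\phi_{\HF}(\mu_n)/\Vert\psi_n\Vert_1]$ converges to $[4\psi_\infty/\Vert\psi_\infty\Vert_1]$ for any weak subsequential limit $\psi_\infty$ of $\psi_n/\Vert\psi_n\Vert_1$; combined with Wolf's high-energy asymptotics $E(v_n)\sim 2\Vert\psi_n\Vert_1$ this gives $\Vert\psi_n\Vert_1\asymp t_n^2$, and $[\mu_n/t_n^2]$ converges in $\PML$ to a nonzero class $[\mu_\infty]$. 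Using Minsky's high-energy estimates for $v_n$---in particular, the $S_n$-stretching of horizontal leaves of $\psi_n$ at rate comparable to $t_n$---one then obtains the lower bound $\ell_{S_n}(\mu_n)\gtrsim t_n^2$. Thus $\ell_{\rho_n}(\mu_n)/t_n^2$ is bounded below by a positive constant, contradicting $\ell_{\rho_n}(\mu_n)/t_n^2=\ell_{\rho_n}(\mu_n/t_n^2)\to\ell_{T^u}(\mu_\infty)=0$.

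\textbf{Main obstacle.} The principal technical difficulty lies in the two-sided comparison $\ell_{S_n}(\mu_n)\asymp t_n^2$. The upper estimate follows routinely from Minsky's bound on the stretch of horizontal leaves, but the lower estimate requires transferring asymptotic $S_n$-length bounds for the horizontal foliation of $\psi_n$ (a differential that itself depends on $n$) onto the pleating measure $\mu_n$. The transfer uses the quantitative closeness between $4\psi_n$ and $\phi_{\HF}(\Lambda_n)$ supplied by (ii), but must be performed uniformly across the sequence of degenerating hyperbolic structures $S_n$---the delicate point bridging Wolf's high-energy analysis of harmonic maps between hyperbolic surfaces with the Bonahon-Thurston geometry of the pleated surfaces.
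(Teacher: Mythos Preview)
Your upper bound is identical to the paper's. For the lower bound, both you and the paper employ the same core mechanism: bound a $\rho_n$-translation length from above by $E(u_n)^{1/2}$ via the Lipschitz estimate for the harmonic map $u_n$, and from below by $E(v_n)^{1/2}$ via the pleated-surface geometry together with high-energy estimates for $v_n$. The paper implements this directly with a single closed curve $\gamma$, produced by Poincar\'e recurrence from a leaf of $\Fcal_{\psi_n}^h$: the image $w_n(\gamma)=f_n\circ v_n(\gamma)$ is nearly geodesic in $\HBbb^3$ of length $\gtrsim i(\gamma,\Fcal^v_{\psi_n/\|\psi_n\|})\,E(v_n)^{1/2}$, while $\ell_X(\gamma)$ is comparable to this same intersection number, so the two inequalities combine. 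Your version replaces this curve by the full pleating measure $\mu_n$ and the explicit near-geodesic estimate by the standard identity $\ell_{\rho_n}(\mu_n)=\ell_{S_n}(\mu_n)$ for pleated surfaces. This is a legitimate alternative packaging of the same idea.

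Two points where your write-up needs tightening. First, the Korevaar--Schoen detour is superfluous and, as written, does not justify the claim: Theorem~\ref{thm:DDW} requires $t_n\asymp E(u_n)^{1/2}$, which fails under your contradiction hypothesis, so you cannot identify the KS limit with a Morgan--Shalen tree. What you actually need---and what drives your argument---is simply the uniform Lipschitz inequality $\ell_{\rho_n}(\mu)\le B\,\ell_X(\mu)\,E(u_n)^{1/2}$, valid for all $\mu\in\mathcal{ML}$, which immediately gives $\ell_{\rho_n}(\mu_n)/t_n^2\to 0$ (here $\ell_X(\mu_n)\asymp t_n$, not $t_n^2$, since the transverse measure scales like $\|\phi_{\HF}(\mu_n)\|_1^{1/2}$; your $t_n^{-2}$-normalization should be $t_n^{-1}$). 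Second, the ``main obstacle'' $\ell_{S_n}(\mu_n)\gtrsim t_n^2$ is tractable: after normalizing, $\mu_n/t_n\to 2\Fcal_{\hat\psi}^h$ in $\mathcal{ML}$ and $[S_n]\to[\Fcal_{\hat\psi}^v]$ on the Thurston boundary (Wolf), and joint continuity of the length pairing at the boundary yields $\ell_{S_n}(\mu_n)/t_n^2\to 4\,i(\Fcal_{\hat\psi}^h,\Fcal_{\hat\psi}^v)=4>0$. The paper's single-curve approach avoids this joint-continuity step by directly arranging $\ell_X(\gamma)$ and the relevant intersection number to be comparable.
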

\begin{proof}
The inequality $E(u_n)\leq E(v_n)$ is automatic, since $u_n$ is an energy minimizer among $\rho_n$-equivariant $L^2_1$-maps, and $E(v_n)=E(w_n)$.
Choose any conformal metric on $X$, and induce a metric on $\widetilde X$. 
By the uniform Lipschitz property of harmonic maps to NPC targets (\emph{cf}.\ \cite[Thm.\ 2.2]{Sch}), 
there is a constant $B$ independent of $j$ such that for any points $p,q\in \widetilde X$,
$$
d_{\HBbb^3}(u_n(p), u_n(q))\leq B\, d_{\widetilde X}(p,q)\cdot E^{1/2}(u_n)\ . 
$$
In particular, for $\gamma\in \pi_1$,
\begin{equation} \label{eqn:lipschitz}
\tau_{\HBbb^3}(\rho_n(\gamma))\leq B\, \ell_X(\gamma)\cdot E^{1/2}(u_n)\ . 
\end{equation}
 By (ii), the laminations $\Lambda_n$ are close to  $\Lambda_{\psi_n}^h$. By Poincar\'e recurrence, we may choose a nondegenerate leaf of $\Fcal^h_{\psi_n}$ and form a closed loop $\gamma$ by adding small segments of the vertical foliation (see \cite[Cor.\ 5.3]{FLP}). The image by $f_n$ of the lift of this loop consists of nearly geodesic segments joined by tiny orthogonal segments, so the length approximates the translation length of the corresponding element in $\Iso(\HBbb^3)$. 
The high energy behavior of $v_n$ (\emph{cf}.\ \cite{Wolf:thesis} and Proposition \ref{prop:realizing-2d})  further
 implies that this length is approximated by the transverse measure to  $\Fcal_{\psi_n}^v$. From this we deduce  the existence of  a constant $c_0>0$ such that
$$
\tau_{\HBbb^3}(\rho_n(\gamma))\geq c_0\cdot i(\gamma,\Fcal^v_{\frac{\psi_n}{\|\psi_n\|}})E(v_n)^{1/2}\ .
$$
We then observe that since $\gamma$ may be chosen to be long and nearly along the leaves of $\Fcal^v_{\psi_n}$, the $X$- and $\frac{|\psi_n|}{\|\psi_n\|}$-lengths of $\gamma$ are comparable.
 The Lemma then  follows from \eqref{eqn:lipschitz}.
\end{proof}

Let 
$(C_n, d_{\HBbb^3})$ be the  closed convex hull of the image of $w_n$ in $\HBbb^3$. 
 We now consider the rescaled metric spaces $\HBbb^2_n:=(\HBbb^2, t_n^{-1}d_{\HBbb^2})$ and $W_n:=(C_n, t_n^{-1}d_{\HBbb^3})$, where $t_n^2=E(v_n)$.
By the uniform Lipschitz property used in the previous proof, $v_n:\widetilde X\to \HBbb^2_n$ has uniform modulus of continuity in the sense of Theorem \ref{thm:KS-compactness}.
 Since the map $f_n: \HBbb^2\to C_n$  is distance nonincreasing. it
  follows that $w_n:\widetilde X\to W_n$ has uniform modulus of continuity  as well.

\begin{lem} \label{lem:gh-convergence}
After possibly passing to a subsequence, we have the following properties:
\begin{compactenum}[(i)]
\item $\HBbb^2_n$ with the isometric action of $\pi_1$ converges in the Gromov-Hausdorff  sense to the $\RBbb$-tree $T_\psi$ dual to the quadratic differential $\psi$, up to scale, and the maps $v_n$ converge to a surjective $\pi_1$-equivariant harmonic map $v:\widetilde X\to T_\psi$;
\item $W_n$ with the isometric action of $\pi_1$ converges in the Gromov-Hausdorff  sense to an $\RBbb$-tree $T$ with $\pi_1$-action whose projective length function is equivalent to the Morgan-Shalen limit of $\{\rho_n\}$.  In particular, the action is minimal. The maps $w_n$ converge to an equivariant map $w:\widetilde X\to T$ of finite energy; 
\item The maps $f_n:\HBbb^2_n\to W_n$ converge to a morphism of trees $f: T_\psi\to T$. There is no folding of edges in $T_\psi$ 
corresponding to  adjacent critical leaves
of the horizontal foliation of $\psi$ meeting at a zero. Moreover,  $w:=f\circ v$.
\end{compactenum}
\end{lem}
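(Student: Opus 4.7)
The plan is to apply Korevaar-Schoen compactness separately to each of the two map sequences, identify the limits via results already established in the excerpt, and then combine everything through the tautological factorization $w_n = f_n\circ v_n$.

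For part (i), the rescaled maps $v_n:\widetilde X\to\HBbb^2_n$ are harmonic, have unit energy, and satisfy a uniform modulus of continuity by Theorem \ref{thm:KS}(i). Applying Theorem \ref{thm:KS-compactness} yields (along a subsequence) Gromov-Hausdorff convergence to some $\RBbb$-tree $T'$ together with an equivariant harmonic map; since $t_n^{-2}\psi_n\to\psi$, Theorem \ref{thm:KS-compactness}(iii) identifies its Hopf differential as $4\psi$. Theorem \ref{thm:KS}(iii) produces a folding $p$ with $v = p\circ u_\psi$, and Wolf's theorem \cite{WolfT,Wolf:thesis}, applied to the sequence of harmonic diffeomorphisms $v_n$, shows that the limit length function equals the intersection pairing with $\Fcal_\psi^v$; this forces $p$ to be an isometry, so $T'\cong T_\psi$ and $v = u_\psi$ is the canonical (surjective) projection.

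For part (ii), the maps $w_n = f_n\circ v_n:\widetilde X\to W_n$ inherit the uniform modulus of continuity of $v_n$ since $f_n$ is $1$-Lipschitz, and $E(w_n) = E(v_n) = t_n^2$. A second application of Theorem \ref{thm:KS-compactness} delivers a limit tree $T$ with an isometric $\pi_1$-action and an equivariant finite-energy map $w:\widetilde X\to T$. By Lemma \ref{lem:energy-estimate} we have $E(u_n)^{1/2}\asymp t_n$, so the hypothesis of Theorem \ref{thm:DDW} is satisfied with the $u_n$ and the length function of $T$ lies in the projective class of the Morgan-Shalen limit of $[\rho_n]$. Minimality is obtained by replacing $T$ with the closed convex hull of the image of $w$, which is $\pi_1$-invariant and supports a nontrivial action because the translation lengths are strictly positive (the Morgan-Shalen limit is nontrivial by divergence of $[\rho_n]$).

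For part (iii), passing to the limit in the identity $w_n = f_n\circ v_n$ using the equicontinuity of both sequences produces a continuous equivariant map $f:T_\psi\to T$ with $w = f\circ v$. To verify the morphism condition, a segment $e\subset T_\psi$ is represented by an arc $\widetilde k\subset\widetilde X$ transverse to $\Fcal_\psi^v$; for large $n$ the curve $v_n(\widetilde k)$ meets only finitely many plaques of $\Lambda_n$, on each of which $f_n$ is isometric, so in the limit $e$ decomposes into finitely many subsegments on each of which $f$ is an isometry. Theorem \ref{thm:KS}(iii) applied to $w$ yields a factorization $w = \tilde p\circ u_{\Hopf(w)/4}$; combining with Proposition \ref{prop:hopf} (whose hypotheses (i) and (ii) are exactly those of the present lemma) gives $\Hopf(w) = 4\psi$, and hence $u_{\Hopf(w)/4} = v$ and $\tilde p = f$.

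The hard part will be the no-folding assertion at the edges meeting at a branch point $[p_0]\in T_\psi$ corresponding to a zero $p_0$ of $\psi$. The three critical horizontal leaves at $p_0$ are mapped by $v_n$, after geodesic straightening in $S_n$, to the three sides of a single ideal triangle in $\HBbb^2$. The pleated surface map $f_n$, being totally geodesic on plaques, sends this ideal triangle isometrically to an ideal triangle in $\HBbb^3$ with three pairwise distinct ideal vertices. Under the rescaling $d_{\HBbb^3}\mapsto t_n^{-1}d_{\HBbb^3}$ and passage to the Gromov-Hausdorff limit, this ideal triangle collapses to a tripod in $T$ whose three rays share only the central vertex $f([p_0])$; this prevents $f$ from identifying initial subsegments of any two of the three edges at $[p_0]$, ruling out folding. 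Making this rigorous will require tracking the three asymptotic directions of the ideal triangle under the rescaling, and arguing that the pleated structure ensures their limiting directions in $T$ remain pairwise distinct.
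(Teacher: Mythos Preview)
Your approach to parts (i) and (ii) is essentially the paper's, invoked through slightly different citations (Korevaar--Schoen compactness in place of Bestvina--Paulin for (i)). The substantive problems are in part (iii).

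First, you apply Theorem~\ref{thm:KS}(iii) to $w$, but that theorem requires the map to be harmonic. The maps $w_n = f_n\circ v_n$ are \emph{not} harmonic (only $u_n$ and $v_n$ are), and Theorem~\ref{thm:KS-compactness}(iii) guarantees harmonicity of the limit only when the approximating sequence is harmonic. So at this stage you have no right to a Hopf differential for $w$, let alone a factorization through a dual tree.

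Second, your invocation of Proposition~\ref{prop:hopf} is circular: that proposition's proof explicitly relies on Lemma~\ref{lem:gh-convergence} (and on Lemma~\ref{lem:v-map}, which in turn uses the no-folding clause of the present lemma). You cannot call forward.

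The good news is that neither step is needed. The factorization $w=f\circ v$ follows directly from passing $w_n=f_n\circ v_n$ to the limit, as you already said, and the morphism property of $f$ comes from the pleated-surface structure: $f_n$ maps leaves of $\widetilde\Lambda_n$ to geodesics in $\HBbb^3$, so on segments of $T_\psi$ corresponding to horizontal leaves of $\psi$ the limit $f$ is an isometry. (Your ``finitely many plaques'' phrasing is imprecise---a transverse arc typically meets uncountably many plaques---but the idea is right.) The paper's proof stops there. The identification $\mathrm{Hopf}(w)=\psi$ is established only \emph{afterwards}, in Lemma~\ref{lem:v-map}, and it uses the no-folding clause you are currently proving.

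For the no-folding assertion, your ideal-triangle-to-tripod picture is exactly the paper's one-line argument: adjacent critical horizontal leaves straighten to distinct sides of a plaque, $f_n$ maps that plaque isometrically to an ideal triangle in $\HBbb^3$, and the rescaled images of the sides stay at bounded-below distance, so they cannot be identified in the limit.
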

Note that the embedding, into the surface, of the graph of critical leaves of the horizontal foliation of $\psi$, induces a natural notion of adjacency of critical leaves.
\begin{proof}
The convergence property in 
item (i) follows from \cite{Bestvina:88,PaulinGromovConv94}, and the harmonicity, surjectivity and convergence to the map is in \cite{WolfT}.
Convergence in  (ii) follows by the construction in Theorem \ref{thm:KS-compactness}.  Note that the result of Lemma \ref{lem:energy-estimate} guarantees that the length function has a well defined nonzero limit and that the resulting map $v$ is nonconstant. 
The fact that the limiting tree is the Morgan-Shalen (minimal) tree follows from Theorem \ref{thm:DDW}. 
Item (iii) can be seen by taking the images of leaves of the horizontal foliation of $\psi$ and using the fact that the pleated maps $f_n$ take leaves of the lamination to geodesics. The last assertion in (iii) is obvious.
\end{proof}

We shall need some further properties of the map $w:\widetilde X\to T$.
\begin{lem} \label{lem:v-map}
Fix $z_0\in \widetilde X$ and $Q\in T$. Then on a sufficiently small disk about $z_0$, the function $z\mapsto d_T(w(z), Q)$ is subharmonic. Moreover, the Hopf differential of $w$ is
well defined and equal to $\psi$. 
\end{lem}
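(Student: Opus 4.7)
The plan is to exploit the factorization $w = f \circ v$ from Lemma~\ref{lem:gh-convergence}(iii), where $v : \widetilde X \to T_\psi$ is an equivariant harmonic map (Lemma~\ref{lem:gh-convergence}(i)) and $f : T_\psi \to T$ is a morphism of $\RBbb$-trees. Both claims of the lemma then reduce to general facts about tree morphisms combined with the Korevaar--Schoen theory of harmonic maps into NPC spaces.

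For the subharmonicity statement I would first show that for any $Q \in T$, the function $F_Q : T_\psi \to \RBbb$ defined by $F_Q(p) := d_T(f(p), Q)$ is convex along every geodesic of $T_\psi$. This uses that $f$ is a morphism: along any unit-speed geodesic $\gamma$ in $T_\psi$, the composition $f \circ \gamma$ is a unit-speed piecewise geodesic in $T$ (by Theorem~\ref{thm:KS}(iii), folding only occurs on the discrete vertex set of $T_\psi$), and in an $\RBbb$-tree the distance from a point moving at unit speed to a fixed point is convex: it decreases at unit speed, achieves a minimum, and increases at unit speed. A direct case analysis shows that at a folding point of $f$ the two one-sided slopes of $t \mapsto d_T(f(\gamma(t)), Q)$ can only create a convex corner, never a concave one. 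Given this convexity, the classical Korevaar--Schoen result that harmonic maps pull convex functions on NPC targets back to subharmonic functions on the Riemannian domain applies to the harmonic map $v:\widetilde X\to T_\psi$, yielding subharmonicity of $F_Q \circ v (z) = d_T(w(z), Q)$.

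For the Hopf differential, I would argue that $w$ and $v$ have the same directional energy density almost everywhere, hence the same Hopf differential. Indeed, $f$ is an isometry away from the branch vertex set $V \subset T_\psi$ (which is at most countable). The preimage $v^{-1}(V)$ consists of the critical horizontal trajectories of $\psi$ meeting its zeroes, hence has two-dimensional Lebesgue measure zero in $\widetilde X$. On the full-measure complement $\widetilde X \setminus v^{-1}(V)$, the Korevaar--Schoen directional derivative of $w = f \circ v$ is obtained by post-composing the directional derivative of $v$ with a local isometry, so the pullback tensor, and therefore its $(2,0)$-part, coincides with that of $v$. By Theorem~\ref{thm:KS}(ii), combined with Theorem~\ref{thm:KS-compactness}(iii) which computes $\Hopf(v)$ as the limit of $\Hopf(v_n)$ in the rescaled targets, we identify $\Hopf(v)$ with the limiting quadratic differential $\psi$ (with the normalization of $T_\psi$ inherited from the Gromov--Hausdorff limit in Lemma~\ref{lem:gh-convergence}(i)). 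Hence $\Hopf(w) = \psi$.

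The main obstacle is handling the folding behavior of $f$ cleanly: first, verifying that convexity of $F_Q$ truly survives at the fold vertices, which comes down to a careful analysis of how a geodesic of $T_\psi$ can be bent by $f$ at a branch point; and second, verifying that the folding locus $v^{-1}(V)$ is negligible, which uses the explicit description in Lemma~\ref{lem:gh-convergence}(iii) of $v$ as the leaf-space projection of a measured foliation whose critical graph is one-dimensional. A secondary bookkeeping issue is tracking the rescalings $\HBbb^2_n = t_n^{-1}\HBbb^2$ against the normalization $\psi_n/\|\psi_n\|_1 \to \psi$ carefully enough to obtain $\Hopf(w) = \psi$ on the nose rather than merely up to a positive constant.
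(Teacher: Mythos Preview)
Your overall strategy—pull back a convex function on $T_\psi$ through the harmonic map $v$—is sound and arguably cleaner than the paper's, but there is a genuine gap in the convexity step.

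You assert that at a folding point of $f$ the two one-sided slopes of $t \mapsto d_T(f(\gamma(t)),Q)$ ``can only create a convex corner, never a concave one.'' This is false for a general morphism of $\RBbb$-trees: take $T_\psi=\RBbb$, $T=[0,\infty)$, $f(t)=|t|$, and $Q=1$; then $d_T(f(t),Q)=\bigl||t|-1\bigr|$ has a strict local maximum at $t=0$. What rescues you here is precisely the clause of Lemma~\ref{lem:gh-convergence}(iii) that you cite only for the measure-zero claim and not for convexity: \emph{there is no folding of adjacent edges at a vertex}. Since $\psi\in\SQD^\ast(X)$, every vertex of $T_\psi$ is trivalent; a geodesic $\gamma$ through such a vertex enters along one edge and exits along a different one, and Lemma~\ref{lem:gh-convergence}(iii) forbids $f$ from identifying those two directions. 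Hence $f\circ\gamma$ never backtracks, is a genuine geodesic in $T$, and convexity of $d_T(\,\cdot\,,Q)$ along it is then automatic. Once you insert this step explicitly, your argument goes through.

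The paper's proof uses the same no-adjacent-folding input but packages it differently: rather than proving $F_Q$ is convex on $T_\psi$, it works on a small disk $U$ about $z_0$, observes that the three local edges $e_i$ at $v(z_0)$ map to three \emph{distinct} segments $\bar e_i$ in $T$, and then does a two-case split on whether $\overline{w(z_0)Q}$ enters one of the $\bar e_i$. In each case it exhibits a point $\widetilde Q\in T_\psi$ with $d_T(w(z),Q)=d_{T_\psi}(v(z),\widetilde Q)+\text{const}$ on $U$, reducing directly to subharmonicity for $v$. The two arguments are equivalent in content; yours is more conceptual, the paper's more hands-on.

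Your Hopf-differential argument (local isometry of $f$ off the vertex set, $v^{-1}(V)$ null, hence the Korevaar--Schoen pullback tensors of $w$ and $v$ agree a.e.) is correct. The paper instead takes $Q=w(z_0)$ in its local identity to get $d_T(w(z),w(z_0))=d_{T_\psi}(v(z),v(z_0))$ on $U$, and reads off equality of energy densities and Hopf differentials from that.
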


\begin{proof}
We may assume $z_0$ is a point such that the map $p$ folds at $v(z_0)$, since otherwise $f$ is a local isometry, and the result follows since $v$ is harmonic with Hopf differential $\psi$. Choose the disk $U$ such that the image $v(U)$ consists of geodesic segments $e_1, e_2, e_3$ in $T_\psi$ meeting at a vertex. Because $f$ is a folding, we may assume that $f$ maps each $e_i$ isometrically onto corresponding geodesic segments $\bar e_i$ in $T$. Note that since folding cannot occur on edges, hence not on adjacent edges incident to a vertex, and the zero in $U$ is trivalent, we see that the $\bar e_i$ are distinct.

 Either the geodesic segment  $\overline{w(z_0)Q}$ intersects
each $\bar e_i$ in the point $w(z_0)$ (which we call Case 1); or $\overline{w(z_0)Q}$ intersects
  some (and hence only one) $\bar e_i$ in a nondegenerate segment (which we call Case 2). 

Suppose we have Case 1. Then we claim that for lifts $\widetilde Q$ of $Q$ to $T_\psi$,
$$
\overline{v(z_0)\widetilde Q}\cap e_i=\{v(z_0)\}\ .
$$
If this were not the case, let $R$ be a nondegenerate segment in one of the intersections above, and set $\overline R=f(R)\subset T$. Set $R^-=\{v(z_0)\}$, and let $R^+$ denote the other endpoint of $R$. The geodesic $\gamma$ in $T_\psi$ from $R^+$ to $\widetilde Q$  is disjoint from the interior of $R$. Hence, its image $\bar\gamma$ in $T$ is a path from $\overline R^+$ to $Q$ that is disjoint from the interior of $\overline R$. On the other hand, there is another path from $w(z_0)$ to $Q$ that is disjoint from the interior of $\overline R$. This contradicts the fact that $T$ is a tree. 

Since the image of $v$ on $U$ is $e_1\cup e_2\cup e_3$, it follows that
\begin{align*}
d_{T_\psi}(v(z), \widetilde Q)&= d_{T_\psi}(v(z), v(z_0)) + d_{T_\psi}(v(z_0), \widetilde Q)\\
&=  d_{T}(w(z), w(z_0)) + d_{T_\psi}(v(z_0), \widetilde Q)\\
&=  d_{T}(w(z), Q)-d_T(w(z_0), Q) + d_{T_\psi}(v(z_0), \widetilde Q)
\end{align*}
Since $d_{T_\psi}(v(z), \widetilde Q)$ is subharmonic on $U$, so is $d_{T}(w(z), Q)$.

In Case 2, suppose without loss of generality that $\overline{w(z_0)Q}\cap \bar e_1=\overline R$ is a nondegenerate segment. Then for small enough $U$, 
$$
d_T(w(z), Q)=d_T(w(z), \overline R^+)+d_T(\overline R^+, Q)\ .
$$
Now because adjacent edges do not fold, 
$$
d_T(w(z), \overline R^+)= d_{T_\psi}(v(z), R^+)\ ,
$$
and the result follows as above. This proves the first part of the Lemma. 
Taking $Q=w(z_0)$, we see that $d_T(w(z), w(z_0))=d_{T_\psi}(v(z), v(z_0))$. Then the energy densities and  Hopf differentials of $w$ and $v$ must coincide (see \cite[\textsection{1.2} and \textsection{2.3}]{KorSch1}).
 The Lemma is proved. 
\end{proof}

 Again appealing to Theorem \ref{thm:DDW},   the rescaled  convex hulls of the images of the $u_n$ converge to give an equivariant harmonic map $u:\widetilde X\to T$ to the Morgan-Shalen limit. Let us emphasize that since the limiting length function is not abelian, there is a unique $\RBbb$-tree associated to the Morgan-Shalen limit with the given limiting length function. 

\begin{proof}[Proof of Proposition \ref{prop:hopf}]
Applying assumption (ii) provides for the estimates in Lemma \ref{lem:energy-estimate}, and hence the existence of the nontrivial map $w$ in Lemma \ref{lem:gh-convergence}. Assumption (i) is used in proof of Lemma \ref{lem:v-map}. With this understood, 
 consider $D(z)=d_T(u(z), w(z))$, for $z\in \widetilde X$. First, since $u$ and $w$ are equivariant for the same action, $D(z)$ descends to a function on $X$. Next, observe that $D(z)$ is continuous, since both $u$ and $w$ are Lipschitz. Let 
$$\Scal=\{ z\in X\mid D(z)=\max_X D\}\ .$$
 Then $\Scal$ is closed and nonempty. Let
$z_0\in \Scal$. We may assume $D(z_0)>0$, since otherwise $u=w$ and  there is nothing to prove. 
Let $Q\in T$ be the midpoint of  the geodesic  $\overline{u(z_0)w(z_0)}$, 
and choose an open disk $U$ about $z_0$ sufficiently small so that $u(U)\cap w(U)=\emptyset$. Since $T\setminus\{Q\}$ is disconnected and hence $u(U)$, $w(U)$ lie in different components, any path from one image to the other must pass through $Q$. In particular,
$$
D(z)=d_T(u(z), Q)+ d_T(w(z), Q)\ ,\ \text{for all $z\in U$.}
$$
Since distance to a point is a convex function, and harmonic maps pull back convex functions to subharmonic functions, the first term on the right hand side is subharmonic. For $U$ sufficiently small, the second term is also subharmonic by Lemma \ref{lem:v-map}.  Hence, using the strong maximum principle this implies that $D$ is constant on $U$, and so $\Scal$ is open. 
Hence, $D(z)$ is a constant function. 

We claim that $u$ and $w$ have the same Hopf differentials.
Suppose $D=D(z)>0$. Then the claim will follow (as above using the definition in \cite{KorSch1}) by showing that for any $z_0$ there is a small enough neighborhood $U$ about $z_0$ such that
\begin{equation}\label{eqn:equality}
d_T(u(z), u(z_0)) = d_T(w(z), w(z_0))\ ,\ \text{for all $z\in U$.}
 \end{equation}
Let $R\subset T$ denote the edge from $u(z_0)$ to $w(z_0)$. Let 
$$
D^+_u=\{ z\in U \mid u(z)\not\in R\} \quad ,\quad
D^-_u=\{ z\in U \mid u(z)\in R\}\ .
$$
Similarly, we define $D^\pm_w$. Notice that since $D(z)=D$ is constant, $D^+_w=D^-_u$ and $D^-_w=D^+_u$. Hence, for $z\in D^+_u$,
\begin{align*}
D=d_T(u(z), w(z))&= d_T(u(z), u(z_0))+ d(u(z_0), w(z)) \\
&= d(u(z), u(z_0))-d_T(w(z), w(z_0))+D\ ,
\end{align*}
whereas for $z\in D^-_u$,
\begin{align*}
D=d_T(u(z), w(z))&= d_T(w(z), w(z_0))+ d(w(z_0), u(z)) \\
&= d(w(z), w(z_0))-d_T(u(z), u(z_0))+D\ .
\end{align*}
In both cases, the equality \eqref{eqn:equality} holds. This proves that $\psi=q$.

In fact, since the energy densities of $u$ and $w$ agree,
$w$ is also energy minimizing. But
 equivariant harmonic maps to nontrivial $\RBbb$-trees are unique 
(see \cite{Mese}), so that in fact $w=u$.
 Choose $p\neq p'\in \HBbb^2$ to lie on a portion of a leaf
 $\ell\subset\widetilde\Fcal^h_q$ away from the zeroes.
 We assume that the map $f:T_q\to T$ maps the image of 
$\ell$ isometrically onto a geodesic segment in $T$. By the definition of a folding and the dual tree, such an $\ell$ can always be found: one simply takes a preimage in a leaf of an arc in a tree and restricts to a small enough subleaf that is not folded. By Theorems \ref{thm:KS} and \ref{thm:KS-compactness}, we have 
\begin{align}
\begin{split}\label{eqn:harmonic-map-convergence}
E(w_n)^{-1/2}d_{\HBbb^3}(w_n(p), w_n(p'))
=&E(v_n)^{-1/2}d_{\HBbb^3}(v_n(p), v_n(p'))\lra i(\ell,\Fcal_\psi^v)\\
E(u_n)^{-1/2}d_{\HBbb^3}(u_n(p), u_n(p'))&\lra i(\ell,\Fcal_q^v)
\end{split}
\end{align}
and since $q=\psi$, the right hand sides are equal. On the other hand, from Lemma \ref{lem:gh-convergence} and the fact that $w=u$, 
\begin{equation} \label{eqn:maps-converge}
\lim_{n\to\infty} E(v_n)^{-1/2}\left(
d_{\HBbb^3}(w_n(p), w_n(p'))-d_{\HBbb^3}(u_n(p), u_n(p'))
\right) =0
\end{equation}
(recall that $t_n^2=E(v_n)$). Eqs.\ \ref{eqn:harmonic-map-convergence} and \ref{eqn:maps-converge} force
$$
\lim_{n\to\infty} \frac{E(u_n)}{E(v_n)}=1\ ,
$$
which implies 
\begin{equation} \label{eqn:q-limit}
\lim_{n\to\infty} \frac{\Vert q_n\Vert_1}{\Vert \psi_n\Vert_1}=1\ .
\end{equation}
Indeed, this follows because the $u_n$ and $v_n$ are harmonic,
and the energy converges \cite[Thm.\ 3.9]{KorSch2}.
Alternatively, for $v_n$ we have the inequality 
$$
\Vert \psi_n\Vert_1-2\pi(g-1)\leq E(v_n)\leq
\Vert\psi_n\Vert_1+2\pi(g-1) 
$$
(see \cite{EellsWood:76}), and so 
$$
\lim_{n\to\infty} \frac{E(v_n)}{\Vert \psi_n\Vert_1}=1\ .
$$
Similarly, using Theorem \ref{thm:DonaldsonIsomorphism} (iii)
and (iv), eq.\ \ref{eqn:pullback-Phi}, and the asymptotics in
Theorem \ref{thm:ConvergenceOfHiggsPairs}, we also have
$$
\lim_{n\to\infty} \frac{E(u_n)}{\Vert q_n\Vert_1}=1\ .
$$
Hence, \ref{eqn:q-limit}. 
The  Proposition now follows from the fact that $\psi=q$, \eqref{eqn:q-limit}, and the algebraic inequality
$$
\frac{\Vert q_n-\psi_n\Vert_1}{\Vert \psi_n\Vert_1}\leq 
\left| 1-\frac{\Vert q_n\Vert_1}{\Vert\psi_n\Vert_1}\right|
+ \left\Vert \frac{q_n}{\Vert q_n\Vert_1} -\frac{\psi_n}{\Vert \psi_n\Vert_1}\right\Vert_1
$$

\end{proof}

\subsection{Limiting configurations and limits of representations}

\subsubsection{Proof of the Main Theorem}
Part (i) of the Main Theorem is the content of Theorem \ref{thm:bonahon-image}. The harmonic map estimates in Propositions \ref{prop:realizing-3d} and \ref{prop: harmonic map C1 localizes near zeroes}   show that the pleated surfaces $f_n:\widetilde S_n\to \HBbb^3$ are asymptotic to the images of the harmonic maps $u_n:\widetilde X\to \HBbb^3$ in the sense of Definition \ref{def:asymptotic-surfaces}. This is part (ii) of the Theorem. Part (iii) then follows from Lemmas \ref{lem:norm} and \ref{lem:wolf}. Finally, part (iv) is a consequence of the approximation in Definition \ref{def:asymptotic-surfaces} and Theorem \ref{thm:bending-periods}. This completes the proof.

\subsubsection{Proof of Corollary \ref{cor:invariance-of-basepoint}}

Let us first recast Theorem \ref{thm:prym-cocycle} in terms of train tracks.  Let $q\in \QD^\ast(X)$. Let $\tau$ be a  complete train-track (\emph{cf}.\ \cite[p.\ 27, 175]{PenHar}) carrying the horizontal lamination $\Lambda_q^h$. Let $\Hcal^o(\tau,S^1)$ be the connected component of the identity of the space of $S^1$-valued cocycles on $\tau$. Then we have the following 
\begin{thm}
There is a group isomorphism
$$
\Hcal^o(\tau,S^1)\simeq \Prym(\widehat X_q, X)/J_2(X)\ .
$$
\end{thm}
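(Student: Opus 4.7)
The plan is to deduce this statement as a direct consequence of Theorem~\ref{thm:prym-cocycle} together with Proposition~\ref{prop:cocycle-properties}(ii). The strategy has two steps: reduce the statement about the train track $\tau$ to one about a maximal lamination containing $\Lambda_q^h$, and then invoke the already-established identification of bending cocycles with points in the Prym variety modulo $J_2(X)$.

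First I would choose a maximalization $\Lambda$ of $\Lambda_q^h$ in the sense of Lemma~\ref{lem:maximalization}, so that $\Lambda$ is a maximal geodesic lamination containing $\Lambda_q^h$ as a sublamination. Since $\tau$ is complete and carries $\Lambda_q^h$, the train track construction from the proof of Proposition~\ref{prop:cocycle-properties}(ii) applies: after splitting $\tau$ along the branches corresponding to saddle connections one obtains a train track $\tau''$ that snugly carries $\Lambda$, and splitting induces a canonical identification of weight spaces $\Hcal(\tau'',G)\simeq\Hcal(\tau,G)$ for any abelian group $G$. Combined with the bijection $\Hcal(\tau'',G)\simeq\Hcal(\Lambda,G)$ coming from snug carrying (Bonahon, \cite[Thm.~11]{Bonahon:97}), this gives a canonical group isomorphism
\[
\Hcal(\tau,S^1)\isorightarrow\Hcal(\Lambda,S^1).
\]
All identifications here are additive in the weights, hence are group homomorphisms, and they descend from the analogous real identifications $\Hcal(\tau,\RBbb)\simeq\Hcal(\tau'',\RBbb)\simeq\Hcal(\Lambda,\RBbb)$ modulo the respective integer lattices. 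In particular the identity components match, giving
\[
\Hcal^o(\tau,S^1)\isorightarrow\Hcal^o(\Lambda,S^1).
\]

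Finally, applying Theorem~\ref{thm:prym-cocycle} to the right-hand side yields the group isomorphism
\[
\Hcal^o(\tau,S^1)\simeq\Hcal^o(\Lambda,S^1)\simeq \Prym(\widehat X_q,X)/J_2(X),
\]
as desired. The point to verify carefully is that the composition $\Hcal(\tau,S^1)\to\Hcal(\Lambda,S^1)$ is canonical (i.e.\ independent of auxiliary choices such as the maximalization $\Lambda$), and that the identity components are preserved under the splitting and carrying maps. This is where I expect the only real bookkeeping to lie: the splittings corresponding to saddle connections can be carried out with either a left or right choice (cf.\ \textsection{\ref{sec:maximalization}}), but different choices produce identifications that differ by an element of the integral lattice, hence agree after passing to $S^1$-valued cocycles and their identity components. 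Once this naturality is in hand, the isomorphism is determined and the theorem follows.
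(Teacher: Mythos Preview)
Your proof is correct and follows essentially the same route as the paper: choose a maximalization $\Lambda$ of $\Lambda_q^h$, identify $\Hcal^o(\tau,S^1)$ with $\Hcal^o(\Lambda,S^1)$ via the splitting of $\tau$ corresponding to the maximalization (as in Proposition~\ref{prop:cocycle-properties}(ii)), and then invoke Theorem~\ref{thm:prym-cocycle}. The only minor difference is in the independence-of-maximalization step: the paper appeals directly to the construction leading to Corollary~\ref{cor:prym} (the map to the Prym variety is defined by periods of Prym differentials, which do not see the extra leaves), whereas your phrasing in terms of lattice ambiguity is a bit imprecise---different maximalizations give different laminations $\Lambda$, so the issue is really that the \emph{composite} map to $\Prym(\widehat X_q,X)/J_2(X)$ is the same, not that the intermediate identifications differ by a lattice element.
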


\begin{proof}
Choose a maximalization of $\Fcal_q^h$ in the sense of Definition \ref{def:maximalization}. By Lemma \ref{lem:maximalization}, this gives a maximalization $\Lambda$ of $\Lambda_q^h$. Now $\Lambda$ is carried by the splitting $\tau'$ of $\tau$ corresponding to the maximalization. Since there is a natural isomorphism
$$
\Hcal^o(\tau,S^1)\simeq
\Hcal^o(\tau',S^1)\simeq \Hcal^o(\Lambda, S^1)\ ,
$$
the result follows from Theorem \ref{thm:prym-cocycle}. Note that from the construction leading to Corollary \ref{cor:prym}, the isomorphism is independent of the choice of maximalization.
\end{proof}

Fix $X_0$ and $q_0\in \SQD^\ast(X_0)$.
The train track $\tau$ may be chosen so that for any 
$X\in U_0$, $\tau$ carries $\Lambda_q^h(X)$, where $q\in \SQD^\ast(X)$ is the Hubbard-Masur differential  for the measured foliation $\Fcal_{q_0}^v$. A  maximalization $\Lambda(X)$ of $\Lambda_q^h(X)$ is carried by a splitting of $\tau$.

We now continue with the proof of the Corollary. Let $T$ be a Morgan-Shalen limit of $[\rho_n]$. As we have noted before, by Theorem \ref{thm:DDW} there is an equivariant harmonic map $u: \widetilde X\to T$ that factorizes through $T_{q_0}$. Note that since $q_0$ has simple zeroes, the action on $T$ is  not abelian, and so the tree $T$ is uniquely (up to scale) associated to the projective length function of the Morgan-Shalen limit. 

Consider $X\in U_0$. Then as above, we have an equivariant harmonic map: 
$\widetilde X\to T_{q_X'}\stackrel{v}{\lra} T$.
 We claim that up to an overall scale, $T_{q_X'}$ is equivariantly isometric to $T_{q_0}$. From this, it follows that $q_X'=q_X$. To prove the claim, let $v:\widetilde X_0\to T_{q_X'}$ be the equivariant harmonic map, and set $w=f\circ v : \widetilde X_0\to T$. Then using exactly the same argument as in Lemma \ref{lem:v-map} and the proof of Proposition \ref{prop:hopf}, we conclude that the Hopf differential of $v$ is also $q_0$. Since the action on $T_{q_X'}$ is ``small'', it follows that the folding $T_{q_0}\to T_{q_X'}$ induced by $v$ from Theorem \ref{thm:KS} (iii) is actually an isometry (see \cite[Prop.\ 3.1]{Skora}).  This proves the first statement of the Corollary.

To prove the statement about bending cocyles, 
let $\widehat S_n(X_0)$ be the companion surfaces as in \textsection{\ref{sec:companion}}. Recall the construction of a pleated surface
$\Psf_n(X_0)=(S_n(X_0), \widetilde f_{n,X_0}, \Lambda_n(X_0),\rho_n)$, where the shearing cocycle of the hyperbolic surface $S_n(X_0)$ is obtained as a perturbation of the one for $\widehat S_n(X_0)$.
We may choose the train track $\tau_{n,\ast}$ used in that proof to carry both laminations $\Lambda_n^h(X)$ and $\Lambda_n^h(X_0)$.  By a straightforward energy estimate the scaling factors of the quadratic differentials on $X$ and $X_0$ are comparable: \emph{i.e.}, there is a constant $C$ depending only on $U_0$ so that
$$
C^{-1} t_n(X_0)\leq t_n(X)\leq Ct_n(X_0)\ .
$$
Perturbing the shearing cocycle of $\widehat S_n(X_0)$ as in \eqref{eqn:realization-transverse-cocycle}, but now  with respect to the lamination $\Lambda_n(X)$ instead of $\Lambda_n(X_0)$,
the argument in \textsection{\ref{sec:shearing_realization}} carries over to show that there is a  $\rho_n$-equivariant pleated surface  with pleating locus $\Lambda_n(X)$. By \cite[Lemma 29]{Bonahon:shearing}, this must agree (up to isotopy) with the pleated surface $\Psf_n(X)=(S_n(X), \widetilde f_{n,X}, \Lambda_n(X),\rho_n)$ constructed in Theorem \ref{thm:bonahon-image}  with the base point $X$. 

Now the complementary regions of the lift  $\widetilde\tau_{n,\ast}$
of the train track to $\HBbb^2$ give the identification of the plaques for $\Psf_n(X)$ and $\Psf_n(X_0)$. Each plaque $P$ is realized in two ways (say $P_X$ and $P_{X_0}$) as an ideal triangle in $\HBbb^3$, and where by the asymptotic  estimates  on the harmonic maps $u_n$ (\emph{cf.} Proposition \ref{prop: harmonic map C1 localizes near zeroes}), the triples of leaves in $\Lambda_n(X)$ and in $\Lambda_n(X_0)$ bounding $P$ are close over a large hexagonal region of $P$.  For a pair of plaques $P,Q$, fix points $\widetilde p\in P_X$, $\widetilde p_0\in P_{X_0}$, $\widetilde q\in Q_X$, $\widetilde q_0\in Q_{X_0}$. 
Then as in the proof of Lemma \ref{lem:finite-approximation-bending}, the proximity of the ideal triangles for $\Psf_n(X)$ and $\Psf_n(X_0)$ when $n$ is large gives an estimate on $|\Theta_{\widetilde f_{n,X}}(\widetilde p,\widetilde q)-\Theta_{\widetilde f_{n,X_0}}(\widetilde p_0,\widetilde q_0)|$.
From \eqref{eqn:geometric-estimate}, we see that the bending cocycles $\betabold_n(X_0)$ and $\betabold_n(X)$ give the same limit as a cocycle in $\Hcal^0(\tau,S^1)$. By the Main Theorem, this common limit determines the periods of $\eta_{X_0}$ and $\eta_X$, and so therefore identifies their cohomology classes under the Gauss-Manin connection. This concludes the proof of Corollary \ref{cor:invariance-of-basepoint}.

\subsubsection{Proof of Corollary \ref{cor:morgan-shalen}}
Consider the situation of the Main Theorem. Suppose that the limiting quadratic differential $q$ has a vertical saddle connection between $p,p'\in Z(q)$. 
Let $p_n$, $p_n'$ be zeroes of $q_n$ so that
 $p_n\to p$, $p_n'\to p'$.
If there is a folding in the Morgan-Shalen limit, then the following must happen: there is some $\delta_0>0$ such that for all $0<\delta\leq\delta_0$ there are points $z_n$, $z_n'$ with
\begin{align}
\begin{split} \label{eqn:folding-limits}
t_n^{-1} d_{\HBbb^3}(p_n^\ast, (p_n')^\ast)\lra 0\ ,\quad
t_n^{-1} d_{\HBbb^3}(z_n^\ast, (z_n')^\ast)\lra 0\ ;\\
t_n^{-1} d_{\HBbb^3}(z_n^\ast, p_n^\ast)\lra \delta\ ,\quad
t_n^{-1} d_{\HBbb^3}((z_n')^\ast, (p_n')^\ast)\lra \delta\ .
\end{split}
\end{align}
See Figure \ref{fig:folding}. The notation here means that $p_n^\ast=u_n(p_n)$, etc. See Theorem \ref{thm:ms} and the definition of folding in \textsection{\ref{sec:trees}}. As in the proof of Lemma \ref{lem:finite-approximation-bending}, the planes $\DBbb_{p_n^\ast}$ and $\DBbb_{(p_n')^\ast}$ intersect, and the assumption is that  the  dihedral angle is bounded away from $\pi$. Let $A_n$ denote the geodesic segment between $z_n^\ast$ and $p_n^\ast$,  $B_n$   the geodesic segment between $(z_n')^\ast$ and $p_n^\ast$, and $C_n$ between $z_n^\ast$ and $(z_n')^\ast$, and let $\alpha_n$, $\beta_n$, and $\gamma_n$ be the corresponding angles of the geodesic triangle thus formed.  Then the assumption implies $\gamma_n\geq \varepsilon>0$ for some fixed $\varepsilon$ and $n$ sufficiently large. From \eqref{eqn:folding-limits}, we may assume $|B|\geq \delta t_n/2$. But then
$$
\sinh|C|\geq (\sin\varepsilon) \sinh(\delta t_n/2)\ ,
$$
which contradicts the assumption that $t_n^{-1}|C|\to 0$.
This completes the proof.

\begin{figure}
\begin{tikzpicture}
\draw [thick, dotted] (-3,0) -- (-1,0);
\draw [thick] (-3,-2) -- (-3,2);
\draw [thick] (-1,-2) -- (0-1,2);
\draw [thick] (-5,0) -- (-3,0);
\draw [thick] (-1,0) -- (1,0);
\node at (-3,0) {$\bullet$};
\node at (-1,0) {$\bullet$};
\node at (-4,0) {$|$};
\node at (0,0) {$|$};
\node at (-2.7,.3) {$p_n$};
\node at (-4.3,.3) {$z_n$};
\node at (-1.3,.3) {$p_n'$};
\node at (.3,.3) {$z_n'$};
\node at (3,.2) {$\stackrel{\rm folding}{\xrightarrow{\hspace{2cm}}}$};
\draw [thick] (6,-2) -- (6,2);
\draw [thick] (6,0) -- (7,0);
\draw [thick] (7,0) -- (8,1);
\draw [thick] (7,0) -- (8,-1);
\node at (6,0) {$\bullet$};
\node at (7,0) {$|$};
 \end{tikzpicture}
\caption{Folding}
\label{fig:folding}
\end{figure}

\subsection{Complex projective structures} \label{sect:operfamily}

The goal of this section is to prove Corollary \ref{cor:thurston's-pleated-surface}.  In order to do so, insofar as the pleated surface is already given by Thurston,  it is necessary to in some sense reverse the argument used in \textsection{\ref{sec:realization}}. For this, we use the result of \textsection{\ref{sec:limit_trees}}, which gives
criteria to identify the limiting quadratic differential for equivariant harmonic maps in terms of the lamination of the associated pleated surfaces.
  In \textsection{\ref{sec:dumas}}, we review Dumas' estimates, which show that the criteria just mentioned hold for Thurston's pleated surfaces associated to projective structures. In \textsection{\ref{sec:oper_spectral}}, we use facts about opers to derive the limiting spectral data. Finally, Corollary \ref{cor:thurston's-pleated-surface} is proven in the last section.

\subsubsection{Dumas' estimates} \label{sec:dumas}
  We recall the estimates of Dumas in \cite{Dum2} (see particularly Theorems~1.1 and 14.2) which relate complex projective structures and Hopf differentials.
As mentioned in the Introduction, given $q\in \QD(X)$ the projective connection $\Op(q)$
produces  a pleated surface $\Psf(q)=(S(q), f_q, \Lambda(q), \Pscr(q))$. Moreover, the bending lamination $\Lambda(q)$ carries a transverse measure. Strictly speaking, $\Lambda(q)$ may not be maximal; it will turn out that the  choice of maximalization of $\Lambda(q)$ will be immaterial, and so we supress it from the notation. 

The first result compares $q$ with the Hubbard-Masur differential defined by the lamination\footnote{Because Dumas uses the Schwarzian, the quadratic differential he uses to  parametrize $S(q)$ differs from the one in \eqref{eqn:diff-eq}  by a factor of $-2$.}. 

\begin{thm}[{\cite[Thm.\ 1.1]{Dum2}}] \label{thm: Dumas Projective Structure Estimate}
	There is a constant $C=C(X)$ that only depends on the Riemann surface $X$ so that
\begin{equation}\label{eqn: Dumas estimate}
\Vert
4q - \phi_{\HF}(\Lambda(q))\Vert_1 \leq C(X)\left(1+ \Vert q\Vert_1^{1/2}\right) \ .	
\end{equation}
\end{thm}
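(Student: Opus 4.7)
The estimate is Dumas' theorem quoted verbatim from \cite{Dum2}, so my plan follows his strategy. The starting point is Thurston's parametrization of the space of $\CP^1$-structures on $\Sigma$: every projective structure $P$ with underlying Riemann surface $X$ arises uniquely as a grafting $P=\gr_\mu(Y)$ of a hyperbolic structure $Y\in T(\Sigma)$ along a measured lamination $\mu\in\ML(\Sigma)$, and the bending lamination of Thurston's pleated surface $\Lambda(q)$ coincides with $\mu$. Thus the target quantity $\phi_{\HF}(\Lambda(q))$ is the Hubbard--Masur differential on $X$ realizing $\mu$ as its horizontal measured foliation.

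Next I would set up the principal analytic comparison. The projective structure $P$ endows $X$ with a \emph{Thurston metric} $\rho_{Th}$ which is hyperbolic in the ``unaltered'' part and Euclidean in the grafted lunes, and there is a collapsing map $\kappa\colon (X,\rho_{Th})\to Y$ that is conformal off the grafted locus and collapses each lune of width $\mu$ onto the corresponding leaf. A fundamental identity, coming from the chain rule for the Schwarzian of a composition $\mathrm{dev}_P=\mathrm{dev}_{\gr_\mu(Y)}\circ\kappa^{-1}$, expresses the Schwarzian $-2q$ of $\mathrm{dev}_P$ as
\[
-2q \;=\; \mathcal{S}(\kappa^{-1})+\kappa^*\bigl(-2 q_Y\bigr),
\]
where $q_Y$ is the quadratic differential attached to the uniformization of $Y$. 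The key point is that the model developing map on a Euclidean lune of width $w$, given explicitly by the exponential $z\mapsto e^{i\pi z/w}$ of a strip, has Schwarzian $\tfrac{1}{2}(\pi/w)^2\,dz^2$, whose horizontal foliation agrees with the flat lamination foliating the lune with the correct transverse mass. Summing these lune contributions recovers exactly $\tfrac14\phi_{\HF}(\mu)$ to leading order.

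The proof then reduces to bounding three error terms in $\|4q-\phi_{\HF}(\mu)\|_1$:
(i) the ``lune error'', coming from the discrepancy between the actual developing map on each lune and the Euclidean model---this is controlled using explicit asymptotics of $\mathcal{S}(\kappa^{-1})$ near the boundary of the grafted region; (ii) the ``hyperbolic contribution'' $\kappa^*q_Y$, bounded in $L^1(X,|q|)$ by a multiple of the area of the thick part of $(X,\rho_{Th})$, hence by a constant depending only on $X$; and (iii) the boundary/transition terms where the Thurston metric interpolates between flat and hyperbolic. The third family is the technically delicate one: since the Euclidean lunes can have total $\rho_{Th}$-area of order $\|q\|_1$, their collars carry measure of order $\|q\|_1^{1/2}$, producing the square-root error term. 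Combined, these three bounds yield $C(X)(1+\|q\|_1^{1/2})$.

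The main obstacle, and the heart of Dumas' work, is the third step: showing that the Schwarzian error accumulated along the interface between the Euclidean and hyperbolic regions is sharply controlled by $\|q\|_1^{1/2}$ and not by $\|q\|_1$. This requires a careful collar/thin-part analysis of the Thurston metric, combined with $L^2$ estimates for the Schwarzian of harmonic maps between domains of bounded geometry and an integration-by-parts argument that converts pointwise exponential decay of the lune model into the stated global $L^1$ bound. Once this estimate is in hand, the other two contributions fit into the bounded constant $C(X)$, and the theorem follows.
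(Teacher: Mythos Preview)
The paper does not prove this theorem at all: it is stated as a quotation of \cite[Thm.\ 1.1]{Dum2} and used as a black box in \textsection{5.3}. There is therefore no ``paper's own proof'' to compare your proposal against; you correctly recognize this in your first sentence.

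Your sketch is a reasonable high-level outline of Dumas' strategy (Thurston grafting parametrization, Thurston metric and collapsing map, Schwarzian chain rule, and the decomposition into lune, hyperbolic, and interface contributions). One caution: the specific mechanism you describe for the $\|q\|_1^{1/2}$ term---that ``collars carry measure of order $\|q\|_1^{1/2}$''---and the identification of the leading lune Schwarzian with $\tfrac14\phi_{\HF}(\mu)$ are heuristics that would need to be matched against Dumas' actual analysis, which proceeds via a somewhat different organization (comparing the Thurston and Schwarzian parametrizations through an intermediate ``co-collapse'' map and surgery estimates rather than a direct Schwarzian chain-rule computation on lunes). But since the present paper treats the result as a citation, no further detail is required here.
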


The second important result is a comparison of the quadratic differentials parametrizing projective structures and those in the harmonic maps parametrization of Teichm\"uller space. 
 More precisely, Dumas proves the following
({\cite[Thm.\ 14.2 and proof]{Dum2}; see also \cite{Dumas06,Dumas06Err}}).

\begin{thm}
\label{thm: Dumas Grafting and hopf differentials agree}
	Fix $q\in \SQD(X)$, and 
	let $\psi$ denote the Hopf differential of the harmonic diffeomorphism $X\to S(q)$. Then
	\begin{equation} \label{eqn: Dumas estimate on Hopf diffs and prunings}
	\Vert
4\psi(q) - \phi_{\HF}(\Lambda(q))\Vert_1 \leq C(X)\left(1+ \Vert q\Vert_1^{1/2}\right) \ .	
	\end{equation}
\end{thm}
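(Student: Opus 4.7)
The plan is to deduce this from the companion Theorem~\ref{thm: Dumas Projective Structure Estimate} together with a comparison between the Hopf differential $\psi(q)$ and $q$ itself, both of which are quadratic differentials on $X$. Since
\begin{equation*}
\Vert 4\psi(q) - \phi_{\HF}(\Lambda(q))\Vert_1 \leq 4\Vert \psi(q) - q\Vert_1 + \Vert 4q - \phi_{\HF}(\Lambda(q))\Vert_1,
\end{equation*}
and the second term on the right is already controlled by \eqref{eqn: Dumas estimate}, it suffices to establish the bound $\Vert \psi(q) - q\Vert_1 = O(1 + \Vert q\Vert_1^{1/2})$.

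First, I would exploit Thurston's grafting description of the projective structure $\Pscr(q)$: we have $X = \gr_{\Lambda(q)}(S(q))$, so $X$ is obtained from $S(q)$ by inserting Euclidean crescents along the bending lamination $\Lambda(q)$, with widths equal to the transverse bending measure. The natural collapsing map $\pi\colon X \to S(q)$ that projects each Euclidean strip onto its core geodesic and is the identity on the hyperbolic part, while not harmonic, can be analyzed directly: its energy is computable from the grafting data, and one verifies that $E(\pi) = \Vert q\Vert_1 + O(\Vert q\Vert_1^{1/2})$. Since the true harmonic diffeomorphism $u_q$ minimizes energy in its homotopy class, and $E(u_q) = \Vert\psi(q)\Vert_1 + 2\pi(g-1)$ by the Eells--Wood identity, one obtains the coarse bound $\Vert\psi(q)\Vert_1 \leq \Vert q\Vert_1 + O(\Vert q\Vert_1^{1/2})$, with a matching lower bound from an a priori comparison between $\pi$ and $u_q$.

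Next, I would upgrade this norm-level comparison to a statement about the measured foliations. The horizontal foliation $\Fcal^h_{\psi(q)}$ records the directions of maximal stretch of $u_q$. On the flat grafted part of $X$ these stretch directions are asymptotic, in the high-energy regime, to the direction of the crescents, which is precisely the direction transverse to $\Lambda(q)$; equivalently, the vertical foliation of $\psi(q)$ aligns with the vertical foliation of $q$, so that the measured foliation classes of $4\psi(q)$ and $4q$ both converge to that of $\phi_{\HF}(\Lambda(q))$. Quantitatively, the standard high-energy harmonic maps asymptotics (\emph{cf.} \cite{Wolf:thesis} and Proposition~\ref{prop:realizing-2d}) pin down the transverse measures of $\Fcal^h_{\psi(q)}$ up to an error of order $\Vert q\Vert_1^{1/2}$, matching those of $\Lambda(q)$ with the same error.

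The hard part will be combining these two ingredients to produce an $L^1$ bound at the level of the differentials rather than just of the foliations. For this I would invoke the local Lipschitz regularity of the Hubbard--Masur homeomorphism $\phi_{\HF}$ (as a map from the space of measured foliations to $\QD(X)$) on compact sets of projective classes, together with the norm control from the first step to absorb the scaling. This is the route taken by Dumas in \cite[Thm.~14.2 and proof]{Dum2}, whose argument provides the required estimate and thereby completes the proof.
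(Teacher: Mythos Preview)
The paper does not give its own proof of this theorem: it is quoted as a result of Dumas, with the citation \cite[Thm.\ 14.2 and proof]{Dum2} (see also \cite{Dumas06,Dumas06Err}). So there is no in-paper argument to compare your proposal against; the authors simply invoke the result.

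Your sketch is a reasonable outline of the ideas behind Dumas' argument, and your final sentence correctly defers to the same reference. A couple of small remarks on the sketch itself. First, the ``Eells--Wood identity'' you invoke is really the pair of inequalities $\Vert\psi\Vert_1 - 2\pi(g-1)\leq E(u)\leq \Vert\psi\Vert_1 + 2\pi(g-1)$ (as the paper itself uses later, in the proof of Proposition~\ref{prop:hopf}); this is harmless for your purposes. Second, the assertion $E(\pi)=\Vert q\Vert_1 + O(\Vert q\Vert_1^{1/2})$ for the collapsing map is not as immediate as you suggest: relating the area of the grafted Euclidean region to $\Vert q\Vert_1$ already uses the comparison between $\Lambda(q)$ and $q$ that is the content of Theorem~\ref{thm: Dumas Projective Structure Estimate}, so one has to be careful not to argue in a circle. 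Dumas' actual route organizes these comparisons somewhat differently, but since both you and the paper ultimately point to \cite{Dum2} for the details, your proposal is adequate as a placeholder.
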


Recall that $\Pscr(q)$ denotes the monodromy of the projective connection $Q(q)$. 
Combining Theorems \ref{thm: Dumas Projective Structure Estimate} and \ref{thm: Dumas Grafting and hopf differentials agree} with Proposition \ref{prop:hopf}, we have
\begin{cor} \label{cor:hopf}
Let $q\in \SQD^\ast(X)$, and let $u_t:\widetilde X\to \HBbb^3$ be the $\Pscr(t^2q)$-equivariant harmonic map with Hopf differential $q_t$. Then
$$
\lim_{t\to+\infty}\Vert t^{-2}q_t -q\Vert_1=0  \ .
$$
\end{cor}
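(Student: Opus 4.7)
The plan is to verify the two hypotheses of Proposition \ref{prop:hopf} for the sequence of representations $\rho_n=\Pscr(t_n^2q)$, $t_n\to+\infty$, using the two estimates of Dumas (Theorems \ref{thm: Dumas Projective Structure Estimate} and \ref{thm: Dumas Grafting and hopf differentials agree}), and then invoke Proposition \ref{prop:hopf} to transfer the information from the Hopf differentials $\psi_n$ of the companion harmonic diffeomorphisms $X\to S(t_n^2q)$ to the Hopf differentials $q_n$ of the equivariant harmonic maps $u_n=u_{t_n}$.

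First I would combine the two Dumas estimates. Applying Theorem \ref{thm: Dumas Projective Structure Estimate} with $t_n^2q$ in place of $q$ and Theorem \ref{thm: Dumas Grafting and hopf differentials agree} (both  use the same Hubbard-Masur differential $\phi_{\HF}(\Lambda(t_n^2q))$), the triangle inequality yields
\[
\Vert 4\psi_n-4t_n^2q\Vert_1\leq 2C(X)\bigl(1+t_n\Vert q\Vert_1^{1/2}\bigr)=O(t_n),
\]
so $\Vert t_n^{-2}\psi_n-q\Vert_1=O(t_n^{-1})$. In particular $\Vert\psi_n\Vert_1=t_n^2\Vert q\Vert_1+O(t_n)$, hence $\psi_n/\Vert\psi_n\Vert_1\to q$ in $\SQD(X)$; since $q\in\SQD^\ast(X)$ by hypothesis, this establishes condition (i) of Proposition \ref{prop:hopf}.

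Next I would verify condition (ii). Again by Theorem \ref{thm: Dumas Grafting and hopf differentials agree},
\[
\frac{\Vert 4\psi_n-\phi_{\HF}(\Lambda_n)\Vert_1}{\Vert\psi_n\Vert_1}\leq \frac{C(X)(1+t_n\Vert q\Vert_1^{1/2})}{\Vert\psi_n\Vert_1},
\]
where $\Lambda_n=\Lambda(t_n^2q)$. Using $\Vert\psi_n\Vert_1\sim t_n^2\Vert q\Vert_1$ from the previous step, the right-hand side is $O(t_n^{-1})\to 0$. Thus Proposition \ref{prop:hopf} applies and gives
\[
\frac{\Vert q_n-\psi_n\Vert_1}{\Vert\psi_n\Vert_1}\longrightarrow 0,
\]
where $q_n$ is the Hopf differential of $u_{t_n}$. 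Combining this with $\Vert\psi_n-t_n^2q\Vert_1=O(t_n)$ and $\Vert\psi_n\Vert_1=O(t_n^2)$ yields
\[
\Vert t_n^{-2}q_n-q\Vert_1\leq t_n^{-2}\Vert q_n-\psi_n\Vert_1+t_n^{-2}\Vert\psi_n-t_n^2q\Vert_1\longrightarrow 0.
\]
Since the sequence $t_n\to+\infty$ was arbitrary, the full limit $\lim_{t\to+\infty}\Vert t^{-2}q_t-q\Vert_1=0$ follows.

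The only mild obstacle is logistical: one must check that the pleated surface $\Psf(t_n^2q)$ of Thurston fits the framework of Proposition \ref{prop:hopf}, i.e.\ that its pleating lamination carries a transverse measure (which it does, by construction of Thurston's grafting) and that the companion harmonic diffeomorphism $X\to S(t_n^2q)$ is the one whose Hopf differential appears in Theorem \ref{thm: Dumas Grafting and hopf differentials agree}. Both points are already embedded in Dumas's setup, so the argument is essentially bookkeeping once the two estimates above are in hand.
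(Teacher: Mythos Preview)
Your proposal is correct and follows exactly the route the paper indicates: it simply says ``Combining Theorems \ref{thm: Dumas Projective Structure Estimate} and \ref{thm: Dumas Grafting and hopf differentials agree} with Proposition \ref{prop:hopf}'', and you have filled in precisely those details, verifying hypotheses (i) and (ii) of Proposition \ref{prop:hopf} from the Dumas estimates and then combining the conclusion with the triangle inequality.
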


\subsubsection{Spectral data for opers} \label{sec:oper_spectral}
Here we determine the possible limiting bending cocycles  of the family  $\Op(t^2q)$. 
The argument we give is based on the identification of limiting configurations with limiting spectral data, and the classical fact that
the underlying holomorphic bundle $\Vcal$ of a (lift of a) complex projective structure is the unique nonsplit extension
$$
0\lra K_X^{1/2}\lra \Vcal \lra K_X^{-1/2}\lra 0
$$
(\emph{cf.} \cite[p.\ 201]{Gunning:66}).
In terms of Higgs pairs, this means that the $\dbar$-operator 
$\dbar_A+\Phi^\ast$ must induce the holomorphic structure on $\Vcal$. Moreover, since $\Vcal$ has a flat connection,
the holomorphic structure on $\Vcal$ can be uniquely characterized 
by the fact that it contains $K_X^{1/2}$ as a subsheaf, and this is the criterion we shall use. 

Before proceeding, it may clarify to recall once again that by Theorem \ref{thm:DonaldsonIsomorphism} and the definition of the Hitchin map \eqref{eqn:Hitchin-map}, if $u$ is the equivariant harmonic map associated to a solution $(A,\Psi)$ of the self-duality equations, then  $\Hopf(u)=4\Hscr([A,\Psi])$. 
With this understood, we have the following.

\begin{prop} \label{prop:oper-limiting-bending}
Let $q$, $q_t$ be as in the statement of Corollary \ref{cor:hopf}. 
Let $[(A_t, \Psi_t)]=\Op(t^2q)$, and let $\eta_t$ be the term appearing in the approximation in
Definition \ref{def:ConvergenceOfHiggsPairs} and $\widehat\eta_t$ the Prym differential corresponding to $\eta_t$ in Proposition \ref{prop:prym-limiting}. Then
$$
[\widehat\eta_t-it\imag\lambda_{\SW}]\lra 0\quad {\rm  in }\quad \Prym(\widehat X_q,X)/J_2(X)\ .
$$
\end{prop}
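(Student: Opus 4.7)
The plan combines two ingredients: (i) Corollary \ref{cor:hopf} together with Theorem \ref{thm:ConvergenceOfHiggsPairs}, to extract along any subsequence of $\Op(t^2q)$ a limiting configuration with a well-defined Prym differential; and (ii) the classical fact that the oper's underlying flat-connection holomorphic bundle $(E,\dbar_{A_t}+\Phi_t^{\ast_h})$ is the unique nonsplit extension of $K_X^{-1/2}$ by $K_X^{1/2}$. The conclusion will follow by pinning down the class from (i) using the constraint from (ii).

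First, from Corollary \ref{cor:hopf} we have $t^{-2}\Hscr(\Op(t^2q))\to q/4$, so after the relabeling $s=t/2$ which matches the normalization of Definition \ref{def:ConvergenceOfHiggsPairs}, Theorem \ref{thm:ConvergenceOfHiggsPairs} applies and every subsequence of $\Op(t_n^2q)$ has a further subsequence converging to a limiting configuration $(A_\infty,\Psi_\infty)\in\Hscr^{-1}_\infty(q)$. Writing $A_\infty=A_\infty^0+\eta$, let $\widehat\eta\in\Hcal^1_\odd(\widehat X_q,i\RBbb)$ be the associated harmonic Prym differential from Proposition \ref{prop:prym-limiting}. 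By a standard subsequence argument, it suffices to prove $[\widehat\eta]=[it\,\imag\lambda_{\SW}]$ in $\Prym(\widehat X_q,X)/J_2(X)$ whenever $\Op(t_n^2q)$ converges in this sense.

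Second, I would exploit the oper condition: the holomorphic structure $\dbar_{A_t}+\Phi_t^{\ast_h}$ realizes $E$ as the nonsplit extension $\Vcal$, so $K_X^{1/2}\hookrightarrow\Vcal$ is a global holomorphic subsheaf. Pulling this inclusion back to the spectral curve $\widehat X_q$ and decomposing it using the $\pm\lambda_{\SW}$-eigenlines of $\pi^\ast\Phi_\infty$ (compare \eqref{eqn:pullback-Phi} and the embedding \eqref{eqn:spectral_subbundle}), in the limit $t\to\infty$ this inclusion becomes a nonvanishing holomorphic section, with respect to the connection on $\widehat X_q$ induced by $A_\infty$, of the line bundle $\pi^\ast K_X^{1/2}\otimes\Ucal^{-1}$, where $\Ucal=\Lcal\otimes\pi^\ast K_X^{1/2}$ is the BNR line bundle of Theorem \ref{thm:bnr}. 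The existence of such a section determines $[\Lcal]\in\Prym(\widehat X_q,X)$ uniquely modulo the $2^{2g}$ choices of lift of $\pi^\ast K_X^{1/2}$ to an odd square root on $\widehat X_q$, i.e., modulo $J_2(X)$.

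Finally, to identify this distinguished class as $[it\,\imag\lambda_{\SW}]$, I would match periods around odd cycles via Lemma \ref{lem:dolbeault}. The asymptotics of Lemma \ref{f_t-h_t-function} applied to the approximate solution \eqref{eqn:atappr}, combined with the leading-order WKB expansion of the oper equation \eqref{eqn:diff-eq} — whose two local solutions are $q^{-1/4}\exp(\pm t\int^z\sqrt{q})$ — show that the flat structure on the distinguished line bundle has monodromy $\exp(\pm t\int_\gamma \lambda_{\SW})$ around every odd cycle $\gamma$, so that the imaginary harmonic representative of the Prym class is exactly $it\,\imag\lambda_{\SW}$ modulo $J_2(X)$. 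The main obstacle is this last step: translating the subsheaf condition into a sharp statement about periods of a harmonic Prym differential, and verifying that the remaining $J_2(X)$ ambiguity exactly absorbs the choice of $K_X^{1/2}$. The earlier compactness and BNR-dictionary steps are largely formal once the convergence framework of \textsection{\ref{sec:prym-limiting}} is in hand.
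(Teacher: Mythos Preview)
Your opening strategy matches the paper's: both argue via the oper's characteristic holomorphic subsheaf $K_X^{1/2}\hookrightarrow(\Ecal,\dbar_{A_t}+\Phi_t^\ast)$, pulled back to the spectral curve. The divergence, and the gap, is in your step (ii) and the WKB patch you propose afterward.

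In step (ii) you project the inclusion onto the $\pm\lambda_{\SW}$-eigenlines and assert that the limit is a nonvanishing section holomorphic ``with respect to the connection on $\widehat X_q$ induced by $A_\infty$''. That drops the $\Phi^\ast$ term. The subsheaf condition is for $\dbar_{A_t}+\Phi_t^\ast$, and on the eigenlines (in the limiting configuration, where $\Phi_\infty^\ast=\overline{\lambda_{\SW}}\otimes W_2$ preserves them) the $\Phi^\ast$ contribution is a twist by $\pm\frac{t}{2}\overline{\lambda_{\SW}}$. This is exactly the mechanism that produces the $it\,\imag\lambda_{\SW}$ shift; if you omit it in (ii) you cannot recover it afterward. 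The paper makes this explicit by writing $T_t(\sigma)=(\xi_t^{(1)}\sigma,\xi_t^{(2)}\sigma)$ in the splitting $E=K_X^{-1/2}\oplus K_X^{1/2}$ and computing $\pi^\ast(\dbar_{A_t}+\Phi_t^\ast)$ directly using the approximate solution; the result is a coupled $2\times 2$ system whose off-diagonal coefficient is $\widehat\eta_t''+\frac{t}{2}\overline{\lambda_{\SW}}(t)$, precisely the $(0,1)$ part of $\widehat\eta_t-it\,\imag\lambda_{\SW}$.

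Your WKB step is therefore attacking a problem that has already been solved once $\Phi^\ast$ is retained, and it introduces its own issues: the WKB expansion of \eqref{eqn:diff-eq} computes monodromy of the flat connection $\nabla=d_A+\Psi$, whereas the Prym class $[\widehat\eta_t]$ is determined by $\dbar_A$ restricted to the eigenline. These are related but not the same, and the paper never invokes WKB. Instead, having obtained the coupled system, the paper argues by contradiction: assume $\widehat\eta_t''+\frac{t}{2}\overline{\lambda_{\SW}}(t)\to\alpha\neq 0$ in the Prym after subtracting lattice elements, normalize the sections in $L^2$, use elliptic regularity to extract a nonzero limit $(f_1,f_2)$ solving $\dbar f_i+\alpha f_{3-i}=0$, and observe that $(\dbar\pm\alpha)(f_1\pm f_2)=0$ forces the degree-zero line bundle defined by $\alpha$ to be trivial. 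The ``nonvanishing'' you assume in (ii) is precisely what this compactness argument supplies; it is not automatic.
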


\begin{proof}
Let $\widehat q_t=t^{-2}q_t$. 
Consider the spectral curves $\pi_t: \widehat X_{\widehat q_t}\to X$. 
Denote the Seiberg-Witten differential (resp.\ tautological
section) on $\widehat X_{\widehat q_t}$
by $\lambda_{\SW}(t)$ (resp.\ $\lambda_t$). 
Since by Corollary \ref{cor:hopf}, $\widehat q_t\to q$ at $t\to +\infty$, it follows that
 $\lambda_{\SW}(t)\to \lambda_{\SW}$ on $\widehat X_q$, where the convergence is taken with respect to the Gauss-Manin connection on Prym differentials.
 
Let $f_t: \widehat X_{\widehat q_t}\to \widehat X_{q_t/4}$ be as in 
the proof of Proposition \ref{prop:prym-limiting}. Then
$f_t^\ast \lambda_{\SW}=(t/2)\lambda_{\SW}(t)$. 

Now, from the discussion at the beginning of this subsection, 
there is an injective homomorphism of smooth bundles, 
$T_t: \pi_t^\ast K_X^{1/2}\to \pi_t^\ast E$, 
such that the image is preserved by the pullback 
 $\dbar$-operator $\pi_t^\ast(\dbar_{A_t}+\Phi_t^\ast)$, and the induced $\dbar$-operator is 
isomorphic to the canonical one on $\pi_t^\ast K_X^{1/2}$, up to possibly twisting by a $2$-torsion line bundle. As 
a smooth bundle, we have the splitting
$\pi_t^\ast E\simeq \pi_t^\ast K_X^{-1/2}\oplus \pi_t^\ast
K_X^{1/2}$ (see \eqref{eq:decompvbE}). Let $\sigma$ be a local trivialization of $\pi_t^\ast K_X^{1/2}$, and write
$
T_t(\sigma)=(\sigma_t^{(1)}, \sigma_t^{(2)})
$,
where $\sigma_t^{(i)}=\xi_t^{(i)}\sigma$
for a local smooth section $\xi_t^{(1)}\in \Gamma(\pi_t^\ast K_X^{-1})$ and
smooth function $\xi_t^{(2)}$.
By a straightforward calculation, one finds the component entries of
$\pi_t^\ast(\dbar_{A_t}+\Phi^\ast_t)T_t$ to be (after an overall rescaling):
\begin{align}
\begin{split} \label{eqn:approx-limit}
\dbar(\lambda_t\xi_t^{(1)}\cdot \Vert\lambda_t\Vert^{-1/2})
+(\widehat\eta_t''+\tfrac{t}{2}\overline\lambda_{\SW}(t))\xi_t^{(2)}\Vert\lambda_t\Vert^{1/2}=R_1(t) \\
 \dbar(\xi_t^{(2)}\cdot
\Vert\lambda_t\Vert^{1/2})+(\widehat\eta_t''+
\tfrac{t}{2}\overline\lambda_{\SW}(t))\lambda_t\xi_t^{(1)}\Vert\lambda_t\Vert^{-1/2}
= R_2(t)
\end{split}
\end{align}
Here, the remainder terms $R_i(t)$ are linear combinations 
 of  $\lambda_t\xi_t^{(1)}\cdot \Vert\lambda_t\Vert^{-1/2}$ and
$\xi_t^{(2)}\Vert\lambda_t\Vert^{1/2}$ with coefficients
that are exponentially small as $t\to +\infty$. 
To obtain \eqref{eqn:approx-limit}, we use the expression for
the limiting connection  in
\eqref{eq:stdlimconn} to calculate:
$$
\pi_t^\ast(\dbar_{A_\infty^0(q_t)}+\eta_t'')(T_t(\sigma))-T_t(\dbar\sigma)
=\left(\begin{matrix}
\dbar\xi^{(1)}_t-(\dbar\log\Vert\lambda_t\Vert^{1/2})\xi_t^{(1)}
+\widehat\eta_t''\lambda_t^{-1}\Vert\lambda_t\Vert \xi^{(2)}_t
\\
\dbar\xi^{(2)}_t+(\dbar\log\Vert\lambda_t\Vert^{1/2})\xi_t^{(2)}
+\widehat\eta_t''\lambda_t\Vert\lambda_t\Vert^{-1} \xi^{(1)}_t
\end{matrix} \right)\sigma
$$
Similarly, 
$$
\pi_t^\ast(\Phi^\ast(q_t)(T_t(\sigma))=\frac{t}{2}\overline\lambda_{\SW}(t)\left(\begin{matrix}
\lambda_t^{-1}\Vert\lambda_t\Vert \xi^{(2)}_t
\\
\lambda_t\Vert\lambda_t\Vert^{-1} \xi^{(1)}_t
\end{matrix}\right)\sigma
$$
Multiplying the first entries by $\lambda_t\Vert\lambda_t\Vert^{-1/2}$, and the second entries by
$\Vert\lambda_t\Vert^{1/2}$, we obtain the left hand side of \eqref{eqn:approx-limit}.
The error terms come from applying the result in Theorem
\ref{thm:ConvergenceOfHiggsPairs}.

Now suppose to the contrary that there is a sequence $t_n\to+\infty$ and $\beta_n$, odd harmonic $(0,1)$ forms
 with periods in $2\pi i\ZBbb$, 
 such that
$$
\lim_{n\to\infty}\left\{(\eta_{t_n}''+\tfrac{t_n}{2}\overline\lambda_{\SW}(t_n))-\beta_n\right\}= \alpha 
\ ,
$$
where the class of $\alpha$ in the Prym variety is nonzero (see
\textsection{\ref{subsect:prymdifferentials}}). 
Choose an arbitrary base point $z_0\in \widehat X_q$, and redefine
$$
\widetilde \xi_n^{(i)}(z)=\exp(-\int_{z_0}^z\beta_n)\xi_{t_n}^{(i)}(z)\ .
$$
Then \eqref{eqn:approx-limit} becomes
\begin{align}
\begin{split} \label{eqn:approx-limit2}
\dbar(\lambda_{t_n}\widetilde\xi_n^{(1)}\cdot
\Vert\lambda_{t_n}\Vert^{-1/2})+(\widehat\eta_{t_n}''+\tfrac{t_n}{2}\overline\lambda_{\SW}(t_n)-\beta_n)
\widetilde\xi_n^{(2)}\Vert\lambda_{t_n}\Vert^{1/2}=\widetilde
R_1(t_n)\ , \\
\dbar(\widetilde \xi_n^{(2)}\cdot
\Vert\lambda_{t_n}\Vert^{1/2})+(\widehat\eta_{t_n}''+\tfrac{t_n}{2}\overline\lambda_{\SW}(t_n)-\beta_n)\lambda_{t_n}\widetilde
\xi_n^{(1)}\Vert\lambda_{t_n}\Vert^{-1/2} = \widetilde R_2(t_n)\ ,
\end{split}
\end{align}
and where the remainder terms $\widetilde R_i(t_n)$ are exponentially small as  
$t_n\to +\infty$ and of the
order of  $\lambda_{t_n}\widetilde \xi_n^{(1)}
\Vert\lambda_{t_n}\Vert^{-1/2}$ and
$\widetilde \xi_n^{(2)}\Vert\lambda_{t_n}\Vert^{1/2}$. 

Fix a conformal metric on $\widehat X_q$ with area form $\rm dv$.
Let us now normalize the sequence of homomorphisms $T_{t_n}$ so that 
$$
\int_{\widehat X_q}
\Vert \lambda_{t_n}\Vert \left(\Vert \widetilde \xi_n^{(1)}\Vert^2+ |\widetilde
\xi_n^{(2)}|^2\right)\, {\rm dv } =1\ .
$$
By applying elliptic regularity to  \eqref{eqn:approx-limit2}, we may assume
that $\lambda_{t_n}\widetilde\xi_n^{(1)}\Vert\lambda_{t_n}\Vert^{-1/2}\to f_1$ and
$\widetilde \xi_n^{(2)}\Vert\lambda_{t_n}\Vert^{1/2}\to f_2$, for functions $f_i$
satisfying 
$$
\int_{\widehat X_q}
(|f_1|^2+|f_2|^2)\, {\rm dv } =1\ ,
$$
and 
$
\dbar f_1+\alpha f_2=0$, $\dbar f_2+\alpha f_1= 0
$.
This, of course,  implies $(\dbar+\alpha)(f_1+f_2)=0$. If $f_1+f_2\neq
0$, then the holomorphic line bundle $\Lcal$ defined by $\alpha$ has a nonzero holomorphic section
and is therefore trivial. If $f_1+f_2=0$, then $f_1$ is nonzero, and
 $(\dbar -\alpha)f_1=0$; so
$\Lcal^\ast$ is trivial. In either case, this contradicts the assumption. The Proposition is
proved. 
\end{proof}

\subsubsection{Proof of Corollary \ref{cor:thurston's-pleated-surface}}
Fix $q\in \SQD^\ast(X)$. Let  $\widetilde S(t^2q)\to \HBbb^3$ be Thurston's pleated surface associated to the projective connection $Q(t^2q)$ with monodromy $\Pscr(t^2q)$, and with bending lamination $\Lambda(t^2q)$. 
Part (i) of the Corollary follows from Corollary \ref{cor:hopf} and Lemma \ref{lem:wolf}. Part (ii)  is the content of Proposition \ref{prop:oper-limiting-bending}. We now move on to prove part (iii) of the Corollary.
Let $S_t$ denote the companion surface defined by requiring the Hopf differential for the harmonic diffeomorphism $X\to S_t$ to be $t^2q$. 
  
By Theorem~\ref{thm: Dumas Grafting and hopf differentials agree}, $\Lambda(t^2q)$ converges to $\Lambda_q^h$.  In particular, for $t$ sufficiently large, $\Lambda(t^2q)$ is carried by the train track $\tau_{t,\ast}\subset S_t$ constructed in \textsection{\ref{sec:companion}}. By (i) and arguing as in the proof of Corollary \ref{cor:invariance-of-basepoint}, a perturbation of the shearing cocycle of $S_t$ as described in  \textsection{\ref{sec:shearing_realization}}  results in  a pleated surface for $\Pscr(t^2q)$ with bending lamination $\Lambda(t^2q)$; as before by the uniqueness of pleated surfaces for a fixed lamination, the pleated surface constructed in this way must coincide with the pleated surface $S(t^2q)$. 
Now, by Theorem~\ref{thm: Dumas Projective Structure Estimate} and Corollary~\ref{cor:hopf}, the bending laminations for the harmonic map $u_t$ and the bending lamination $\Lambda(t^2q)$ are close and hence carried by the same track $\tau_{t,\ast}\subset S_t$. Thus the plaques for the associated pleated surfaces are also in proximity, in the sense of the last portion of the proof of Corollary~\ref{cor:invariance-of-basepoint}; it then follows that their bending cocycles are also close.
Thus, by the Main Theorem, 
 the  bending cocycle of either is approximated by the one given by the Prym differential associated to the limiting configuration of $\Op(t^2q)$. The result now follows from Lemma \ref{lem:wolf} and Proposition \ref{prop:oper-limiting-bending}.
Notice that since $\imag\lambda_{\SW}$ has zero periods on saddle connections of the horizontal foliation, the choice of a possible maximalization of $\Lambda_q^h$ is irrelevant.

\subsubsection{Refined Estimate}

In section, we refine the estimate Corollary~\ref{cor:hopf} of the previous section and so prove an improvement of Corollary~\ref{cor:thurston's-pleated-surface}. We show the proposition.

\begin{prop} \label{prop: refined error}
	Let $q\in \SQD^\ast(X)$, and let $u_t:\widetilde X\to \HBbb^3$ be the $\Pscr(t^2q)$-equivariant harmonic map with Hopf differential $q_t = \Hopf(u_t)$. Then
$$
\lim_{t\to+\infty}\Vert t^{-2}q_t -q\Vert_1=O(t^{-1})  \ .
$$
\end{prop}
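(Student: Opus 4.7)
The plan is to refine Corollary~\ref{cor:hopf} by making each step in its derivation quantitative. The easy half is already explicit: applying Theorems~\ref{thm: Dumas Projective Structure Estimate} and~\ref{thm: Dumas Grafting and hopf differentials agree} with $q$ replaced by $t^2 q$ and using the triangle inequality gives
\begin{equation*}
\|4\psi_t - 4 t^2 q\|_1 \;\leq\; 2C(X)\bigl(1 + t\,\|q\|_1^{1/2}\bigr),
\end{equation*}
where $\psi_t = \Hopf(v_t)$ is the Hopf differential of the harmonic diffeomorphism $v_t:X\to S(t^2 q)$ onto the companion hyperbolic surface. Dividing by $4t^2$ yields $\|t^{-2}\psi_t - q\|_1 = O(t^{-1})$, so by the triangle inequality it suffices to prove the quantitative estimate $\|q_t - \psi_t\|_1 = O(t)$.

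To obtain the latter, I would compare $u_t$ with the auxiliary $\Pscr(t^2q)$-equivariant map $w_t := f_t \circ \widetilde v_t: \widetilde X\to \HBbb^3$, where $\widetilde v_t$ is the lift of $v_t$ to $\HBbb^2$ and $f_t$ is Thurston's pleated surface map. Because $f_t$ is piecewise totally geodesic (its pleating lamination has measure zero), one has $|dw_t|^2 = |dv_t|^2$ a.e., so $\Hopf(w_t) = \psi_t$ and $E(w_t) = E(v_t)$. The Eells--Wood identity gives $E(v_t) = \|\psi_t\|_1 + O(1)$, and the analogous near-pleated-surface identity (as in the proof of Proposition~\ref{prop:hopf}) gives $E(u_t) = \|q_t\|_1 + O(1)$. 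Since $u_t$ is the energy minimizer among $\Pscr(t^2q)$-equivariant maps, $E(u_t)\leq E(w_t)$, whence $E(w_t) - E(u_t) = O(1)$.

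To convert this small energy gap into a bound on $\|q_t - \psi_t\|_1$, the plan is to use the strong convexity of the energy functional at the harmonic minimizer $u_t$. Along the geodesic homotopy $w_{t,s}$ from $u_t$ to $w_t$ in the NPC target $\HBbb^3$, the map $s\mapsto E(w_{t,s})$ is convex, and the irreducibility of $\Pscr(t^2q)$ forces the second variation to be positive definite, giving
\begin{equation*}
\|du_t - dw_t\|_{L^2}^2 \;\leq\; \kappa_t^{-1}\bigl(E(w_t) - E(u_t)\bigr) \;=\; O(\kappa_t^{-1})
\end{equation*}
for some modulus of convexity $\kappa_t>0$. Bilinearity of the Hopf differential together with Cauchy--Schwarz then produces
\begin{equation*}
\|q_t - \psi_t\|_1 \;\leq\; C\,\|du_t - dw_t\|_{L^2}\bigl(\|du_t\|_{L^2} + \|dw_t\|_{L^2}\bigr) \;=\; O(\kappa_t^{-1/2})\cdot O(t),
\end{equation*}
which yields the required $O(t)$ bound provided $\kappa_t$ is uniformly bounded below.

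The main obstacle will be precisely this uniform lower bound on $\kappa_t$: since the representations $\Pscr(t^2q)$ leave every compact subset of $R^o(\Sigma)$, the second variation of $E$ at $u_t$ could in principle degenerate. Uniformity should be accessible via the Morgan--Shalen limit picture (Lemma~\ref{lem:gh-convergence}), as the rescaled harmonic maps converge to a harmonic map to $T_q$ whose second variation is nondegenerate, but extracting a spectral gap for the Jacobi operator on $u_t^\ast T\HBbb^3$ uniformly in $t$ requires care. Should this route prove delicate, one can instead compare $u_t$ and $w_t$ directly using Propositions~\ref{prop:realizing-3d}, \ref{prop:pullbackmetric} and \ref{prop: harmonic map C1 localizes near zeroes} together with their two-dimensional counterparts for $v_t$: outside fixed hexagonal neighborhoods of $Z(q_t)$ both $u_t$ and $w_t$ are exponentially close in $C^1$ to the same pleated surface model, and the hexagonal neighborhoods contribute at most $O(t)$ to $\|q_t-\psi_t\|_1$ in view of their bounded $|q_t|$-area.
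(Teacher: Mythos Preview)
Your reduction to showing $\|q_t-\psi_t\|_1=O(t)$ via Dumas' estimates is correct and matches the paper's first step. However, the core of your argument has two genuine gaps.

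\textbf{The energy gap is not $O(1)$.} From $E(u_t)\leq E(w_t)$ you only get a \emph{lower} bound on $E(w_t)-E(u_t)$. Even granting $E(u_t)=c\|q_t\|_1+O(1)$ and $E(w_t)=c\|\psi_t\|_1+O(1)$, you obtain $E(w_t)-E(u_t)=c(\|\psi_t\|_1-\|q_t\|_1)+O(1)$, and bounding $\|\psi_t\|_1-\|q_t\|_1$ is essentially what you are trying to prove. The only a priori comparison available is the multiplicative one of Lemma~\ref{lem:energy-estimate}, $E(u_t)\geq c\,E(v_t)$, which allows a gap of order $t^2$. So the step ``$E(u_t)\leq E(w_t)$, whence $E(w_t)-E(u_t)=O(1)$'' is a non sequitur.

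\textbf{Convexity does not give what you write.} For NPC targets, convexity of the energy along the geodesic interpolation controls the Dirichlet integral of the distance function $x\mapsto d_{\HBbb^3}(u_t(x),w_t(x))$, not $\|du_t-dw_t\|_{L^2}^2$. Passing from the former to the latter is an additional step, and in any case the uniform lower bound on $\kappa_t$ (a spectral gap for the Jacobi operator along a divergent family) is not supplied; the Morgan--Shalen/Korevaar--Schoen limit gives only qualitative convergence, not a quantitative gap.

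The paper takes a different route that avoids energy comparison entirely. For suitably chosen curve classes $[\gamma]$ (quasi\-transverse, piecewise horizontal--vertical, avoiding vertical saddle connections), it computes the common translation length $\ell_{\HBbb^3}(\rho_t(\gamma))$ twice: once through the pleated surface $w_t$, yielding $\ell_{\HBbb^3}(\rho_t(\gamma))=i(\gamma,\Fcal_{\psi_t}^v)+O(1)$, and once through the harmonic map $u_t$, yielding $\ell_{\HBbb^3}(\rho_t(\gamma))=i(\gamma,\Fcal_{q_t}^v)+O(1)$, both via Propositions~\ref{prop:pullbackmetric} and~\ref{prop: harmonic map C1 localizes near zeroes} together with elementary hyperbolic geometry. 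Equating the two gives $i(\gamma,\Fcal_{\psi_t}^v)=i(\gamma,\Fcal_{q_t}^v)+O(1)$ for enough curves to determine both differentials, and this forces $\|q_t-\psi_t\|_1=O(t)$. Your closing ``alternative'' is in this spirit, but the decisive idea you are missing is to route the comparison through the translation length of $\rho_t$, which is the invariant shared by $u_t$ and $w_t$.
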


 \begin{proof}

     In outline, the argument begins as previously by using results (Theorems~\ref{thm: Dumas Projective Structure Estimate} and \ref{thm: Dumas Grafting and hopf differentials agree}) of Dumas in \cite{Dumas06} , \cite{Dumas06Err}, and \cite{Dum2} to show that $t^2q$, $\Hopf(v_t)$ and $\phi_{\HF}(\Lambda_t)$ are both of order $O(t^2)$ and within an order of $O(t)$ of one another.  Then, for a properly chosen element $[\gamma] \in \pi_1(\Sigma)$, we estimate hyperbolic translation lengths $\tau_{\HBbb^3}(\rho_t(\gamma))$ in two ways: first as a nearly geodesic path on the bent hyperbolic surface $S(t^2q)$, and next as a nearly geodesic path on the image in $\H^3$ of $X$ by the harmonic map $u_t$.  In both cases, for the arcs we will consider, the images of the arcs are controlled well by the intersection numbers of the arcs with the vertical measured foliations of the Hopf differentials -- this is the content of Propositions~\ref{prop:pullbackmetric} and \ref{prop: harmonic map C1 localizes near zeroes} -- and some hyperbolic geometry then asserts that the common translation length must then be predicted by the intersection numbers.  This forces that those intersection numbers are close for a large family of curve classes, which are enough to in turn imply that the Hopf differentials are within a controlled error of each other.

     The first step is to recall the results of Dumas on the $L^1$ norms of the differences of some quadratic differentials.  
      
     Theorems~~\ref{thm: Dumas Projective Structure Estimate} and \ref{thm: Dumas Grafting and hopf differentials agree},     together show that $$\|\Hopf(X, S(t^2q)) - t^2q\| \leq O(t).$$ In particular, we may begin with the relatively weak estimate $\|\Hopf(X, S(t^2q))\| \asymp t^2$.     On the other hand, we may use Propositions~\ref{prop:pullbackmetric} and \ref{prop: harmonic map C1 localizes near zeroes} to get good control on the images of a robust set of arcs $\gamma \subset \Sigma$. For example, represent $[\gamma] \in \pi_1(\Sigma)$ by a curve $\gamma$ which is quasitransverse to the vertical foliation of  $\Hopf(X, S(t^2q))$, as well as (i) piecewise vertical and horizontal with respect to that differential and also (ii) vertical near the zeroes of that differential:  it is routine that this can be accomplished by simply modifying the geodesic representative of $[\gamma]$ in the flat singular metric defined by $|\Hopf(X, S(t^2q))|$.  
     Then Propositions~\ref{prop:pullbackmetric} and \ref{prop: harmonic map C1 localizes near zeroes} assert that the image of a vertical arc through a zero is nearly a geodesic arc of some fixed positive (and finite) length, and moreover, that the image of $\gamma$ is an arc on $S(t^2q)$ comprising those geodesics of uniformly bounded length meeting at images of zeroes of $v_t$ and connected by arcs which have geodesic curvature at most $O(e^{-ct^2})$ and have length given by
     $$\ell_{S(t^2q)}(v_t(\gamma_{hor})) = i(\gamma, \SF_{\Hopf(X, S(t^2q))}^v) + O(e^{-ct^2}).$$
     Thus, by the Morse Lemma in elementary hyperbolic geometry, because on the hyperbolic surface, the arc $v_t(\gamma)$ comprises long nearly geodesic arcs connected, at angles bounded away from zero, by nearly geodesic arcs of uniformly bounded length, the $S(t^2q)$-geodesic representative of $[\gamma]$ has length given by
     $$\ell_{S(t^2q)}([\gamma])=i(\gamma, \SF_{\Hopf(X, S(t^2q))}^v) + O(1),$$ and moreover, outside neighborhoods of uniform size of the $v_t$-images of the zeroes, lies exponentially (in $t^2$) close to the horizontal geodesic {\it lamination} defined by $\Hopf(X, S(t^2q))$.

     Of course, this is the length on a surface, so if we want to promote this estimate to an estimate of the $\rho_t$-translation length of $[\gamma]$, we need to consider the image $w_t(\gamma)$ after the isometry $f_t$. We will need to worry about curves $\gamma$ which are poorly positioned with respect to a fold, so we now restrict to curve classes $[\gamma]$ which may be represented by polygonal quasi-transverse arcs which also contain no vertical saddle connections; later on, we will see that these represent is a sufficiently diverse collection of elements of the fundamental group that suffice to determine the relevant Hopf differentials.

     Now, by \eqref{eqn: Dumas estimate on Hopf diffs and prunings}, we see that the difference $\|t^{-2}\Hopf(X, S(t^2q)) - t^{-2}\phi_{\HF}(\Lambda(t^2q))\|$ of normalized differentials is of order $O(t^{-1})$.  Thus the corresponding laminations make an increasingly shallow angle with one another, or expressed in a way that is better for our purposes, if $\tau$ is any sufficiently split train track that carries $\Lambda(t^2q))$, then both $v_t(\gamma)$ and the horizontal geodesic {\it lamination} defined by $\Hopf(X, S(t^2q))$ meet $\tau$ at angles comparable to $O(t^{-1})$.  But that train track has an image under $f_t$ that carries the pleating lamination $\Lambda(t^2q))$ as the bending lamination for the pleated surface $S(t^2q))$.

     Now, suppose we are focusing on a curve $\gamma$ and $\gamma$ contains a subarc  $k \subset \gamma$ that connects a pair of zeroes of $\Hopf(X, S(t^2q))$: it is possible that that arc $k$ has bending with respect to the bending cocycle that is not bounded away from $\pi$. If such an arc is purely vertical, then the translation length between the $w_t$-image of its endpoints could be arbitrarily small, and the geodesic in $\H^3$ representing the $\rho_t$-image of $[\gamma]$ might be far away from the $f_t$-image of $v_t(\gamma)$.  On the other hand, if all of the subarcs connecting zeroes of $\Hopf(X, S(t^2q))$ have horizontal segments, then the $f_t$-image ofthose subarcs, since they have been sheared by an amount comparable to $t^2$ along a lamination nearly parallel to $\Lambda_t$, relative to their endpoint, will then be mapped to arcs in $\H^3$ that make only a shallow angle with the bending lamination $\Lambda_t$.

 Thus, in that case, the $v_t$-image $v_t(\gamma)$ of $\gamma$ will, after composition with $f_t$, may be seen to comprise some nearly geodesic arcs of uniformly bounded length arising from the vertical arcs of $\gamma$ near the zeroes of $\Hopf(X, S(t^2q))$ together with some very long, nearly geodesic arcs of length $i(\gamma, \SF_{\Hopf(X, S(t^2q))}^v) + O(e^{-ct^2})$, with only some shallow breaks of angle $O(t^{-1})$ at points far removed from their endpoints where they cross the
bending lamination $\Lambda_t$. 

     Thus, again by hyperbolic geometry and using that the geodesic arcs make only a shallow angle with the lamination, the $\rho_t$-geodesic representative of such an arc $[\gamma]$ has length
     \begin{equation} \label{eqn: wn gamma length}
         \ell_{\H^3}([\gamma])= i(\gamma, \SF_{\Hopf(X, S(t^2q))}^v) + O(1).
     \end{equation}
   (This $\H^3$-geodesic representative of $[\gamma]$ also closely shadows the $w_t$-images of the $\Hopf(X, S(t^2q))$-horizontal portions of the arc $\gamma$, but we will not need that in the sequel.)

     We now turn to the map $u_t$. Again, we can find, for a large collection of curve classes $[\gamma]$, a representative of $[\gamma]$ that is well-positioned with respect to $\Hopf(u_t)$, i.e. it is vertical near the zeroes of $\Hopf(u_t)$, always quasi-transverse to the vertical foliation of $\Hopf(u_t)$, comprising arcs that are alternately horizontal and vertical, and containing no vertical saddle connections. The image of the lift of this curve is, again by Propositions~\ref{prop:pullbackmetric} and \ref{prop: harmonic map C1 localizes near zeroes}, an arc in $\H^3$ comprising images of vertical arcs that are of uniformly bounded length meeting orthogonally images of horizontal arcs that have exponentially small geodesic curvature and length given by
     $$\ell_{\H^3}(u_t(\gamma_{hor})) = i(\gamma, \SF_{\Hopf(u_t)}^v) + O(e^{-ct^2}).$$
     Then, again elementary hyperbolic geometry provides that the geodesic representative of $u_t([\gamma])$ lies close the images of the horizontal segments and has length
     \begin{equation}\label{eqn: un gamma length}
 \ell_{\H^3}([\gamma])=i(\gamma, \SF_{\Hopf(u_t)}^v) + O(1).
 \end{equation}

 We next compare equations \eqref{eqn: wn gamma length} and \eqref{eqn: un gamma length} for curves $\gamma$ that meet our conditions for both $\Hopf{v_t}$ and $\Hopf(u_t)$. We find that the Hopf differentials for $v_t$ and $u_t$ have intersection numbers, with the curve classes $\gamma$ that we have considered that meet the conditions for both holomoprhic differentials, that agree up to $O(1)$.

 We next point out that the collection of these curve classes that meet the conditions for both $\Hopf{v_t}$ and $\Hopf(u_t)$are sufficient for determining the vertical foliations of $\Hopf(v_t)$ and $\Hopf(u_t)$.  Indeed, in the case of $\Hopf(v_t)$, we began the construction of $\gamma$ by considering geodesics in the metric $|\Hopf(v_t)|$, and then adjusting the paths between zeroes.  A subcollection of these initially chosen geodesics provides enough paths between zeroes to provide a triangulation of the surface $\Sigma$, from which the intersection numbers with the arcs suffice to determine the vertical measured foliation of, say, $\Hopf(v_t)$.  When we exclude some curves that contain vertical saddle connections, we will inevitably lose the immediate means to find that those arcs have zero intersection number with the vertical foliation.  To recover such information, we begin with such an arc and follow a horizontal leaf on the surface until it returns to the vertical arc, near its initial point. By either doing surgery on the original curve by adding this long horizontal segment to vertical arcs that emanate from the original pair of zeroes or, alternatively, just joining the endpoints of this horizontal arc, we find two simple curves whose intersection numbers, together with the intersection numbers obtained from the other curves in our distinguished class, determine the vertical measured foliation of $\Hopf(v_t)$. We undertake a similar process for choices of curve classes for $\Hopf(u_t)$.

 It remains to compare equations \eqref{eqn: wn gamma length} and \eqref{eqn: un gamma length}. There is an obvious issue to address as these equations apparently refer to collections of curves that are defined independently. On the other hand, it is possible to find curve classes as in the previous paragraph that are simultaneously in the distinguished classes for both $\Hopf(v_t)$ and $\Hopf(u_t)$.  The easiest cases in which to see this are when $\Hopf(v_t)$ and $\Hopf(u_t)$ are projectively equal, in which case the assertion just follows from the construction in the previous paragraph, and when they are transverse. In that latter case, we note that it is possible to realize both vertical foliations as horizontal and vertical foliations of a quadratic differential on the same surface. Then on that surface, we again triangulate the surface using arcs that are saddle connections for neither foliation, replacing any original choice of arc with a curve as in the previous paragraph, this time chosen to be at some angle with respect to both foliations. In particular, for a saddle connection, we remove a small subarc of the saddle connection and replace it with a long arc from the other foliation: the resulting arc may be replaced by an arc transverse to both foliations. The result of all these constructions is a collection of arcs which can be assembled into a collection of curves whose intersection numbers determine both $\Hopf(v_t)$ and $\Hopf(u_t)$. In the case where the vertical foliations agree on some subsurface and are transverse on another, we apply the two cases just discussed on each subsurface.  In that case, we also replace any vertical subsurface boundary leaf with pairs of curves with no vertical saddle connections as described earlier.

 Comparing equations \eqref{eqn: wn gamma length} and \eqref{eqn: un gamma length} for the common collection of curve classes shows that
 $$
 i(\gamma, \SF_{\Hopf(u_t)}^v)= i(\gamma, \SF_{\Hopf(X, S(t^2q))}^v)+ O(1).
 $$

 Thus, since intersection numbers for a quadratic differential are computed as integrals involving $\sqrt{q}$, and our relation above holds for a class of curves whose intersection numbers may determine the quadratic differential, we see that the normalized Hopf differentials $\Hopf(v_t)$ and $\Hopf(u_t)$ agree up to O(1).
 Thus, applying \eqref{eqn: Dumas estimate} and \eqref{eqn: Dumas estimate on Hopf diffs and prunings}, we see that $\|\Hopf(u_t) - t^2q\|\leq O(t)$.
 \end{proof}

\bibliographystyle{amsalpha}
\bibliography{biblioLimitConfigPleated.bib}

\def\cprime{$'$}
\providecommand{\bysame}{\leavevmode\hbox to3em{\hrulefill}\thinspace}
\providecommand{\MR}{\relax\ifhmode\unskip\space\fi MR }
\providecommand{\MRhref}[2]{%
  \href{http://www.ams.org/mathscinet-getitem?mr=#1}{#2}
}
\providecommand{\href}[2]{#2}
\begin{thebibliography}{MSWW19}

\bibitem[All19a]{Allegretti:19b}
Dylan G.~L. Allegretti, \emph{Stability conditions, cluster varieties, and
  {R}iemann-{H}ilbert problems from surfaces}, http://arxiv.org/abs/1912.05938
  (2019).

\bibitem[All19b]{Allegretti:19a}
\bysame, \emph{Voros symbols as cluster coordinates}, J. Topol. \textbf{12}
  (2019), no.~4, 1031--1068. \MR{3977870}

\bibitem[Bes88]{Bestvina:88}
Mladen Bestvina, \emph{Degenerations of the hyperbolic space}, Duke Math. J.
  \textbf{56} (1988), no.~1, 143--161. \MR{932860}

\bibitem[BNR89]{BNR}
Arnaud Beauville, M.~S. Narasimhan, and S.~Ramanan, \emph{Spectral curves and
  the generalised theta divisor}, J. Reine Angew. Math. \textbf{398} (1989),
  169--179. \MR{998478}

\bibitem[Bon96]{Bonahon:shearing}
Francis Bonahon, \emph{Shearing hyperbolic surfaces, bending pleated surfaces
  and {T}hurston's symplectic form}, Ann. Fac. Sci. Toulouse Math. (6)
  \textbf{5} (1996), no.~2.

\bibitem[Bon97]{Bonahon:97}
\bysame, \emph{Transverse {H}\"older distributions for geodesic laminations},
  Topology \textbf{36} (1997), no.~1, 103--122. \MR{MR1410466}

\bibitem[Bow98]{Bowditch:98}
B.~H. Bowditch, \emph{Group actions on trees and dendrons}, Topology
  \textbf{37} (1998), no.~6, 1275--1298. \MR{1632928}

\bibitem[CEG87]{CEG}
R.~D. Canary, D.~B.~A. Epstein, and P.~Green, \emph{Notes on notes of
  {T}hurston}, Analytical and geometric aspects of hyperbolic space
  ({C}oventry/{D}urham, 1984), London Math. Soc. Lecture Note Ser., vol. 111,
  Cambridge Univ. Press, Cambridge, 1987, pp.~3--92. \MR{903850}

\bibitem[Chi76]{Chiswell:76}
I.~M. Chiswell, \emph{Abstract length functions in groups}, Math. Proc.
  Cambridge Philos. Soc. \textbf{80} (1976), no.~3, 451--463. \MR{427480}

\bibitem[Cor88]{Corlette:88}
Kevin Corlette, \emph{Flat {$G$}-bundles with canonical metrics}, J.
  Differential Geom. \textbf{28} (1988), no.~3, 361--382. \MR{965220}

\bibitem[DDW98]{DDW:98}
G.~Daskalopoulos, S.~Dostoglou, and R.~Wentworth, \emph{Character varieties and
  harmonic maps to {${\bf R}$}-trees}, Math. Res. Lett. \textbf{5} (1998),
  no.~4, 523--533. \MR{1653328}

\bibitem[DDW00]{DaDoWen}
\bysame, \emph{On the {M}organ-{S}halen compactification of the {${\rm
  SL}(2,{\bf C})$} character varieties of surface groups}, Duke Math. J.
  \textbf{101} (2000), no.~2, 189--207.

\bibitem[DN19]{DumasNeitzke:19}
David Dumas and Andrew Neitzke, \emph{Asymptotics of {H}itchin's metric on the
  {H}itchin section}, Comm. Math. Phys. \textbf{367} (2019), no.~1, 127--150.
  \MR{3933406}

\bibitem[Don87]{Donaldson:87}
S.~K. Donaldson, \emph{Twisted harmonic maps and the self-duality equations},
  Proc. London Math. Soc. (3) \textbf{55} (1987), no.~1, 127--131. \MR{887285}

\bibitem[Dum06]{Dumas06}
David Dumas, \emph{Grafting, pruning, and the antipodal map on measured
  laminations}, J. Differential Geom. \textbf{74} (2006), no.~1, 93--118.
  \MR{2260929}

\bibitem[Dum07a]{Dumas06Err}
\bysame, \emph{Erratum to: ``{G}rafting, pruning, and the antipodal map on
  measured laminations'' [{J}. {D}ifferential {G}eom. {\bf 74} (2006), no. 1,
  93--118; mr2260929]}, J. Differential Geom. \textbf{77} (2007), no.~1,
  175--176. \MR{2344358}

\bibitem[Dum07b]{Dum2}
\bysame, \emph{The {S}chwarzian derivative and measured laminations on
  {R}iemann surfaces}, Duke Math. J. \textbf{140} (2007), no.~2.

\bibitem[DW07]{DaWen}
Georgios~D. Daskalopoulos and Richard~A. Wentworth, \emph{Harmonic maps and
  {T}eichm\"uller theory}, Handbook of {T}eichm\"uller theory. {V}ol. {I}, IRMA
  Lect. Math. Theor. Phys., vol.~11, Eur. Math. Soc., Z\"urich, 2007,
  pp.~33--109.

\bibitem[DW15]{DumasWolf15}
David Dumas and Michael Wolf, \emph{Polynomial cubic differentials and convex
  polygons in the projective plane}, Geom. Funct. Anal. \textbf{25} (2015),
  no.~6, 1734--1798. \MR{3432157}

\bibitem[EW76]{EellsWood:76}
J.~Eells and J.~C. Wood, \emph{Restrictions on harmonic maps of surfaces},
  Topology \textbf{15} (1976), no.~3, 263--266. \MR{420708}

\bibitem[Fay73]{Fay:73}
John~D. Fay, \emph{Theta functions on {R}iemann surfaces}, Lecture Notes in
  Mathematics, Vol. 352, Springer-Verlag, Berlin-New York, 1973. \MR{0335789}

\bibitem[Fen15]{Fenyes:15}
Aaron Fenyes, \emph{A dynamical perspective on shear-bend coordinates},
  preprint (2015), http://arxiv.org/abs/1510.05757.

\bibitem[FLP12]{FLP}
Albert Fathi, Fran{\c{c}}ois Laudenbach, and Valentin Po{\'e}naru,
  \emph{Thurston's work on surfaces}, Mathematical Notes, vol.~48, Princeton
  University Press, Princeton, NJ, 2012, Translated from the 1979 French
  original by Djun M. Kim and Dan Margalit.

\bibitem[Fre18a]{Fredrickson:18a}
Laura Fredrickson, \emph{Exponential decay for the asymptotic geometry of the
  {H}itchin metric}, preprint (2018), http://arxiv.org/abs/1810.01554.

\bibitem[Fre18b]{Fredrickson:18b}
\bysame, \emph{Generic ends of the moduli space of {S}{L}(n,{C})-{H}iggs
  bundles}, preprint (2018), http://arxiv.org/abs/1810.01556.

\bibitem[GMN13]{GMN:13}
Davide Gaiotto, Gregory~W. Moore, and Andrew Neitzke, \emph{Wall-crossing,
  {H}itchin systems, and the {WKB} approximation}, Adv. Math. \textbf{234}
  (2013), 239--403. \MR{3003931}

\bibitem[GS92]{GroSch}
Mikhail Gromov and Richard Schoen, \emph{Harmonic maps into singular spaces and
  {$p$}-adic superrigidity for lattices in groups of rank one}, Inst. Hautes
  \'{E}tudes Sci. Publ. Math. (1992), no.~76, 165--246. \MR{1215595}

\bibitem[Gun66]{Gunning:66}
R.~C. Gunning, \emph{Lectures on {R}iemann surfaces}, Princeton Mathematical
  Notes, Princeton University Press, Princeton, N.J., 1966. \MR{0207977}

\bibitem[Gun67]{Gunning67}
\bysame, \emph{Lectures on vector bundles over {R}iemann surfaces}, University
  of Tokyo Press, Tokyo; Princeton University Press, Princeton, N.J., 1967.
  \MR{0230326}

\bibitem[Hit87]{Hitchin:87}
Nigel~J. Hitchin, \emph{The self-duality equations on a {R}iemann surface},
  Proc. London Math. Soc. (3) \textbf{55} (1987), no.~1, 59--126. \MR{887284}

\bibitem[HM79]{HM}
John Hubbard and Howard Masur, \emph{Quadratic differentials and foliations},
  Acta Math. \textbf{142} (1979), no.~3-4.

\bibitem[HN16]{HollandsNeitzke:16}
Lotte Hollands and Andrew Neitzke, \emph{Spectral networks and
  {F}enchel-{N}ielsen coordinates}, Lett. Math. Phys. \textbf{106} (2016),
  no.~6, 811--877. \MR{3500424}

\bibitem[HN19]{HollandsNeitzke:19}
\bysame, \emph{Exact {W}{K}{B} and abelianization for the {T}3 equation},
  preprint (2019), http://arxiv.org/abs/1906.04271.

\bibitem[IN14]{IwakiNakanishi:14}
Kohei Iwaki and Tomoki Nakanishi, \emph{Exact {WKB} analysis and cluster
  algebras}, J. Phys. A \textbf{47} (2014), no.~47, 474009, 98. \MR{3280000}

\bibitem[Jos94]{Jost:94}
J\"{u}rgen Jost, \emph{Equilibrium maps between metric spaces}, Calc. Var.
  Partial Differential Equations \textbf{2} (1994), no.~2, 173--204.
  \MR{1385525}

\bibitem[JY91]{JostYau:91}
J\"{u}rgen Jost and Shing-Tung Yau, \emph{Harmonic maps and group
  representations}, Differential geometry, Pitman Monogr. Surveys Pure Appl.
  Math., vol.~52, Longman Sci. Tech., Harlow, 1991, pp.~241--259. \MR{1173045}

\bibitem[KS93]{KorSch1}
Nicholas~J. Korevaar and Richard~M. Schoen, \emph{Sobolev spaces and harmonic
  maps for metric space targets}, Comm. Anal. Geom. \textbf{1} (1993), no.~3-4.

\bibitem[KS97]{KorSch2}
\bysame, \emph{Global existence theorems for harmonic maps to non-locally
  compact spaces}, Comm. Anal. Geom. \textbf{5} (1997), no.~2, 333--387.

\bibitem[KT92]{KamTan}
Yoshinobu Kamishima and Ser~P. Tan, \emph{Deformation spaces on geometric
  structures}, Aspects of low-dimensional manifolds, Adv. Stud. Pure Math.,
  vol.~20, 1992.

\bibitem[KT05]{KawaiTakei:05}
Takahiro Kawai and Yoshitsugu Takei, \emph{Algebraic analysis of singular
  perturbation theory}, Translations of Mathematical Monographs, vol. 227,
  American Mathematical Society, Providence, RI, 2005, Translated from the 1998
  Japanese original by Goro Kato, Iwanami Series in Modern Mathematics.
  \MR{2182990}

\bibitem[Lab91]{Labourie:91}
Fran\c{c}ois Labourie, \emph{Existence d'applications harmoniques tordues \`a
  valeurs dans les vari\'{e}t\'{e}s \`a courbure n\'{e}gative}, Proc. Amer.
  Math. Soc. \textbf{111} (1991), no.~3, 877--882. \MR{1049845}

\bibitem[Lev83]{Levitt83}
Gilbert Levitt, \emph{Foliations and laminations on hyperbolic surfaces},
  Topology \textbf{22} (1983), no.~2, 119--135. \MR{683752}

\bibitem[Mes02]{Mese}
Chikako Mese, \emph{Uniqueness theorems for harmonic maps into metric spaces},
  Commun. Contemp. Math. \textbf{4} (2002), no.~4, 725--750.

\bibitem[Min92a]{Minsky:HarmonicThreeManifold}
Yair~N. Minsky, \emph{Harmonic maps into hyperbolic {$3$}-manifolds}, Trans.
  Amer. Math. Soc. \textbf{332} (1992), no.~2, 607--632.

\bibitem[Min92b]{Minsky:92a}
\bysame, \emph{Harmonic maps, length, and energy in {T}eichm\"{u}ller space},
  J. Differential Geom. \textbf{35} (1992), no.~1, 151--217. \MR{1152229}

\bibitem[Moc16]{Mochizuki:16}
Takur\={o} Mochizuki, \emph{Asymptotic behaviour of certain families of
  harmonic bundles on {R}iemann surfaces}, J. Topol. \textbf{9} (2016), no.~4,
  1021--1073. \MR{3620459}

\bibitem[MS84]{MorganShalen:84}
John~W. Morgan and Peter~B. Shalen, \emph{Valuations, trees, and degenerations
  of hyperbolic structures. {I}}, Ann. of Math. (2) \textbf{120} (1984), no.~3,
  401--476. \MR{769158}

\bibitem[MS91]{MorganShalen:91}
\bysame, \emph{Free actions of surface groups on {${\bf R}$}-trees}, Topology
  \textbf{30} (1991), no.~2, 143--154. \MR{1098910}

\bibitem[MSWW14]{msww15}
Rafe Mazzeo, Jan Swoboda, Hartmut Weiss, and Frederik Witt, \emph{Limiting
  configurations for solutions of {H}itchin's equation}, Semin. Theor. Spectr.
  Geom. (Grenoble) \textbf{31} (2012-2014), 91--116.

\bibitem[MSWW16]{MSWW:Ends}
\bysame, \emph{Ends of the moduli space of {H}iggs bundles}, Duke Math. J.
  \textbf{165} (2016), no.~12, 2227--2271. \MR{3544281}

\bibitem[MSWW19]{MSWW:HitchinMetric}
\bysame, \emph{Asymptotic geometry of the hitchin metric}, Comm. Math. Phys.
  \textbf{367} (2019), no.~1, 151--191.

\bibitem[Mum74]{Mumford:74}
David Mumford, \emph{Prym varieties. {I}}, Contributions to analysis (a
  collection of papers dedicated to {L}ipman {B}ers), 1974, pp.~325--350.
  \MR{0379510}

\bibitem[Ota96]{Otal:96}
Jean-Pierre Otal, \emph{Le th\'{e}or\`eme d'hyperbolisation pour les
  vari\'{e}t\'{e}s fibr\'{e}es de dimension 3}, Ast\'{e}risque (1996), no.~235,
  x+159. \MR{1402300}

\bibitem[Pau88]{PaulinGromovConv94}
Fr\'ed\'eric Paulin, \emph{Topologie de {G}romov \'equivariante, structures
  hyperboliques et arbres r\'eels}, Invent. Math. \textbf{94} (1988), no.~1,
  53--80. \MR{958589}

\bibitem[PH92]{PenHar}
R.~C. Penner and J.~L. Harer, \emph{Combinatorics of train tracks}, Annals of
  Mathematics Studies, vol. 125, Princeton University Press, Princeton, NJ,
  1992.

\bibitem[Sch84]{Sch}
Richard~M. Schoen, \emph{Analytic aspects of the harmonic map problem}, Seminar
  on nonlinear partial differential equations ({B}erkeley, {C}alif., 1983),
  Math. Sci. Res. Inst. Publ., vol.~2, Springer, New York, 1984, pp.~321--358.

\bibitem[Sko96]{Skora}
Richard~K. Skora, \emph{Splittings of surfaces}, J. Amer. Math. Soc. \textbf{9}
  (1996), no.~2, 605--616.

\bibitem[Tau13]{Taubes}
Clifford~Henry Taubes, \emph{{${\rm PSL}(2;C)$} connections on 3-manifolds with
  {${\rm L}^2$} bounds on curvature}, Camb. J. Math. \textbf{1} (2013), no.~2,
  239--397. \MR{3272050}

\bibitem[Wol89]{Wolf:thesis}
Michael Wolf, \emph{The {T}eichm\"uller theory of harmonic maps}, J.
  Differential Geom. \textbf{29} (1989), no.~2, 449--479.

\bibitem[Wol91]{Wolf:HighEnergy}
\bysame, \emph{High energy degeneration of harmonic maps between surfaces and
  rays in {T}eichm\"uller space}, Topology \textbf{30} (1991), no.~4, 517--540.
  \MR{1133870}

\bibitem[Wol95]{WolfT}
\bysame, \emph{Harmonic maps from surfaces to {$\bold R$}-trees}, Math. Z.
  \textbf{218} (1995), no.~4, 577--593.

\bibitem[Wol96]{Wolf:96}
\bysame, \emph{On realizing measured foliations via quadratic differentials of
  harmonic maps to {$\bold R$}-trees}, J. Anal. Math. \textbf{68} (1996),
  107--120. \MR{1403253}

\end{thebibliography}

\end{document}